\def\LaTeX{\leavevmode L\raise.42ex
    \hbox{\kern-.3em\size{\sfsize}{0pt}\selectfont A}\kern-.15em\TeX}
\newcommand{\BibTeX}{{\rm B\kern-.05em{\sc
          i\kern-.025emb}\kern-.08em\TeX}}
\newtheorem{theorem}{Theorem}[section]
\newtheorem{proposition}[theorem]{Proposition}
\newtheorem{corollary}[theorem]{Corollary}
\newtheorem{lemma}[theorem]{Lemma}
\theoremstyle{definition}
\newtheorem{definition}[theorem]{Definition}
\newtheorem{remark}[theorem]{Remark}
\numberwithin{equation}{section}
\def\ep{\varepsilon}
\def\vphi{\varphi}
\def\div{\textrm{div}}
\def\Bcal{\mathcal{B}}
\def\Feh{\mathcal{F}}
\def\Geh{\mathcal{G}}
\def\Mcal{\mathcal{M}}
\def\Meh{\mathcal{M}}
\def\Ncal{\mathcal{N}}
\def\Oeh{\mathcal{O}}
\def\Peh{\mathcal{P}}
\def\Seh{\mathcal{S}}
\def\Reh{\mathcal{R}}
\def\Ucal{\mathcal{U}}
\def\Vcal{\mathcal{V}}
\def\N{\mathbb{N}}
\def\R{\mathbb{R}}
\def\Mr{\mathscr{M}}
\def\Nr{\mathscr{N}}
\def\Hh{\mathscr{H}}
\def\diag{\mathrm{diag}}
\def\hbar{\bar{h}}
\def\utilde{{\tilde u}}
\def\vtilde{{\tilde v}}
\def\atilde{{\tilde a}}
\def\btilde{{\tilde b}}
\def\mutilde{{\tilde \mu}}
\def\nutilde{{\tilde \nu}}
\def\dist{{\rm dist}}
\def\interior{{\rm int}}
\def\leq{\leqslant}
\def\geq{\geqslant}
\definecolor{verde}{rgb}{0,0.35,0.1} 
\definecolor{rosso}{rgb}{0.7,0,0}
\definecolor{blue}{rgb}{0, 0, 1}
\definecolor{viola}{rgb}{0.6,0,0.4}
\newcommand{\todo}[1][TO DO]{\typeout{#1}{\large\ttfamily\textbf{\fbox{**#1**}}}}
\newcommand{\rst}[1]{\ensuremath{{\mathbin |}%
\raise-.5ex\hbox{$#1$}}}
\begin{document}

\title[Optimal partition problems involving Laplacian eigenvalues]{Extremality conditions and regularity of solutions to optimal partition problems involving Laplacian eigenvalues}

\author[M. Ramos]{Miguel Ramos}\thanks{}

\author[H. Tavares]{Hugo Tavares}\thanks{}
\address{Hugo Tavares \newline \indent Center for Mathematical Analysis, Geometry and Dynamical Systems \newline \indent
Instituto Superior T\'ecnico, Universidade de Lisboa, \newline \indent
Av. Rovisco Pais, 1049-001 Lisboa, Portugal}
\email{htavares@math.ist.utl.pt}

\author[S. Terracini]{Susanna Terracini}
\address{Susanna Terracini \newline \indent Dipartimento di Matematica ``Giuseppe Peano'', Universit\`a di Torino \newline \indent
Via Carlo Alberto 10, \newline \indent 20123 Torino, Italy }
\email{susanna.terracini@unito.it}

\keywords{ optimal partition problems, extremality conditions, Laplacian eigenvalues, elliptic competitive systems, segregation phenomena, regularity of free boundary problems}

\thanks{{Acknowlegments.}  The authors would like to thank Sandro Salsa for giving a hint that enabled a very short and elegant proof of Lemma \ref{prop:u_i/u_1_alpha_Holder_generalcase}. H. Tavares was supported by Funda\c c\~ao para a Ci\^encia e Tecnologia through the program Investigador FCT and through the project UID/MAT/04459/2013. H. Tavares and S. Terracini are partially supported through the project ERC Advanced Grant  2013 n. 339958 ``Complex Patterns for Strongly Interacting Dynamical Systems - COMPAT''}

\date{\today}

\begin{abstract}
Let $\Omega\subset \mathbb{R}^N$ be an open bounded domain and $m\in \mathbb{N}$. Given $k_1,\ldots,k_m\in \mathbb{N}$, we consider a wide class of optimal partition problems involving Dirichlet eigenvalues of elliptic operators, of the following form
\[
\inf\left\{F(\lambda_{k_1}(\omega_1),\ldots, \lambda_{k_m}(\omega_m)):\ (\omega_1,\ldots, \omega_m)\in \mathcal{P}_m(\Omega)\right\},
\]
where $\lambda_{k_i}(\omega_i)$ denotes the $k_i$--th eigenvalue of $(-\Delta,H^1_0(\omega_i))$ counting multiplicities, and $\mathcal{P}_m(\Omega)$ is the set of all open partitions of $\Omega$, namely
\[
\mathcal{P}_m(\Omega)=\left\{(\omega_1,\ldots,\omega_m):\ \omega_i\subset \Omega \text{ open},\ \omega_i\cap \omega_j=\emptyset\ \forall i\neq j\right\}.
\]
While existence of a \emph{quasi-open} optimal partition $(\omega_1,\ldots, \omega_m)$ follows from a general result by Bucur, Buttazzo and Henrot [Adv. Math. Sci. Appl. 8, 1998], the aim of this paper is to associate with such minimal partitions and their eigenfunctions some suitable extremality conditions and to exploit them,  
proving as well the Lipschitz continuity of some eigenfunctions, and regularity of the partition in the sense that the free boundary $\cup_{i=1}^m \partial \omega_i\cap \Omega$ is, up to a residual set, locally a $C^{1,\alpha}$ hypersurface. This last result extend the ones in the paper by Caffarelli and Lin [J. Sci. Comput. 31, 2007] to the case of higher eigenvalues.

\end{abstract}

\maketitle

\section{Introduction}
Let $\Omega$ be an open bounded domain of $\R^N$ ($N\geq 2$) and let $m\in \N$ and $k_1,\ldots, k_m\in \N$. The starting point and first aim of this paper is to investigate the extremality conditions and the regularity properties of solutions to problems of type:
\begin{equation}\label{eq:OPPmain}
\inf_{(\omega_1,\ldots,\omega_m)\in \Peh_m(\Omega)} \sum_{i=1}^m \lambda_{k_i}(\omega_i),
\end{equation}
where 
\[
\Peh_m(\Omega)=\left\{(\omega_1,\ldots,\omega_m)\subseteq \Omega^m:\ \omega_i \text{ open }\forall i,\ \omega_i\cap \omega_j=\emptyset\ \forall i\neq j\right\}
\]
denotes the class of all partitions of $\Omega$ with $m$ open sets,  and $\lambda_k(\omega)$ is the $k$--th eigenvalue of $-\Delta$ in $H^1_0(\omega)$, counting multiplicities. Observe that \eqref{eq:OPPmain} is an example of an optimal partition problem (see for instance \cite[Section 5.5]{BucurButtazzoBook} or \cite{BucurButtazzoHenrot}) with cost function $\Phi(\omega_1,\ldots,\omega_m):=\sum_{i=1}^m \lambda_{k_i}(\omega_i)$, and class of admissible sets given by $\mathcal{A}(\Omega):=\{\omega\subseteq \Omega: \ \omega \text{ open set}\}$. 

Optimal partition problems belong to the wide family of shape optimization problems, and have many applications in physics and engineering. They also play a fundamental role in the study of the nodal sets of eigenfunctions of Schr\"odinger operators \cite{HHT}, as well as in the proof of monotonicity formulas \cite{BBHU,BKS,BHV,helffer,HHOT2,HHOT3,polacik,TavaresTerracini1}.  In general, these kind of problems may only have solution in a relaxed sense \cite{ButtazzoDalMaso, ButtazzoTimofte}, except when one imposes certain geometric constraints on the admissible domains, or some monotonicity properties on the cost function (we refer the reader to the book by Bucur and Buttazzo \cite{BucurButtazzoBook} for a good survey on these issues). Problem \eqref{eq:OPPmain} falls into the second case: as $\Phi$ is monotone decreasing with respect to set inclusion, as well as lower semi continuous for the $\gamma$--convergence, a general abstract result by Bucur, Buttazzo and Henrot (\cite{BucurButtazzoHenrot}) implies the existence of solution for \eqref{eq:OPPmain} in the class of \emph{quasi-open sets}. In general, it is hard to pass from quasi-open to open sets. By using a penalization technique with partition of unity functions, Bourdin, Bucur and Oudet \cite{BourdinBucurOudet} give a different proof for the existence of quasi-open solutions, while proving the existence of open solutions for the bidimensional case $N=2$ (by using a compactness result by {\v{S}}ver{{\'a}}k \cite{Sverak} which only holds in dimension two). 

Let us consider a function $F:\R^m\to \R$ such that
\begin{enumerate}
\item[(F1)] $F\in C^1((\R^+)^m)$, and $\frac{\partial F}{\partial x_i}>0$ in $(\R^+)^m$ for every $i=1,\ldots,m$;
\item[(F2)] Given $i$, and for every fixed $\bar x_j>0$, $j\neq i$, we have 
\[
F(\bar x_1,\ldots, \bar x_{i-1}, x_i, \bar x_{i+1}\ldots, \bar x_{m})\to +\infty \qquad \text{ as } x_i\to +\infty.
\]
\end{enumerate}
Consider
\begin{equation}\label{eq:OPPmain_general_with_F}
\inf_{(\omega_1,\ldots,\omega_m)\in \Peh_m(\Omega)} F(\lambda_{k_1}(\omega_1),\ldots, \lambda_{k_m}(\omega_m)).
\end{equation}

A key point in the qualitative theory of solutions to the optimal partition problem is be to derive proper extremality conditions and initial regularity for the eigenfunctions associated with the elements of the partition. In this aspect, the case of the first eigenvalue is much simpler, yet non trivial, and extremality conditions were first derived by Conti, Terracini, Verzini:

\begin{theorem}[Conti, Terracini, Verzini, \cite{ContiTerraciniVerziniOPP}]\label{thm:CTV}
Assume $k_i=1$, $i=1\dots, m$, and let $F(x_1,\dots,x_m)=\sum_ix_i^p$, for some $p>0$. Then the optimal partition problem \eqref{eq:OPPmain_general_with_F} admits an open and connected solution $(\omega_1,\ldots,\omega_m)\in \Peh_m(\Omega)$, such that,  for each $i=1,\ldots, m$, the ($L^2$--normalized and nonnegative) eigenfunctions $u_{i}$ associated with the eigenvalues $\lambda_{1}(\omega_i)$ are Lipschitz continuous.  In addition, there are numbers  $a_i>0$ such that

\begin{equation}\label{eq:CTV}
-\Delta\left(a_i u_i-\sum_{j\neq i}a_ju_j\right) \geq \lambda_{1}(\omega_i) a_iu_i-\sum_{j\neq i} \lambda_{1}(\omega_j)a_ju_j\,,\qquad i=1,\dots,m\,. 
\end{equation}
\end{theorem}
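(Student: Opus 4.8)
The plan is to realize the optimal partition as a limit of solutions to a family of competitive elliptic systems with large interaction parameter, and then to pass to the limit in the associated Euler--Lagrange equations. Concretely, for $\beta>0$ large I would study the penalized functional
\[
J_\beta(u_1,\dots,u_m)=\sum_{i=1}^m \int_\Omega |\nabla u_i|^2 + \beta \sum_{i<j}\int_\Omega u_i^2 u_j^2,
\]
minimized over the set $\{(u_1,\dots,u_m): u_i\geq 0,\ \int_\Omega u_i^2=1\}$ (this is where $F(x)=\sum x_i^p$ with $p>0$, together with the Lagrange-multiplier structure for $p=1$, enters; for general $p$ one incorporates the $p$-th powers into the constraint handling). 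Standard minimization gives a minimizer $(\ub_1,\dots,\ub_m)$ solving
\[
-\Delta \ub_i = \lb_i \ub_i - \beta \ub_i \sum_{j\neq i} \ub_j^2 \quad\text{in }\Omega,\qquad i=1,\dots,m,
\]
for Lagrange multipliers $\lb_i>0$. The first step is uniform-in-$\beta$ estimates: boundedness in $H^1_0(\Omega)$ is immediate from $J_\beta\le J_\beta$ evaluated at a fixed admissible competitor with disjoint supports (so the penalization term stays bounded), which also bounds $\beta\sum_{i<j}\int u_i^2 u_j^2$; $L^\infty$ bounds follow from a Brezis--Kato/Moser iteration exploiting the good sign of $-\beta \ub_i\sum_{j\ne i}\ub_j^2$. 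The multipliers $\lb_i$ are bounded because they equal $\int|\nabla\ub_i|^2+\beta\int \ub_i^2\sum_{j\ne i}\ub_j^2$, both bounded.

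The second and technically heaviest step is the uniform Lipschitz bound. Here I would invoke the now-standard regularity theory for such competition-diffusion systems (Conti--Terracini--Verzini, Caffarelli--Karakhanyan--Lin, Noris--Tavares--Terracini--Verzini): the family $\{(\ub_1,\dots,\ub_m)\}_\beta$ is uniformly bounded in $C^{0,\gamma}(\Omega')$ for every $\gamma\in(0,1)$ and every $\Omega'\Subset\Omega$, and in fact uniformly Lipschitz on compact subsets. The limiting configuration $(u_1,\dots,u_m)$, obtained along a subsequence $\beta_n\to\infty$, then satisfies $u_iu_j\equiv 0$ for $i\neq j$, each $u_i\geq 0$ is Lipschitz, $-\Delta u_i=\lambda_i u_i$ on $\{u_i>0\}=:\omega_i$ with $\lambda_i=\lim\lb_i$, and $\int u_i^2=1$. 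One checks $(\omega_1,\dots,\omega_m)$ is an optimal (open, connected after discarding null components) partition by a standard comparison argument: any competing partition yields, via its first eigenfunctions multiplied by suitable cut-offs and inserted into $J_\beta$, an upper bound that survives the limit; conversely lower semicontinuity of the Dirichlet integrals gives $\sum\lambda_i\le \sum\lambda_1(\tilde\omega_i)$. That the $\lambda_i$ are genuinely the \emph{first} eigenvalues of $\omega_i$ uses nonnegativity of $u_i$ and connectedness.

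The third step is to pass to the limit in the differential inequality. Summing the $\beta$-equations with signs $\pm$ and using $\ub_i\ge0$,
\[
-\Delta\Big(\ub_i-\sum_{j\neq i}\ub_j\Big) = \lb_i\ub_i-\sum_{j\neq i}\lb_j\ub_j -\beta\Big(\ub_i\sum_{j\ne i}\ub_j^2 - \sum_{j\ne i}\ub_j\sum_{k\ne j}\ub_k^2\Big).
\]
The delicate point is that the $\beta$-term does \emph{not} vanish in the limit but converges (in the sense of distributions / as measures) to a nonnegative measure supported on the free boundary: grouping terms, the bracket equals $\sum_{j\ne i}\ub_j^2(\ub_i-\sum_{k\ne i,j}\ub_k)-\ub_i^2\sum_{j\ne i}\ub_j$, and testing against a nonnegative test function together with the uniform Lipschitz bound and the domination of $\ub_i$ over the others on $\{\ub_i \text{ large}\}$ shows the limit of $-\beta(\cdots)$, as a distribution, is $\le 0$, hence
\[
-\Delta\Big(u_i-\sum_{j\neq i}u_j\Big)\ \ge\ \lambda_1(\omega_i)u_i-\sum_{j\neq i}\lambda_1(\omega_j)u_j
\]
in $\mathcal D'(\Omega)$. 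Finally the coefficients $a_i>0$: for $F(x)=\sum x_i^p$ the correct weights turning the minimization stationarity into \eqref{eq:CTV} are $a_i=p\,\lambda_1(\omega_i)^{p-1}$ (so $a_i=1$ when $p=1$), obtained by writing the optimality condition for the powered functional and renormalizing; one must verify these are the same $a_i$ that make the sign bookkeeping above consistent, which is a direct computation. The main obstacle, and the part requiring genuine care rather than citation, is the second step: establishing the uniform Lipschitz bound for the $\beta$-system and, intertwined with it, the identification of the limit of the penalization terms as a measure with the right sign on the free boundary.
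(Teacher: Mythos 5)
The paper does not prove Theorem \ref{thm:CTV} at all --- it is quoted from \cite{ContiTerraciniVerziniOPP}, where the inequalities are obtained by a direct variational argument on the segregated minimization problem (competitor constructions inside the class $u_i\cdot u_j\equiv 0$), not by a $\beta$-limit. Your penalization route is the strategy this paper develops for its own, more general theorems, so it is a legitimate alternative in principle; but as written it has a genuine gap exactly at the step you call the crux. Write $M_{i,\beta}:=\beta u_{i,\beta}\sum_{j\neq i}u_{j,\beta}^2$, so that
\begin{equation}
-\Delta\Bigl(u_{i,\beta}-\sum_{j\neq i}u_{j,\beta}\Bigr)=\lambda_{i,\beta}u_{i,\beta}-\sum_{j\neq i}\lambda_{j,\beta}u_{j,\beta}-M_{i,\beta}+\sum_{j\neq i}M_{j,\beta}.
\end{equation}
Each $M_{i,\beta}$ converges (up to subsequences) to a nonnegative measure $\mathcal M_i$ concentrated on the free boundary, and \eqref{eq:CTV} is precisely the measure inequality $a_i\mathcal M_i\leq\sum_{j\neq i}a_j\mathcal M_j$ for every $i$; for $m=2$ the two inequalities force the exact balance $a_1\mathcal M_1=a_2\mathcal M_2$, i.e.\ the reflection law along the interface. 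This is the entire analytic content of the theorem, and it cannot be extracted from the pointwise ``domination'' argument you sketch: at fixed $\beta$ your bracket has no sign (for $m=2$ it equals $\beta\,u_{1,\beta}u_{2,\beta}(u_{2,\beta}-u_{1,\beta})$, which changes sign in the overlap region), and testing with a nonnegative function gives no control over which interface measure dominates in the limit. (There is also a sign slip: to conclude ``$\geq$'' you need the limit of $-\beta(\cdots)$ to be $\geq 0$, not $\leq 0$.) Closing this step requires either minimality used directly at the limit, as in \cite{ContiTerraciniVerziniOPP}, or the machinery the present paper builds for its general results: Pohozaev/domain-variation identities, Almgren monotonicity, regularity of the regular part of the interface where the (weighted) normal derivatives balance, and negligibility of the singular part.

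A second problem is the weights. With the unweighted $J_\beta$ you only approximate the case $p=1$; for $F(x)=\sum_i x_i^p$ the weights must be built into the approximation, e.g.\ by minimizing $\sum_i\bigl(\int_\Omega|\nabla u_i|^2\bigr)^p+\beta\sum_{i<j}\int_\Omega u_i^2u_j^2$, whose Euler--Lagrange equations carry coefficients $p\bigl(\int_\Omega|\nabla u_{i,\beta}|^2\bigr)^{p-1}$ in front of $-\Delta u_{i,\beta}$ (this is the role of $\partial\psi/\partial\xi_i$ in Theorem \ref{thm:minimizer_for_c_beta}); they cannot be appended afterwards. Moreover your identification $a_i=p\,\lambda_1(\omega_i)^{p-1}$ is not the right one for \eqref{eq:CTV}: on the regular interface that inequality forces $a_1|\nabla u_1|=a_2|\nabla u_2|$ (first powers of the gradients), whereas the Hadamard-type stationarity of $\sum_i\lambda_i^p$ gives $\lambda_1^{p-1}|\nabla u_1|^2=\lambda_2^{p-1}|\nabla u_2|^2$; consistency requires $a_i\propto\lambda_1(\omega_i)^{(p-1)/2}$, and with your choice the two conditions are incompatible unless $\lambda_1(\omega_1)=\lambda_1(\omega_2)$. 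Finally, connectedness of the $\omega_i$ also needs a minimality argument, not merely discarding null components. The remaining steps (uniform $H^1$, $L^\infty$ and H\"older/Lipschitz bounds, segregation in the limit, identification of the $\lambda_i$ as first eigenvalues, optimality of the limiting partition by comparison) are correctly outlined and standard.
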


It is worthwhile noticing that this simple condition does not hold in the case of optimal partitions related with higher eigenvalues. The lack of extremality conditions  is a well known problem in shape optimization, as highlighted  for instance in the recent paper  \cite{BMPV2014}.  In facts, no domain variation formula is available in the unfortunate (yet possible) case of degenerate eigenvalues (observe that we will take any prescribed $k_1,\ldots, k_m\in \N$). This fact, together with the initial lack of regularity of the interfaces, makes Hadamard type conditions unavailable for this problem.  

In particular, the system of inequalities  \eqref{eq:CTV} immediately yields regularity of the nodal set in regions when only two components are active, and proportionality of their gradients at the interfaces. In the case of the first eigenvalues,  when $F(x_1,\dots,x_m)=\sum_{i}x_i$, much stronger regularity results were proved in \cite{Aram,CaffarelliLin2,CaffarelliLinJAMS} and will be discussed later.

Our first main result applies to  a wide class of optimal partition problems which includes \eqref{eq:OPPmain}, providing  openness of the optimal partition, Lipschitz continuity of eigenfunctions and  extremality conditions for the corresponding eigenfunctions.

\begin{theorem}\label{thm:extremality_result}
The optimal partition problem \eqref{eq:OPPmain_general_with_F} admits a solution $(\widetilde\omega_1,\ldots,\widetilde\omega_m)\in \Peh_m(\Omega)$, such that,  for each $i=1,\ldots, m$, there exist $1\leq l_i \leq k_i$,  Lipschitz continuous eigenfunctions $\tilde u_{l_i}^i, \tilde u_{l_i+1}^i\ldots \tilde u_{k_i}^i$ associated to the eigenvalues $\lambda_{l_i}(\widetilde \omega_i)=\ldots=\lambda_{k_i}(\widetilde \omega_i)$, such that
\[
\tilde u^i_{l_i},\tilde u^i_{l_i+1},\ldots, \tilde u^i_{k_i} \qquad \text{ are $L^2$ orthonormal.}
\]
In addition, there are numbers  $\tilde a^i_{l_i},\ldots, \tilde a^i_{k_i}>0$
such that, defining, for $x_0\in \Omega$ and $r\in (0,\dist(x_0,\partial \Omega))$,

\[
E(x_0,r)=\sum_{i=1}^m \frac{1}{r^{N-2}}  \int_{B_r(x_0)}  \sum_{j=l_i}^{k_i}  \tilde a^i_j\left(|\nabla \tilde u^i_j|^2 -\lambda_{k_i}(\tilde u^i_j)^2\right) \,dx,
\]
then there holds:

\begin{equation}\label{eq:extremality}
\dfrac{d E}{dr} 
=\sum_{i=1}^m \left(
\frac{2}{r^{N-2}} \int_{\partial B_r(x_0)} 
 \sum_{j=l_i}^{k_i}  \tilde a^i_j|\partial_n \tilde u^i_j|^2
d\sigma-\frac{2}{r^{N-1}}  \int_{B_r(x_0)}  \sum_{j=l_i}^{k_i} \tilde a^i_j\lambda_{k_i}(\tilde u^i_j)^2\, dx\right).
\end{equation}
\end{theorem}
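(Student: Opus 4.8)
The plan is to obtain the optimal partition together with its extremality conditions as a limit of solutions to a penalized, unconstrained problem, in the spirit of the competition–diffusion approximation used for the case of first eigenvalues. First I would introduce, for a parameter $\beta>0$, a penalized functional on $(H^1_0(\Omega))^{Nm}$ (where we allow, for each $i$, a vector of $k_i-l_i+1$ functions, but a priori one should carry along all $k_i$ eigenfunctions and then discard the lower ones), roughly of the form
\[
J_\beta(\mathbf{u}) = \sum_{i} F\text{-type terms in the Rayleigh quotients} + \beta \int_\Omega \sum_{i\neq j} u_i^2 u_j^2\, dx,
\]
with an $L^2$-orthonormality constraint within each group $i$. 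Critical points $\mathbf{u}_\beta$ of $J_\beta$ solve a system of the form $-\Delta u^i_{j,\beta} = \lambda^i_\beta u^i_{j,\beta} - \beta u^i_{j,\beta}\sum_{k\neq i}(u^k_\beta)^2 + (\text{Lagrange terms})$. The coefficients $\tilde a^i_j$ in the statement will arise as limits of the (positive) partial derivatives of $F$ and/or of normalization factors coming from this scheme; this is where one uses hypotheses (F1)–(F2), (F1) guaranteeing positivity of the $\tilde a^i_j$ and (F2) providing the coercivity that keeps the eigenvalues bounded along the minimization.

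Next I would establish uniform-in-$\beta$ bounds: an $L^\infty$ bound via Brezis–Kato / De Giorgi iteration applied to the subsolution property of each $|u^i_{j,\beta}|$, and then, crucially, a uniform Lipschitz bound. For the latter I would invoke the Almgren/monotonicity-formula machinery adapted to eigenfunctions (the functions are eigenfunction-like, not harmonic, so the correct object is the \emph{perturbed} Almgren quotient, which is almost monotone up to exponential factors $e^{C r^2}$). Combined with a Federer-type dimension reduction and the a priori estimates available for the limiting segregated problem, this yields equi-Lipschitz continuity of the $u^i_{j,\beta}$ and hence of the limits $\tilde u^i_j$; this simultaneously gives the openness of $\widetilde\omega_i := \{x: \text{some } \tilde u^i_j(x)\neq 0\}$ and the fact that the $\tilde u^i_j$ are genuine eigenfunctions on $\widetilde\omega_i$ for a common eigenvalue $\lambda_{l_i}(\widetilde\omega_i)=\cdots=\lambda_{k_i}(\widetilde\omega_i)$ (the index $l_i$ appears because the limit of the $\beta$-eigenfunctions need only capture the top of a multiplicity cluster). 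I expect \textbf{this equi-Lipschitz estimate to be the main obstacle}: the absence of a domain-variation formula for degenerate eigenvalues forces one to work purely with the PDE system and the Almgren quotient, and to handle carefully the Lagrange-multiplier terms coming from the orthonormality constraints when passing to the limit.

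Finally, for the extremality identity \eqref{eq:extremality}, I would define the combined quantity $w_\beta := \sum_i \sum_j \tilde a^i_{j,\beta} (u^i_{j,\beta})^2$-type object and track the Pohozaev / Rellich–Nečas identity on balls $B_r(x_0)$. For each fixed $\beta$, multiplying the equation for $u^i_{j,\beta}$ by $x\cdot\nabla u^i_{j,\beta}$ and integrating on $B_r(x_0)$ gives, after summing against the weights $\tilde a^i_{j,\beta}$, an identity of the form
\[
\frac{d}{dr}\Big[\tfrac{1}{r^{N-2}}\!\int_{B_r}\!\sum_{i,j}\tilde a^i_{j,\beta}\big(|\nabla u^i_{j,\beta}|^2 - \lambda^i_\beta (u^i_{j,\beta})^2\big)\Big] = \tfrac{2}{r^{N-2}}\!\int_{\partial B_r}\!\sum_{i,j}\tilde a^i_{j,\beta}|\partial_n u^i_{j,\beta}|^2 - \tfrac{2}{r^{N-1}}\!\int_{B_r}\!\sum_{i,j}\tilde a^i_{j,\beta}\lambda^i_\beta (u^i_{j,\beta})^2 + R_\beta,
\]
where the remainder $R_\beta$ collects the competition terms $\beta(u^i_{j,\beta})^2\sum_{k\neq i}(u^k_\beta)^2$ and the Lagrange-multiplier cross terms. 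The key points are: (i) the competition part of $R_\beta$ carries a favorable sign (it is $\propto -\beta\int x\cdot\nabla(\text{product of squares})$, which upon integration by parts becomes a nonnegative boundary-like term plus a term that vanishes in the limit because $\beta u_i^2 u_j^2 \to 0$ in $L^1_{loc}$), and (ii) the Lagrange terms disappear in the limit because orthonormality forces the corresponding multipliers to localize away from the interface, or simply because the off-diagonal multipliers tend to zero. Passing $\beta\to\infty$, using the equi-Lipschitz convergence $u^i_{j,\beta}\to \tilde u^i_j$ in $C^{0,\gamma}_{loc}$ and weak-$H^1_{loc}$ convergence of gradients (strong on compact subsets away from the free boundary, which suffices since the free boundary is negligible), yields exactly \eqref{eq:extremality} with $E$ as defined. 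One should double-check that the boundary integral $\int_{\partial B_r}$ survives the limit for a.e.\ $r$ (it does, by Fubini and the uniform $H^1$ bound), and that $\lambda^i_\beta \to \lambda_{k_i}(\widetilde\omega_i)$.
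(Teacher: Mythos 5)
There is a genuine gap, and it is the central structural idea of the paper: the \emph{double} approximation. Your scheme penalizes only in $\beta$, with ``$F$-type terms in the Rayleigh quotients'', but it never explains how the cost $F(\lambda_{k_1}(\omega_1),\ldots,\lambda_{k_m}(\omega_m))$ --- which involves a single, possibly degenerate, eigenvalue per group --- is to be encoded as an energy of frames of functions. This is precisely the obstruction the theorem has to circumvent: no usable minimax characterization of the $k$-th eigenvalue for the $\beta$-system is available, and minimizing over $L^2$-orthonormal $k$-frames does not reproduce the individual eigenvalues (rotating a frame of eigenfunctions replaces the Rayleigh quotients by averages, so for instance the minimum over frames of $\max_j\|\nabla u_j\|^2$ is in general strictly below $\lambda_k$). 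The paper's way out is to first replace $\lambda_{k_i}$ by $\bigl(\sum_{j\leq k_i}\lambda_j(\omega_i)^p\bigr)^{1/p}$, i.e. the auxiliary problem \eqref{eq:OPPsecondary}: a symmetric, monotone function of \emph{all} eigenvalues up to order $k_i$ can be written as the constrained minimization \eqref{eq:c_infty_INTRO}, provided one also imposes $\int_\Omega\nabla u^i_k\cdot\nabla u^i_l\,dx=0$, and this extra constraint is handled by extending $\psi$ to an orthogonally invariant function of the Gram matrix (Lemmas \ref{lemma_changeofbasis} and \ref{lemma:derivative_matrix}). Only for this $p$-problem is the $\beta$-penalization applied, giving minimizers of \eqref{eq:c_beta_INTRO} solving \eqref{eq:equation_for_u_beta_v_beta}; the limit $\beta\to\infty$ is then followed by a second limit $p\to\infty$.

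As a consequence, your account of where the weights $\tilde a^i_j$ and the selection indices $l_i$ come from cannot work: $F$ has only one argument per group, so ``partial derivatives of $F$'' cannot produce $k_i-l_i+1$ distinct positive weights inside group $i$. In the paper the weights are $\partial\psi_{i,p}/\partial\xi_j$ evaluated at the Dirichlet norms, they satisfy the bounds \eqref{eq:weightsbounds}, and the index $l_i$ appears in the limit $p\to\infty$ (not $\beta\to\infty$): the weights attached to the lower eigenfunctions may vanish, only the top cluster survives, its eigenvalues merge into $\lambda_{l_i}=\ldots=\lambda_{k_i}$, and the extremality identity \eqref{eq:extremality} is exactly the limiting Pohozaev identity \eqref{eq:Pohozaev--type_at_the_limit_of_p} for the surviving components, as in Lemma \ref{fracddrE(x_0,(u,v),r)=}. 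Your treatment of the Pohozaev remainder for fixed approximation is fine in spirit (the competition term drops because $\beta\int(\sum_j u_j^2)^{q/2}(\sum_j v_j^2)^{q/2}\to0$ and the off-diagonal multipliers vanish, no sign argument needed), and the Almgren-based Lipschitz theory you invoke is indeed the tool used; but note also that the argument requires \emph{minimizers}, not general critical points of $J_\beta$ --- minimality is what yields the equations up to the free boundary, the identification of the limit level with $c_\infty$, and the positivity of one component --- and for non-minimizing critical points you would be thrown back onto the unknown minimax characterization you set out to avoid.
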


\begin{remark}
We stress that the step from the first to higher eigenvalues imposes a change of perspective. Indeed, the extremality condition \eqref{eq:extremality} involves two non trivial selection facts: at first, it may involve only  some of the $\lambda_{k_i}$--eigenfunctions, and secondly the weights $\tilde a_{n}^i$ are not given directly by the original problem and we only know  some partial information - see \eqref{eq:weightsbounds} ahead.  The selection of  the eigenfunctions and coefficients appears in a rather mysterious way as a consequence of our strategy, which consists in a double penalization/approximation of the problem.  The regularity of the eigenfunctions shows close similarities with the analogue result of \cite{BMPV2014}, in the context of shape optimization. 
\end{remark}

The extremality condition \eqref{eq:extremality} is a weak reflection law in the sense of \cite{TavaresTerracini1} and will play the same role here. Indeed, following the approach introduced by Caffarelli and Lin in their celebrated paper \cite{CaffarelliLinJAMS}, the main application of Theorem \ref{thm:extremality_result} concerns  the regularity of the optimal partition, or, more specifically, of the common boundaries of the elements of the partition itself (see also \cite{TavaresTerracini1}, where the case of non minimizing configuration is treated). Following \cite{CaffarelliLinJAMS}, we shall adopt the following definition of regular partition.

\begin{definition}\label{definition:regular_partition} An open partition $(\omega_1,\ldots,\omega_m)\in \Peh_m(\Omega)$ is called \emph{regular} if:
\begin{enumerate}
\item[1.] denoting $\Gamma=\Omega\setminus \bigcup_{i=1}^m\omega_i$, there holds\footnote{Here, $\Hh_\text{dim}(\cdot)$ denotes the Hausdorff dimension of a set.} $\Hh_\text{dim}(\Gamma)\leq N-1$; 
\item[2.] there exists a set $\Reh\subseteq \Gamma$, relatively open in $\Gamma$, such that
\begin{itemize}
\item[-] $\Hh_\text{dim} (\Gamma\setminus \Reh)\leq N-2$;
\item[-] $\Reh$ is a collection of  hypersurfaces of class $C^{1,\alpha}$ (for some $0<\alpha<1$), each one separating two different elements of the partition. 
\end{itemize}
\end{enumerate}
Moreover we ask that, if $N=2$, then actually $\Gamma\setminus \Reh$ is a locally finite set, and that $\Reh$ consists of a locally finite collection of curves meeting at singular points with equal angles.
\end{definition} 

\begin{remark}
In particular, observe that a regular partition exhausts the whole domain, as it satisfies $\overline \Omega=\cup_{i=1}^m \overline{\omega_i}$. 
\end{remark}

We will show the following.

\begin{theorem}\label{thm:first_main_result}
The optimal partition problem \eqref{eq:OPPmain_general_with_F} admits a regular solution $(\widetilde\omega_1,\ldots,\widetilde\omega_m)\in \Peh_m(\Omega)$.  Moreover, for each $i=1,\ldots, m$, there exist $1\leq l_i \leq k_i$ and eigenfunctions $\tilde u_{l_i}^i, \tilde u_{l_i+1}^i\ldots \tilde u_{k_i}^i$ associated to the eigenvalues $\lambda_{l_i}(\widetilde \omega_i)=\ldots=\lambda_{k_i}(\widetilde \omega_i)$, such that
\[
\tilde u^i_{l_i},\tilde u^i_{l_i+1},\ldots, \tilde u^i_{k_i} \qquad \text{ are $L^2$ orthonormal and Lipschitz continuous,}
\]
and
\[
\widetilde \omega_i=\interior\left(\, \overline{ \left\{\sum_{j=l_i}^{k_i}  (\tilde u^i_j)^2>0\right\} }\, \right).
\]
Finally, on the regular part of the common boundary, we have the following extremality condition: for each $i=1,\ldots, m$ there exist coefficients $\tilde a^i_{l_i},\ldots, \tilde a^i_{k_i}>0$ such that
\begin{quote}
given $x_0\in \Reh$, denoting by $\widetilde \omega_i$ and $\widetilde \omega_j$ the two adjacent sets of the partition at $x_0$,
\begin{equation}\label{eq:extremality_main}
\mathop{\lim_{x\to x_0}}_{x\in \widetilde \omega_i} \sum_{n=l_i}^{k_i} \tilde a_{n}^i |\nabla \tilde u^i_n(x)|^2=\mathop{\lim_{x\to x_0}}_{x\in \widetilde \omega_j} \sum_{n=l_j}^{k_j} \tilde a_n^j |\nabla \tilde u^j_n(x)|^2\neq 0.
\end{equation}
\end{quote} 
\end{theorem}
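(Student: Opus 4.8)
The plan is to combine the extremality condition from Theorem \ref{thm:extremality_result} with the blow-up and regularity machinery developed by Caffarelli and Lin \cite{CaffarelliLinJAMS} for the first-eigenvalue problem, adapted to the weighted combinations of higher eigenfunctions appearing in \eqref{eq:extremality}. First I would fix the solution $(\widetilde\omega_1,\ldots,\widetilde\omega_m)$, the selected indices $l_i$, the Lipschitz eigenfunctions $\tilde u^i_j$ and the weights $\tilde a^i_j$ produced by Theorem \ref{thm:extremality_result}, and introduce the auxiliary functions $v_i := \big(\sum_{j=l_i}^{k_i}\tilde a^i_j (\tilde u^i_j)^2\big)^{1/2}$ (or a comparable scalar quantity), which are Lipschitz, nonnegative, vanish on $\Gamma$, and whose positivity sets recover the $\widetilde\omega_i$ by taking interiors of closures. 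The monotonicity formula \eqref{eq:extremality} says precisely that $E(x_0,r)$ is, up to the lower-order $\lambda$-terms, monotone nondecreasing in $r$; multiplying through by a suitable exponential correction of the form $e^{Cr^2}$ kills the zero-order perturbation and yields a genuinely monotone Almgren-type quantity, so that $\Phi(x_0):=\lim_{r\to 0^+}E(x_0,r)$ exists and is finite for every $x_0\in\Gamma$.

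Next I would run the blow-up analysis at a free boundary point $x_0\in\Gamma$: set $u^i_{j,r}(x) := \big(r^{-N}\int_{B_r(x_0)}\sum_j \tilde a^i_j(\tilde u^i_j)^2\big)^{-1/2}\,\tilde u^i_j(x_0+rx)$ (normalizing the whole $i$-th block together), use the uniform Lipschitz bound to extract, along a subsequence $r_n\to0$, a limit profile $(U^i_j)$ on $\R^N$. The point of the exponentially-corrected monotonicity formula is the standard one: $\Phi(x_0)$ is preserved in the blow-up limit and the blow-up limit has constant Almgren frequency equal to $\Phi(x_0)$, hence each limiting block $V_i := \big(\sum_j \tilde a^i_j (U^i_j)^2\big)^{1/2}$ is a homogeneous function, of degree equal to its frequency; since the $\tilde u^i_j$ are Lipschitz the frequency is $\geq 1$, and since the supports of distinct blocks are disjoint with $V_i$ subharmonic where positive (a consequence of \eqref{eq:extremality}-type inequalities in two-component regions, exactly as in \cite{CaffarelliLinJAMS} and \cite{TavaresTerracini1}), a dimension/Liouville argument forces the frequency to be exactly $1$ on the set where two or more blocks meet. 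Then I would classify the $1$-homogeneous blow-up limits: when exactly two components are present near $x_0$, the pair $(V_i,V_j)$ blows up to $(\alpha x_n^+,\beta x_n^-)$ in suitable coordinates, which gives the nondegeneracy and the matching of the one-sided gradient limits, i.e.\ \eqref{eq:extremality_main}, while the locus where three or more components coexist, or where the frequency exceeds $1$, is shown to be small.

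For the regularity statement I would follow the dichotomy of \cite{CaffarelliLinJAMS}: define $\Reh$ as the set of $x_0\in\Gamma$ where the blow-up is (unique and) of two-plane type, show $\Reh$ is relatively open in $\Gamma$ and that near such points $\Gamma$ is the graph of a $C^{1,\alpha}$ function — this uses the $\varepsilon$-regularity/improvement-of-flatness argument, for which the weighted reflection law \eqref{ex:extremality_main} in the two-component regime plays the role of the transmission condition and the subharmonicity of $\sum_i v_i$ (after the exponential correction) gives the needed comparison with harmonic functions. The singular set $\Gamma\setminus\Reh$ is then stratified by an Almgren-type dimension reduction (Federer's argument applied to the frequency function), yielding $\Hh_{\text{dim}}(\Gamma\setminus\Reh)\leq N-2$, and $\Hh_{\text{dim}}(\Gamma)\leq N-1$ comes from the fact that $\sum_i v_i$ is a nonnegative Lipschitz subsolution vanishing on $\Gamma$ with a Weiss-type density bounded below. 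In dimension $N=2$ the classification of $1$-homogeneous limits is explicit (finitely many rays meeting at equal angles), which upgrades $\Gamma\setminus\Reh$ to a locally finite set and gives the equal-angle statement.

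The main obstacle I expect is that, unlike in the first-eigenvalue case, each $\widetilde\omega_i$ carries a \emph{block} of eigenfunctions $\tilde u^i_{l_i},\ldots,\tilde u^i_{k_i}$ with unknown weights $\tilde a^i_j$ rather than a single positive eigenfunction, so one cannot simply quote \cite{CaffarelliLinJAMS}: the correct scalar object to monitor is the weighted sum $v_i^2=\sum_j \tilde a^i_j(\tilde u^i_j)^2$, and one must check that (a) $v_i$ still vanishes continuously on $\Gamma$ and is nondegenerate there, (b) $\Delta(v_i)\geq -\,C v_i$ in the positivity set (so that after an exponential correction it is subharmonic), and crucially (c) the blow-up limit of the \emph{whole block} is $1$-homogeneous and the limiting gradient-square $\sum_j \tilde a^i_j|\nabla \tilde u^i_j|^2$ converges to a constant on each side of a regular boundary point — i.e.\ that \eqref{eq:extremality} really does pass to the limit and pin the density. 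Handling the possible internal degeneracy of the block (some of the $\tilde u^i_j$ may vanish to higher order at $x_0$ than others) without destroying the homogeneity of $v_i$ is the delicate point, and is precisely where the precise form of the monotonicity formula \eqref{eq:extremality} — which treats the block as a single weighted energy — is needed.
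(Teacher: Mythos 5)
Your route is genuinely different from the paper's: you take Theorem \ref{thm:extremality_result} as an input and propose to run the Caffarelli--Lin blow-up/regularity machinery directly on the limiting eigenfunctions, using \eqref{eq:extremality} as a weak reflection law. The paper instead proves Theorems \ref{thm:extremality_result} and \ref{thm:first_main_result} \emph{simultaneously}: all the regularity theory (Almgren monotonicity, blow-ups, Clean-Up, boundary Harnack, construction of the normal) is carried out in Section \ref{sec:Regularity_of_nodal_set} for the solutions of the $p$-approximating problems obtained as $\beta\to\infty$ limits, and Theorem \ref{thm:first_main_result} is then deduced in Section \ref{sec:limit_as_p_to_infty} by letting $p\to\infty$, where the identification of the limit partition as a minimizer of the \emph{original} problem requires a $\gamma$-convergence argument for quasi-open sets plus a capacity estimate on the singular set. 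Your approach, if completed, would be cleaner in that it bypasses the second limit; note however that several steps you need (e.g.\ that on each connected component of $\Omega\setminus\overline{\widetilde\omega_j}$ some component of the block is nonvanishing, Corollary \ref{coro:One_component_positive}) use the \emph{minimality} of the configuration and not only the reflection law, so you must invoke optimality explicitly and not just \eqref{eq:extremality}.

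The genuine gap is the passage from ``blow-up limits at points of $\Reh$ are two-plane solutions'' to the $C^{1,\alpha}$ regularity of $\Reh$ and to the existence of the actual limits in \eqref{eq:extremality_main}. The blow-up classification only gives convergence of rescalings along subsequences $t_n\to0$; it does not by itself yield uniqueness of the blow-up, differentiability of the free boundary, or the existence of $\lim_{x\to x_0}\sum_n\tilde a^i_n|\nabla\tilde u^i_n(x)|^2$. You flag the ``internal degeneracy of the block'' as the delicate point but offer no mechanism to resolve it, and an unspecified ``improvement-of-flatness argument'' is precisely the missing argument rather than a proof. The paper's mechanism is concrete: first a Clean-Up result (Proposition \ref{prop:cleanup}) shows neither block vanishes in a blow-up at a point of $\Reh$, so $\Gamma$ is locally Reifenberg-flat and the two adjacent positivity sets are NTA domains; then, since at least one component of each block is signed there, the boundary Harnack principle (Lemma \ref{prop:u_i/u_1_alpha_Holder_generalcase}) shows the \emph{direction} $U/|U|$ of the block extends H\"older-continuously up to $\Gamma$ --- this is exactly what controls components vanishing at different rates; finally the normal $\nu(x_0)$ is built by an iterated harmonic-replacement scheme and $|U|-|V|$ is shown to be differentiable with nonzero gradient (Theorem \ref{thm:Regularity!}), which is what produces both the $C^{1,\alpha}$ hypersurface and the two-sided matching \eqref{eq:extremality_main}. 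Without some substitute for the boundary Harnack step your proof cannot even define the one-sided limits in \eqref{eq:extremality_main}, let alone show they agree. A secondary, repairable omission: to close the Almgren quotient you need not only \eqref{eq:extremality} but also the identity expressing $E(x_0,r)$ as a boundary integral of $\sum\tilde a^i_j\tilde u^i_j\,\partial_n\tilde u^i_j$, i.e.\ that the measures $-\Delta\tilde u^i_j-\lambda\tilde u^i_j$ concentrated on $\Gamma$ do not contribute when tested against $\tilde u^i_j$ itself; this follows from the Lipschitz continuity but must be stated.
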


\begin{remark}
Both in this theorem and in the intermediate results of Section \ref{sec:intermediate}, the regularity of the free boundary can be extended up to the boundary $\partial \Omega$ under appropriate regularity assumptions on the set $\Omega$. We refer the reader to \cite[Section 7]{TavaresTerracini1} for the details.
\end{remark}

Theorem \ref{thm:first_main_result} extends the regularity result of \cite{CaffarelliLin2} to the case of higher eigenvalues and more general cost functions. As already pointed out, the case of the first eigenvalues,  $k_1=\ldots=k_m=1$ is much simpler, as it can be characterized by an absolute minimization of an energy functional in a singular space, namely
\begin{equation}\label{eq:equivalent_characterization_of_sum_lambda1}
\inf \left\{\sum_{i=1}^m \int_\Omega |\nabla u_i|^2\, dx:\ u_i\in H^1_0(\Omega),\ \int_\Omega u_i^2\, dx=1,\ u_i\cdot u_j\equiv 0 \ \forall i\neq j\right\}.
\end{equation}

For this case, the first partial regularity results track back to  \cite{ContiTerraciniVerziniOPP}, while the general theorem was then established in \cite{CaffarelliLin2}, using techniques that were previously developed in \cite{CaffarelliLinJAMS} (where the authors deal with a minimization of a free energy in a singular space).  We cite also \cite{TavaresTerracini1} for the extension from minimizing to critical configurations was performed.  Although we will follow the same general strategy of \cite{CaffarelliLinJAMS},  
 (use Almgren's monotonicity formulas, Hausdorff Reduction Principle, or Boundary Harnack Principles in NTA domains), we stress that the case of higher eigenvalues differs from that of the first one (or the minimizers of the free energy) and requires the elaboration of different ideas and strategies. We finally refer to the papers \cite{ContiTerraciniVerziniNehari,ContiTerraciniVerziniOptimalNonlinear}, where related optimal partition problems involving sums of first nonlinear eigenvalues are considered.

\section{Strategy of the proofs and intermediate results}\label{sec:intermediate}

To simplify the presentation, we deal from now on with $F(x_1,\ldots,x_m)=\sum_{i=1}^m x_i$, hence with \eqref{eq:OPPmain}, which already presents all the main features of the problem. The general case follows exactly in the same way, as the proofs only use properties (F1) and (F2).

Another natural approach to the problem is to consider limiting profiles of solutions to the singularly perturbed problem 
\begin{equation}\label{eq:BEC}
-\Delta u_i=\lambda_i u_i -\beta \mathop{\sum_{j=1}^m}_{j\neq i} u_iu_j^2,\quad u_i\in H^1_0(\Omega), \qquad i=1,\ldots, m,
\end{equation}
under the constraints
\[
\int_\Omega u_i^2\, dx=1,\qquad i=1,\ldots, m.
\]
These systems have been the object of an intensive study in the last decade, in particular in the case of competitive interaction $\beta>0$, mainly because of its interesting mathematical features, as well as for its physical applications (for instance in the study of Bose-Einstein condensation). Their relation with optimal partition problems has also been addressed, for instance in \cite{BTWW,CaffarelliLin2,CLLL,ContiTerraciniVerziniOPP,HHT,TavaresTerracini2}. It has been shown in \cite{CaffarelliLinJAMS,NTTV1, TavaresTerracini2} that, in some situations, phase separation occurs between different components as the competition parameter increases, i.e., $\beta\to \infty$. In particular it is shown in \cite{NTTV1, TavaresTerracini2} that, by taking an $L^\infty$ bounded family of solutions $(u_\beta)_\beta$, and corresponding bounded coefficients $(\lambda_{i\beta})_\beta$, then there exists a limiting profile $u_i:=\lim_{\beta\to +\infty} u_{i\beta}$ such that $(\{u_1\neq 0\},\ldots, \{u_m\neq 0\})\in \Peh_m(\Omega)$, and
\[
-\Delta \tilde u_i=\lambda_i \tilde u_i \qquad \text{ in } \{u_i\neq 0\}.
\]
This clearly illustrates the relation between optimal partitions involving eigenvalues and the system of Schr\"odinger equations \eqref{eq:BEC}. In particular, it is known that \eqref{eq:equivalent_characterization_of_sum_lambda1} can be well approximated (as $\beta\to \infty$) by the ground state (least energy) levels of \eqref{eq:BEC}, namely:
\[
\inf \left\{\int_\Omega \left(\sum_{i=1}^m |\nabla u_i|^2 + \beta \mathop{\sum_{i,j=1}}_{i<j}^m u_i^2 u_j^2\right)\, dx : \ u_i\in H^1_0(\Omega) \text{ and } \int_\Omega u_i^2\, dx=1 \right\}.
\]
Thus, using this approach and the results in \cite{TavaresTerracini1}, one proves once again the existence of a regular partition to the problem of summing first eigenvalues. However, passing to higher eigenvalues is not an easy task, as one needs to construct suitable minimax characterizations at higher energy levels of \eqref{eq:BEC}. Indeed in \cite{TavaresTerracini2}, by using a new notion of vector genus, several sign changing solutions are build for \eqref{eq:BEC}, and by taking the least energy sign-changing solution among these, one approaches the second eigenfunctions associated to the optimal partition problem $\inf_{\Peh_m(\Omega)} \lambda_2(\omega_i)$. By collecting the previous results, one then can actually solve \eqref{eq:OPPmain} for $1\leq k_i\leq 2$. In order to solve the general problem with higher eigenvalues, however, it does not seem completely clear to us which variational characterization for solutions of \eqref{eq:BEC} one could take (see \cite[Subsection 4.1]{TavaresTerracini2} for a conjecture). Thus here we decided to follow a different strategy relying on a double approximation procedure, which we will describe next. The relevant and surprising fact is that instead of taking minimax levels for a certain energy functional, we are able to approximate the problem \eqref{eq:OPPmain} for every $k_i\in \N$ through a symmetric constrained energy minimization.

\bigskip

In order to cope with the problem of not knowing a priori what is the multiplicity of each set of the optimal partition, our motivation was to try to find approximate solutions of \eqref{eq:OPPmain} through the minimization process of a certain energy functional. Partially inspired by \cite{HHT} (where a different problem is treated), we let $p\in \N$ and consider the problem
\begin{equation}\label{eq:OPPsecondary}
\inf_{(\omega_1,\ldots,\omega_m)\in \Peh_m(\Omega)} \sum_{i=1}^m \Bigl(\sum_{j=1}^{k_i} (\lambda_j(\omega_i))^p\Bigr)^{1/p}.
\end{equation}
Observe that \eqref{eq:OPPsecondary} is a good approximation for \eqref{eq:OPPmain} for large $p$, as,  given $k\in \N$ and any positive real numbers $a_1,\ldots, a_k$, there holds $(a_1^p+\ldots +a_k^p)^{1/p}\to \max\{a_1,\ldots, a_k\}$ as $p\to \infty$. Thus, for any given partition $(\omega_1,\ldots, \omega_m)\in \Peh_m(\Omega)$, 
$$
\sum_{i=1}^m \Bigl(\sum_{j=1}^{k_i} \lambda_j(\omega_i)^p\Bigr)^{1/p} \to \sum_{i=1}^m \lambda_{k_i}(\omega_i)\qquad \text{ as }p\to +\infty.
$$
We will prove that an optimal solution $(\omega_{1p},\ldots, \omega_{mp})$ of \eqref{eq:OPPsecondary} exists and approaches, as $p\to \infty$, a solution of our original problem \eqref{eq:OPPmain}. More precisely, our second main result is the following.

\begin{theorem}\label{thm:2ndmain}
Given $p\in \N$, the optimal partition problem \eqref{eq:OPPsecondary}  admits a regular solution $(\omega_{1p},\ldots,\omega_{mp})\in \Peh_m(\Omega)$. 

Moreover, under the previous notations, for each $i=1,\ldots, m$ there exist eigenfunctions $u_{1p}^i,u_{2p}^i,\ldots, u_{k_i p}^i$ associated to the eigenvalues $\lambda_{1}(\omega_{ip}),\lambda_{2}(\omega_{ip}),\ldots, \lambda_{k_i}(\omega_{k_i p})$, such that

\[
u^i_{1p}, u_{2p}^i,\ldots, u^i_{k_ip} \qquad \text{ are $L^2$ orthonormal and Lipschitz continuous}
\]
and
\[
\omega_{ip}= \left\{\sum_{j=1}^{k_i}  (u^i_{jp})^2>0\right\}=\interior\left(\, \overline{ \left\{\sum_{j=1}^{k_i}  (u^i_{jp})^2>0\right\} }\, \right).
\]
On the regular part of the common boundary, we have the following extremality condition: for each $i=1,\ldots, m$ there exist coefficients $a^i_{1p},\ldots, a^i_{k_ip}>0$ such that
\begin{quote}
given $x_0\in \Reh$, denoting by $\omega_{ip}$ and $\omega_{jp}$ the two adjacent sets of the partition at $x_0$,
\[
\mathop{\lim_{x\to x_0}}_{x\in \omega_{ip}} \sum_{n=1}^{k_i} a_{np}^i |\nabla u^i_{np}(x)|^2=\mathop{\lim_{x\to x_0}}_{x\in \omega_{jp}} \sum_{n=1}^{k_j} a_{np}^j |\nabla u^j_{np}(x)|^2\neq 0
\]
\end{quote} 
Finally, as $p\to +\infty$,
\[
a_{np}^i \to 0\ \forall n=1,\ldots, l_i-1,\qquad a_{np}^i\to \tilde a_n^i \ \forall n=l_i,\ldots, k_i,
\]
and we have strong convergence of the last $k_i-l_i+1$ eigenfunctions:
\[
u^i_{np}\to \tilde u^i_n \text{ strongly in } H^1_0(\Omega)\cap C^{0,\gamma}(\overline \Omega), \qquad \forall 0<\gamma<1,\  \forall n=l_i,\ldots, k_i
\]
(where $\tilde a_n^i$ and $\tilde u^i_n$ are the coefficients and functions appearing in Theorem \ref{thm:first_main_result}).
\end{theorem}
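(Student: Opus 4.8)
The plan is to solve \eqref{eq:OPPsecondary} at a fixed $p$ first — thereby producing the regular partition, the Lipschitz eigenfunctions, the weights $a^i_{np}$ and the reflection law — and then to let $p\to+\infty$. The algebraic fact I would use is that, for every open $\o\subseteq\O$ and every $k\in\N$,
\[
\sum_{j=1}^k\l_j(\o)^p=\min\Bigl\{\mathrm{tr}\bigl(G(\mathbf v)^p\bigr):\ \mathbf v=(v_1,\dots,v_k),\ v_j\in H^1_0(\o),\ \textstyle\int_\o v_iv_j=\d_{ij}\Bigr\},\quad G(\mathbf v)_{jl}:=\int_\O\nabla v_j\cdot\nabla v_l,
\]
since $G(\mathbf v)$ is the matrix of $-\D$ restricted to $\mathrm{span}(v_1,\dots,v_k)$, its eigenvalues dominate $\l_1(\o),\dots,\l_k(\o)$ by min--max, and equality is attained exactly at $L^2$--orthonormal bases of the span of the first $k$ eigenfunctions of $\o$. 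Hence \eqref{eq:OPPsecondary} is equivalent to the symmetric constrained energy minimization
\[
\inf\Bigl\{\sum_{i=1}^m\bigl(\mathrm{tr}\,G(\mathbf u^i)^p\bigr)^{1/p}:\ u^i_j\in H^1_0(\O),\ \textstyle\int_\O u^i_ju^i_l=\d_{jl},\ u^i_ju^{i'}_{j'}\equiv0\ (i\neq i')\Bigr\}.
\]
I would solve this by the direct method: the functional is coercive ($\mathrm{tr}\,G^p\geq k_i^{1-p}(\mathrm{tr}\,G)^p$ with $\mathrm{tr}\,G(\mathbf u^i)=\sum_j\|\nabla u^i_j\|_{L^2(\O)}^2$, and Poincar\'e) and weakly lower semicontinuous (along a weakly convergent sequence $G(\mathbf u^i_n)=G(\mathbf u^i)+H_n+o(1)$ with $H_n\succeq0$, and $X\mapsto\mathrm{tr}\,X^p$ is monotone for the Loewner order), while $L^2$--orthonormality and the segregation survive by Rellich. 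Diagonalizing $G(\mathbf u^i)$ inside each block selects, among the minimizers, the eigenfunctions $u^i_{jp}$ for $\l_1(\o_{ip}),\dots,\l_{k_i}(\o_{ip})$, where $\o_{ip}:=\{\sum_j(u^i_{jp})^2>0\}$.

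To derive the extremality condition I would use inner variations, which \emph{are} admissible here since diffeomorphisms preserve the disjointness of supports: for $x_0\in\O$, $r<\dist(x_0,\partial\O)$ and a vector field $Y$ with compact support in $B_r(x_0)$, deform $\mathbf u_p$ along $\mathrm{id}+tY$ and re--orthonormalize inside each block; requiring that $t=0$ be critical and carrying out the chain rule — the $(\cdot)^{1/p}$ and $\mathrm{tr}((\cdot)^p)$ differentiations generating the weights $a^i_{jp}\propto\l_j(\o_{ip})^{p-1}\bigl(\sum_l\l_l(\o_{ip})^p\bigr)^{1/p-1}>0$, and the re--orthonormalization generating, through the Lagrange multipliers, the zero--order terms $\l_j(\o_{ip})(u^i_{jp})^2$ — yields the Rellich--Pohozaev/Almgren identity for the weighted quantity $\sum_ja^i_{jp}\bigl(|\nabla u^i_{jp}|^2-\l_j(\o_{ip})(u^i_{jp})^2\bigr)$. (This step may equivalently be run after a Gross--Pitaevskii penalization $+\b\int_\O\sum_{i\neq i'}(\sum_j(u^i_j)^2)(\sum_{j'}(u^{i'}_{j'})^2)$ of the segregation and a passage to the competition limit $\b\to+\infty$, as in \cite{NTTV1,TavaresTerracini1,TavaresTerracini2}.) From here I would follow \cite{CaffarelliLinJAMS,CaffarelliLin2,TavaresTerracini1}: the Almgren formula gives Lipschitz continuity of the $u^i_{jp}$ and the classification of blow--ups, and the Hausdorff reduction principle together with the boundary Harnack principle in NTA domains (cf. Lemma \ref{prop:u_i/u_1_alpha_Holder_generalcase}) then produce the regular partition of Definition \ref{definition:regular_partition}, the identity $\o_{ip}=\interior(\overline{\{\sum_j(u^i_{jp})^2>0\}})$, and the reflection law \eqref{eq:extremality_main}.

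To pass to the limit $p\to+\infty$, testing \eqref{eq:OPPsecondary} against a fixed admissible partition bounds $(\sum_j\l_j(\o_{ip})^p)^{1/p}$, hence every $\l_j(\o_{ip})$ and every $\|\nabla u^i_{jp}\|_{L^2(\O)}$, uniformly in $p$; since the estimates of \cite{NTTV1,TavaresTerracini2} depend only on such bounds they are $p$--uniform, so along a subsequence $u^i_{jp}\to u^i_{j\infty}$ in $C^{0,\g}(\overline\O)$ and weakly in $H^1_0(\O)$, with $\l_j(\o_{ip})\to\l_j^\infty$ and $-\D u^i_{j\infty}=\l_j^\infty u^i_{j\infty}$ on $\widetilde\o_i$. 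Put $M_i:=\l_{k_i}^\infty$ and let $l_i$ be the least index with $\l_{l_i}^\infty=M_i$; the explicit formula for $a^i_{jp}$ gives $a^i_{jp}\to0$ for $j<l_i$ and $a^i_{jp}\to\tilde a^i_j>0$ for $j\geq l_i$. The chain $\inf\eqref{eq:OPPmain}\leq\sum_i\l_{k_i}(\widetilde\o_i)\leq\sum_iM_i=\lim_p\sum_i(\sum_j\l_j(\o_{ip})^p)^{1/p}\leq\inf\eqref{eq:OPPmain}$ — the middle equality from $(\sum_ja_j^p)^{1/p}\to\max_ja_j$, the last inequality because a near--optimal partition for \eqref{eq:OPPmain} is asymptotically admissible in \eqref{eq:OPPsecondary} — is then forced to be a chain of equalities, which yields that $(\widetilde\o_1,\dots,\widetilde\o_m)$ solves \eqref{eq:OPPmain}, that $M_i=\l_{l_i}(\widetilde\o_i)=\dots=\l_{k_i}(\widetilde\o_i)$, and (since $\|\nabla u^i_{jp}\|_{L^2(\O)}^2=\l_j(\o_{ip})\to M_i=\|\nabla\tilde u^i_j\|_{L^2(\O)}^2$ for $j\geq l_i$) that the surviving eigenfunctions converge strongly in $H^1_0(\O)$, with $\widetilde\o_i=\interior(\overline{\{\sum_{j\geq l_i}(\tilde u^i_j)^2>0\}})$. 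Passing to the limit in the weighted identities of the $p$--level then gives \eqref{eq:extremality} and \eqref{eq:extremality_main}, with $\tilde a^i_n$, $\tilde u^i_n$ coinciding with the objects of Theorem \ref{thm:first_main_result}.

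Two points carry the weight. At fixed $p$ it is the \emph{weighted} Almgren monotonicity formula and the Lipschitz/blow--up analysis built upon it: it is precisely the $p$--th power structure — which forces the optimal block to be a genuine eigenbasis and produces the strictly positive weights $a^i_{jp}$ — that makes such an identity available, in contrast with the degenerate energy $\sum_j\|\nabla u^i_j\|^2$, for which no analogue of \eqref{eq:CTV} holds. In the limit $p\to+\infty$ it is the \emph{selection phenomenon}: one must show that the eigenfunctions attached to the non--maximal eigenvalues $\l_j(\o_{ip})$, $j<l_i$ — which persist in the limit as nonzero eigenfunctions of $\widetilde\o_i$ and may blow up along the free boundary — carry asymptotically vanishing weight and therefore neither enter the limiting reflection law nor obstruct the identification of $\widetilde\o_i$; quantitatively this rests on the rate $a^i_{jp}/a^i_{k_ip}\sim(\l_j(\o_{ip})/M_i)^{p-1}$ and on the a priori estimates \eqref{eq:weightsbounds}.
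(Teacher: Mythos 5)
Your reformulation of \eqref{eq:OPPsecondary} through the Gram matrix $G(\mathbf u^i)$ and the orthogonally invariant function $(\mathrm{tr}\,M^p)^{1/p}$ is exactly the device the paper uses (Definition \ref{def:symmetric_function}, Lemma \ref{lemma_changeofbasis}, Corollary \ref{coro:characterization_of_c_infty}), and the direct method does produce a segregated minimizer. The gap at fixed $p$ is what comes next: such a minimizer has a priori no continuity, its supports are not known to be open, and the inner-variation/Pohozaev identity alone does not launch the Almgren--Caffarelli--Lin machinery. Besides the Pohozaev identity, the paper needs the interior equations, the identities $\int_{B_r}|\nabla u_i|^2=\int_{\partial B_r}u_i\partial_n u_i+\mu_i\int_{B_r}u_i^2$, the equations $-a_i\Delta u_i=\mu_i u_i-\mathscr{M}_i$ with measures concentrated on the nodal set (Proposition \ref{eq:propositionmeasures}), and above all the initial $C^{0,\alpha}$ regularity; all of these are extracted from the $\beta$-penalized system and the uniform-in-$\beta$ H\"older estimates of Theorem \ref{thm:convergences}(i), which is where most of the fixed-$p$ work lies. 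So your parenthetical ``may equivalently be run after a Gross--Pitaevskii penalization'' is not an optional variant but the load-bearing step; as written, the primary route (direct method + inner variations, then ``follow Caffarelli--Lin'') has no source of initial regularity for the segregated minimizer.

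For the limit $p\to\infty$ there are two genuine problems. First, the claimed $p$-uniform $C^{0,\gamma}$ bounds for \emph{all} $k_i$ components are unjustified: since the weights $a^i_{jp}$ of the low components vanish, those components drop out of the limiting Pohozaev/Almgren identities, and the blow-up/Liouville argument of Section \ref{sec:strong_convergence_for_some_i} only yields uniform bounds and strong convergence for the block $j\ge l_i$ --- which is also all the theorem asserts. Second, and more seriously, the middle inequality $\sum_i\lambda_{k_i}(\widetilde\omega_i)\le\sum_i M_i$ in your chain needs $k_i$ admissible test functions in $H^1_0(\widetilde\omega_i)$, i.e.\ it needs the weak limits of the low-index eigenfunctions to vanish quasi-everywhere outside $\widetilde\omega_i$. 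The paper explicitly flags that this ``is hard to check directly'' and instead passes through the weak $\gamma$-limits $\omega_i$ of $\omega_{ip}$ (quasi-open sets), the inclusion $\omega_i\subseteq\widetilde\omega_i\cup(\Seh_{(\tilde u,\tilde v)}\cap\widetilde\Gamma)$, the lower semicontinuity of $\lambda_{k}$ under weak $\gamma$-convergence, and the fact that the singular set has finite $\mathscr{H}^{N-2}$-measure, hence zero capacity, via a stratification theorem (Claims 1--2 in Subsection \ref{subsec:first_subsec_of_Chapter4}); note that Hausdorff dimension $\le N-2$ alone would not give zero capacity. Nothing in your proposal replaces this step. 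Finally, a bookkeeping flaw: defining $l_i$ through the limits $\lambda_j^\infty$ and deducing from the explicit formula that $a^i_{jp}\to\tilde a^i_j>0$ for all $j\ge l_i$ is not valid --- e.g.\ $\lambda_j(\omega_{ip})=M_i(1-p^{-1/2})$ gives $\lambda_j^\infty=M_i$ yet $(\lambda_j(\omega_{ip})/\lambda_{k_i}(\omega_{ip}))^{p-1}\to0$ --- which is why the paper defines $l_i$ directly by which weights vanish along a subsequence, using only \eqref{eq:weightsbounds}.
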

 
 The existence and regularity results of the previous theorem will come out as a particular case of a more general result, for optimal partition problems that depend on all eigenvalues up to a certain order.
More precisely, let $k\in \N$ and a function $\psi:(\R^+_0)^k \to \R$ that satisfies (F1), (F2), and is invariant under permutations of the variables, that is:
\begin{itemize}
\item[-]$\frac{\partial \psi}{\partial \xi_i}(\xi_1,\ldots, \xi_k)>0$ for every $i$, whenever $\xi_1,\ldots, \xi_k>0$;
\item[-] Given $i$, and for every fixed $\bar \xi_j>0$, $j\neq i$, we have 
\[
\psi(\bar \xi_1,\ldots, \bar \xi_{i-1}, \xi_i, \bar \xi_{i+1}\ldots, \bar \xi_{m})\to +\infty \qquad \text{ as } \xi_i\to +\infty.
\]
\item[-] $\psi(\sigma(\xi_1,\ldots, \xi_k))=\psi (\xi_1,\ldots, \xi_k)$, for every permutation $\sigma\in S_k$.
\end{itemize}
Now, given $m$ functions $\psi_1,\ldots, \psi_m$ satisfying these assumptions (eventually for different $k_1,\ldots, k_m$), we will consider the following class of optimal partition problems
\begin{equation}\label{eq:general_OPP_with_psi}
\inf_{(\omega_1,\ldots,\omega_m)\in \Peh_m(\Omega)} \sum_{i=1}^m \psi_i( \lambda_1(\omega_i),\ldots,\lambda_{k_i}( \omega_i)),
\end{equation}
As examples we have \eqref{eq:OPPsecondary}, the optimization with cost function $\sum_{i=1}^{m} \prod_{j=1}^{k_i} \lambda_j(\omega_i)$, or any combination of the two, among many other.

We will prove that \eqref{eq:general_OPP_with_psi} can be reformulated as a constrained minimization problem of an energy functional:
\begin{equation}\label{eq:c_infty_INTRO}
\inf \left\{ \sum_{i=1}^m \psi_i\left(\int_\Omega |\nabla u_1^i|^2\,dx,\ldots, \int_\Omega |\nabla u^i_{k_i}|^2\, dx\right):
\begin{array}{c}
u^i\in H^1_0(\Omega;\R^{k_i}),\ u^i_k \cdot u^j_l\equiv 0\ \forall k,l, i\neq j\\[6pt]
\displaystyle \text{and, for every $i$, } \int_\Omega u^i_k u^i_l\, dx=\delta_{kl}\ \forall k,l\\[6pt]
\displaystyle  \int_\Omega \nabla u^i_k \cdot \nabla u^i_l\, dx=0\ \forall k\neq l
\end{array}
\right\}
\end{equation}
and that this, in turn, can be approximated by a solution to a competitive system of $k_1+\ldots+ k_m$ equations, where competition occurs between groups of components. More precisely, let the penalized energy functional $E_\beta:\R^{k_1}\times\ldots \times \R^{k_m}\to \R$ defined as
\begin{align*}
E_\beta(u^1,\ldots,u^m)&= \sum_{i=1}^m \psi_i\left(\int_\Omega |\nabla u_1^i|^2\,dx,\ldots, \int_\Omega |\nabla u^i_{k_i}|^2\, dx\right)\\
					&+\mathop{\sum_{i,j=1}^m}_{i<j} \frac{2\beta}{q}\int_\Omega \left((u_1^i)^2+\ldots+ (u_{k_i}^i)^2 \right)^\frac{q}{2}\left((u_1^j)^2+\ldots+ (u_{k_j}^j)^2\right)^\frac{q}{2}\, dx
\end{align*}
where $2<2q<2^\ast$, and consider the energy level
\begin{equation}\label{eq:c_beta_INTRO}
c_\beta:=\inf \left\{ E_\beta(u^1,\ldots, u^m) :\
\begin{array}{c}
u^i=(u^i_1,\ldots, u^i_{k_i})\in H^1_0(\Omega;\R^{k_i}) \\[6pt]
\displaystyle \text{and, for every $i$, } \int_\Omega u^i_k u^i_l\, dx=\delta_{kl}\ \forall k,l,\\[6pt]
\displaystyle  \int_\Omega \nabla u^i_k \cdot \nabla u^i_l\, dx=0\ \forall k\neq l
\end{array}
\right\}.
\end{equation}

Our third and last main result reads as follows.

\begin{theorem}\label{thm:3rdmain}
Under the previous notations, the following holds.
\begin{enumerate}
\item[(i)] Given $\beta>0$ there exists $(u^1_\beta,\ldots, u_\beta^m)$ achieving $c_\beta$. Moreover, for some $\mu_{jl,\beta}^i>0$, $i=1,\ldots, m$, $j,l=1,\ldots, k_i$, it is a solution of the system
\begin{equation}\label{eq:system_system}
-a^i_{l,\beta}\Delta u^i_{l}=\sum_{j=1}^{k_i} \mu_{jl,\beta}^i u^i_{l}-\beta u^i_{l}\Bigl((u^i_{1})^2+\ldots+(u^i_{k_i})^2\Bigr)^{\frac{q}{2}-1}\mathop{\sum_{j\neq i}^m}_{j=1} \Bigl( (u^j_{1})^2+\ldots+(u^j_{k_j})^2\Bigr)^\frac{q}{2},
\end{equation}
with 
\[
a^i_{l,\beta}=\frac{\partial \psi_i}{\partial \xi_l}\left(\int_\Omega |\nabla u_{1,\beta}^i|^2\,dx,\ldots, \int_\Omega |\nabla u^i_{k_i,\beta}|^2\, dx\right)>0.
\]
\item[(ii)] There exists $(\bar u^1,\ldots, \bar u^m)$, with Lipschitz continuous components, such that
\[
u^i_\beta\to \bar u^i \qquad \text{ strongly in } H^1_0\cap C^{0,\gamma}(\overline \Omega; \R^{k_i})\ \forall 0<\gamma<1,\quad \text{ as } \beta\to \infty;
\]
and
\begin{equation}\label{eq:Eigenfunctions}
-a^i_l\Delta \bar u^i_l=\mu^i_l \bar u^i_l \qquad \text{ in } \left\{\sum_{j=1}^{k_i}  (\bar u_j^i)^2>0\right\}=\interior \left(\, \overline{\left\{\sum_{j=1}^{k_i}  (\bar u_j^i)^2>0\right\}}\, \right).
\end{equation}
for $a^i_l=\lim_{\beta\to\infty} a^i_{l,\beta}>0$ and $\mu_l^i=\lim_{\beta\to \infty} \mu_{ll,\beta}^i>0$.
\item[(iii)] The vector $(\bar u^1,\ldots, \bar u^m)$ achieves \eqref{eq:c_infty_INTRO}, and such level is the limit of $c_\beta$ as $\beta\to \infty$;
\item[(iv)] The partition 
\[
(\bar \omega_1,\ldots, \bar \omega_m):=\left(\left\{\sum_{j=1}^{k_1} (\bar u_j^1)^2>0 \right\},\ldots,\left\{ \sum_{j=1}^{k_m} (\bar u_j^m)^2>0\right\}    \right)\in\Peh_m(\Omega)
\]
is regular, and solves the optimal partition problem \eqref{eq:OPPsecondary}.
\item[(v)] For $a^i_l=\lim_{\beta\to \infty} a^i_{l,\beta}$, we have that, whenever $x_0$ belongs to the regular part of the common nodal set of $(\bar u^1,\ldots, \bar u^m)$, if $\bar \omega_i, \bar \omega_j$ are the corresponding adjacent sets, then
\begin{equation}\label{eq:extremality_for_general_problem}
\mathop{\lim_{x\to x_0}}_{x\in \bar \omega_i} \sum_{n=1}^{k_i} a_{n}^i |\nabla u^i_n(x)|^2=\mathop{\lim_{x\to x_0}}_{x\in \bar \omega_j} \sum_{n=1}^{k_j} a_n^j |\nabla u^j_n(x)|^2\neq 0
\end{equation}
\end{enumerate}
\end{theorem}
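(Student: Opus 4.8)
The plan is to establish the five assertions essentially in the order stated — parts (i)--(iii) by variational arguments, parts (iv)--(v) by the free-boundary analysis of \cite{CaffarelliLinJAMS,CaffarelliLin2,TavaresTerracini1} — the single genuinely hard point being a uniform-in-$\beta$ Lipschitz estimate for the system \eqref{eq:system_system}. For \textbf{(i)} I would first minimize the \emph{relaxed} problem obtained from \eqref{eq:c_beta_INTRO} by dropping the constraints $\int_\Omega\nabla u^i_k\cdot\nabla u^i_l=0$ and keeping only $\int_\Omega u^i_ku^i_l=\delta_{kl}$. On this set the direct method applies cleanly: coercivity follows from the normalization $\int_\Omega(u^i_l)^2=1$, which forces $\int_\Omega|\nabla u^i_l|^2\geq\lambda_1(\Omega)>0$, together with (F1)--(F2), so $E_\beta$ controls every Dirichlet integral; the penalization term is weakly continuous since $2q<2^*$ makes $H^1_0(\Omega)\hookrightarrow L^{2q}(\Omega)$ compact; the $L^2$-constraints are weakly closed by Rellich; and $\psi_i$ monotone and continuous makes $u\mapsto\psi_i(\text{Dirichlet integrals})$ weakly lower semicontinuous. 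Given a minimizer $v$ of the relaxed problem, stationarity of $E_\beta$ under infinitesimal rotations of each block $v^i$ shows that the off-diagonal Gram entries $\int_\Omega\nabla v^i_k\cdot\nabla v^i_l$ already vanish whenever $\partial\psi_i/\partial\xi_k\neq\partial\psi_i/\partial\xi_l$; on the remaining coordinate blocks, where those derivatives coincide and $\psi_i$ is therefore locally symmetric, one may further rotate $v^i$ to diagonalize the Gram matrix without changing $E_\beta$ (the penalization depends only on $(v^i_1)^2+\dots+(v^i_{k_i})^2$). After this reduction $v$ satisfies all the constraints of \eqref{eq:c_beta_INTRO}, hence achieves $c_\beta$, and its Euler--Lagrange system carries only the symmetric $L^2$-multiplier matrices $(\mu^i_{jl,\beta})$, which is \eqref{eq:system_system}; positivity of $a^i_{l,\beta}$ is (F1), and positivity of the diagonal multipliers $\mu^i_{ll,\beta}$ follows by testing the $l$-th equation against $u^i_l$ and using the orthonormality and the sign of the penalization.

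For \textbf{(ii)}--\textbf{(iii)}, testing $c_\beta$ against a fixed \emph{segregated} frame (blocks with pairwise disjoint supports, admissible for \eqref{eq:c_infty_INTRO} hence for \eqref{eq:c_beta_INTRO}, with vanishing penalization) bounds $c_\beta$ uniformly in $\beta$; by (F1)--(F2) this gives $\|u^i_\beta\|_{H^1_0}\leq C$, a uniform bound on the penalization integrals, and confines $a^i_{l,\beta}$ and $\mu^i_{ll,\beta}$ to a fixed compact subset of $(0,\infty)$. The crux is the \emph{uniform-in-$\beta$} Lipschitz bound and the strong $H^1_{loc}\cap C^{0,\gamma}_{loc}$ convergence for solutions of \eqref{eq:system_system}, which I would obtain by adapting the blow-up analysis and the Almgren/Alt--Caffarelli--Friedman monotonicity formulas of \cite{NTTV1,TavaresTerracini2,TavaresTerracini1}; the new structural features are the \emph{grouped} interaction $\bigl((u^i_1)^2+\dots\bigr)^{q/2-1}\bigl((u^j_1)^2+\dots\bigr)^{q/2}$ — components within a block do not compete and merely share the same interaction term, so inside a block one has a \emph{linear}, uniformly elliptic system — and the $\beta$-dependence of the (uniformly elliptic) coefficients. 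Passing to the limit in \eqref{eq:system_system} and using that the interaction concentrates on the measure-zero contact set yields \eqref{eq:Eigenfunctions} with $a^i_l=\lim_\beta a^i_{l,\beta}>0$ and $\mu^i_l=\lim_\beta\mu^i_{ll,\beta}>0$. For (iii), strong convergence gives $c_\beta=E_\beta(u_\beta)\to\sum_i\psi_i(\text{Dirichlet integrals of }\bar u^i)$; since $(\bar u^i)$ is segregated, $L^2$-orthonormal and (keeping the Gram matrices diagonal along $\beta$) $H^1_0$-orthogonal, it is admissible for \eqref{eq:c_infty_INTRO}, while testing $c_\beta$ against near-optimal frames for \eqref{eq:c_infty_INTRO} shows that $\limsup_\beta c_\beta$ does not exceed the infimum in \eqref{eq:c_infty_INTRO}; combining, $(\bar u^i)$ achieves \eqref{eq:c_infty_INTRO} and $c_\beta$ converges to that level.

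For \textbf{(iv)}--\textbf{(v)}, I would first note that \eqref{eq:general_OPP_with_psi} and the infimum in \eqref{eq:c_infty_INTRO} coincide: the $L^2$-normalized first $k_i$ eigenfunctions on a (quasi-)open $\omega_i$ form an admissible frame with Dirichlet integrals $\lambda_1(\omega_i),\dots,\lambda_{k_i}(\omega_i)$ (eigenfunctions of distinct eigenvalues are $H^1_0$-orthogonal, and within one eigenspace any $L^2$-orthonormal basis is $H^1_0$-orthogonal too), giving one inequality; conversely, after reordering a frame $u^i$ so that its Dirichlet integrals increase, Courant--Fischer on $\omega_i:=\{\sum_l(u^i_l)^2>0\}$ gives $\lambda_l(\omega_i)\leq\int_\Omega|\nabla u^i_l|^2$, and (F1) gives the other. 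Applying this to the minimizer $(\bar u^i)$ — together with \eqref{eq:Eigenfunctions}, the $L^2/H^1_0$-orthogonality, and a min--max counting argument to conclude $\int_\Omega|\nabla\bar u^i_l|^2=\lambda_l(\bar\omega_i)$ — shows that the partition $(\bar\omega_i)$ solves \eqref{eq:general_OPP_with_psi} (in particular \eqref{eq:OPPsecondary} for $\psi_i(\xi)=(\sum_j\xi_j^p)^{1/p}$); here $(\bar\omega_i)\in\Peh_m(\Omega)$ because $\bar u^i_k\cdot\bar u^j_l\equiv0$ forces $\bar\omega_i\cap\bar\omega_j=\emptyset$, and $\bar\omega_i=\interior(\,\overline{\{\sum_l(\bar u^i_l)^2>0\}}\,)$ by the non-degeneracy inherent in the uniform estimates. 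The regularity of $(\bar\omega_i)$ in the sense of Definition \ref{definition:regular_partition} and the reflection law \eqref{eq:extremality_for_general_problem} are then obtained exactly as in \cite{CaffarelliLinJAMS,CaffarelliLin2,TavaresTerracini1}: the limit vector $(\bar u^1,\dots,\bar u^m)$ lies in the relevant class of segregated functions (each component solving a uniformly elliptic equation, and, across the contact set, the weak reflection identity encoded by the Almgren-type function $E(x_0,r)$ of Theorem \ref{thm:extremality_result} holding), and Almgren monotonicity, the Hausdorff Reduction Principle, and the Boundary Harnack Principle in NTA domains yield $\Hh_{\text{dim}}(\Gamma\setminus\Reh)\leq N-2$ and the local $C^{1,\alpha}$ structure of $\Reh$, with \eqref{eq:extremality_for_general_problem} read off from the two-phase blow-up at points of $\Reh$ (the planar case being refined as in \cite{CaffarelliLinJAMS,TavaresTerracini1}).

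The main obstacle is exactly the uniform-in-$\beta$ Lipschitz bound in (ii): the monotonicity-formula and blow-up scheme must accommodate at once the $q$-homogeneous grouped interaction, the coupling inside each block through an a priori only $L^\infty$-bounded, not necessarily sign-definite multiplier matrix, and coefficients that vary with $\beta$; once this estimate and the resulting description of the limit profiles are available, parts (iv)--(v) follow the established Caffarelli--Lin line with the extremality condition of Theorem \ref{thm:extremality_result} as the essential new ingredient.
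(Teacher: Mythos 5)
Your reduction in (i) from the relaxed problem (dropping the constraints $\int_\Omega\nabla u^i_k\cdot\nabla u^i_l\,dx=0$) back to \eqref{eq:c_beta_INTRO} has a genuine gap, and it sits exactly at the point the paper singles out as nontrivial. The functional you relax, $\sum_i\psi_i\bigl(\int_\Omega|\nabla u^i_1|^2,\ldots,\int_\Omega|\nabla u^i_{k_i}|^2\bigr)$ plus the penalization, is \emph{not} invariant under rotations of a block $u^i$: a rotation preserves the $L^2$-constraints, the penalization (which only sees $\sum_l(u^i_l)^2$) and the trace of the Gram matrix $M(u^i)$, but it changes the individual Dirichlet integrals. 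Your first-order argument does force $\int_\Omega\nabla u^i_k\cdot\nabla u^i_l\,dx=0$ when $\partial\psi_i/\partial\xi_k\neq\partial\psi_i/\partial\xi_l$, but in the degenerate case where these derivatives coincide --- for $\psi_{i,p}$ this happens precisely when two Dirichlet integrals are equal, say both equal to $\xi$, a case you cannot exclude --- the assertion that ``$\psi_i$ is locally symmetric'' and that one can ``rotate to diagonalize the Gram matrix without changing $E_\beta$'' is false. Diagonalizing a $2\times2$ block with equal diagonal entries $\xi$ and off-diagonal entry $c\neq0$ replaces $(\xi,\xi)$ by the eigenvalues $(\xi+|c|,\xi-|c|)$, and for $p>1$ strict convexity of $t\mapsto t^p$ gives $\psi_{i,p}(\xi+|c|,\xi-|c|,\ldots)>\psi_{i,p}(\xi,\xi,\ldots)$. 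Hence the rotated competitor may have energy strictly above the relaxed minimum, and all you obtain is $(\text{relaxed infimum})\le c_\beta\le E_\beta(\text{rotated competitor})$, which does not show that $c_\beta$ is attained. The paper's resolution is precisely the missing idea: $\psi_i$ is extended to an orthogonally invariant function $\vphi\in\Feh$ of the full Gram matrix, so that the energy becomes rotation invariant; Lemma \ref{lemma_changeofbasis} then makes the relaxation exact, Lemma \ref{lemma:derivative_matrix} ensures that at a diagonal Gram matrix only the diagonal coefficients $a^i_{l,\beta}$ survive in the Euler--Lagrange system \eqref{eq:system_system}, and the minimizer is produced along an Ekeland Palais--Smale sequence, rotated term by term, rather than from a minimizer of your relaxed problem (Theorem \ref{thm:minimizer_for_c_beta}). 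The same gap propagates into your proof of (iii), where admissibility of the limit for \eqref{eq:c_infty_INTRO} is justified by ``keeping the Gram matrices diagonal along $\beta$'', i.e.\ by the very property whose construction is in question.

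Apart from this, your outline of (ii), (iv) and (v) follows essentially the paper's route (uniform H\"older and Lipschitz bounds via blow-up and Almgren-type monotonicity adapted from the cited works, with the grouped $q$-interaction and the $\beta$-dependent coefficients correctly identified as the new features; Courant--Fischer applied to $L^2$-orthonormal, gradient-orthogonal frames to identify the optimal-partition value), but these steps are asserted by reference rather than carried out, so the decisive missing ingredient remains the matrix-invariant reformulation behind part (i).
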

Observe that the existence of minimizers in \eqref{eq:c_infty_INTRO} and \eqref{eq:c_beta_INTRO} is not a trivial fact (unlike in \eqref{eq:equivalent_characterization_of_sum_lambda1}) due to restriction on the gradients $\int_\Omega \nabla u^i_k \cdot \nabla u^i_l\, dx=0$, $k\neq l$. We will be able to remove this condition by extending the $\psi_i$'s to symmetric  functions defined on the space of self-adjoint matrices. The coefficients $a^i_n$, which appear in the extremality condition \eqref{eq:extremality_for_general_problem} for the approximated problem, are related to the chosen functions $\psi_i$, thus are related to \eqref{eq:general_OPP_with_psi}. We recall that for the choice 
\begin{equation}\label{eq:label_p}
\psi_{i,p}:(\R_0^+)^{k_i}\to \R,\qquad \psi_{i,p}(\xi_1,\ldots, \xi_{k_i}):=\left(\sum_{j=1}^{k_i} (\xi_j)^p \right)^{1/p}
\end{equation}
we are dealing with \eqref{eq:OPPsecondary} and thus, with an approximation of \eqref{eq:OPPmain}. It turns out that the coefficients in the extremality condition \ref{eq:extremality_main} of Theorem \ref{thm:first_main_result} come out from the double approximation as being a limit as $\beta\to +\infty$, followed from a limit as $p\to +\infty$, of the coefficients provided in the previous theorem.

\bigskip

Finally, let us now come to other possible applications of our strategy and future directions of research. Recall that in \cite{HHT} spectral minimal partitions like
\begin{equation}\label{eq:OPP_auxiliarylast}
\inf_{(\omega_1,\ldots,\omega_m)\in P_m(\Omega)} \max_{i=1,\ldots, m}\{ \lambda_1(\omega_i)\}
\end{equation}
were introduced (see the survey \cite{helffer} for a description of the subject). We believe our approach can be applied to the following generalization of \eqref{eq:OPP_auxiliarylast} 
\[
\inf_{(\omega_1,\ldots,\omega_m)\in P_m(\Omega)} \max_{i=1,\ldots, m}\{ \lambda_{k_i}(\omega_i)\}, \qquad \text{ for } k_1,\ldots, k_m\in \N,
\]
(which is not contained in the assumptions of Theorem \ref{thm:first_main_result}) by taking the following approximating problem (for $p$ and $q$ large):
\[
\inf_{(\omega_1,\ldots,\omega_m)\in P_m(\Omega)} \left(\sum_{i=1}^m \left(\sum_{j=1}^{k_i}(\lambda_{j}(\omega_i))^q\right)^{p/q}\right)^{1/p}.
\]
This, in turn, can also be well approximated by constrained minimal energy solutions of a competitive Schr\"odinger system. 

Another interesting question, that is being tackled at the moment \cite{TavaresZilio}, is to prove that, in \eqref{eq:OPPmain}, \emph{every} optimal partition has a regular \emph{representative} (in the sense of a class of equivalence involving the capacity of sets). This was proved in \cite{HHT} for \eqref{eq:OPP_auxiliarylast} by using a penalization technique (check p.111 therein for more details), and it seems that the strategy followed there can be successfully adapted and combined with our approach for \eqref{eq:OPPmain}.

\bigskip

\begin{remark} To simplify the notation, we will present the proofs of the theorems for the simpler case $k_1=\ldots, k_m=:k$, and deal with partitions with $m=2$ components. It will become clear that the general case follows exactly in the same way, without any extra work.
\end{remark}

\noindent\textbf{Notation. } We will denote the $H^1_0$--norm simply by $\|\cdot \|$, namely $\|u\|^2:=\int_\Omega |\nabla u|^2\, dx$.

The paper is organized as follows. 
\tableofcontents

\section{A general existence result. An approximated problem depending on $p$}\label{sec:Existence}

\subsection{Preliminaries}\label{subsec:preliminaries}

As previously mentioned, to simplify the presentation we will work from now on with $m=2$ and $k_1=k_2=:k\in \N$.
Define $\Seh_k(\R)$ and $\Oeh_k(\R)$ as the spaces of symmetric and orthogonal $k\times k$ matrices with real entries, respectively. In order to deal with  \eqref{eq:OPPsecondary} and \eqref{eq:general_OPP_with_psi}, we will work with a special class of functions with domain in $\Seh_k(\R)$.

\begin{definition}\label{def:symmetric_function}
We say that $\vphi\in \Feh$ if $\vphi:\Seh_k(\R)\to \R$ is  $C^1$ in $\Seh_k(\R)\setminus\{0\}$ and
$$
\vphi(M)=\vphi(P^TMP)\qquad \text{ for all }M\in \Seh_k(\R) \text{ and }P\in \Oeh_k(\R).
$$
Moreover, consider  the restriction $\psi$ of $\vphi$ to the space of diagonal matrices, that is $\psi(a_1,\ldots, a_k):=\vphi(\diag(a_1,\ldots, a_k))$. Then we ask that 
\begin{itemize}
\item[-] $\frac{\partial \psi}{\partial a_i}>0$ on $(\R^+)^k$ for every $i=1,\ldots, k$; 
\item[-] for each $i$ and $\bar a_1,\ldots, \bar a_{i-1},\bar a_{i+1},\ldots,\bar a_k>0$, we have 
\[
\psi(\bar a_1,\ldots, \bar a_{i-1},a_i,\bar a_{i+1},\ldots,\bar a_k)\to +\infty \quad \text{ as } a_i\to +\infty.
\]
\end{itemize}
\end{definition}

\begin{remark}
Along this paper, whenever we say that $\vphi:\Seh_k(\R)\to \R$ is a $C^1$ function, we are identifying $\Seh_k(\R)$ with $\R^{k(k+1)/2}$. The partial derivative $\frac{\partial \vphi}{\partial \xi_{ij}}$ with $i\geq j$ denotes the derivative of $\vphi$ along the direction of the symmetric matrix $A$ having all entries zero except for $a_{ij}=a_{ji}=1$.
\end{remark}
\noindent {\bf Examples.} Functions as in Definition \ref{def:symmetric_function} just depend on the set of the eigenvalues of a matrix. To fix our mind, let us think of the examples $\vphi(M)=\left(\text{trace}(M^p)\right)^{1/p}$ for $p\in \N$, or $\vphi(M)=\text{det}(M)$. In these cases, $\psi(a_1,\ldots, a_k)=\left(\sum_{i=1}^k a_i^p\right)^{1/p}$ and $\psi(a_1,\ldots, a_k)=\prod_{i=1}^k a_i$, respectively. We will use the first of these examples to treat \eqref{eq:OPPsecondary}.

\medbreak

The main aim of this section is to prove the following result.

\begin{theorem}\label{thm:main_general} Given $\vphi\in \Feh$, the level
\begin{align}\label{eq:OPPgeneral}
c_\infty:&=\inf_{(\omega_1,\omega_2)\in \Peh_2(\Omega)}  \sum_{i=1}^2 \vphi\left( \diag(\lambda_1(\omega_i),\ldots,\lambda_k(\omega_i))\right)\\
	&=\inf_{(\omega_1,\omega_2)\in \Peh_2(\Omega)}  \sum_{i=1}^2 \psi(\lambda_1(\omega_i),\ldots,\lambda_k(\omega_i))
\end{align}
is achieved.
\end{theorem}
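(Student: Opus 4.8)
The plan is to prove existence by the direct method, but applied to a suitably relaxed formulation, and then to show that a minimizer of the relaxed problem is in fact a minimizer of the original one. The difficulty is that, unlike in the case of the first eigenvalue, the map $\omega\mapsto \psi(\lambda_1(\omega),\ldots,\lambda_k(\omega))$ is not obviously lower semicontinuous along a minimizing sequence of partitions, because several eigenvalues are involved and the constraint that the gradients $\int_\Omega \nabla u^i_j\cdot \nabla u^i_l\,dx=0$ for $j\ne l$ (built into the natural energy reformulation \eqref{eq:c_infty_INTRO}) is not closed under weak convergence. My strategy is therefore to first rewrite $c_\infty$ as an energy minimization over vector-valued Sobolev functions and then to use the matrix-valued extension $\vphi\in\Feh$ to \emph{remove} the troublesome gradient constraint.

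First I would reformulate the geometric problem variationally. Given a partition $(\omega_1,\omega_2)\in\Peh_2(\Omega)$, by the min-max (Courant--Fischer) characterization of the eigenvalues one can choose, for each $i$, an $L^2$-orthonormal system $u^i_1,\ldots,u^i_k\in H^1_0(\omega_i)$ of eigenfunctions that is also orthogonal in $H^1_0$, with $\int_\Omega|\nabla u^i_j|^2\,dx=\lambda_j(\omega_i)$; extending by zero, these lie in $H^1_0(\Omega)$ with disjoint supports, so $u^i_j\cdot u^l_m\equiv 0$ for $i\ne l$. Conversely, given any admissible family for \eqref{eq:c_infty_INTRO}, the sets $\omega_i:=\interior(\overline{\{\sum_j (u^i_j)^2>0\}})$ (or simply $\{\sum_j (u^i_j)^2>0\}$) form a partition and, by the variational characterization together with the orthogonality constraints, $\psi(\|u^i_1\|^2,\ldots,\|u^i_k\|^2)\ge \psi(\lambda_1(\omega_i),\ldots,\lambda_k(\omega_i))$ using (F1) (monotonicity in each variable) and the fact that the $j$-th ordered Rayleigh quotient over a $k$-dimensional subspace dominates $\lambda_j$. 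Hence $c_\infty$ equals the infimum in \eqref{eq:c_infty_INTRO}.

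Next, the key step: I would enlarge the admissible class by dropping the constraint $\int_\Omega\nabla u^i_j\cdot\nabla u^i_l\,dx=0$ for $j\ne l$ and replacing $\psi(\|u^i_1\|^2,\ldots,\|u^i_k\|^2)$ by $\vphi(G^i)$, where $G^i=\bigl(\int_\Omega\nabla u^i_j\cdot\nabla u^i_l\,dx\bigr)_{j,l}$ is the Gram matrix of the gradients — a symmetric positive semidefinite $k\times k$ matrix. Because $\vphi$ is invariant under $M\mapsto P^TMP$ for $P\in\Oeh_k(\R)$, its value depends only on the eigenvalues of $G^i$; and given a family with prescribed $L^2$-orthonormality constraints, replacing $(u^i_1,\ldots,u^i_k)$ by $(v^i_1,\ldots,v^i_k):=(u^i_1,\ldots,u^i_k)P$ with $P$ the orthogonal matrix diagonalizing $G^i$ preserves the $L^2$-orthonormality and the supports, diagonalizes the new Gram matrix, and leaves $\vphi(G^i)=\psi(\text{eigenvalues of }G^i)$ unchanged. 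So the relaxed infimum coincides with the constrained one, i.e. with $c_\infty$. Now on the relaxed problem the direct method applies cleanly: take a minimizing sequence $(u^1_n,u^2_n)$; the coercivity hypothesis (F2) on $\psi$, combined with the fact that $\vphi(G)\ge\psi$ of the ordered eigenvalues of $G$ and that the smallest eigenvalue of $G^i$ is bounded below by $\lambda_1(\Omega)>0$ (Poincaré, since the $u^i_j$ are $L^2$-normalized), forces $\int_\Omega|\nabla u^i_{j,n}|^2\,dx$ to stay bounded — otherwise some eigenvalue of $G^i_n$ would blow up and, by (F2) applied after ordering, the energy would diverge. Thus the sequence is bounded in $H^1_0(\Omega;\R^k)^2$; up to a subsequence it converges weakly in $H^1_0$ and strongly in $L^2$. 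The $L^2$-orthonormality constraints pass to the limit by strong $L^2$ convergence, and the disjointness $u^i_j u^l_m\equiv 0$ ($i\ne l$) passes to the limit by $L^2$ (or a.e.) convergence. Finally, for the cost: the entries of the limiting Gram matrix satisfy $\int\nabla \bar u^i_j\cdot\nabla \bar u^i_l\,dx\le\liminf$ only diagonally, so I would instead argue via the ordered eigenvalues — each $\lambda_j(G^i_n)$ is a min-max of Rayleigh quotients $\langle G^i_n\xi,\xi\rangle$ over subspaces of $\R^k$, hence $\lambda_j(\bar G^i)\le\liminf_n\lambda_j(G^i_n)$ by weak lower semicontinuity of $\xi\mapsto\int|\nabla(\sum_l\xi_l u^i_{l,n})|^2\,dx$ along the fixed finite-dimensional directions — and then invoke (F1) monotonicity plus continuity of $\vphi$ to conclude $\sum_i\vphi(\bar G^i)\le\liminf_n\sum_i\vphi(G^i_n)=c_\infty$. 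Diagonalizing $\bar G^i$ as above produces an admissible minimizer for the constrained problem, and the associated partition achieves $c_\infty$.

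The main obstacle I anticipate is precisely the lower semicontinuity of the cost under weak $H^1_0$ convergence: because $\vphi$ is not assumed convex and the off-diagonal Gram entries need not be lower semicontinuous, one cannot argue entrywise. The resolution sketched above — expressing $\vphi(G)$ through the ordered eigenvalues of $G$, which \emph{are} min-maxes of quantities that are individually weakly lower semicontinuous, and then using the componentwise monotonicity (F1) — is the crux, and it is exactly where the structural hypotheses in Definition \ref{def:symmetric_function} (orthogonal invariance plus monotonicity of the diagonal restriction) are used in an essential way. A secondary technical point is checking that the weak limit's support sets genuinely form an element of $\Peh_2(\Omega)$ (open, disjoint), which follows from taking the interiors of the closures of the positivity sets as in the statements of Theorems \ref{thm:2ndmain} and \ref{thm:3rdmain}; one should note that the value of $\psi(\lambda_1,\ldots,\lambda_k)$ on these sets does not exceed the energy value, so no loss occurs in passing back from functions to sets.
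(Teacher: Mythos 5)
Your reduction to an energy minimization without the gradient-orthogonality constraint (via the orthogonal invariance of $\vphi$ and diagonalization of the Gram matrix) is sound and is in fact the paper's Lemma \ref{lemma_changeofbasis}, and your coercivity argument via (F2) together with the lower semicontinuity of the ordered Gram eigenvalues (the limit Gram matrix is dominated, in the sense of quadratic forms, by any limit of the $G^i_n$, so Weyl monotonicity plus (F1) applies) would indeed produce a minimizer of the relaxed functional problem. The genuine gap is exactly at the point you dismiss as a ``secondary technical point'': passing from that minimizer, whose components are merely $H^1_0(\Omega)$ functions with a.e.\ disjoint supports, back to an element of $\Peh_2(\Omega)$, i.e.\ to a pair of \emph{open} disjoint sets achieving $c_\infty$. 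For a general $H^1_0$ function the set $\{\sum_j(u^i_j)^2>0\}$ is only measurable (at best quasi-open), and taking $\omega_i=\interior\bigl(\,\overline{\{\sum_j(u^i_j)^2>0\}}\,\bigr)$ does not repair this: the two interiors of closures need not be disjoint when the original supports are only a.e.\ disjoint, one of them may even be empty while $u^i\not\equiv 0$, and in any case one cannot conclude $u^i_j\in H^1_0(\omega_i)$, so the key inequality $\psi(\lambda_1(\omega_i),\ldots,\lambda_k(\omega_i))\leq\psi(\text{Gram eigenvalues})$ is unjustified. The same issue invalidates your ``conversely'' step, so even the identity between $c_\infty$ and the relaxed infimum in \eqref{eq:c_infty_INTRO} is not established: a priori the relaxed infimum could be strictly smaller. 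This is precisely the quasi-open versus open difficulty stressed in the Introduction in connection with the Bucur--Buttazzo--Henrot result.

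This missing step is in fact the heart of the paper's proof of Theorem \ref{thm:main_general}: rather than applying the direct method on the segregated class, the paper minimizes the $\beta$-penalized functional $E_\beta$ on $\Sigma(L^2)\times\Sigma(L^2)$ (Theorem \ref{thm:minimizer_for_c_beta}), derives the Euler--Lagrange system, and proves uniform-in-$\beta$ $L^\infty$ and H\"older bounds (Theorem \ref{thm:convergences}) through blowup analysis, Liouville-type theorems and Almgren's monotonicity formula. Only because the limit $(u,v)$ is continuous are $\omega_u=\{\sum_i u_i^2>0\}$ and $\omega_v=\{\sum_i v_i^2>0\}$ open and disjoint, and only then does the chain of inequalities in Corollary \ref{coro:characterization_of_c_infty} close, yielding both the alternative characterization of $c_\infty$ and the theorem. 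Without a substitute regularity (continuity) argument for your relaxed minimizer, your proposal establishes at most a quasi-open analogue of the statement, not the existence of an optimal open partition.
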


\begin{corollary}\label{coro:corolario_do_main_existence}
In particular, going through the previous examples, we have that the optimal partition problems
$$
\inf_{(\omega_1,\omega_2)\in \Peh_2(\Omega)} \sum_{i=1}^2\left( \sum_{j=1}^k\lambda_j(\omega_i)^p\right)^{1/p}, \qquad p\in \N;
$$
and
$$
\inf_{(\omega_1,\omega_2)\in \Peh_2(\Omega)} \sum_{i=1}^2 \Bigl(\prod_{j=1}^k \lambda_j(\omega_i)\Bigr)
$$
admit a solution.
\end{corollary}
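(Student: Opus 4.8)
The plan is to obtain Corollary~\ref{coro:corolario_do_main_existence} as a direct consequence of Theorem~\ref{thm:main_general}. Both optimal partition problems in the statement are exactly of the form \eqref{eq:OPPgeneral} with $m=2$ and $k_1=k_2=k$, so it suffices to produce, for each of them, a function $\vphi\in\Feh$ whose diagonal restriction $\psi$ (on matrices with nonnegative entries) is the prescribed cost density: $\psi(a_1,\dots,a_k)=\big(\sum_{j=1}^k a_j^p\big)^{1/p}$ for the first problem, and $\psi(a_1,\dots,a_k)=\prod_{j=1}^k a_j$ for the second. Once such $\vphi$'s are exhibited, Theorem~\ref{thm:main_general} applies verbatim and yields that the corresponding infimum is attained, which is the assertion of the corollary.

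For the product functional I would take $\vphi(M):=\det M$. It is a polynomial in the entries of $M$, hence $C^1$ (in fact $C^\infty$) on all of $\Seh_k(\R)$, and $\det(P^TMP)=(\det P)^2\det M=\det M$ for every $P\in\Oeh_k(\R)$, so $\vphi(M)=\vphi(P^TMP)$. Its diagonal restriction is $\psi(a_1,\dots,a_k)=\prod_{j=1}^k a_j$, for which $\partial\psi/\partial a_i=\prod_{j\neq i}a_j>0$ on $(\R^+)^k$, and, fixing $\bar a_j>0$ for $j\neq i$, one has $\psi(\bar a_1,\dots,a_i,\dots,\bar a_k)=\big(\prod_{j\neq i}\bar a_j\big)a_i\to+\infty$ as $a_i\to+\infty$. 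Hence $\vphi\in\Feh$.

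For the $\ell^p$-type functional the one mildly technical point — and the only real obstacle — is that $\operatorname{trace}(M^p)$ need not be nonnegative when $p$ is odd, so the naive candidate $(\operatorname{trace}M^p)^{1/p}$ is not an admissible extension to all of $\Seh_k(\R)$. I would circumvent this as follows. For $p=1$ take $\vphi_1(M):=\operatorname{trace}M$, which is linear and restricts to $\sum_j a_j$ on diagonal matrices. For $p\geq2$ take $\vphi_p(M):=\big(\operatorname{trace}|M|^p\big)^{1/p}$, where $|M|^p$ denotes the result of applying to $M$ the function $g(t):=|t|^p$, which lies in $C^1(\R)$ precisely because $p\geq2$ (its derivative $g'(t)=p\,|t|^{p-2}t$ being continuous). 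Then $\operatorname{trace}|M|^p=\sum_j|\mu_j(M)|^p$, where the $\mu_j(M)$ are the eigenvalues of $M$; this is strictly positive for $M\neq0$, it equals $\sum_j a_j^p$ on diagonal matrices with nonnegative entries, and it is $C^1$ on $\Seh_k(\R)\setminus\{0\}$ because $M\mapsto\operatorname{trace}(g(M))$ is $C^1$ on $\Seh_k(\R)$ whenever $g\in C^1(\R)$ (with differential $H\mapsto\operatorname{trace}(g'(M)H)$, a standard fact on spectral functions), and composing with the $C^1$ map $t\mapsto t^{1/p}$ on $(0,\infty)$ preserves this. Orthogonal invariance is immediate, since conjugation by $P\in\Oeh_k(\R)$ does not change the spectrum and hence not $\operatorname{trace}|M|^p$. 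Finally, the diagonal restriction $\psi_p(a_1,\dots,a_k)=\big(\sum_j a_j^p\big)^{1/p}$ satisfies $\partial\psi_p/\partial a_i=\big(\sum_j a_j^p\big)^{1/p-1}a_i^{p-1}>0$ on $(\R^+)^k$, and $\psi_p(\bar a_1,\dots,a_i,\dots,\bar a_k)\geq a_i\to+\infty$ when the remaining entries are kept positive and fixed. Thus $\vphi_p\in\Feh$, and Theorem~\ref{thm:main_general} applies; beyond these elementary verifications I anticipate no further difficulty (the general $m$ and varying $k_i$, not even needed here, would follow in the same way).
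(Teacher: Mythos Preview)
Your proposal is correct and follows the paper's approach exactly: the paper gives no separate proof of this corollary, simply pointing to the examples $\vphi(M)=(\operatorname{trace}(M^p))^{1/p}$ and $\vphi(M)=\det M$ stated just before Theorem~\ref{thm:main_general} and letting the reader verify membership in~$\Feh$. Your write-up is in fact more careful than the paper's, since for odd $p$ the naive candidate $(\operatorname{trace}(M^p))^{1/p}$ can fail to be $C^1$ at nonzero $M$ with $\operatorname{trace}(M^p)=0$ (e.g.\ $M=\operatorname{diag}(1,-1)$), and your replacement by $(\operatorname{trace}|M|^p)^{1/p}$ for $p\geq 2$ cleanly repairs this while leaving the diagonal restriction unchanged on the positive cone.
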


We will solve problem \eqref{eq:OPPgeneral} by approaching it with a penalized problem having an additional  parameter (recall system \eqref{eq:system_system} in the Introduction). More precisely, the proof of Theorem \ref{thm:main_general} in this section will imply Theorem \ref{thm:3rdmain}, (i)-(ii)-(iii), as well as the existence part of (iv). The proof of the remaining statements will be completed in Section \ref{sec:Regularity_of_nodal_set}.

 At this point it is useful to recall that, if $\{v_n\}_{n\in \N}$ represents a sequence of $L^2$--orthogonal eigenfunctions of $(-\Delta,H^1_0(\Omega))$, ordered according to the nondecreasing sequence of eigenvalues, then
\[
\lambda_k(\Omega)=\inf\left\{ \int_\Omega |\nabla v|^2:\ \int_\Omega \nabla v\cdot \nabla v_i\, dx=0,\ i=1,\ldots, k-1\right\}.
\]
Given $u\in H^1_0(\Omega; \R^k)$, define the $k\times k$ symmetric matrix
\begin{equation}\label{eq:M(u)}
M(u)=\Bigl( \int_\Omega \nabla u_i\cdot \nabla u_j\, dx \Bigr)_{i,j=1,\ldots, k}.
\end{equation}

Fix $2<2q<2^\ast$, where $2^\ast=+\infty$ if $N\leq 2$, $2^\ast=2N/(N-2)$ whenever $N\geq 3$. For each $\beta>0$, we define the energy $E_\beta:H^1_0(\Omega; \R^k)\times H^1_0(\Omega;\R^k)\to \R$ as
$$
E_\beta(u,v)=\vphi(M(u))+\vphi(M(v))+\frac{2\beta}{q}\int_\Omega (u_1^2+\ldots +u_k^2)^\frac{q}{2}(v_1^2+\ldots + v_k^2)^\frac{q}{2}\, dx
$$
and consider the energy levels
$$
c_\beta:=\inf \left\{ E_\beta(u,v): u,v\in \Sigma(L^2) \right\},
$$
where
$$
\Sigma(L^2):=\Bigl\{ w=(w_1,\ldots, w_k)\in H^1_0(\Omega; \R^k):\ \int_\Omega w_i w_j\, dx=\delta_{ij} \text{ for every }i,j \Bigr\}
$$
($\delta_{ij}$ denotes the Kronecker symbol). In order to prove that $c_\beta$ is achieved, we need three auxiliary lemmas.
\begin{lemma}\label{lemma_changeofbasis}
Let $\vphi:\Seh_k(\R)\to \R$ be a map satisfying
$$
\vphi(M)=\vphi(P^TMP)\qquad \text{ for all }M\in \Seh_k(\R) \text{ and }P\in \Oeh_k(\R).
$$
Given $u\in \Sigma(L^2)$, let $M(u)$ be defined as in \eqref{eq:M(u)}. Then there exists $\tilde u\in \Sigma(L^2)$ such that 
$$
M(\tilde u)=\diag\left(\| \tilde u_1\|^2,\ldots, \|\tilde u_k\|^2 \right),
$$
and $\vphi(M(u))=\vphi(M(\tilde u))$; moreover, $\sum_{i=1}^k u_i^2=\sum_{i=1}^k \tilde u_i^2$ pointwise. In particular, $E_\beta(\tilde u,\tilde v)=E_\beta(u,v)$.

\end{lemma}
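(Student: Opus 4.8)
\textbf{Proof plan for Lemma \ref{lemma_changeofbasis}.}

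The plan is to diagonalize the symmetric matrix $M(u)$ by an orthogonal change of basis, and then to transport this change of basis back to the level of the functions $u_1,\dots,u_k$, checking that all the required structure is preserved. First I would invoke the spectral theorem: since $M(u)\in\Seh_k(\R)$, there is $P\in\Oeh_k(\R)$ such that $P^TM(u)P=\diag(d_1,\dots,d_k)$ for real numbers $d_i\geq 0$ (nonnegativity because $M(u)$ is the Gram matrix of the gradients $\nabla u_1,\dots,\nabla u_k$ in $L^2(\Omega;\R^N)$, hence positive semidefinite). Now define $\tilde u:=P^Tu$, meaning $\tilde u_i=\sum_{j=1}^k P_{ji}u_j$ for each $i$; equivalently $\tilde u_i$ is the $i$-th component of the vector obtained by applying the constant orthogonal matrix $P^T$ to the vector-valued function $u$. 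The point is that all the relevant bilinear quantities transform covariantly: $M(\tilde u)=P^TM(u)P=\diag(d_1,\dots,d_k)$, and in particular $d_i=\|\tilde u_i\|^2$, which is exactly the claimed identity $M(\tilde u)=\diag(\|\tilde u_1\|^2,\dots,\|\tilde u_k\|^2)$.

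Next I would verify that $\tilde u\in\Sigma(L^2)$. Writing $G=(\int_\Omega u_iu_j\,dx)_{ij}$ for the $L^2$ Gram matrix of $u$, the hypothesis $u\in\Sigma(L^2)$ says $G=I_k$. The same covariance computation gives $(\int_\Omega\tilde u_i\tilde u_j\,dx)_{ij}=P^TGP=P^TI_kP=I_k$, so indeed $\tilde u\in\Sigma(L^2)$. For the pointwise identity $\sum_i\tilde u_i^2=\sum_i u_i^2$, note that at each point $x\in\Omega$ the vector $(\tilde u_1(x),\dots,\tilde u_k(x))^T=P^T(u_1(x),\dots,u_k(x))^T$, and an orthogonal matrix preserves the Euclidean norm of a vector, so $\sum_i\tilde u_i(x)^2=|P^Tu(x)|^2=|u(x)|^2=\sum_iu_i(x)^2$. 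Finally $\vphi(M(u))=\vphi(M(\tilde u))$ is immediate from the orthogonal invariance of $\vphi$ applied to $M(\tilde u)=P^TM(u)P$. The statement $E_\beta(\tilde u,\tilde v)=E_\beta(u,v)$ then follows term by term: the two $\vphi$-terms are equal by what was just shown (applied to $u$ and, separately, to $v$ with its own orthogonal matrix $Q$), and the coupling term $\frac{2\beta}{q}\int_\Omega(\sum_iu_i^2)^{q/2}(\sum_iv_i^2)^{q/2}\,dx$ is unchanged because each of the two sums-of-squares is preserved pointwise.

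This lemma is essentially bookkeeping, so I do not expect a genuine obstacle; the only point requiring a little care is to be consistent about whether one conjugates with $P$ or $P^T$ and to track that the change of basis on functions is by a \emph{constant} matrix, so that it commutes with both $\nabla$ and $\int_\Omega$ and therefore respects the two Gram matrices $M(\cdot)$ and the $L^2$-Gram matrix simultaneously. One should also remark that $\tilde u$ and $\tilde v$ may be produced by \emph{different} orthogonal matrices $P$ and $Q$, which is harmless since the two vector-valued functions are decoupled except through the coupling integral, and that integral only sees $\sum_iu_i^2$ and $\sum_jv_j^2$, both of which are $O(k)$-invariant. Thus the whole statement reduces to the single observation that the data of the problem—the constraint manifold $\Sigma(L^2)$, the coupling density, and the functional $\vphi\circ M$—are all invariant under the diagonal action of $\Oeh_k(\R)$ on $H^1_0(\Omega;\R^k)$.
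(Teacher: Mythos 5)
Your proposal is correct and follows essentially the same route as the paper: diagonalize $M(u)$ by the spectral theorem, set $\tilde u = P^T u$ with a constant orthogonal matrix, and use the covariance of the $L^2$ and gradient Gram matrices together with pointwise norm preservation to conclude. No gaps.
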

\begin{proof}
 Let $P$ be an orthogonal matrix such $P^T M(u) P$ is diagonal.  Consider the new elements
$$
\left[ \begin{array}{c} \tilde u_1 \\ \vdots \\ \tilde u_k \end{array}\right] = P^T \left[ \begin{array}{c} u_1 \\ \vdots \\ u_k \end{array}\right]\quad \Leftrightarrow \quad \tilde u_i=\sum_{j=1}^k p_{ji}u_j,\; i=1,\ldots, k.
$$
Then, given any bilinear form $b:H^1_0(\Omega)\times H^1_0(\Omega)\to \R$ and its associated matrix $B(u)=(b(u_i,u_j))_{ij}$, we have
$$
b(\tilde u_i,\tilde u_j)=b\Bigl( \sum_{l=1}^k p_{li}u_l,\sum_{m=1}^k p_{mj} u_m  \Bigr)=\sum_{l,m=1}^k p_{li} p_{mj} b(u_l, u_m)=(P^TB(u)P)_{ij}.
$$
If one takes $b(w_1,w_2):=\int_\Omega w_1 w_2\, dx$, then as $u\in \Sigma(L^2)$ we have $B(u)=Id$ and
$$
\int_\Omega \tilde u_i \tilde u_j\, dx=(P^T P)_{ij}=\delta_{ij},
$$ 
 hence $\tilde u\in \Sigma(L^2)$. If one takes $b(w_1,w_2)=\int_\Omega \nabla w_1\cdot \nabla w_2\, dx$ instead, then
$$
(M(\tilde u))_{ij}=(P^T M(u)P)_{ij},
$$  hence $M(\tilde u)$ is a diagonal matrix.

As for the final statement of the lemma, we have
\begin{eqnarray*}
\sum_{i=1}^k \tilde u_i^2   &=&	\sum_{i=1}^k \sum_{l,m=1}^k p_{li} p_{mi} u_l u_m =\sum_{l,m=1}^k u_l u_m \sum_{i=1}^k p_{li} p_{mi}\\
				& =&	\sum_{l,m=1}^k u_l u_m (PP^T)_{lm}=\sum_{l,m=1}^k u_l u_m  \delta_{lm}=\sum_{l=1}^k u_l^2.
\end{eqnarray*}
\end{proof}

This lemma implies that $c_\beta$ has the equivalent characterization shown in the Introduction - see \eqref{eq:c_beta_INTRO} - namely that it is the infimum of $E_\beta(u,v)$ for 
\[
u,v\in \Sigma(L^2)\quad \text{ with } \quad \int_\Omega \nabla u_i\cdot \nabla u_j\, dx=\int_\Omega \nabla v_i\cdot \nabla v_j\, dx=0\ \forall i\neq j.
\]
It also clarifies the special shape of the competition term in the energy. 

For such $(u,v)$, observe that
\[
E_\beta(u,v)\geq 2\psi(0,\ldots,0),\qquad \text{ and so } c_\beta \text{ is finite}.
\]

\begin{lemma}\label{lemma:derivative_matrix}
Let $\vphi:\Seh_k(\R)\to \R$ be a $C^1$ function such that
$$
\vphi(M)=\vphi(P^TMP)\qquad \text{ for all }M\in \Seh_k(\R) \text{ and }P\in \Oeh_k(\R).
$$
Then $\frac{\partial \vphi}{\partial \xi_{ij}}(D)=0$ for every diagonal matrix $D$, and $j> i$.
\end{lemma}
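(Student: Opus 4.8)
The plan is to exploit the invariance $\vphi(M) = \vphi(P^T M P)$ by differentiating it along a curve of orthogonal conjugations passing through the identity. Fix a diagonal matrix $D = \diag(d_1,\ldots,d_k)$ and fix a pair of indices $i \neq j$; without loss of generality take $i < j$. First I would pick the skew-symmetric generator $S$ having all entries zero except $s_{ij} = 1$, $s_{ji} = -1$, and consider the one-parameter family of rotations $P(t) = e^{tS} \in \Oeh_k(\R)$, so that $P(0) = Id$ and $P'(0) = S$. The key identity is that the map $t \mapsto \vphi(P(t)^T D\, P(t))$ is constant in $t$, hence its derivative at $t = 0$ vanishes.

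Next I would compute that derivative by the chain rule. Writing $M(t) = P(t)^T D\, P(t)$, we have $M(0) = D$ and $M'(0) = S^T D + D S = -SD + DS = [D,S]$, the commutator. A direct computation of this commutator shows that $[D,S]$ is the symmetric matrix whose only nonzero entries are in positions $(i,j)$ and $(j,i)$, both equal to $d_j - d_i$ (up to a harmless sign, which will not matter). Therefore
\[
0 = \frac{d}{dt}\Big|_{t=0} \vphi(M(t)) = (d_j - d_i)\,\frac{\partial \vphi}{\partial \xi_{ij}}(D).
\]
If $d_i \neq d_j$ this immediately gives $\frac{\partial \vphi}{\partial \xi_{ij}}(D) = 0$. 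To handle the degenerate case $d_i = d_j$, I would instead perturb: for any $\varepsilon$ the matrix $D_\varepsilon := D + \varepsilon\, \diag(\mathbf{e}_j)$ (adding $\varepsilon$ only to the $j$-th diagonal entry) has distinct $i$-th and $j$-th entries for $\varepsilon \neq 0$ small, so the argument above gives $\frac{\partial \vphi}{\partial \xi_{ij}}(D_\varepsilon) = 0$; letting $\varepsilon \to 0$ and using the continuity of $\nabla \vphi$ (which holds since $\vphi \in C^1$ away from $0$, and one may further translate by a multiple of the identity to avoid the origin) yields $\frac{\partial \vphi}{\partial \xi_{ij}}(D) = 0$ as well. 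Since $i \neq j$ was arbitrary, this is exactly the claim.

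The only mild subtlety — and the place to be slightly careful — is the bookkeeping for the identification of $\Seh_k(\R)$ with $\R^{k(k+1)/2}$: one must make sure that the directional derivative of $\vphi$ along the symmetric matrix $A$ with $a_{ij} = a_{ji} = 1$ and all other entries zero is what is meant by $\frac{\partial\vphi}{\partial \xi_{ij}}$, consistently with the remark preceding the lemma, so that no spurious factor of $2$ enters and the conclusion $\frac{\partial\vphi}{\partial\xi_{ij}}(D) = 0$ is stated in the intended normalization. This is purely a matter of matching conventions; the geometric content is entirely in the vanishing of $\frac{d}{dt}\vphi(P(t)^T D P(t))$ at $t=0$, which is the main point.
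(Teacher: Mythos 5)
Your proof is correct and follows essentially the same route as the paper: a one-parameter family of plane rotations $P(t)$ (the paper writes the $\cos t$/$\sin t$ matrix explicitly, you write $e^{tS}$), differentiation of the constant map $t\mapsto\vphi(P(t)^TDP(t))$ at $t=0$ to get $(d_j-d_i)\,\partial\vphi/\partial\xi_{ij}(D)=0$, and a continuity/density argument to pass from the case $d_i\neq d_j$ to general diagonal $D$ (the paper invokes density of such matrices, you perturb one diagonal entry — the same idea).
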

\begin{proof}
Let us check it for instance for $i=1\neq 2=j$. First let  $D=\text{diag}(d_1,\ldots, d_k)$ be  a matrix such that $d_1\neq d_2$. Consider $P(t)\in \Oeh_k(\R)$ given by
$$
P(t)=\left(\begin{array}{ccccc}
\cos t & -\sin t &0 &\ldots & 0\\
\sin t & \cos t & 0 & \ldots & 0\\
0      &   0      & 1  & \ldots & 0\\
\vdots & \vdots & \vdots & \ddots & 0\\ 
0      &   0      & 0  & \ldots & 1
\end{array}\right)
$$
which satisfies $P(0)=Id$ and 
$$
P'(0)=\left(\begin{array}{ccccc}
0  & -1 & 0 &\ldots & 0\\
1 & 0 & 0 & \ldots & 0\\
0      &   0      & 0  & \ldots & 0\\
\vdots & \vdots & \vdots & \ddots & 0\\ 
0      &   0      & 0  & \ldots & 0
\end{array}\right).
$$
Then
\begin{equation*}
\frac{d}{dt} (P(t)^T D P(t))|_{t=0} = P'(0)^T D P(0) + P(0)^T D P'(0)=\left(\begin{array}{ccccc}
0  & d_2-d_1 & 0 &\ldots & 0\\
d_2-d_1 & 0 & 0 & \ldots & 0\\
0      &   0      & 0  & \ldots & 0\\
\vdots & \vdots & \vdots & \ddots & 0\\ 
0      &   0      & 0  & \ldots & 0
\end{array}\right)
\end{equation*}
and since by assumption the map $t\mapsto \vphi(P(t)^T D P(t))$ is constant, 
$$
\frac{\partial \vphi}{\partial \xi_{12}}(D) (d_2-d_1)=0, \quad \text{ and } \quad \frac{\partial \vphi}{\partial \xi_{12}}(D)=0 \text{ since $d_1\neq d_2$}.
$$
As the set of diagonal matrices satisfying $d_1\neq d_2$ is obviously dense among the set of all diagonal matrices (identified with $\R^k$),  since $\vphi$ is $C^1$ we obtain that
$\frac{\partial \vphi}{\partial \xi_{12}}(D)=0$ for every diagonal matrix $D$. The proof in the other cases is analogous, by suitably choosing the family of matrices $P(t)$.
\end{proof}

Let us check that $\Sigma(L^2)$ is a manifold.

\begin{lemma}\label{lemma:Sigma(L^2)manifold}
Let $G:H^1_0(\Omega;\R^k)\to \R^{\frac{k(k+1)}{2}}$ defined by
$$
G(w)=(g_{ij}(w))_{j\geq i}:=\left(\int_\Omega w_i w_j\, dx\right)_{j\geq i},
$$ so that $\Sigma(L^2)=\{w\in H^1_0(\Omega;\R^k):\ g_{ij}(w)=\delta_{ij}\ \forall j\geq i\}$. Then $G'(u)$ is onto for each $u\in \Sigma(L^2)$.  In particular, the set $\Sigma(L^2)$ is a submanifold of $H^1_0(\Omega; \R^k)$ of codimension $k(k+1)/2$.
\end{lemma}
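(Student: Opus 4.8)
The plan is to compute $G'(u)$ explicitly and, for an arbitrary target, exhibit a preimage of a particularly simple shape. First I would record that $G$ is well defined and of class $C^1$ (in fact $C^\infty$, being a vector of continuous quadratic forms): each $g_{ij}$ is built from the bounded symmetric bilinear form $(v,w)\mapsto\int_\Omega vw\,dx$ on $H^1_0(\Omega)\hookrightarrow L^2(\Omega)$, and its differential is
\[
g_{ij}'(w)[h]=\int_\Omega\bigl(w_i h_j+w_j h_i\bigr)\,dx,\qquad h=(h_1,\dots,h_k)\in H^1_0(\Omega;\R^k).
\]

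Next, I fix $u\in\Sigma(L^2)$ and an arbitrary symmetric array $(c_{ij})_{j\geq i}$, and look for a preimage under $G'(u)$ of the form $h_l=\sum_{m=1}^k a_{lm}u_m$, with $A=(a_{lm})$ a symmetric matrix to be chosen. Since $\int_\Omega u_i u_m\,dx=\delta_{im}$, one computes directly
\[
g_{ij}'(u)[h]=\sum_{m=1}^k a_{jm}\int_\Omega u_i u_m\,dx+\sum_{m=1}^k a_{im}\int_\Omega u_j u_m\,dx=a_{ji}+a_{ij}=2a_{ij}.
\]
Hence choosing $a_{ij}:=c_{ij}/2$ gives $G'(u)[h]=(c_{ij})_{j\geq i}$, and $h$, being a finite linear combination of the $u_m\in H^1_0(\Omega)$, indeed lies in $H^1_0(\Omega;\R^k)$. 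Therefore $G'(u)\colon H^1_0(\Omega;\R^k)\to\R^{k(k+1)/2}$ is onto.

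Finally, writing $e:=(\delta_{ij})_{j\geq i}$ we have $\Sigma(L^2)=G^{-1}(e)$, and $\Sigma(L^2)\neq\emptyset$ (for instance the first $k$ $L^2$-normalized Dirichlet eigenfunctions of $\Omega$ belong to it). Since $G$ is $C^1$ and $G'(u)$ is surjective at every $u\in G^{-1}(e)$, with kernel splitting (a finite-codimension closed subspace of a Hilbert space), the submersion theorem in Banach spaces applies and yields that $\Sigma(L^2)$ is a closed embedded $C^1$-submanifold of $H^1_0(\Omega;\R^k)$ of codimension $\dim\R^{k(k+1)/2}=k(k+1)/2$, with $T_u\Sigma(L^2)=\ker G'(u)$. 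There is no real obstacle here; the only step requiring a moment's thought is the ansatz $h_l=\sum_m a_{lm}u_m$, which reduces surjectivity of $G'(u)$ to the observation that every symmetric matrix is twice a symmetric matrix.
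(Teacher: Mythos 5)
Your proof is correct and follows essentially the same route as the paper: the paper shows surjectivity of $G'(u)$ by hitting each canonical basis vector of $\R^{k(k+1)/2}$ with the test function $\bar\vphi_j=u_i$ (all other components zero), using the $L^2$-orthonormality $\int_\Omega u_iu_m\,dx=\delta_{im}$, while you package all these choices at once through the symmetric-matrix ansatz $h_l=\sum_m a_{lm}u_m$. Your added remark on the submersion theorem makes explicit the ``in particular'' step that the paper leaves implicit.
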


\begin{proof}

Given $u\in \Sigma(L^2)$, we have
$$
G'(u)(\vphi_1,\ldots,\vphi_k)=\left(\int_\Omega (\vphi_m u_n+u_m\vphi_n)\right)_{n\geq m}=(g'_{mn}(u)(\vphi_1,\ldots,\vphi_k))_{n\geq m}.
$$
In order to prove that $G'(u)$ is onto, let us prove that every vector of the canonical basis of $\R^\frac{k(k+1)}{2}$ is the image through $G'(u)$ of an element in $H^1_0(\Omega;\R^k)$. Fix $i,j\in \{1,\ldots, k\}$ with $j\geq i$; then by taking $\bar \vphi:=(\bar \vphi_1,\ldots, \bar \vphi_k)$ with $\bar \vphi_j=u_i$, $\bar \vphi_l=0$ for $l\neq j$, we obtain
$$
g'_{ij}(u)\bar \vphi=\int_\Omega u_i^2\, dx=1\; \text{ (if $i\neq j$)},\qquad \text{ or }\qquad g'_{ij}(u)\bar \vphi=\int_\Omega 2 u_i^2\, dx=2 \; \text{ (if $i= j$)},
$$
and
$$
g'_{mn}(u)\bar \vphi=0 \qquad  \forall (m,n)\neq (i,j), \ n\geq m.
$$
Thus $G'(u)$ is onto and the result follows.
\end{proof}

Let us now fix once and for all some $\vphi\in \Feh$, and denote by $\psi$ its restriction to the space of $k\times k$ diagonal matrices.

\begin{theorem}\label{thm:minimizer_for_c_beta}
Given $\beta>0$, the infimum $c_\beta$ is attained at $u_\beta,v_\beta\in \Sigma(L^2)$ such that 
\begin{equation}\label{eq:orthogonal_in_H^1}
\int_\Omega \nabla u_{i,\beta}\cdot \nabla u_{j,\beta}\, dx=\int_\Omega \nabla v_{i,\beta}\cdot \nabla v_{j,\beta}\, dx=0 \quad \text{ whenever } i\neq j.
\end{equation}  
Moreover, for each $i$ we have
\begin{equation}\label{eq:equation_for_u_beta_v_beta}
\left\{\begin{array}{l}
-a_{i,\beta}\Delta u_{i,\beta}=\sum_{j=1}^k \mu_{ij,\beta} u_{j,\beta}-\beta u_{i,\beta}\Bigl(\sum_{j=1}^ku_{j,\beta}^2\Bigr)^{\frac{q}{2}-1}\Bigl(\sum_{j=1}^k v_{j,\beta}^2\Bigr)^\frac{q}{2} \\[10pt]
-b_{i,\beta}\Delta v_{i,\beta}= \sum_{j=1}^k \nu_{ij,\beta} v_{j,\beta}-\beta v_{i,\beta}
\Bigl(\sum_{j=1}^k v_{j,\beta}^2\Bigr)^{\frac{q}{2}-1}
\Bigl(\sum_{j=1}^k u_{j,\beta}^2\Bigr)^{\frac{q}{2}}  
\end{array}\right.
\end{equation}
with 
\[
a_{i,\beta}=\frac{\partial \vphi}{\partial \xi_{ii}}(M(u_{\beta}))=\frac{\partial \psi}{\partial a_i}(M(u_\beta)),\qquad  b_{i,\beta}=\frac{\partial \vphi}{\partial \xi_{ii}}(M(v_{\beta}))=\frac{\partial \psi}{\partial a_i}(M(v_\beta)),
\] 
and
\begin{align}\label{eq:expression_for_mu_ij}
\mu_{ij,\beta}&=\mu_{ji,\beta}=\delta_{ij}a_{i,\beta} \int_\Omega |\nabla u_{i,\beta}|^2\, dx+\beta\int_\Omega u_{i,\beta} u_{j,\beta}\Bigl(\sum_{l=1}^k u_{l,\beta}^2\Bigr)^{\frac{q}{2}-1}\Bigl(\sum_{l=1}^k v_{l,\beta}^2\Bigr)^\frac{q}{2}\,dx,\\
\nu_{ij,\beta}&=\nu_{ji,\beta}=\delta_{ij}b_{i,\beta} \int_\Omega |\nabla v_{i,\beta}|^2\, dx+\beta\int_\Omega v_{i,\beta} v_{j,\beta}\Bigl(\sum_{l=1}^k v_{l,\beta}^2\Bigr)^{\frac{q}{2}-1}\Bigl(\sum_{l=1}^k u_{l,\beta}^2\Bigr)^\frac{q}{2}\,dx.
\end{align}
\end{theorem}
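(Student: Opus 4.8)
The plan is to obtain $(u_\beta,v_\beta)$ by the direct method, then derive the system \eqref{eq:equation_for_u_beta_v_beta} from the Lagrange multiplier rule on the manifold $\Sigma(L^2)$, and finally identify the multipliers by testing the equations against the functions $u_{j,\beta}$ themselves.

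\emph{Existence of a minimizer.} First I would take a minimizing sequence $(u_n,v_n)$ for $c_\beta$ and, invoking Lemma~\ref{lemma_changeofbasis}, reduce to the case where $M(u_n)$ and $M(v_n)$ are diagonal, so that $\varphi(M(u_n))=\psi(\|u_{1,n}\|^2,\dots,\|u_{k,n}\|^2)$ and likewise for $v_n$. Since the competition term is nonnegative and $\psi\geq\psi(0,\dots,0)$ ($\psi$ being nondecreasing in each variable, as $\partial\psi/\partial a_i>0$), the quantities $\psi(\|u_{1,n}\|^2,\dots,\|u_{k,n}\|^2)$ stay bounded. Because $u_n\in\Sigma(L^2)$, Poincar\'e's inequality gives $\|u_{i,n}\|^2\geq\lambda_1(\Omega)>0$ for all $i$; were some $\|u_{i_0,n}\|^2$ unbounded, monotonicity of $\psi$ together with the blow-up assumption in Definition~\ref{def:symmetric_function} would force $\psi(\|u_{1,n}\|^2,\dots,\|u_{k,n}\|^2)\to+\infty$, a contradiction. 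Hence $(u_n)$ and $(v_n)$ are bounded in $H^1_0(\Omega;\R^k)$; up to a subsequence $u_n\rightharpoonup u_\beta$, $v_n\rightharpoonup v_\beta$ weakly in $H^1_0$, strongly in $L^2$ and in $L^{2q}$ (recall $2<2q<2^\ast$), and $\|u_{i,n}\|^2\to d_i$, $\|v_{i,n}\|^2\to e_i$. Passing to the limit, $u_\beta,v_\beta\in\Sigma(L^2)$, and in each entry $M(u_\beta)\preceq\diag(d_1,\dots,d_k)$ in the Loewner order (and similarly for $v_\beta$). The crucial observation is that $\varphi$ is nondecreasing for the Loewner order: it depends only on the (sorted) eigenvalues, these are monotone under $\preceq$ by Courant--Fischer, and $\psi$ is nondecreasing. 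Combining this with the $L^{2q}$-convergence of the competition term, which makes it continuous along the sequence, yields $E_\beta(u_\beta,v_\beta)\leq\liminf E_\beta(u_n,v_n)=c_\beta$, so $(u_\beta,v_\beta)$ minimizes. A last application of Lemma~\ref{lemma_changeofbasis} produces a minimizer satisfying \eqref{eq:orthogonal_in_H^1}; I relabel it $(u_\beta,v_\beta)$.

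\emph{The Euler--Lagrange system.} At this minimizer $M(u_\beta)$ is diagonal and positive definite, being the Gram matrix (for the form $\int\nabla\cdot\,\nabla\cdot$) of the $L^2$-orthonormal, hence linearly independent, functions $u_{i,\beta}$; in particular $M(u_\beta)\neq0$, so $E_\beta$ is $C^1$ near $(u_\beta,v_\beta)$: $u\mapsto M(u)$ is a smooth quadratic map, $\varphi$ is $C^1$ off the origin, and $u\mapsto\int(\sum_iu_i^2)^{q/2}(\sum_iv_i^2)^{q/2}\,dx$ is $C^1$ on $H^1_0(\Omega;\R^k)$ since $H^1_0\hookrightarrow L^{2q}$. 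By Lemma~\ref{lemma:Sigma(L^2)manifold} the constraint differential is onto, so the Lagrange multiplier rule gives symmetric matrices $(\mu_{ij,\beta})$ and $(\nu_{ij,\beta})$ with $DE_\beta(u_\beta,v_\beta)=\sum_{i\leq j}\mu_{ij,\beta}\,D\bigl(\int u_iu_j\bigr)+\sum_{i\leq j}\nu_{ij,\beta}\,D\bigl(\int v_iv_j\bigr)$ at $(u_\beta,v_\beta)$. Computing $D_u\varphi(M(u_\beta))$ produces the coefficients $\frac{\partial\varphi}{\partial\xi_{ij}}(M(u_\beta))$ attached to the directional derivatives of the entries of $M$; since $M(u_\beta)$ is diagonal, Lemma~\ref{lemma:derivative_matrix} annihilates every off-diagonal one, leaving only $a_{i,\beta}=\frac{\partial\varphi}{\partial\xi_{ii}}(M(u_\beta))=\frac{\partial\psi}{\partial a_i}(M(u_\beta))>0$ (and symmetrically $b_{i,\beta}>0$ for $v_\beta$). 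Collecting the identity componentwise, and absorbing the harmless numerical factors into the symmetric matrices $\mu_{ij,\beta}$, $\nu_{ij,\beta}$, one obtains precisely \eqref{eq:equation_for_u_beta_v_beta}.

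\emph{Identification of the multipliers.} To get \eqref{eq:expression_for_mu_ij} I would test the $i$-th equation of \eqref{eq:equation_for_u_beta_v_beta} against $u_{j,\beta}$: the left-hand side equals $a_{i,\beta}\int\nabla u_{i,\beta}\cdot\nabla u_{j,\beta}\,dx=\delta_{ij}\,a_{i,\beta}\|u_{i,\beta}\|^2$ by \eqref{eq:orthogonal_in_H^1}; the linear term on the right equals $\sum_l\mu_{il,\beta}\int u_{l,\beta}u_{j,\beta}\,dx=\mu_{ij,\beta}$ since $u_\beta\in\Sigma(L^2)$; and the competition term equals $\beta\int u_{i,\beta}u_{j,\beta}(\sum_lu_{l,\beta}^2)^{q/2-1}(\sum_lv_{l,\beta}^2)^{q/2}\,dx$. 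Rearranging gives \eqref{eq:expression_for_mu_ij}, and the analogous test of the $v$-equations gives the formula for $\nu_{ij,\beta}$.

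The main obstacle is the lower semicontinuity of $u\mapsto\varphi(M(u))$ along the (diagonalized) minimizing sequence: weak $H^1_0$-convergence only yields $M(u_\beta)\preceq\lim M(u_n)$, not equality, so one genuinely needs monotonicity of $\varphi$ for the Loewner order — a property not built into the definition of $\Feh$, but extracted from the eigenvalue representation of $\varphi$, Weyl's monotonicity theorem, and $\partial\psi/\partial a_i>0$. Everything else (the coercivity estimate, the manifold structure, and the multiplier computation) is then routine in view of Lemmas~\ref{lemma_changeofbasis}, \ref{lemma:derivative_matrix} and \ref{lemma:Sigma(L^2)manifold}.
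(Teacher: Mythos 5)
Your proposal is correct, and the existence step takes a genuinely different route from the paper's. The paper does not run the plain direct method: it takes, via Ekeland's variational principle, a minimizing sequence which is also a constrained Palais--Smale sequence, diagonalizes it with Lemma~\ref{lemma_changeofbasis}, writes approximate Euler--Lagrange equations (Lemmas~\ref{lemma:derivative_matrix} and~\ref{lemma:Sigma(L^2)manifold}), and exploits the uniform lower bound $\frac{\partial \vphi}{\partial \xi_{ii}}(M(\tilde u_n))\geq \delta>0$ by testing with $\tilde u_{i,n}-\tilde u_i$ to upgrade weak to strong $H^1_0$ convergence; strong convergence then yields at once the minimality of the limit, the orthogonality \eqref{eq:orthogonal_in_H^1}, and the exact system by the Lagrange multiplier rule. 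You instead keep the direct method and compensate for the failure of weak continuity of $u\mapsto \vphi(M(u))$ by the observation that $\vphi$ is monotone for the Loewner order on positive definite matrices (orthogonal invariance reduces $\vphi$ to the eigenvalues, Weyl's monotonicity orders them under $\preceq$, and $\partial\psi/\partial a_i>0$ on $(\R^+)^k$ concludes); combined with $M(u_\beta)\preceq \diag(d_1,\ldots,d_k)$, which indeed follows by applying weak lower semicontinuity of the norm to $\sum_i c_i u_{i,n}$ for every $c\in\R^k$, this gives the needed lower semicontinuity along the diagonalized sequence, and a final application of Lemma~\ref{lemma_changeofbasis} restores \eqref{eq:orthogonal_in_H^1}. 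The Lagrange-multiplier part (smoothness of $E_\beta$ near a minimizer with $M(u_\beta)$ positive definite, Lemma~\ref{lemma:derivative_matrix} killing the off-diagonal partials, identification of $\mu_{ij,\beta}$, $\nu_{ij,\beta}$ by testing against $u_{j,\beta}$, $v_{j,\beta}$) is essentially the paper's. What each route buys: yours avoids Ekeland and the Palais--Smale machinery at the price of the Loewner-monotonicity lemma, which is not stated in the paper but is a correct consequence of Definition~\ref{def:symmetric_function}; the paper's route yields, as a by-product, strong $H^1_0$ convergence of the (diagonalized) minimizing sequence, which is not needed for the statement. One cosmetic point: your lower bound $\psi\geq\psi(0,\ldots,0)$ tacitly uses continuity of $\psi$ at the origin, which is not assumed ($\vphi$ is only $C^1$ away from $0$); since Poincar\'e already gives $\|u_{i,n}\|^2,\|v_{i,n}\|^2\geq\lambda_1(\Omega)>0$, it is cleaner to bound the two $\psi$-terms below by $\psi(\lambda_1(\Omega),\ldots,\lambda_1(\Omega))$ — though the paper indulges in the same shortcut.
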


\begin{remark}
We observe that, although the quantity $q/2-1$ needs not to be positive, there occur no singularities in the  expressions above. In fact, for every $u\in H^1_0(\Omega; \R^k)$ we have that
$$
\int_\Omega |u_i u_j|\Bigl(\sum_{l=1}^k u^2_{l}\Bigr)^{\frac{q}{2}-1}\Bigl(\sum_{l=1}^k v_l^2\Bigr)^\frac{q}{2}\, dx\leq \int_\Omega \Bigl(\sum_{l=1}^k u_{l}^2\Bigr)^{\frac{q}{2}}\Bigl(\sum_{l=1}^k v_l^2\Bigr)^\frac{q}{2}\, dx<\infty,
$$
as $2q<2^\ast$.
\end{remark}

\begin{proof}[Proof of Theorem \ref{thm:minimizer_for_c_beta}]
Fix $\beta>0$ and let us take a minimizing sequence $u_n=(u_{1,n},\ldots, u_{k,n})$, $v_n=(v_{1,n},\ldots, v_{k,n})$. By the Ekeland's variational principle we can suppose that $u_n,v_n\in \Sigma(L^2)$ is such that
$$
E_\beta(u_n,v_n)\to c_{\beta} \qquad \text{ and } \qquad E_\beta|_{\Sigma(L^2)\times \Sigma(L^2)}'(u_n,v_n)\to 0 \qquad \text{ as } n\to \infty.
$$
(taking also in consideration Lemma \ref{lemma:Sigma(L^2)manifold}).

Consider  the functions $\tilde u_n$, $\tilde v_n$ defined in Lemma \ref{lemma_changeofbasis}. As $E_\beta(u,v)=E_\beta(Pu,Qv)$ for every $u,v\in H^1_0(\Omega; \R^k)$, $P,Q\in \Oeh_k(\R)$, and $Pu\in \Sigma(L^2)$ whenever $u\in \Sigma(L^2)$, then 
\begin{equation}\label{eq:constrained_PS_sequence}
E_\beta(\tilde u_n,\tilde v_n)=E_\beta(u_n,v_n)\to c_\beta \qquad \text{ and } \qquad E_\beta|_{\Sigma(L^2)\times \Sigma(L^2)}'(\tilde u_n,\tilde v_n)\to 0.
\end{equation}
The first convergence implies that
\begin{eqnarray*}
E_\beta(\tilde u_n,\tilde v_n)&=&\psi\Bigl( \| \tilde u_{1,n}\|^2 ,\ldots, \| \tilde u_{k,n}\|^2 \Bigr)+\psi\Bigl(\| \tilde v_{1,n}\|^2,\ldots, \int_\Omega \| \tilde v_{k,n}\|^2\Bigr)\\
				&& + \frac{2 \beta}{p}\int_\Omega\Bigl(\sum_{j=1}^k \tilde u_{j,n}^2\Bigr)^\frac{q}{2}\Bigl(\sum_{j=1}^k \tilde v_{j,n}\Bigr)^\frac{q}{2}\, dx\leq c_\beta+1, \qquad \forall n\text{ large}.
\end{eqnarray*}
Since $\int_\Omega \tilde u_{i,n}^2\, dx=1$, we must have $\int_\Omega |\nabla \tilde u_{i,n}|^2\, dx\geq \delta>0$ for all $n$, and thus since $\psi\to +\infty$ as $s_i\to +\infty$ for some $i$, we deduce that $\int_\Omega |\nabla \tilde u_{i,n}|^2\, dx \leq C$ for all $i$ and $n$. Analogously  $\int_\Omega |\nabla \tilde v_{i,n}|^2\, dx\leq C$ and hence there exists $(\tilde u,\tilde v)\in \Sigma(L^2)$ such that, up to a subsequence,
$$
\tilde u_{i,n}\rightharpoonup \tilde u_i,\quad \tilde v_{i,n}\rightharpoonup \tilde v_i \qquad \text{ weakly in $H^1_0(\Omega)$, strongly in $L^r(\Omega)$ ($1\leq r<2^\ast$), a.e $x\in \Omega$.}
$$
The second condition in \eqref{eq:constrained_PS_sequence} together with Lemma \ref{lemma:derivative_matrix} implies the existence of $\tilde \mu_{ij,n}$, $\tilde \nu_{ij,n}$ such that
\begin{align*}
-\frac{\partial \vphi}{\partial \xi_{ii}}(M(\tilde u_n))\Delta \tilde u_{i,n}&=\sum_{j=1}^k \tilde \mu_{ij,n} \tilde u_{j,n}-\beta \tilde u_{i,n} \Bigl(\sum_{j=1}^k \tilde u_{j,n}^2\Bigr)^{\frac{q}{2}-1}    \Bigl(\sum_{j=1}^k \tilde v_{j,n}^2\Bigr)^\frac{q}{2} + {\rm o}_n(1)\\
-\frac{\partial \vphi}{\partial \xi_{ii}}(M(\tilde v_n))\Delta \tilde v_{i,n}&=\sum_{j=1}^k \tilde \nu_{ij,n} \tilde v_{j,n}-\beta \tilde v_{i,n} \Bigl(\sum_{j=1}^k \tilde v_{j,n}^2\Bigr)^{\frac{q}{2}-1}    \Bigl(\sum_{j=1}^k \tilde u_{j,n}^2\Bigr)^\frac{q}{2}+ {\rm o}_n(1)
\end{align*}
in $H^{-1}(\Omega)$. Observe that
$$
\tilde \mu_{ij,n}=\delta_{ij} \frac{\partial \vphi}{\partial \xi_{ii}}(M(\tilde u_n))\int_\Omega |\nabla \tilde u_{i,n}|^2 \, dx+\beta\int_\Omega \tilde u_{i,n}\tilde u_{j,n} \Bigl(\sum_{l=1}^k \tilde u_{l,n}^2\Bigr)^{\frac{q}{2}-1}\Bigl(\sum_{l=1}^k \tilde v_{l,n}^2\Bigr)^\frac{q}{2}\, dx+{\rm o}(\|\tilde u_{j,n}\|)
$$
is bounded independently of $n$. Thus, by  multiplying the $i$--th equation for $\tilde u_n$ by $\tilde u_{i,n}-\tilde u_i$ we obtain
$$
\frac{\partial \vphi}{\partial \xi_{ii}}(M(\tilde u_n))\int_\Omega \nabla \tilde u_{i,n}\cdot \nabla (\tilde u_{i,n}-\tilde u_{i})\, dx={\rm o}_n(1).
$$
As $\Bigl(\int_\Omega |\nabla \tilde u_{1,n}|^2\, dx,\ldots,\int_\Omega |\nabla \tilde u_{k,n}|^2\, dx\Bigr)$ lies in a compact set not containing the origin and $\frac{\partial \psi}{\partial a_{i}}>0$ on $(\R^+)^k$, we have that $\frac{\partial \vphi}{\partial \xi_{ii}}(M(\tilde u_n))\geq \delta >0$ for all $i$ and $n$. Hence $\tilde u_{i,n}\to \tilde u_i$ strongly in $H^1_0(\Omega)$ and, analogously, $\tilde v_{i,n}\to \tilde v_i$. In conclusion, for any $i\neq j$ we have
$$
\int_\Omega \nabla \tilde u_i\cdot \nabla \tilde u_j\, dx=\lim_n \int_\Omega \nabla \tilde u_{i,n}\cdot \nabla \tilde u_{j,n}\, dx=0,\qquad \int_\Omega \nabla \tilde v_i\cdot \nabla \tilde v_j\, dx=\lim_n \int_\Omega \nabla \tilde v_{i,n}\cdot \nabla \tilde v_{j,n}\, dx=0,
$$
and $E_\beta(\tilde u_i,\tilde v_i)=c_\beta$. The last paragraph of the statement is a consequence of the Lagrange multiplier rule, by recalling once again Lemma \ref{lemma:Sigma(L^2)manifold}.
\end{proof}

Observe that \eqref{eq:equation_for_u_beta_v_beta} is a type of competitive system where competition occurs between groups of components. In fact, if one thinks of each $u_i$ and $v_i$ as being a population density, then no competition occurs between different $u_i$'s, or between different $v_i$'s, but each element of $\{u_1,\ldots, u_k\}$ competes with each element of $\{v_1,\ldots,v_k\}$. A Lotka-Volterra (non variational) type competition of this kind was considered in the paper by Caffarelli and F.-H. Lin \cite{CaffarelliLinJAMS}.

\subsection{Uniform bounds in H\"older spaces. Proof of the general existence theorem}\label{subsec:Holderbounds}
Our next goal is to pass to the limit $\beta\to \infty$ in \eqref{eq:equation_for_u_beta_v_beta}, relating the limiting profiles with problem $c_\infty$. Having this in mind, first we establish some uniform bounds (in $\beta)$ in the $L^\infty$--norm, and then in H\"older spaces. Throughout this subsection, we take $(u_\beta,v_\beta)$ as in Theorem \ref{thm:minimizer_for_c_beta}.

\begin{lemma}\label{lemma:uniform_bounds_in_beta}
We have $c_\beta\leq c_\infty$ for every $\beta>0$. In particular this implies that $\|(u_\beta,v_\beta)\|_{H^1_0(\Omega)}$ and $\|(u_\beta,v_\beta)\|_{L^\infty(\Omega)}$ are bounded independently of $\beta$.
\end{lemma}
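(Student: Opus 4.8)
The statement has two halves: the inequality $c_\beta\le c_\infty$, which I would prove by producing an explicit competitor, and the uniform $H^1_0$ and $L^\infty$ bounds, which then follow by standard elliptic arguments applied to the sum of the squares of the components. \emph{For the first half}, note that since $\Omega$ is open, bounded and nonempty we have $c_\infty<+\infty$ (split $\Omega$ into two disjoint balls), so it suffices to bound $c_\beta$ by $\sum_{i=1}^2\psi(\lambda_1(\omega_i),\dots,\lambda_k(\omega_i))$ for an arbitrary $(\omega_1,\omega_2)\in\Peh_2(\Omega)$ with $\omega_1,\omega_2\neq\emptyset$. Given such a partition, I would take $L^2(\omega_i)$--orthonormal eigenfunctions $\varphi^{(i)}_1,\dots,\varphi^{(i)}_k$ of $(-\Delta,H^1_0(\omega_i))$ belonging to $\lambda_1(\omega_i),\dots,\lambda_k(\omega_i)$, extend them by $0$ to $\Omega$ (they remain in $H^1_0(\Omega)$), and set $u=(\varphi^{(1)}_1,\dots,\varphi^{(1)}_k)$, $v=(\varphi^{(2)}_1,\dots,\varphi^{(2)}_k)$. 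Then $u,v\in\Sigma(L^2)$; the competition term of $E_\beta(u,v)$ vanishes because $\omega_1\cap\omega_2=\emptyset$ forces $u_jv_l\equiv0$ a.e.\ in $\Omega$; and testing $-\Delta\varphi^{(i)}_j=\lambda_j(\omega_i)\varphi^{(i)}_j$ against $\varphi^{(i)}_l$ shows $M(u)=\diag(\lambda_1(\omega_1),\dots,\lambda_k(\omega_1))$ and $M(v)=\diag(\lambda_1(\omega_2),\dots,\lambda_k(\omega_2))$, hence $\vphi(M(u))+\vphi(M(v))=\sum_{i=1}^2\psi(\lambda_1(\omega_i),\dots,\lambda_k(\omega_i))$. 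Thus $c_\beta\le E_\beta(u,v)=\sum_{i=1}^2\psi(\lambda_1(\omega_i),\dots,\lambda_k(\omega_i))$, and infimizing over partitions gives $c_\beta\le c_\infty$.

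\emph{For the uniform $H^1_0$ bound}, I would use that $E_\beta(u_\beta,v_\beta)=c_\beta\le c_\infty$, that the competition term is nonnegative, and Theorem \ref{thm:minimizer_for_c_beta} (which makes $M(u_\beta)$, $M(v_\beta)$ diagonal) to get $\psi(\|u_{1,\beta}\|^2,\dots,\|u_{k,\beta}\|^2)+\psi(\|v_{1,\beta}\|^2,\dots,\|v_{k,\beta}\|^2)\le c_\infty$. Poincaré gives $\|u_{i,\beta}\|^2,\|v_{i,\beta}\|^2\ge\lambda_1(\Omega)=:\delta>0$, and since $\psi$ has positive partials on $(\R^+)^k$ it is nondecreasing there; monotonicity then bounds $\psi(\|u_{1,\beta}\|^2,\dots,\|u_{k,\beta}\|^2)\le c_\infty-\psi(\delta,\dots,\delta)$, hence $\psi(\delta,\dots,\delta,\|u_{i,\beta}\|^2,\delta,\dots,\delta)\le c_\infty-\psi(\delta,\dots,\delta)$ for each $i$, and the coercivity of $\psi$ in each variable (Definition \ref{def:symmetric_function}) forces $\|u_{i,\beta}\|^2\le R$ with $R$ independent of $\beta$; thus $\|u_\beta\|^2\le kR$, and likewise for $v_\beta$. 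Along the way I would record, for Step~3, that $(\|u_{1,\beta}\|^2,\dots,\|u_{k,\beta}\|^2)$ stays in the compact set $[\delta,R]^k\subset(\R^+)^k$, so $0<\delta'\le a_{i,\beta},b_{i,\beta}\le C'$; and that $\tfrac{2\beta}{q}\int_\Omega(\sum_j u_{j,\beta}^2)^{q/2}(\sum_j v_{j,\beta}^2)^{q/2}\,dx\le c_\infty-2\psi(\delta,\dots,\delta)$, which together with \eqref{eq:expression_for_mu_ij} and the pointwise bound $|u_{i}u_{j}|(\sum_l u_l^2)^{q/2-1}\le(\sum_l u_l^2)^{q/2}$ makes $\mu_{ij,\beta},\nu_{ij,\beta}$ bounded uniformly in $\beta$.

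\emph{For the uniform $L^\infty$ bound}, I would first note that for \emph{fixed} $\beta$ a routine elliptic bootstrap in \eqref{eq:equation_for_u_beta_v_beta} (the right-hand sides lie in some $L^r$, $r>1$, and the integrability improves on iteration since $2q<2^\ast$) gives $u_{i,\beta},v_{i,\beta}\in L^\infty(\Omega)$, so that $\Phi_\beta:=\sum_{i=1}^k u_{i,\beta}^2\in H^1_0(\Omega)$ and likewise $V_\beta:=\sum_i v_{i,\beta}^2$; this is just to legitimize the manipulations. Multiplying the $i$-th $u_\beta$-equation in \eqref{eq:equation_for_u_beta_v_beta} by $u_{i,\beta}/a_{i,\beta}$, summing over $i$, and using $-\tfrac12\Delta\Phi_\beta=\sum_i(-\Delta u_{i,\beta})u_{i,\beta}-|\nabla u_\beta|^2$, the favourable sign of the competition contribution (discarded together with $-|\nabla u_\beta|^2$), the bound $|u_{i,\beta}u_{j,\beta}|\le\tfrac12(u_{i,\beta}^2+u_{j,\beta}^2)$, and Step~2, one obtains in the weak sense
\[
-\Delta\Phi_\beta\le C_3\,\Phi_\beta\ \text{ in }\Omega,\qquad \Phi_\beta\in H^1_0(\Omega),\quad\Phi_\beta\ge0,
\]
with $C_3$ \emph{independent of $\beta$}, and the same for $V_\beta$. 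Since $\|\Phi_\beta\|_{L^{2^\ast/2}(\Omega)}\le\sum_i\|u_{i,\beta}\|_{L^{2^\ast}(\Omega)}^2$ is bounded by Step~2, a De Giorgi--Nash--Moser argument for nonnegative subsolutions --- bootstrapping the integrability of $\Phi_\beta$ from $L^{2^\ast/2}$ up to $L^q$ with $q>N/2$ in finitely many steps, then invoking the interior local-boundedness estimate together with its boundary counterpart (legitimate because $\Phi_\beta\in H^1_0(\Omega)$) --- yields $\|\Phi_\beta\|_{L^\infty(\Omega)}\le C$ uniformly in $\beta$ (cf.\ the analogous estimates in \cite{NTTV1,TavaresTerracini1}). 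Hence $|u_{i,\beta}|\le\sqrt C$ for all $i$, uniformly in $\beta$, and the same for $v_\beta$. This last step is the only genuinely delicate point: one must single out the scalar quantity $\sum_i u_{i,\beta}^2$, exploit that the strong competition term enters with a sign that erases the $\beta$-dependence from the differential inequality, and control the behaviour near $\partial\Omega$ using only the $H^1_0$ information; Steps~1 and~2 are comparatively routine.
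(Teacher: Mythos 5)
Your proposal is correct, and the first two parts (the competitor built from extended eigenfunctions giving $c_\beta\le E_\beta(\bar u,\bar v)=\sum_i\psi(\lambda_1(\omega_i),\ldots,\lambda_k(\omega_i))$, and the $H^1_0$ bound via $\Sigma(L^2)$-normalization, Poincar\'e, monotonicity and coercivity of $\psi$) coincide with the paper's argument, only spelled out in more detail. The only real divergence is the uniform $L^\infty$ bound: you pass to the scalar subsolution $\Phi_\beta=\sum_i u_{i,\beta}^2$, discard the gradient and competition terms to get $-\Delta\Phi_\beta\le C\Phi_\beta$ with $C$ independent of $\beta$, and then run a De Giorgi--Nash--Moser bootstrap starting from the uniform $L^{2^\ast/2}$ bound; the paper instead applies Kato's inequality componentwise to obtain $-\Delta\bigl(\sum_i|u_{i,\beta}|\bigr)\le \tfrac{k\bar\mu}{\underline a}\sum_i|u_{i,\beta}|$ and concludes with a Brezis--Kato argument. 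Both routes rest on the same two facts you isolate (the favourable sign of the $\beta$-term after the transformation, and the uniform bounds $a_{i,\beta}\ge\underline a$, $|\mu_{ij,\beta}|\le\bar\mu$), and both are valid; the paper's choice of $\sum_i|u_{i,\beta}|$ is marginally more economical because that function inherits the uniform $H^1_0$ bound directly, so the iteration starts from a uniform $H^1_0$ (hence $L^{2^\ast}$) datum and no preliminary fixed-$\beta$ regularization of $\Phi_\beta$ is needed, whereas your version requires the extra (but correctly supplied) steps of checking $\Phi_\beta\in H^1_0$ for fixed $\beta$ and raising the integrability from $L^{2^\ast/2}$ before invoking local boundedness.
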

\begin{proof}
1. Given $(\omega_1,\omega_2)\in \Peh_2(\Omega)$, consider the first $k$ eigenfunctions $\bar u=(\bar u_1,\ldots, \bar u_k)$  of $-\Delta$ in $H^1_0(\omega_1)$, orthonormal in $L^2$, and  the first $k$ $L^2$-orthonormal eigenfunctions $\bar v=(\bar v_1,\ldots, \bar v_k)$ of $-\Delta$ in $H^1_0(\omega_2)$, so that $\bar u,\bar v\in \Sigma(L^2)$. Then, since $\omega_1\cap \omega_2=\emptyset$,
$$
\psi(\lambda_1(\omega_1),\ldots, \lambda_k(\omega_1))+\psi(\lambda_1(\omega_2),\ldots, \lambda_k(\omega_2))=\vphi(M(\bar u))+\vphi(M(\bar v))=E_\beta(\bar u,\bar v)\geq c_\beta
$$
 for all  $\beta>0$, hence $c_\beta\leq c_\infty$.

\medbreak

\noindent 2. As $\displaystyle \int_\Omega \beta (\sum_{j=1}^k u_{j,\beta}^2)^\frac{q}{2}(\sum_{j=1}^k v_{j,\beta}^2)^\frac{q}{2}\, dx\geq 0$ and $\psi|_{(\R^+)^k}\geq \psi(0,\ldots,0)$, we obtain that
\[
\psi\left(\| u_{1,\beta}\|^2,\ldots,\| u_{k,\beta}\|^2\right),\quad \psi\left(\| v_{1,\beta}\|^2,\ldots,\| v_{k,\beta}\|^2\right)
\]
and also
\[
\int_\Omega \beta \Bigl(\sum_{j=1}^k u_{j,\beta}^2\Bigr)^\frac{q}{2}\Bigl(\sum_{j=1}^k v_{j,\beta}^2\Bigr)^\frac{q}{2}\, dx
\]
are bounded from above, independently of $\beta$. Since $u_\beta,v_\beta\in \Sigma(L^2)$, we have $\|u_{i,\beta}\|^2, \|v_{i,\beta}\|^2\geq \delta>0$, and by the assumptions made for $\psi$ we obtain uniform $H^1_0$ bounds for $u_\beta,v_\beta$.

\medbreak

\noindent 3. To prove the $L^\infty$ bounds, observe first of all that the conclusions of the previous point imply the existence of $\bar \mu$ such that $|\mu_{ij,\beta}|\leq \bar \mu$ for every $i,j$ and $\beta$, where $\mu_{ij,\beta}$ are the Lagrange multipliers defined in \eqref{eq:expression_for_mu_ij}. Moreover, there exists $\underline{a}>0$ such that $a_{i,\beta}\geq \underline{a}$. If we combine equation \eqref{eq:equation_for_u_beta_v_beta} with the Kato inequality \footnote{The Kato inequality (see for instance \cite[Th. 21.19]{willem}) states that given $u\in L^1_{\rm loc}$ with $\Delta u\in L^1_{\rm loc}$, then $\text{sign}(u)\Delta u\leq \Delta |u|$.} we deduce that
\[
\begin{split}
-\Delta |u_{i,\beta}| &\leq -\text{sign}(u_{i,\beta}) \Delta u_{i,\beta} \\
		& =\sum_{j=1}^k \frac{\mu_{ij,\beta}}{a_{i,\beta}}\text{sign}(u_{i,\beta}) u_{j,\beta} -\frac{\beta}{a_{i,\beta}} \text{sign}(u_{i,\beta}) u_{i,\beta} \Bigl(\sum_{j=1}^k u_{j,\beta}^2\Bigr)^{\frac{q}{2}-1}\Bigl(\sum_{j=1}^k v_{j,\beta}^2\Bigr)^\frac{q}{2}\\
		& \leq \frac{\bar \mu}{\underline a}\sum_{j=1}^k|u_{j,\beta}|.
\end{split}
\]
Summing up in $i$ yields
$$
-\Delta \Bigl(\sum_{i=1}^k |u_{i,\beta}|\Bigr)\leq \frac{k\bar \mu}{\underline{a}}\sum_{i=1}^k |u_{i,\beta}|.
$$
Since a similar statement holds for $\sum_{i=1}^k |v_{i,\beta}|$, we can conclude by using a standard Brezis-Kato type argument together with the $H^1_0$ bounds.
\end{proof}

\begin{theorem}\label{thm:convergences}
There exists $(u,v)\in H^1_0(\Omega; \R^k)\cap C^{0,\alpha}(\overline{\Omega};\R^k)$ ($\forall 0<\alpha<1$) such that, up to a subsequence, as $\beta\to +\infty$,
\begin{itemize}
\item[(i)] $u_\beta\to u$, $v_\beta\to v$ in $C^{0,\alpha}(\overline \Omega)\cap H^1_0(\Omega)$;
\item[(ii)] $u,v\in \Sigma(L^2)$ and $\int_\Omega \nabla u_i \cdot \nabla u_j\, dx=\int_\Omega \nabla v_i\cdot \nabla v_j\, dx=0\ \forall i\neq j$. Moreover,
\[
\int_\Omega \beta (\sum_{j=1}^k u_{j,\beta}^2)^\frac{q}{2}(\sum_{j=1}^k v_{j,\beta}^2)^\frac{q}{2}\, dx\to 0 \quad \text{ and }\quad  u_i\cdot v_j\equiv 0\text{ in } \Omega\ \forall i,j.
\]
\item[(iii)] Furthermore,
\[
  \begin{split}
    &-a_i \Delta u_i=\mu_i u_i \quad \text{ in  } \omega_u:=\{x\in \Omega: u_1^2+\ldots+u_k^2>0\},\\ 
    &-b_i \Delta v_i=\nu_i v_i \; \,\quad \text{ in  } \omega_v:=\{x\in \Omega: v_1^2+\ldots+v_k^2>0\}
\end{split}
\]
for $a_i=\lim_\beta a_{i,\beta}$, $b_i=\lim_{\beta}b_{i,\beta}$, $\mu_i=\lim_\beta \mu_{ii,\beta}$, $\nu_i=\lim_\beta \nu_{ii,\beta}$. Finally, for every $  i=1,\dots,k, \; x_0\in \Omega,\, r\in (0,\dist(x_0,\partial\Omega))$,
$$a_i\int_{B_r(x_0)}|\nabla u_i|^2\, dx=a_i\int_{\partial B_r(x_0)}u_i\, \partial_nu_i\, d\sigma+\mu_i \int_{B_r(x_0)} u_i^2\, dx$$ 
and 
$$b_i\int_{B_r(x_0)}|\nabla v_i|^2\, dx=b_i\int_{\partial B_r(x_0)}v_i\, \partial_nv_i\, d\sigma+\nu_i \int_{B_r(x_0)} v_i^2\, dx.$$    
\end{itemize}
\end{theorem}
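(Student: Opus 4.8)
The plan is to obtain Theorem~\ref{thm:convergences} by passing to the limit $\beta\to\infty$ in the system \eqref{eq:equation_for_u_beta_v_beta}, in the spirit of the by-now classical theory for strongly competing systems (as developed in \cite{NTTV1,TavaresTerracini2} and used in \cite{CaffarelliLinJAMS}). First I would record the uniform $H^1_0$ and $L^\infty$ bounds from Lemma~\ref{lemma:uniform_bounds_in_beta}, together with the uniform bounds $0<\underline a\le a_{i,\beta}\le\bar a$, $0<\underline b\le b_{i,\beta}\le\bar b$ and $|\mu_{ij,\beta}|,|\nu_{ij,\beta}|\le\bar\mu$ (these follow from the $H^1_0$ bounds plus the continuity of $\partial\vphi/\partial\xi_{ii}$ away from the origin and the explicit formulas \eqref{eq:expression_for_mu_ij}); passing to a further subsequence we may assume $a_{i,\beta}\to a_i>0$, $b_{i,\beta}\to b_i>0$, $\mu_{ij,\beta}\to\mu_{ij}$, $\nu_{ij,\beta}\to\nu_{ij}$. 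Dividing the $u$-equation in \eqref{eq:equation_for_u_beta_v_beta} by $a_{i,\beta}$, the right-hand sides are of the form $f_{i,\beta}-\beta g_{i,\beta}$ where $f_{i,\beta}$ is bounded in $L^\infty$ and $g_{i,\beta}\ge0$ has the sign of $u_{i,\beta}$; this is exactly the structure to which the uniform-in-$\beta$ H\"older estimate of \cite[Theorem~1.1 and Theorem~1.2]{NTTV1} applies (here one uses that the competition term couples the group $(u_1,\dots,u_k)$ against the group $(v_1,\dots,v_k)$, the within-group terms being absent, and that $\sum_i u_{i,\beta}^2$ and $\sum_j v_{j,\beta}^2$ play the role of the two competing densities). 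Consequently $(u_\beta,v_\beta)$ is bounded in $C^{0,\alpha}(\overline\Omega)$ for every $\alpha\in(0,1)$, and by Ascoli--Arzel\`a a subsequence converges in $C^{0,\alpha}$ to some $(u,v)$.

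Next I would upgrade this to strong $H^1_0$-convergence and obtain the segregation. Testing the $i$-th $u$-equation against $u_{i,\beta}-u_i$ and using the $C^{0,\alpha}$ convergence together with the boundedness of the $\mu_{ij,\beta}$ shows that $a_{i,\beta}\int_\Omega\nabla u_{i,\beta}\cdot\nabla(u_{i,\beta}-u_i)\to0$; since $a_{i,\beta}\ge\underline a>0$ this gives $u_{i,\beta}\to u_i$ strongly in $H^1_0(\Omega)$, and likewise $v_{i,\beta}\to v_i$. Strong convergence preserves the constraints: $u,v\in\Sigma(L^2)$ and $\int_\Omega\nabla u_i\cdot\nabla u_j\,dx=\lim_\beta\int_\Omega\nabla u_{i,\beta}\cdot\nabla u_{j,\beta}\,dx=0$ for $i\ne j$, and similarly for $v$. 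To see that the penalization term vanishes, note $c_\beta=E_\beta(u_\beta,v_\beta)\le c_\infty$ is bounded and, passing to the limit using strong $H^1_0$ convergence in the $\vphi(M(\cdot))$ terms, $\vphi(M(u))+\vphi(M(v))\le\liminf_\beta\big(c_\beta-\tfrac{2\beta}{q}\int_\Omega(\cdots)\big)\le c_\infty$, while also $\vphi(M(u))+\vphi(M(v))\ge c_\infty$ by the admissibility of $(u,v)$ in $c_\infty$ (once segregation is known); hence $\tfrac{2\beta}{q}\int_\Omega(\sum u_{j,\beta}^2)^{q/2}(\sum v_{j,\beta}^2)^{q/2}\,dx\to0$. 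By Fatou, $\int_\Omega(\sum u_j^2)^{q/2}(\sum v_j^2)^{q/2}\,dx=0$, so $u_i\cdot v_j\equiv0$ a.e., and by continuity everywhere in $\Omega$. (If one prefers to avoid the slight circularity with $c_\infty$, the vanishing of $\beta\int_\Omega(\cdots)$ follows directly from the $C^{0,\alpha}$ bounds: the standard argument in \cite{NTTV1,TavaresTerracini2} shows $\beta\,(\sum u_{j,\beta}^2)^{q/2}(\sum v_{j,\beta}^2)^{q/2}$ is bounded in $L^1_{\mathrm{loc}}$ and concentrates on the set $\{u=0\}\cap\{v=0\}$, forcing $u\cdot v\equiv0$ in the limit.)

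Finally I would derive the limiting equations and the local Pohozaev-type (Rellich) identities. On the open set $\omega_u=\{\sum_j u_j^2>0\}$, for $\beta$ large and a small ball $B$ around any point of $\omega_u$ one has $\sum_j v_{j,\beta}^2\to0$ uniformly on $B$; testing the $u$-equation \eqref{eq:equation_for_u_beta_v_beta} against a test function supported in $B$ and letting $\beta\to\infty$, the competition term is killed (it is controlled by $\beta(\sum v_{j,\beta}^2)^{q/2}$ times an $L^\infty$ factor, and $\beta(\sum v_{j,\beta}^2)^{q/2}\to0$ uniformly on $B$ because $\sum v_{j,\beta}^2$ decays faster than any power of $1/\beta$ away from the interface — again by the $C^{0,\alpha}$ estimates, or more simply from the fact that $u\cdot v\equiv0$ and $v$ is continuous so $v\equiv0$ on $B$ hence the $L^\infty$ norm of $\sum v_{j,\beta}^2$ on $B$ tends to $0$, combined with $\beta\int_{B}(\cdots)\to0$). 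The off-diagonal Lagrange multipliers $\mu_{ij,\beta}$, $i\ne j$, converge to $0$: indeed by \eqref{eq:expression_for_mu_ij} with $i\ne j$, $\mu_{ij,\beta}=\beta\int_\Omega u_{i,\beta}u_{j,\beta}(\sum u_{l,\beta}^2)^{q/2-1}(\sum v_{l,\beta}^2)^{q/2}\,dx$, whose absolute value is $\le\beta\int_\Omega(\sum u_{l,\beta}^2)^{q/2}(\sum v_{l,\beta}^2)^{q/2}\,dx\to0$. Hence the limit system is $-a_i\Delta u_i=\mu_i u_i$ in $\omega_u$ with $\mu_i=\lim_\beta\mu_{ii,\beta}$, and symmetrically for $v$. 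That $\omega_u$ coincides with the interior of its closure is part of the regularity statement and is proved later (Section~\ref{sec:Regularity_of_nodal_set}); at this stage I would only assert the equation on the open set $\omega_u$ as written. The local Rellich identities then follow by multiplying $-a_i\Delta u_i=\mu_i u_i$ by $u_i$ and integrating over $B_r(x_0)$: since $u_i$ is Lipschitz (again from the later regularity, or directly: $u_i$ solves $-a_i\Delta u_i=\mu_i u_i$ in $\omega_u$ and vanishes on $\Omega\setminus\omega_u$, and the domain-variation/monotonicity machinery of \cite{CaffarelliLinJAMS,TavaresTerracini1} gives interior Lipschitz bounds), the boundary term $\int_{\partial B_r(x_0)}u_i\,\partial_n u_i\,d\sigma$ is meaningful and Green's formula yields $a_i\int_{B_r(x_0)}|\nabla u_i|^2\,dx=a_i\int_{\partial B_r(x_0)}u_i\,\partial_n u_i\,d\sigma+\mu_i\int_{B_r(x_0)}u_i^2\,dx$, and likewise for $v_i$. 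The main obstacle is the uniform-in-$\beta$ H\"older (and ultimately Lipschitz) estimate for the competitive system with group-structured interaction of exponent $q$ with possibly $q<2$; this requires checking that the hypotheses of the \cite{NTTV1} (respectively \cite{TavaresTerracini1}) estimates are met by \eqref{eq:equation_for_u_beta_v_beta} after normalizing by the $a_{i,\beta}$, in particular that the absence of intra-group competition and the form of the cross term do not spoil the Almgren monotonicity formula used there.
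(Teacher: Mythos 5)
Your overall architecture (uniform bounds, blowup-free compactness, strong $H^1$ convergence by testing with $u_{i,\beta}-u_i$, vanishing of the penalization via $c_\beta\le c_\infty$, $\mu_{ij,\beta}\to0$ for $i\ne j$) matches the paper's, but there is a genuine gap exactly where the paper spends most of its effort: the uniform-in-$\beta$ H\"older estimate. You assert that \cite[Theorems 1.1--1.2]{NTTV1} ``apply'' to \eqref{eq:equation_for_u_beta_v_beta}, and then in your closing sentence concede that verifying their hypotheses for this system is ``the main obstacle'' without carrying it out. Those results do not cover the present setting: here the interaction has exponent $q$ with $2<2q<2^*$ (so possibly $q<2$), the components are sign-changing and coupled in groups $(u_1,\dots,u_k)$ versus $(v_1,\dots,v_k)$, and the diffusion coefficients $a_{i,\beta},b_{i,\beta}$ depend on $\beta$. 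The paper's proof of part (i) is precisely the adaptation of the \cite{NTTV1} contradiction/blowup scheme to this situation: Claims 1 and 2 treat the cases $q\ge2$ and $q<2$ with different decay estimates (\cite[Lemma 2.2]{SoaveZilio}, \cite[Lemma 4.4]{ContiTerraciniVerziniAsymptotic}), the quantity $J(r)$ and the ratios $\Lambda_1,\Lambda_2$ are re-estimated because $q\ne2$, and the final contradiction needs an Almgren monotonicity formula with the weights $a_i,b_i$, which is new and proved separately in Subsection 3.3. Deferring all of this to a ``check'' leaves the central step of the theorem unproved.

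The derivation of the final Rellich identities is also not sound as written. The statement asserts them for \emph{every} $x_0\in\Omega$ and $r<\dist(x_0,\partial\Omega)$, so $B_r(x_0)$ may cross the free boundary; Green's formula applied to $-a_i\Delta u_i=\mu_i u_i$ is only available when $B_r(x_0)\subset\omega_u$, and in general $u_i$ satisfies $-a_i\Delta u_i=\mu_i u_i-\Mr_i$ with a singular measure (Proposition \ref{eq:propositionmeasures}), whose harmlessness when paired with $u_i$ must be argued. Moreover, you invoke Lipschitz continuity ``from the later regularity'' to justify the boundary term, but that regularity (Corollary \ref{coro:Lipschiz}) rests on the Almgren formula of Theorem \ref{thm:Almgren}, whose proof uses exactly the identity you are trying to establish — a circularity. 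The paper avoids both problems by obtaining the identities directly at the $\beta$-level: multiply \eqref{eq:equation_for_u_beta_v_beta} by $u_{i,\beta}$, integrate over $B_r(x_0)$, and pass to the limit using the strong $H^1\cap C^{0,\alpha}$ convergence, the $L^1$-vanishing of the nonnegative coupling term, and $\mu_{ij,\beta}\to0$ for $i\ne j$; only H\"older regularity of the limit is needed. Your treatment of the limit equation on $\omega_u$ is recoverable (use that $\sum_j u_{j,\beta}^2$ is bounded below on compact subsets of $\omega_u$ together with $\beta\int(\sum u_{j,\beta}^2)^{q/2}(\sum v_{j,\beta}^2)^{q/2}\to0$, rather than the unproven superpolynomial decay of $v_\beta$), but the two issues above — the missing uniform H\"older estimate and the flawed route to the global Rellich identities — are real gaps.
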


\begin{proof}  

(i) The proof of this point is made by contradiction. Namely, we assume that
$$
L_{\beta}:=\max_{i,j=1,\ldots,k}\left\{  \max_{x,y\in \overline{\Omega}}\frac{|u_{j,\beta}(x)-u_{j,\beta}(y)|}{|x-y|^{\alpha}},
\max_{x,y\in \overline{\Omega}}\frac{|v_{i,\beta}(x)-v_{i,\beta}(y)|}{|x-y|^{\alpha}} \right\} \to +\infty
$$
and will reach a contradiction. The argument is very close to the one in \cite[Section 3]{NTTV1} (there the authors deal with positive solutions and moreover $q=2$, $k=1$ and $a_{i,\beta}=b_{i,\beta}=1$) and therefore we merely focus on the technical differences with respect to \cite{NTTV1}. 

\medbreak

\noindent \emph{Step A.} We may assume that $L_{\beta}$ is achieved by the component  $u_{1,\beta}$, say at the pair $(x_{\beta},y_{\beta})$ with $|x_{\beta}-y_{\beta}|\to 0$. Accordingly, for a given sequence $r_{\beta}\to 0$ to be chosen later on, we define the rescaled functions
$$\bar{u}_{i,\beta}(x)=\frac{1}{L_{\beta}r_{\beta}^{\alpha}} \; u_{i,\beta}(x_{\beta}+r_{\beta}x), 
\qquad \bar{v}_{i,\beta}(x)=\frac{1}{L_{\beta}r_{\beta}^{\alpha}}\;  v_{i,\beta}(x_{\beta}+r_{\beta}x),$$
for $x\in \Omega_{\beta}:=\frac{\Omega-x_{\beta}}{r_{\beta}}$. It follows that
\begin{equation}\label{eq:equation_for_bar_u_beta_bar_v_beta}
\left\{\begin{array}{l}
-a_{i,\beta}\Delta \bar{u}_{i,\beta}=\sum_{j=1}^k r_{\beta}^2\mu_{ij,\beta} \bar{u}_{j,\beta}-\beta M_{\beta} \bar{u}_{i,\beta}\Bigl(\sum_{j=1}^k\bar{u}_{j,\beta}^2\Bigr)^{\frac{q}{2}-1}\Bigl(\sum_{j=1}^k \bar{v}_{j,\beta}^2\Bigr)^\frac{q}{2} \\[10pt]
-b_{i,\beta}\Delta \bar{v}_{i,\beta}= \sum_{j=1}^k r_{\beta}^2\nu_{ij,\beta} \bar{v}_{j,\beta}-\beta M_{\beta} \bar{v}_{i,\beta}
\Bigl(\sum_{j=1}^k \bar{v}_{j,\beta}^2\Bigr)^{\frac{q}{2}-1} \Bigl(\sum_{j=1}^k \bar{u}_{j,\beta}^2\Bigr)^{\frac{q}{2}}  
\end{array}\right.
\end{equation}
with
$$
M_{\beta}=L_{\beta}^{2q-2}r_{\beta}^{2\alpha(q-1)+2}, \qquad \delta \leq a_{i,\beta},\; b_{i,\beta}\leq C, \quad  |\mu_{ij,\beta}|, \; |\nu_{ij,\beta}| \leq C.$$
We observe that
$$
r_{\beta}^2 \| \bar{u}_{j,\beta}\|_{L^{\infty}(\Omega_{\beta})}+r_{\beta}^2\| \bar{v}_{j,\beta}\|_{L^{\infty}(\Omega_{\beta})}\to 0$$
and also that
\begin{multline*}
\max_{i,j=1,\ldots,k}\left\{  \max_{x,y\in \overline{\Omega}_{\beta}}
\frac{|\bar{u}_{j,\beta}(x)-\bar{u}_{j,\beta}(y)|}{|x-y|^{\alpha}},
\max_{x,y\in \overline{\Omega}_{\beta}}\frac{|\bar{v}_{i,\beta}(x)-\bar{v}_{i,\beta}(y)|}{|x-y|^{\alpha}} \right\} \\
	=\frac{|\bar{u}_{1,\beta}(0)-\bar{u}_{1,\beta}(\frac{y_{\beta}-x_{\beta}}{r_{\beta}})|}{| \frac{y_{\beta}-x_{\beta}}{r_{\beta}} |^{\alpha}}=1.
\end{multline*}
We next prove two claims which correspond to Lemmas 3.4 and 3.5 in \cite{NTTV1}.

\medbreak

{\sc Claim 1.} {\it Whenever $r_{\beta}$ is such that
\begin{equation}\label{Claim1}
\liminf_{\beta\to\infty} \beta M_{\beta}>0 \qquad \mbox{ and } \qquad \limsup_{\beta\to\infty}\frac{|x_{\beta}-y_{\beta}|}{r_{\beta}}<\infty
\end{equation}
then both sequences
$$
d_{\beta}:=\sum_{i=1}^{k} \bar{u}^2_{i,\beta}(0)\qquad \mbox{ and } \qquad e_{\beta}:=\sum_{i=1}^{k} \bar{v}^2_{i,\beta}(0)$$
are bounded as $\beta\to \infty$.}

Let $R\geq |x_\beta-y_\beta|/r_\beta$ and assume by contradiction that $d_\beta+e_\beta\to +\infty$. For sufficiently large $\beta$, one has $B_{2R}(0)\subseteq \Omega_\beta$. Let $\eta$ a smooth cutoff function such that $\eta=1$ in $B_R(0)$ and $\eta=0$ in $\R^N\setminus B_{2R}(0)$. By testing the first equation in \eqref{eq:equation_for_bar_u_beta_bar_v_beta} with $\bar{u}_{i,\beta}\eta^2$ and the second one with $\bar{v}_{i,\beta}\eta^2$, we find out that
$$
\beta M_\beta\int_{B_{R}(0)} \left( \sum_{i=1}^k \bar{u}_{i,\beta}^2\right)^{q/2}\left( \sum_{i=1}^k \bar{v}_{i,\beta}^2\right)^{q/2}\leq C \int_{B_{2R}(0)} \sum_{i=1}^k \bar{u}_{i,\beta}^2 \; dx
$$
and
\begin{equation}\label{eq:claim1_aux1}
\beta M_\beta \int_{B_{R}(0)} \left( \sum_{i=1}^k \bar{u}_{i,\beta}^2\right)^{q/2}\left( \sum_{i=1}^k \bar{v}_{i,\beta}^2\right)^{q/2}\leq C \int_{B_{2R}(0)} \sum_{i=1}^k \bar{v}_{i,\beta}^2 \; dx.
\end{equation}
Since $\bar{u}_{i,\beta}$ and $\bar{v}_{i,\beta}$  are H\"older continuous and \eqref{Claim1} holds, there exist $C_1,C_2>0$ (depending only on $R>0$) such that
\[
\left( \sum_{i=1}^k \bar{u}_{i,\beta}^2(x)\right)^{q/2}\left( \sum_{i=1}^k \bar{v}_{i,\beta}^2(x)\right)^{q/2}dx\leq C_1 \left( \sum_{i=1}^k \bar{u}_{i,\beta}^2(x)\right)+C_2
\]
and
\begin{equation}\label{eq:claim1_aux2}
\left( \sum_{i=1}^k \bar{u}_{i,\beta}^2(x)\right)^{q/2}\left( \sum_{i=1}^k \bar{v}_{i,\beta}^2(x)\right)^{q/2}dx\leq C_1 \left( \sum_{i=1}^k \bar{v}_{i,\beta}^2(x)\right)+C_2.
\end{equation}
for every $x\in B_R(0)$. In particular,
\[
(d_{\beta}\, e_{\beta})^{q}\leq (C_1 d_\beta+C_2)(C_1e_\beta+C_3).
\]
Since $q>1$, we cannot have that \emph{both} sequences are unbounded.

Assume first of all that $d_\beta\to +\infty$ (up to a subsequence), and $e_\beta$ is bounded, so that in particular
\[
\inf_{B_{2R}(0)} \sum_{i=1}^k \bar u_{i,\beta}^2 \to +\infty,\quad \text{ and } \quad \sup_{B_{2R}(0)}\sum_{i=1}^k \bar v_{i,\beta}^2 \text{ is bounded,}
\]
and actually
\begin{equation}\label{eq:sup_goes_to_0}
\sup_{B_{2R}(0)}\sum_{i=1}^k \bar v_{i,\beta}^2\to 0 \quad \text{ as }\beta\to \infty.
\end{equation}
Define
\[
I_\beta= \beta M_b \inf_{B_{2R}} \left(\sum_{i=1}^k \bar u_{i,\beta}^2\right)^\frac{q}{2}\to +\infty.
\]
If $q\geq 2$, we have
\[
- a_{i,\beta} \Delta |\bar v_{i,\beta}|\leq {\rm o}_{\beta}(1)- |\bar v_{i,\beta}|^{q-1} I_\beta.
\]
Thus the decay estimate \cite[Lemma 2.2]{SoaveZilio} gives $I_\beta \sup_{B_R} |\bar v_{i,\beta}|^{q-1}={\rm o}_\beta(1)$ as $\beta\to \infty$, for every $i=1,\ldots, k$. This implies that
 \begin{equation}\label{eq:Laplacian_to_0}
 \|\Delta \bar{u}_{i,\beta}\|_{L^\infty(B_R(0))}\to 0 \qquad \text{ for every large $R$}.
 \end{equation}
 In particular, the sequence $\tilde{u}_{\beta}(x):=\bar{u}_{1,\beta}(x)-\bar{u}_{1,\beta}(0)$ converges uniformly in compact sets to  a non constant, harmonic,  H\"older continuous function in $\Omega_{\infty}$, where $\Omega_{\infty}$  is either $\R^N $ or a half-space of $\R^N$. Arguing as in \cite[p. 282]{NTTV1}, this contradicts the Liouville type theorem stated in \cite{NTTV1}. If on the other hand $q<2$, from \eqref{eq:sup_goes_to_0},
\[
-\Delta \left( \sum_{j=1}^k |\bar v_{j,\beta}|\right)\leq C \sum_{j=1}^k |\bar v_{j,\beta}|-C I_\beta \sum_{j=1}^k |\bar v_{j,\beta}|^{q-1}\leq -\widetilde C I_\beta \sum_{j=1}^k |\bar v_{j,\beta}|
\]
and so \cite[Lemma 4.4]{ContiTerraciniVerziniAsymptotic} gives $\sum_{j=1}^k |\bar v_{j,\beta}|\leq C_1 e^{-C_2 \sqrt{I_\beta}}$ in $B_R(0)$, and so \eqref{eq:Laplacian_to_0}, again a contradiction. In conclusion, we have shown that $(d_{\beta})_{\beta} $ is bounded. The case when $(e_\beta)_\beta$ can be handled in an analogous way, and thus the claim is established. 

\medbreak

{\sc Claim 2.} {\it We have that}
\[
\beta L_{\beta}^{2q-2}|x_{\beta}-y_{\beta}|^{2\alpha(q-1)+2}\to \infty.
\]

Indeed, assuming the contrary, by letting
\[
r_{\beta}^{-1}=\left( \beta L_{\beta}^{2q-2}\right)^{2\alpha(q-1)+2}\to \infty
\]
we find ourselves in the situation displayed in \eqref{Claim1}, with $\beta M_{\beta}=1$. According to Claim 1 and up to subsequences, we conclude that $\bar{u}_{i,\beta}\to u_{i,\infty}$,  $\bar{v}_{i,\beta}\to v_{i,\infty}$ uniformly in compact sets, and
$$
\left\{\begin{array}{l}
-a_{i}\Delta {u}_{i,\infty}=- {u}_{i,\infty}\Bigl(\sum_{j=1}^k{u}_{j,\infty}^2\Bigr)^{\frac{q}{2}-1}\Bigl(\sum_{j=1}^k {v}_{j,\infty}^2\Bigr)^\frac{q}{2} \\[10pt]
-b_{i}\Delta {v}_{i,\infty}=- {v}_{i,\infty}
\Bigl(\sum_{j=1}^k {v}_{j,\infty}^2\Bigr)^{\frac{q}{2}-1} \Bigl(\sum_{j=1}^k {u}_{j,\infty}^2\Bigr)^{\frac{q}{2}}.
\end{array}\right.
$$
Moreover, all these functions are H\"older continuous and $u_{1,\infty}$ is non constant. As explained in \cite[p. 283]{NTTV1}, the case where the system holds in a half-space is easier to handle with, and so we assume henceforth that this limit problem holds in the whole space $\R^N$. Let us denote $u=\sum_{i=1}^{\infty}|u_{i,\infty}|$, $v=\sum_{i=1}^{\infty}|v_{i,\infty}|.$ Since $a_{i}, b_{i}\geq \delta >0$ $\forall i $ and since
$$
|u|\left(\sum _{j=1}^k u_{j,\infty}^2\right)^{\frac{q}{2}-1}\geq  |u|^{q-1}, \qquad 
\frac{1}{\delta}\left(\sum _{j=1}^k u_{j,\infty}^2\right)^{\frac{q}{2}}\geq c  |u|^{q}, 
$$for some $c>0$ (and similarly for $v$), we find that
\begin{equation}\label{-Delta|u|leq-c|u|^p-1|v|^p}
-\Delta |u|\leq -c|u|^{q-1}|v|^q, \qquad -\Delta |v|\leq -c |v|^{q-1}|u|^q.
\end{equation}
We may assume that $c=1$ by possibly replacing $u$ and $v$ with $c^{1/(2q-2)}u$ and $c^{1/(2q-2)}v$ respectively. Suppose first that $|u|\; |v|\equiv 0$, so that $-\Delta |u|\leq 0$ and $-\Delta |v|\leq 0$. It follows then from \cite[Proposition 2.2]{NTTV1}  that $v=0$ and so $u_{1,\infty}$ is a non constant, harmonic,  H\"older continuous function in $\R^N$, in contradiction with \cite[Corollary 2.3]{NTTV1}. Assume now that $|u(x_0)|\, |v(x_0)| \neq 0$ for some $x_0\in \R^N$; since the problem is invariant by translation, we may assume that $x_0=0$. Then we argue precisely as in the proof of \cite[Proposition 2.6]{NTTV1}   to conclude that again $v=0$, thus finishing the proof of Claim 2.

For the sake of clarity, we explicitly mention the only part of the argument where an extra care with respect to \cite{NTTV1}  is  in order, due to the fact that here possibly $q\neq 2$. The main step in the proof of \cite[Proposition 2.6]{NTTV1} consists in showing (cf. \cite[Lemma 2.5]{NTTV1}) that, given $\varepsilon>0$, the function
\[
J(r):=\frac{1}{r^{4-\varepsilon}}\int_{B_{r}(0)} f(|x|) \left( |\nabla u|^2+|u|^q|v|^q \right)dx\int_{B_{r}(0)} f(|x|) \left( |\nabla v|^2+|u|^q|v|^q\right)dx
\]
associated to \eqref{-Delta|u|leq-c|u|^p-1|v|^p} is increasing for $r$ sufficiently large, where $f$ is a suitable cut-off function. This property, in turn, relies on the study of the quantities
\begin{align*}
\Lambda_1(r)&:=\frac{\int_{\partial B_1(0)}\left( |\nabla_{\theta}u_{(r)}|^2+r^2 |u_{(r)}|^q|v_{(r)}|^q\right) d\sigma
}{\int_{\partial B_1(0)}u^2_{(r)}d\sigma},\\
\Lambda_2(r)&:=\frac{\int_{\partial B_1(0)}\left( |\nabla_{\theta}v_{(r)}|^2+r^2 |u_{(r)}|^q|v_{(r)}|^q\right) d\sigma
}{\int_{\partial B_1(0)}v^2_{(r)}d\sigma},
\end{align*}
where $u_{(r)}(\theta):=u(r\theta)$, $v_{(r)}(\theta):=v(r\theta)$. Arguing as in \cite[p. 275]{NTTV1}, assume that $\Lambda_1(r_n)$ and $\Lambda_2(r_n)  $ are bounded sequences along a sequence $r_n\to \infty$. In particular,
\[
r_n^2\int_{\partial B_1(0)}  |u_{(r_n)}|^q|v_{(r_n)}|^q\, d\sigma \text{ is less than or equal to } C\int_{\partial B_1(0)} u^2_{(r_n)}\, d\sigma\text{ and } C\int_{\partial B_1(0)} v^2_{(r_n)}\, d\sigma.
\]
By multiplying these inequalities we find that
\[
r_n^2\int_{\partial B_1(0)}  |u_{(r_n)}|^q|v_{(r_n)}|^q\, d\sigma \leq C \|u_{(r_n)}\|_{L^2(\partial B_1(0))}
\|v_{(r_n)}\|_{L^2(\partial B_1(0))}.
\]
In particular, since $q>1$ and since both sequences  $ ||u_{(r_n)}||_{L^2(\partial B_1(0))}$ and $ ||v_{(r_n)}||_{L^2(\partial B_1(0))}$ are bounded away from  zero  (this a consequence of the fact that both $|u|$ and $|v| $ are subharmonic functions  such that $ u(0)\neq 0$, $v(0)\neq 0$), we deduce from the previous inequality that
$$
r_n^2\int_{\partial B_1(0)}  |u_{(r_n)}|^q|v_{(r_n)}|^q\, d\sigma \leq C' \|u_{(r_n)}\|^q_{L^2(\partial B_1(0))}
\|v_{(r_n)}\|^q_{L^2(\partial B_1(0))}
$$
for some $C'>0$. By using this estimate we can apply the remaining argument in \cite[p. 275]{NTTV1}  to conclude the proof of Claim 2.

\medbreak

\noindent \emph{Step B.} Once Claims 1 and 2 are established, one proceeds as follows. Let
$$
r_{\beta}=|x_{\beta}-y_{\beta}|.$$
With this choice of $r_{\beta}$ we have that $\beta M_{\beta}\to \infty $ (cf. Claim 2) and so, according to Claim 1 and up to subsequences, we have that $\bar{u}_{i,\beta}\to u_{i,\infty} $ and $\bar{v}_{i,\beta}\to v_{i,\infty} $ uniformly in compact subsets of  $\R^N$, and $u_{1,\infty} $ is non constant. Moreover, by proceeding exactly as in Lemmas 3.6 and 3.7 in \cite{NTTV1}, we deduce that these convergences hold in $H^1_{{\rm loc}}(\R^N)$, $\beta M_{\beta}\left( \sum_{i=1}^k\bar{u}^2_{i,\beta}\right)^{q/2}\left( \sum_{i=1}^k\bar{v}^2_{i,\beta}\right)^{q/2}\to 0$ in $L^1_{{\rm loc}}(\R^N)$,  $\left(\sum_{i=1}^ku^2_{i,\infty}\right)\left(\sum_{i=1}^kv^2_{i,\infty}\right)\equiv 0$, $-a_i\Delta u_{i,\infty}=0$ in $\{x: \sum_{i=1}^k u^2_{i,\infty}(x)>0\}$, and $-b_i\Delta v_{i,\infty}=0$ in $\{x: \sum_{i=1}^k v^2_{i,\infty}(x)>0\}$. 

A contradiction is then reached as in \cite[p. 293]{NTTV1}. We outline the argument. The main tool in reaching such a contradiction is a monotonicity formula applied to the Almgren quotients
$$
N(x_0,r)=\frac{E(x_0,r)}{H(x_0,r)}\qquad (x_0\in \R^N, \; r>0),$$
where, in the present setting,
$$
E(x_0,r):=\frac{1}{r^{N-2}} \int_{B_r(x_0)} \left( \sum_{i=1}^k a_i|\nabla u_{i,\infty}|^2+ \sum_{i=1}^kb_i|\nabla v_{i,\infty}|^2\right)dx$$
and
$$
H(x_0,r):=\frac{1}{r^{N-1}} \int_{\partial B_r(x_0)} \left( \sum_{i=1}^k a_i u^2_{i,\infty}+ \sum_{i=1}^k b_i v^2_{i,\infty}\right)d\sigma.$$
Namely, one can prove (cf. \cite[Proposition 3.9]{NTTV1}) that $N(x_0,r)$ is a non decreasing function and 
$$
\frac{d}{dr}\log(H(x_0,r))=\frac{2N(x_0,r)}{r}\qquad \forall  x_0\in \R^N, \; r>0.$$
Due to the presence of the terms $a_{i}=\frac{\partial \vphi}{\partial \xi_{ii}}(M(u_{\infty}))$ and  $b_{i}=\frac{\partial \vphi}{\partial \xi_{ii}}(M(v_{\infty}))$ (which are absent in \cite{NTTV1}),
this monotonicity formula will be proved in full detail in the next subsection, in a slightly different context which will also be needed later on. Then we  conclude as follows. We denote $U=\sum_{i=1}^ku_{i,\infty}^2$, $V=\sum_{i=1}^kv_{i,\infty}^2$,  so that $U\, V\equiv 0$, $-\Delta U\leq 0$ and $-\Delta V\leq 0$ in $\R^N$. Since $U\neq 0$, $V$ must vanish somewhere in $\R^N$. Also, $U$ must vanish at some point, otherwise $u_{1,\infty}$ would a non constant, harmonic,  H\"older continuous function in $\R^N$, in contradiction in \cite[Corollary 2.3]{NTTV1}. Thus, by continuity, $U$ and $V$ must have a common zero point. Since moreover they are both non negative, sub-harmonic and  H\"older continuous  in $\R^N$, we deduce from \cite[Proposition 2.2]{NTTV1} that
$$
V=0 \qquad \mbox{ and } \qquad \{x: U(x)>0\} \; \mbox{ is a proper connected subset of } \R^N.$$
Now, let $\Gamma:=\{x: U(x)=0\}$. Since $U$ is $\alpha$-H\"older continuous, by using the above mentioned Almgren monotonicity formula one finds out that
$$
N(x_0,r)=\alpha \qquad \forall x_0\in \Gamma, \, r>0,$$
which turns out to be equivalent to the statement that $u_{i,\infty}$ is given in polar coordinates around $x_0$ by  $u_{i,\infty}(x)=r^{\alpha}g_i(\theta)$ for some function $g_i $ ($i=1,\ldots,k$, $r=|x-x_0|$, $\theta=(x-x_0)/r$). Since this procedure can be applied to any given $x_0\in \Gamma$, we conclude that $\Gamma$ is a cone with respect to each of its points, and so $\Gamma$ is a (proper) subspace of $\R^N$. Moreover, $\R^N\setminus \Gamma$ is connected and so $\Gamma$ has dimension at most $N-2$; in particular, it has zero capacity. As a consequence,  $u_{1,\infty}$ is a non constant, harmonic,  H\"older continuous function in the whole of $\R^N$, in contradiction in \cite[Corollary 2.3]{NTTV1}.

This final contradiction establishes the property (i) in Theorem \ref{thm:convergences}. 

\medbreak

\noindent (ii) The proof of this fact is much simpler and follows precisely as in \cite[p. 294]{NTTV1}. 
 
 \medbreak
 
\noindent (iii) We multiply the $i$--th equation for $u$ in \eqref{eq:equation_for_u_beta_v_beta} by $u_i$, integrate by parts and pass to the limit, by observing that, thanks to \eqref{eq:expression_for_mu_ij},   $\mu_{ij,\beta}\to 0$ for $i\neq j$ as $\beta \to \infty$. A full proof of a stronger result will be presented ahead in Proposition \ref{eq:propositionmeasures}.

\end{proof}

The following corollary implies in particular Theorem \ref{thm:main_general}, as well as Theorem \ref{thm:3rdmain} (i)-(ii)-(iii) except the statement of Lipschitz continuity and the equality between sets in (ii).

\begin{corollary}\label{coro:characterization_of_c_infty}
We have the following alternative characterization for $c_\infty$:
$$
c_\infty= \inf \left\{\vphi(M(w))+\vphi(M(z)):\ w,z\in \Sigma(L^2),\ w_i\cdot z_j=0 \ \forall i,j\right\}.
$$
Moreover, for 
$$\omega_u:= \{u_1^2+\ldots+u_k^2>0\}\qquad \mbox{ and } \qquad\omega_v:=\{v_1^2+\ldots+v_k^2>0\},$$
 where $u$ and $v$ are given by Theorem \ref{thm:convergences}, we have that $(\omega_u,\omega_v)\in \Peh_2(\Omega)$ achieves $c_\infty$. In particular, Theorem \ref{thm:main_general} is proved.
\end{corollary}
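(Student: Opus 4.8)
The plan is to combine three facts already in hand — the strong convergence $u_\beta\to u$, $v_\beta\to v$ in $H^1_0(\Omega;\R^k)\cap C^{0,\alpha}(\overline\Omega)$ (Theorem \ref{thm:convergences}(i)), the relations $\int_\Omega\nabla u_i\cdot\nabla u_j=\int_\Omega\nabla v_i\cdot\nabla v_j=0$ for $i\neq j$ together with $u_iv_j\equiv0$ for all $i,j$ (Theorem \ref{thm:convergences}(ii)), and the bound $c_\beta\le c_\infty$ for every $\beta$ (Lemma \ref{lemma:uniform_bounds_in_beta}) — the only genuinely new input being a Courant--Fischer comparison. First I would pass to the limit in $c_\beta$: since $u\in\Sigma(L^2)$ forces $\int_\Omega u_i^2=1$ and hence $\|u_i\|^2\ge c_P>0$ by the Poincar\'e inequality, the matrix $M(u)=\diag(\|u_1\|^2,\dots,\|u_k\|^2)$ (diagonal by Theorem \ref{thm:convergences}(ii)) is nonzero, so $\vphi$ is continuous at it; as $M(u_\beta)\to M(u)$, $M(v_\beta)\to M(v)$ and the interaction term of $E_\beta(u_\beta,v_\beta)$ tends to $0$ by Theorem \ref{thm:convergences}(ii),
\[
c_\beta=E_\beta(u_\beta,v_\beta)\longrightarrow\vphi(M(u))+\vphi(M(v))=:S\qquad\text{as }\beta\to\infty,
\]
and combining this with $c_\beta\le c_\infty$ yields $S\le c_\infty$.

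The core of the argument is the reverse inequality $c_\infty\le S$, obtained by testing $c_\infty$ against the partition $(\omega_u,\omega_v)$. Being superlevel sets of the continuous functions $\sum_iu_i^2$, $\sum_iv_i^2$, the sets $\omega_u,\omega_v$ are open, and they are disjoint because $u_iv_j\equiv0$; thus $(\omega_u,\omega_v)\in\Peh_2(\Omega)$. Each $u_i$ is continuous, lies in $H^1_0(\Omega)$ and vanishes outside $\omega_u$, hence lies in $H^1_0(\omega_u)$ (a standard consequence of continuity; the truncations $(|u_i|-\varepsilon)^+\,\text{sign}(u_i)$ have compact support in $\omega_u$ and approximate $u_i$ in $H^1_0$). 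After relabelling the components so that $\|u_1\|^2\le\cdots\le\|u_k\|^2$, the span of $u_1,\dots,u_i$ is an $i$-dimensional subspace of $H^1_0(\omega_u)$ on which, by the orthogonality relations of Theorem \ref{thm:convergences}(ii), the Rayleigh quotient at $w=\sum_{r\le i}\alpha_ru_r$ equals $\bigl(\sum_r\alpha_r^2\|u_r\|^2\bigr)/\bigl(\sum_r\alpha_r^2\bigr)\le\|u_i\|^2$; hence $\lambda_i(\omega_u)\le\|u_i\|^2$ by the Courant--Fischer principle. Since $\psi$ is non-decreasing in each variable and $\vphi$ (which restricts to $\psi$ on diagonal matrices) is invariant under conjugation by permutation matrices, this gives $\psi(\lambda_1(\omega_u),\dots,\lambda_k(\omega_u))\le\vphi(M(u))$, and analogously for $v$. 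Therefore the cost of $(\omega_u,\omega_v)$ in problem \eqref{eq:OPPgeneral} is $\psi(\lambda_1(\omega_u),\dots,\lambda_k(\omega_u))+\psi(\lambda_1(\omega_v),\dots,\lambda_k(\omega_v))\le S$, so $c_\infty\le S$. Combined with the previous step, $c_\infty=S$; since all inequalities are then equalities, $(\omega_u,\omega_v)$ achieves $c_\infty$, which in particular proves Theorem \ref{thm:main_general}.

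It then remains to identify $c_\infty$ with the infimum $\widehat c$ on the right-hand side of the claimed formula. The pair $(u,v)$ is admissible there by Theorem \ref{thm:convergences}(ii), so $\widehat c\le S=c_\infty$. Conversely, if $w,z\in\Sigma(L^2)$ satisfy $w_iz_j\equiv0$ for all $i,j$, then $\bigl(\sum_iw_i^2\bigr)\bigl(\sum_jz_j^2\bigr)\equiv0$, so the interaction term of $E_\beta$ vanishes identically and $E_\beta(w,z)=\vphi(M(w))+\vphi(M(z))$ for \emph{every} $\beta>0$; since $(w,z)$ is admissible for $c_\beta$, one has $c_\beta\le\vphi(M(w))+\vphi(M(z))$, and letting $\beta\to\infty$ gives $c_\infty=S\le\vphi(M(w))+\vphi(M(z))$. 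Taking the infimum over all such $(w,z)$ yields $c_\infty\le\widehat c$, hence $c_\infty=\widehat c$.

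I do not expect a serious obstacle, as all the compactness and regularity work is already packaged in Theorem \ref{thm:convergences}. The one point requiring care is the Courant--Fischer step: verifying that $u_i\in H^1_0(\omega_u)$ for the possibly very irregular open set $\omega_u$, and arranging the permutation of components so that the monotonicity of $\psi$ lines up with the natural ordering $\lambda_1(\omega_u)\le\cdots\le\lambda_k(\omega_u)$ of the Dirichlet eigenvalues.
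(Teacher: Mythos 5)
Your proof is correct and follows essentially the same route as the paper: the paper's argument is exactly the closed chain $c_\infty\geq\lim_\beta c_\beta=\vphi(M(u))+\vphi(M(v))=\psi(\|u_1\|^2,\dots,\|u_k\|^2)+\psi(\|v_1\|^2,\dots,\|v_k\|^2)\geq \psi(\lambda_1(\omega_u),\dots)+\psi(\lambda_1(\omega_v),\dots)\geq c_\infty$, which you have simply split into the two inequalities $S\leq c_\infty$ and $c_\infty\leq S$. The only difference is that you spell out the steps the paper leaves implicit (that $u_i\in H^1_0(\omega_u)$, the Courant--Fischer comparison $\lambda_i(\omega_u)\leq\|u_i\|^2$ after relabelling, and the testing of $c_\beta$ with admissible $(w,z)$ for the reverse inequality in the alternative characterization), all of which are correctly handled.
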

\begin{proof}
As a result of the strong convergences proved in the previous theorem, we have that $u_i v_j=0\ \forall i,j$, $\int_\Omega \nabla u_i\cdot \nabla u_j=\int_\Omega \nabla v_i \cdot \nabla v_j=0\ \forall i\neq j$ and $u,v\in \Sigma(L^2)$. Thus, by the monotonicity property of $\psi$, and its invariance with respect to exchange of variables,
\begin{align*}
c_\infty&\geq \lim_{\beta\to \infty } c_\beta\\
	&=\lim_{\beta \to \infty}\vphi(M(u_\beta))+\vphi(M(v_\beta))+\frac{2\beta}{p}\int_\Omega (u_{1,\beta}^2+\ldots +u_{k,\beta}^2)^\frac{p}{2}(v_{1,\beta}^2+\ldots + v_{k,\beta}^2)^\frac{p}{2}\, dx\\
	&=\vphi(M(u))+\vphi(M(v))\\
	&= \psi\left(\| u_1\|^2,\ldots, \|u_k\|^2\right)+\psi\left( \|v_1\|^2,\ldots, \|v_k\|^2\right)\\
	&\geq \psi(\lambda_1(\omega_u),\ldots, \lambda_k(\omega_u))+\psi(\lambda_1(\omega_v),\ldots, \lambda_k(\omega_v))\\
	&\geq c_\infty.
\end{align*}
\end{proof}

We end this subsection by showing that, as a consequence of the minimization, the equations in Theorem \ref{thm:convergences}-(iii) for $u_i$ and $v_i$ hold  in the (apparently) larger sets $\widetilde \omega_u:=\Omega\setminus\overline{\omega_v}$, $\widetilde \omega_v:=\Omega\setminus\overline{\omega_u}$. It is easy to check that $\omega_u \subseteq \widetilde \omega_u$, $\omega_v\subseteq \widetilde \omega_v$; later on we will check that these sets coincide (cf. Corollary \ref{coro:One_component_positive}).

\begin{proposition}\label{prop:lemadoaviao}
The following holds for every $i$:
\[
  \begin{split}
    &-a_i \Delta u_i=\mu_i u_i \text{ in the open set }  \widetilde \omega_u,\\ 
    &-b_i \Delta v_i=\nu_i v_i \text{ in the open set } \widetilde \omega_v.
\end{split}
\]
\end{proposition}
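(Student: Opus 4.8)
The plan is to perturb $u$ internally while keeping $v$ fixed, and use the variational characterization of $c_\infty$ in Corollary \ref{coro:characterization_of_c_infty}. The key point is that $v$, being continuous, vanishes identically on $\widetilde\omega_u=\Omega\setminus\overline{\omega_v}$, so a perturbation of $u$ supported inside $\widetilde\omega_u$ does not disturb the segregation constraint $w_i\cdot v_j\equiv 0$; it only has to be corrected so as to stay on the $L^2$--orthonormality manifold $\Sigma(L^2)$, and --- crucially --- the infimum in Corollary \ref{coro:characterization_of_c_infty} involves \emph{no} gradient orthogonality, so $\Sigma(L^2)$ is the only constraint to preserve.

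I would fix $i\in\{1,\dots,k\}$ and $\phi\in C^\infty_c(\widetilde\omega_u)$, and let $\hat w^t$ be the vector obtained from $u$ by replacing its $i$--th component with $u_i+t\phi$. The Gram matrix $A^t:=\big(\int_\Omega \hat w^t_l\hat w^t_m\,dx\big)_{l,m}$ satisfies $A^0=\mathrm{Id}$, hence is positive definite for $|t|$ small, and $w^t:=(A^t)^{-1/2}\hat w^t$ (matrix times vector) lies in $\Sigma(L^2)$ by construction. Each component of $w^t$ is a linear combination of $u_1,\dots,u_k$ and $\phi$, all of which vanish on $\overline{\omega_v}$, so $w^t_m\cdot v_j\equiv 0$ for all $m,j$, and $(w^t,v)$ is admissible for the infimum in Corollary \ref{coro:characterization_of_c_infty}. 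Since $(u,v)$ achieves that infimum and $\vphi(M(v))$ is fixed, the function $g(t):=\vphi(M(w^t))$, which is $C^1$ near $t=0$ because $M(w^t)$ depends smoothly on $t$ and stays near $M(u)\neq 0$, has a local minimum at $t=0$; hence $g'(0)=0$.

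It then remains to differentiate. Writing $(A^t)^{-1/2}=\mathrm{Id}-\tfrac{t}{2}\dot A^0+{\rm o}(t)$ and using $M(w^t)=(A^t)^{-1/2}M(\hat w^t)(A^t)^{-1/2}$, one gets $\tfrac{d}{dt}M(w^t)\big|_{t=0}=\dot M-\tfrac12\big(\dot A^0 M(u)+M(u)\dot A^0\big)$, where $\dot M$ and $\dot A^0$ have nonzero entries only in row/column $i$, with diagonal entries $\dot M_{ii}=2\int_\Omega\nabla u_i\cdot\nabla\phi\,dx$ and $\dot A^0_{ii}=2\int_\Omega u_i\phi\,dx$. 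Since $M(u)=\diag(\|u_1\|^2,\dots,\|u_k\|^2)$ is diagonal (Theorem \ref{thm:convergences}(ii)), Lemma \ref{lemma:derivative_matrix} kills all off--diagonal derivatives $\tfrac{\partial\vphi}{\partial\xi_{lm}}(M(u))$, so the chain rule leaves only the $(i,i)$ contribution: $0=g'(0)=2a_i\big(\int_\Omega\nabla u_i\cdot\nabla\phi\,dx-\|u_i\|^2\int_\Omega u_i\phi\,dx\big)$ with $a_i=\tfrac{\partial\vphi}{\partial\xi_{ii}}(M(u))>0$. Finally, from \eqref{eq:expression_for_mu_ij} and Theorem \ref{thm:convergences}(ii) one has $\mu_i=\lim_\beta\mu_{ii,\beta}=a_i\|u_i\|^2$ (the competition term is controlled by $\beta\int_\Omega(\sum_l u_{l,\beta}^2)^{q/2}(\sum_l v_{l,\beta}^2)^{q/2}\,dx\to 0$), so dividing by $2a_i$ yields $a_i\int_\Omega\nabla u_i\cdot\nabla\phi\,dx=\mu_i\int_\Omega u_i\phi\,dx$ for all $\phi\in C^\infty_c(\widetilde\omega_u)$, i.e. $-a_i\Delta u_i=\mu_i u_i$ in $\widetilde\omega_u$; exchanging the roles of $u$ and $v$ gives the equation for $v_i$ in $\widetilde\omega_v$.

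The main obstacle is purely a matter of bookkeeping: producing an admissible variation (the orthonormalization $w^t=(A^t)^{-1/2}\hat w^t$ does this while respecting both $\Sigma(L^2)$ and the segregation), and then pushing the differentiation through $\vphi\circ M$. The diagonality of $M(u)$ combined with Lemma \ref{lemma:derivative_matrix} is exactly what makes the resulting identity decouple into one equation per component, so that choosing $\phi$ in a single slot is legitimate and suffices.
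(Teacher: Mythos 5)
Your argument is correct, and it reaches the conclusion by a route that differs in execution from the paper's, though both hinge on the same two ingredients: the minimality of $(u,v)$ in the characterization of $c_\infty$ from Corollary \ref{coro:characterization_of_c_infty} (which imposes only the $L^2$-constraints and segregation, not gradient orthogonality), and the identification $\mu_i=a_i\|u_i\|^2$ obtained by passing to the limit in \eqref{eq:expression_for_mu_ij}. The paper freezes $u_2,\ldots,u_k$ and $v$, reduces to a scalar constrained minimization over $\Meh=\{w\in H^1_0(\widetilde\omega_u):\int w^2=1,\ \int w u_j=0,\ j\geq 2\}$, shows $u_1$ minimizes there via extension by zero and a contradiction with Corollary \ref{coro:characterization_of_c_infty}, and then invokes the Lagrange multiplier rule (the decoupling into a single equation again rests on Lemma \ref{lemma:derivative_matrix} and the diagonality of $M(u)$). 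You instead build an explicit one-parameter admissible family: perturb the $i$-th slot by $t\phi$ with $\phi\in C^\infty_c(\widetilde\omega_u)$ and renormalize by $(A^t)^{-1/2}$, which keeps you in $\Sigma(L^2)$ and, since every component remains a combination of $u_1,\ldots,u_k,\phi$ vanishing on $\overline{\omega_v}$, preserves segregation; then $g(t)=\vphi(M(w^t))$ is minimized at $t=0$ and the first-variation computation, with Lemma \ref{lemma:derivative_matrix} killing the off-diagonal contributions of both $\dot M$ and $\dot A^0 M(u)+M(u)\dot A^0$, gives the weak equation directly. What your version buys is that it bypasses the auxiliary manifold $\Meh$ and the Lagrange multiplier rule altogether (in particular you never have to argue that the multipliers attached to the constraints $\int w u_j=0$, $j\neq i$, do not appear in the final equation: the diagonal structure discards them automatically); what the paper's version buys is a shorter formal write-up, since the multiplier machinery of Lemma \ref{lemma:Sigma(L^2)manifold} is already in place. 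Your checks of the smoothness of $t\mapsto M(w^t)$ near the nonzero diagonal matrix $M(u)$ and of the vanishing of the $\beta$-interaction term in $\mu_{ii,\beta}$ are exactly the points that need to be verified, so the proof stands as written.
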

\begin{proof}
1. As we will see, this is an immediate consequence of the Lagrange multiplier rule, and it is related to the fact that $(u,v)$ solves a minimization problem. To fix ideas, let us just prove that 
$$
-a_1 \Delta u_1=\mu_1 u_1\qquad \text{ in } \Omega\setminus\overline{\omega_v}.
$$
Consider the functional $J:H^1_0(\tilde w_u)\to \R$ defined by $J(w)=\vphi(M(w,u_2,\ldots, u_k))+\vphi(M(v))$ and the set
$$
\Meh=\left\{w\in H^1_0(\tilde \omega_u): G(w):=\left(\int_{ \tilde \omega_u} w^2\, dx, \int_{ \tilde \omega_u} w u_2\, dx,\ldots, \int_{ \tilde \omega_u} w u_k\, dx\right)=(1,0,\ldots,0)\right\}.
$$
We observe that $\Meh$ is a manifold in a neighborhood of $u$, since $G'(u)$ is onto. Indeed, $G'(u)u_i$ is equal to $2e_i$ if $i=1$, and $G'(u)u_i=e_i$ if $i\geq 2$, where $e_i$ denotes the $i$--th vector of the canonical basis of $\R^k$.

2. The infimum
$$
\inf \{J(w):\ w\in \Meh\}
$$
is attained at $w=u_1$. In fact, suppose in view of a contradiction that there exists $\bar w\in \Meh$ satisfying $J(\bar w)<J(u_1)=\vphi(M(u))+\vphi(M(v))$. Denote by $\tilde w$ the extension of $\bar w$ by 0 to $\Omega$. Then $(\tilde w,u_2,\ldots, u_k)\in \Sigma(L^2)$ and $\tilde w\cdot v_j\equiv 0$ for every $j$, as $\tilde w=0$ on $\{v_1^2+\ldots+v_k^2>0\}$. Thus we obtain a contradiction with the first statement of Corollary \ref{coro:characterization_of_c_infty}.

3. By the Lagrange multipliers rule, there exists $\tilde \mu_1$ such that
$$
-a_1 \Delta u_1=\tilde \mu_1 u_1 \qquad \text{ in }\tilde \omega_u.
$$
Let us now check that $\tilde \mu_1=\mu_1$. Since by passing to the limit as $\beta\to +\infty$ in \eqref{eq:expression_for_mu_ij} yields $a_1\int_\Omega |\nabla u_1|^2\, dx=\mu_1$, the result follows.
\end{proof}

\subsection{Almgren's Monotonicity Formula}\label{subsec:Almgren}

Let $(u,v)$ be the limiting profile provided by Theorem \ref{thm:convergences}, achieving $c_\infty$. Our aim is to continue to characterize this limit. We will prove an Almgren Monotonicity Formula (Theorem \ref{thm:Almgren}), which will imply the Lipschitz continuity of $u$ and $v$, and will have a central role in the proof of the regularity of the free boundary in the next section.
As a first step towards the monotonicity formula, we need to establish some general integral identities, which are the subject of the following lemmas.

\begin{lemma}
Let $H:\R^{2k}\to \R$ be a $C^1$ function such that $|H(s,t)|\leq C(1+|s|+|t|)^{2^*}$ for every $s,t\in \R^k$, and take $l_i,m_i\in \R$, $i=1,\ldots, k$. Let $w=(w_1,\ldots, w_k)$, $z=(z_1,\ldots, z_k)$ be a smooth solution of
\begin{equation}\label{eq:auxiliary_functional}
-l_i\Delta u_i=H_{w_i}(w,z),\qquad -m_i\Delta v_i=H_{z_i}(w,z).
\end{equation}
Then for every vector field $Y\in C^1_c(\Omega; \R^N)$ we have that
\begin{align*}
\sum_{i=1}^k l_i\int_\Omega \left( \langle dY \nabla w_i,\nabla w_i\rangle-\frac{1}{2}\div Y |\nabla w_i|^2\right)\, dx\\
+\sum_{i=1}^k m_i\int_\Omega \left( \langle dY \nabla z_i,\nabla z_i\rangle-\frac{1}{2}\div Y |\nabla z_i|^2\right)\, dx\\ 
+\int_\Omega H(w,z)\, \div Y\, dx=0.
\end{align*}
\end{lemma}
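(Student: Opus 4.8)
The statement is the classical domain-variation (generalized Pohozaev) identity for the system \eqref{eq:auxiliary_functional}, and the natural route is to test each equation against the vector-field-generated perturbation $\nabla w_i\cdot Y$, respectively $\nabla z_i\cdot Y$, and sum. Concretely, the plan is: fix $Y\in C^1_c(\O;\R^N)$, multiply the $i$--th equation $-l_i\Delta w_i=H_{w_i}(w,z)$ by $\nabla w_i\cdot Y$, integrate over $\O$, integrate by parts in the left-hand side, and do the same with the equations for $z_i$; then add everything over $i=1,\dots,k$ and recognize that the right-hand side assembles into $\int_\O \nabla_x\bigl(H(w,z)\bigr)\cdot Y\,dx$ (here $\nabla_x$ means the gradient with respect to the spatial variable $x$, obtained through the chain rule since $w,z$ are the only $x$-dependence), which after one more integration by parts equals $-\int_\O H(w,z)\,\div Y\,dx$ (no boundary term, as $Y$ has compact support). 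This is exactly the asserted identity once the left-hand side is put in the stated form.

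The key computation on the left-hand side is the standard one: for a single function $w_i$,
\[
-\int_\O l_i\,\Delta w_i\,(\nabla w_i\cdot Y)\,dx
=l_i\int_\O \nabla w_i\cdot \nabla(\nabla w_i\cdot Y)\,dx
=l_i\int_\O\Bigl(\langle dY\,\nabla w_i,\nabla w_i\rangle+\tfrac12\,Y\cdot\nabla|\nabla w_i|^2\Bigr)\,dx,
\]
and integrating the last term by parts turns $\tfrac12\int_\O Y\cdot\nabla|\nabla w_i|^2\,dx$ into $-\tfrac12\int_\O\div Y\,|\nabla w_i|^2\,dx$. Thus the contribution of the $i$--th $w$-equation is precisely $l_i\int_\O\bigl(\langle dY\nabla w_i,\nabla w_i\rangle-\tfrac12\div Y\,|\nabla w_i|^2\bigr)\,dx$ minus $\int_\O H_{w_i}(w,z)(\nabla w_i\cdot Y)\,dx$. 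Summing over $i$ and over both families, the collected ``source'' terms are
\[
\sum_{i=1}^k\int_\O\bigl(H_{w_i}(w,z)\,\nabla w_i+H_{z_i}(w,z)\,\nabla z_i\bigr)\cdot Y\,dx=\int_\O\nabla\bigl(H(w,z)\bigr)\cdot Y\,dx=-\int_\O H(w,z)\,\div Y\,dx,
\]
which, moved to the left-hand side, yields the claimed formula. The growth hypothesis $|H(s,t)|\le C(1+|s|+|t|)^{2^*}$ (together with Sobolev embedding) is what guarantees all these integrals are finite, so that the manipulations are legitimate; since $w,z$ are assumed smooth here and $Y$ compactly supported, no further regularity technicalities arise.

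I do not anticipate a genuine obstacle: the proof is a bookkeeping exercise in integration by parts. The only points requiring a little care are (i) keeping track of the matrix $dY$ (the Jacobian of $Y$) arising from $\nabla(\nabla w_i\cdot Y)=dY^T\nabla w_i+(D^2w_i)Y$ and noting $\nabla w_i\cdot(D^2w_i)Y=\tfrac12\,Y\cdot\nabla|\nabla w_i|^2$, and (ii) making sure the chain rule is applied correctly when writing $\nabla\bigl(H(w,z)\bigr)=\sum_i\bigl(H_{w_i}\nabla w_i+H_{z_i}\nabla z_i\bigr)$, using that $H$ has no explicit $x$-dependence. Everything else is routine, and the compact support of $Y$ eliminates all boundary contributions.
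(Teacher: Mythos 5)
Your proposal is correct and follows essentially the same route as the paper's proof: multiply each equation by $\nabla w_i\cdot Y$ (resp. $\nabla z_i\cdot Y$), use $\nabla(\nabla w_i\cdot Y)=(D^2w_i)Y+dY^T\nabla w_i$ together with $\langle D^2w_i\,\nabla w_i,Y\rangle=\tfrac12 Y\cdot\nabla|\nabla w_i|^2$, integrate by parts (no boundary terms since $Y\in C^1_c$), and collect the source terms into $\int_\Omega\nabla_x(H(w,z))\cdot Y\,dx=-\int_\Omega H(w,z)\,\div Y\,dx$. The sign bookkeeping and the role of the growth assumption are handled correctly, so nothing further is needed.
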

\begin{proof}
Consider a field $Y\in C^1_c(\Omega;\R^N)$, that is such that $Y=0$ ouside of a compact subset of $\Omega$. Multiply the equation for $w_i$ in \eqref{eq:auxiliary_functional} by $ \nabla w_i\cdot Y$ and the one for $z_i$ by $\nabla z_i\cdot Y$. Summing up, one obtains
\begin{align*}
\sum_{i=1}^k l_i \int_\Omega \langle \nabla w_i,\nabla(\nabla w_i\cdot Y)\rangle\, dx+\sum_{i=1}^k m_i\int_\Omega \langle \nabla z_i,\nabla(\nabla z_i\cdot Y)\rangle\, dx\\
-\sum_{i=1}^k \int_\Omega (H_{w_i}(w,z)(\nabla w_i\cdot Y)+H_{z_i}(w,z)(\nabla z_i\cdot Y))\, dx=0.
\end{align*}
Now
\begin{align*}
\int_\Omega \langle \nabla w_i,\nabla(\nabla w_i\cdot Y)\rangle\, dx=&=\int_\Omega \langle \nabla w_i,(Hw_i Y+dY^T\nabla w_i)\rangle\, dx\\
&=\int_\Omega (\langle Hw_i \nabla w_i,Y\rangle+\langle dY\nabla w_i,\nabla w_i \rangle)\, dx\\
&=\int_\Omega \left(\frac{1}{2}\nabla (|\nabla w_i|^2)\cdot Y+\langle dY \nabla w_i,\nabla w_i \rangle\right)\, dx\\
&=\int_\Omega \left(\langle dY \nabla w_i,\nabla w_i\rangle-\frac{1}{2}|\nabla w_i|^2\div Y\right)\, dx,
\end{align*}
where $Hw_i$ denotes the Hessian of $w_i$. Analogously,
\[
\int_\Omega \langle \nabla v_i,\nabla(\nabla v_i\cdot Y)\rangle\, dx=\int_\Omega \left(\langle dY \nabla v_i,\nabla v_i\rangle-\frac{1}{2}|\nabla v_i|^2\div Y\right)\, dx.
\]
Finally,
\begin{align*}
 \sum_{i=1}^k \int_\Omega (H_{w_i}(w,z)(\nabla w_i\cdot Y)+H_{z_i}(w,z)(\nabla z_i\cdot Y))\, dx \\
 = \int_\Omega  \nabla_x (H(w(x),z(x)))\cdot Y(x)\, dx = -\int_\Omega H(w,z) \div Y\, dx.
\end{align*}

\end{proof}

Now let us apply the previous result to $(u,v)$, the limit functions obtained in Theorem \ref{thm:convergences}. Recall the definitions 
\[
a_i=\frac{\partial \vphi}{\partial \xi_{ii}}(M(u_\beta))=\frac{\partial \psi}{\partial a_i}(M(u_\beta)),\qquad  b_i=\frac{\partial \vphi}{\partial \xi_{ii}}(M(v_\beta))=\frac{\partial \psi}{\partial a_i}(M(v_\beta)),
\]
and
\[
\mu_i=\lim_{\beta\to\infty} \mu_{ii,\beta},\qquad \nu_i=\lim_{\beta\to\infty} \nu_{ii,\beta}.
\]

\begin{corollary}\label{coro:Field_first}
Let $(u,v)$ be the limit functions given by Theorem \ref{thm:convergences}. Then
\begin{align*}
\sum_{i=1}^k a_i \int_\Omega \left( \langle dY \nabla u_i,\nabla u_i\rangle-\frac{1}{2}\div Y |\nabla u_i|^2\right)\, dx\\
+\sum_{i=1}^k b_i\int_\Omega \left( \langle dY \nabla v_i,\nabla v_i\rangle-\frac{1}{2}\div Y |\nabla v_i|^2\right)\, dx\\ 
+\sum_{i=1}^k \int_\Omega \frac{1}{2}(\mu_i u_i^2+\nu_i v_i^2)\, \div Y\, dx=0.
\end{align*}
\end{corollary}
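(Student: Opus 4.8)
The plan is to obtain this identity not from $(u,v)$ directly, but by passing to the limit $\beta\to\infty$ in the analogous identity for the approximating minimizers $(u_\beta,v_\beta)$ of Theorem~\ref{thm:minimizer_for_c_beta}. Applying the preceding lemma to $(u,v)$ itself is \emph{not} legitimate: the Euler--Lagrange equations $-a_i\Delta u_i=\mu_i u_i$, $-b_i\Delta v_i=\nu_i v_i$ are only known in the open sets $\widetilde\omega_u,\widetilde\omega_v$ of Proposition~\ref{prop:lemadoaviao}, and $(u,v)$ has no global smoothness across the free boundary, so the integration by parts behind that lemma cannot be performed on all of $\Omega$. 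By contrast, $(u_\beta,v_\beta)$ solves the system \eqref{eq:equation_for_u_beta_v_beta} throughout $\Omega$; since $2<2q$ the competition nonlinearity is continuous, so by the $L^\infty$ bounds of Lemma~\ref{lemma:uniform_bounds_in_beta} and elliptic regularity $(u_\beta,v_\beta)$ is smooth enough in $\Omega$ for the lemma to apply.

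First I would recast \eqref{eq:equation_for_u_beta_v_beta} in the gradient form \eqref{eq:auxiliary_functional}. With
\[
H_\beta(w,z):=\frac12\sum_{i,j=1}^k\mu_{ij,\beta}w_iw_j+\frac12\sum_{i,j=1}^k\nu_{ij,\beta}z_iz_j-\frac{\beta}{q}\Bigl(\sum_{j=1}^kw_j^2\Bigr)^{\frac q2}\Bigl(\sum_{j=1}^kz_j^2\Bigr)^{\frac q2},
\]
the symmetry $\mu_{ij,\beta}=\mu_{ji,\beta}$, $\nu_{ij,\beta}=\nu_{ji,\beta}$ gives $\partial_{w_i}H_\beta(u_\beta,v_\beta)=-a_{i,\beta}\Delta u_{i,\beta}$ and $\partial_{z_i}H_\beta(u_\beta,v_\beta)=-b_{i,\beta}\Delta v_{i,\beta}$; moreover $|H_\beta(s,t)|\le C_\beta(1+|s|+|t|)^{2q}$ with $2q<2^\ast$, so the growth hypothesis of the lemma holds. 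Hence, for every $Y\in C^1_c(\Omega;\R^N)$, the lemma produces exactly the identity in the statement, but with $a_i,b_i$ replaced by $a_{i,\beta},b_{i,\beta}$, with $u_i,v_i$ replaced by $u_{i,\beta},v_{i,\beta}$, and with the last term $\frac12\sum_i\int_\Omega(\mu_iu_i^2+\nu_iv_i^2)\,\div Y\,dx$ replaced by $\int_\Omega H_\beta(u_\beta,v_\beta)\,\div Y\,dx$.

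It then remains to send $\beta\to\infty$. By Theorem~\ref{thm:convergences}(i) one has $u_{i,\beta}\to u_i$, $v_{i,\beta}\to v_i$ strongly in $H^1_0(\Omega)$ and uniformly on $\overline\Omega$, and $a_{i,\beta}\to a_i$, $b_{i,\beta}\to b_i$; since $dY$ and $\div Y$ are bounded, the two gradient sums converge to their counterparts for $(u,v)$. For the remaining integral, the competition part of $H_\beta$ contributes at most $\frac1q\|\div Y\|_\infty\,\beta\int_\Omega(\sum_j u_{j,\beta}^2)^{q/2}(\sum_j v_{j,\beta}^2)^{q/2}\,dx\to 0$ by Theorem~\ref{thm:convergences}(ii); the off-diagonal multipliers satisfy $\mu_{ij,\beta},\nu_{ij,\beta}\to 0$ for $i\ne j$ (by \eqref{eq:expression_for_mu_ij} and the elementary estimate in the Remark following Theorem~\ref{thm:minimizer_for_c_beta}, which bounds them by the same competition integral), while $\mu_{ii,\beta}\to\mu_i$, $\nu_{ii,\beta}\to\nu_i$; combined with $u_{i,\beta}u_{j,\beta}\to u_iu_j$ in $L^1(\Omega)$ this yields $\int_\Omega H_\beta(u_\beta,v_\beta)\,\div Y\,dx\to\frac12\sum_i\int_\Omega(\mu_iu_i^2+\nu_iv_i^2)\,\div Y\,dx$, and passing to the limit in the displayed identity gives the assertion. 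The only non-routine ingredient is the vanishing of the competition integral $\beta\int_\Omega(\sum_j u_{j,\beta}^2)^{q/2}(\sum_j v_{j,\beta}^2)^{q/2}\,dx$, but this is precisely Theorem~\ref{thm:convergences}(ii) (and it is also what forces the off-diagonal multipliers to vanish), so once that limit is available the argument is essentially bookkeeping.
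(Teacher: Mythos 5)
Your proposal is correct and follows essentially the same route as the paper: the paper also applies the preceding lemma to the approximating solutions $(u_\beta,v_\beta)$ of \eqref{eq:equation_for_u_beta_v_beta} (with the same $H_\beta$, whose competition term indeed enters with the sign you use, consistently with \eqref{eq:auxiliary1}) and then passes to the limit $\beta\to\infty$ via Theorem \ref{thm:convergences}. Your justification of why one cannot work with $(u,v)$ directly, and your bookkeeping of the limits (vanishing of the competition integral and of the off-diagonal multipliers $\mu_{ij,\beta},\nu_{ij,\beta}$, $i\neq j$, via \eqref{eq:expression_for_mu_ij}), simply spell out the details the paper leaves implicit.
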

\begin{proof}
Since the functions $(u_\beta,v_\beta)$ solve the system \eqref{eq:equation_for_u_beta_v_beta}, we can apply the previous result with:
\begin{align*}
H_\beta(u,v)=&\frac{1}{2}\sum_{i,j=1}^k (\mu_{ij,\beta}u_{i}u_{j}+\nu_{ij,\beta}v_{i}v_{j}) +\frac{\beta}{q}\left(\sum_{j=1}^k u_{j}^2\right)^{\frac{q}{2}}\left(\sum_{j=1}^k v^2_{j}\right)^\frac{q}{2}.
\end{align*}
By Lemma \ref{lemma:derivative_matrix},  for each $\beta>0$ we have
\begin{equation}\label{eq:auxiliary1}
\begin{split}
\sum_{i=1}^k a_{i,\beta} \int_\Omega (\langle dY\nabla u_{i,\beta},\nabla u_{i,\beta}\rangle -\frac{1}{2}\div Y |\nabla u_{i,\beta}|^2)\, dx&\\
 +\sum_{i=1}^k b_{i,\beta} \int_\Omega (\langle dY\nabla v_{i,\beta},\nabla v_{i,\beta}\rangle -\frac{1}{2}\div Y |\nabla v_{i,\beta}|^2)\, dx&\\ 
+\frac{1}{2}\sum_{i,j=1}^k \int_\Omega (\mu_{ij,\beta} u_{i,\beta} u_{j,\beta}+\nu_{ij,\beta}v_{i,\beta}v_{j,\beta})\div Y\, dx& \\
-\frac{\beta}{q}\int_\Omega \left(\sum_{j=1}^k u^2_{j,\beta}\right)^\frac{q}{2}\left(\sum_{j=1}^k v_{j,\beta}^2\right)^\frac{q}{2}\div Y\, dx=0.
\end{split}
\end{equation}
By using the convergences stated in Theorem \ref{thm:convergences}  we can conclude by passing to the limit in \eqref{eq:auxiliary1}.  
\end{proof}

\begin{corollary}[Local Pohozaev--type identities]\label{coro:Field_second}
Given $x_0\in \Omega$ and $r\in (0,\dist(x_0,\partial \Omega))$, we have that
\begin{align*}
(2-N) \sum_{i=1}^k \int_{B_r(x_0)} (a_i|\nabla u_i|^2+b_i |\nabla v_i|^2)\, dx\\
=\sum_{i=1}^k \int_{\partial B_r(x_0)} a_i r(2(\partial_n u_i)^2-|\nabla u_i|^2)\, d\sigma +\sum_{i=1}^k \int_{\partial B_r(x_0)} b_i r (2(\partial_n v_i)^2-|\nabla v_i|^2)\, d\sigma\\
+\sum_{i=1}^k \int_{\partial B_r(x_0)} r(\mu_i u_i^2+\nu_i v_i^2)\, d\sigma-\sum_{i=1}^k \int_{B_r(x_0)} N(\mu_i u_i^2+\nu_i v_i^2)\, dx.
\end{align*}
\end{corollary}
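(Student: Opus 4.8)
The plan is to deduce this from the divergence identity of Corollary~\ref{coro:Field_first} by testing it against a vector field that, in the limit, is the radial field $x\mapsto x-x_0$ cut off at the sphere $\partial B_r(x_0)$. Fix $x_0\in\Omega$ and $0<r<\dist(x_0,\partial\Omega)$, and assume $x_0=0$ after a translation. For small $\ep>0$ choose a smooth $\eta_\ep\colon[0,\infty)\to[0,1]$ with $\eta_\ep\equiv1$ on $[0,r-\ep]$, $\eta_\ep\equiv0$ on $[r,\infty)$ and $\eta_\ep'\le0$, and set $Y_\ep(x):=\eta_\ep(|x|)\,x$; since $r<\dist(x_0,\partial\Omega)$ this lies in $C^1_c(\Omega;\R^N)$, so it is admissible. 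First I would record the elementary identities for a field of the form $Y(x)=\phi(|x|)x$: writing $\rho=|x|$ and letting $n=x/|x|$ denote the radial direction,
\[
\div Y=\rho\,\phi'(\rho)+N\phi(\rho),\qquad \langle dY\xi,\xi\rangle=\rho\,\phi'(\rho)\langle n,\xi\rangle^2+\phi(\rho)|\xi|^2\quad(\xi\in\R^N),
\]
so that for $\xi=\nabla u_i$,
\[
\langle dY\nabla u_i,\nabla u_i\rangle-\tfrac12\div Y\,|\nabla u_i|^2=\tfrac{\rho\,\phi'(\rho)}{2}\bigl(2(\partial_n u_i)^2-|\nabla u_i|^2\bigr)+\tfrac{2-N}{2}\,\phi(\rho)\,|\nabla u_i|^2,
\]
and likewise for $v_i$, while $\tfrac12(\mu_iu_i^2+\nu_iv_i^2)\div Y=\tfrac12(\mu_iu_i^2+\nu_iv_i^2)\bigl(\rho\,\phi'(\rho)+N\phi(\rho)\bigr)$.

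Then I would substitute $\phi=\eta_\ep$ into the identity of Corollary~\ref{coro:Field_first}. This separates every integrand into a piece carrying $\eta_\ep$ undifferentiated and a piece carrying $\rho\,\eta_\ep'(\rho)$, and I would pass to the limit $\ep\to0$. For the $\eta_\ep$ pieces, $\eta_\ep\to\mathbf 1_{[0,r)}$ pointwise with $0\le\eta_\ep\le1$, so by dominated convergence (using $u,v\in H^1_0(\Omega)\cap C^{0,\alpha}(\overline\Omega)$) they converge to $\tfrac{2-N}{2}\sum_i\int_{B_r}(a_i|\nabla u_i|^2+b_i|\nabla v_i|^2)\,dx+\tfrac N2\sum_i\int_{B_r}(\mu_iu_i^2+\nu_iv_i^2)\,dx$. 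For the $\rho\,\eta_\ep'(\rho)$ pieces, the coarea formula gives $\int_\Omega \eta_\ep'(|x|)|x|\,h(x)\,dx=\int_0^\infty \eta_\ep'(\rho)\,\rho\bigl(\int_{\partial B_\rho}h\,d\sigma\bigr)\,d\rho\to-r\int_{\partial B_r}h\,d\sigma$, since $\eta_\ep'$ is a negative approximate identity at $\rho=r$; I would apply this with $h\in\{2(\partial_n u_i)^2-|\nabla u_i|^2,\ \mu_iu_i^2+\nu_iv_i^2,\dots\}$. Collecting the two limits gives an equation whose only terms are the volume integral of $|\nabla u|^2,|\nabla v|^2$ (with coefficient $\tfrac{2-N}{2}$), the volume integral of $\mu_iu_i^2+\nu_iv_i^2$ (coefficient $\tfrac N2$), and surface integrals over $\partial B_r$; multiplying through by $2$ and moving the surface terms to the right-hand side yields exactly the asserted identity.

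The point needing care is the passage to the limit in the surface terms involving gradients: the maps $\rho\mapsto\int_{\partial B_\rho}|\nabla u_i|^2\,d\sigma$ and $\rho\mapsto\int_{\partial B_\rho}(\partial_n u_i)^2\,d\sigma$ are a priori only in $L^1_{\rm loc}(0,\dist(x_0,\partial\Omega))$, so a priori the convergence $\int\eta_\ep'(\rho)\rho(\int_{\partial B_\rho}h)\,d\rho\to-r\int_{\partial B_r}h$ is valid only at Lebesgue points, hence for a.e.\ $r$; one then upgrades to every $r$ by noting that $u_i,v_i$ are smooth in $\widetilde\omega_u,\widetilde\omega_v$ by Proposition~\ref{prop:lemadoaviao} and elliptic regularity, vanish on $\overline{\omega_v},\overline{\omega_u}$ respectively, and that no mass concentrates on the free boundary because that interface is already encoded in the global identity of Corollary~\ref{coro:Field_first}. (Alternatively, once the Lipschitz bound of the next subsection is available, all the quantities are continuous in $r$ and the identity holds for every $r$ with no further argument.) The remaining computations are the routine substitution and bookkeeping sketched above.
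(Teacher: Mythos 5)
Your proof is correct and follows essentially the same route as the paper: both test the identity of Corollary \ref{coro:Field_first} with a cutoff approximation of the radial field $x-x_0$ adapted to $B_r(x_0)$, use the coarea formula to convert the terms carrying the cutoff's gradient into spherical integrals, and pass to the limit in the cutoff parameter; your explicit computation of $dY$ and $\div Y$ for a radial field is just a streamlined version of the paper's computation with its cutoff $\eta_\delta$. One caveat on your parenthetical alternative for upgrading from a.e.\ $r$ to every $r$: the Lipschitz bound is obtained later from the Almgren formula, which itself uses this corollary, so that shortcut would be circular --- but your main Lebesgue-point argument suffices for all subsequent uses, and the paper is no more explicit on this point.
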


\begin{proof}
Given $x_0\in \Omega$ and $r>0$ we choose $Y=Y_\delta(x)=(x-x_0)\eta_\delta(x)$ where $\eta_\delta$ is a cut-off function such that $\eta_\delta=1$ on $B_r(x_0)$, $\eta_\delta(x)=0$ in $\R^N\setminus B_{r+\delta}(x_0)$. Observe that
$$ \langle dY_\delta \nabla u_i, \nabla u_i\rangle = \langle d(\eta_\delta(x)(x-x_0))\nabla u_i, \nabla u_i\rangle=\eta_\delta |\nabla u_i|^2+(\nabla u_i\cdot \nabla \eta_\delta)\langle \nabla u_i, x-x_0\rangle,
$$
$$
 \div (Y_\delta)=N\eta_\delta + \langle \nabla \eta_\delta, x-x_0\rangle.
 $$
By using Corollary \ref{coro:Field_first}, we have that
\begin{align}\label{eq:auxiliary2}
\sum_{i=1}^k  \int_\Omega \frac{2-N}{2} \eta_\delta (a_i |\nabla u_i|^2+b_i |\nabla v_i|^2)\, dx+\sum_{i=1}^k\int_\Omega \frac{N}{2}(\mu_i u_i^2+\nu_i v_i^2)\, \eta_\delta \, dx\nonumber\\
= \sum_{i=1}^k a_i \int_\Omega \left( \frac{1}{2}|\nabla u_i|^2 \langle \nabla \eta_\delta,x-x_0\rangle-(\nabla u_i\cdot \nabla \eta_\delta)\langle \nabla u_i,x-x_0\rangle \right)\, dx\nonumber\\
+ \sum_{i=1}^k b_i \int_\Omega \left( \frac{1}{2}|\nabla v_i|^2 \langle \nabla \eta_\delta,x-x_0\rangle-(\nabla v_i\cdot \nabla \eta_\delta)\langle \nabla v_i,x-x_0\rangle \right)\, dx \nonumber\\
-\sum_{i=1}^k \int_\Omega \frac{1}{2}(\mu_i u_i^2+\nu_i v_i^2)\langle \nabla \eta_\delta,x-x_0\rangle\, dx.
\end{align}
We can rewrite the right-hand-side as
\begin{align*}
  \sum_{i=1}^k a_i \int_\Omega |\nabla \eta_\delta|\left( \frac{1}{2}|\nabla u_i|^2\left\langle\frac{\nabla \eta_\delta}{|\nabla \eta_\delta|},x-x_0\right\rangle - \left(\nabla u_i\cdot   \frac{\nabla \eta_\delta}{|\nabla \eta_\delta|}\right)\left\langle \nabla u_i,x-x_0\right \rangle\right)\, dx\\
  +\sum_{i=1}^k b_i \int_\Omega |\nabla \eta_\delta|\left( \frac{1}{2}|\nabla v_i|^2\left\langle\frac{\nabla \eta_\delta}{|\nabla \eta_\delta|},x-x_0\right\rangle - \left(\nabla v_i\cdot   \frac{\nabla \eta_\delta}{|\nabla \eta_\delta|}\right)\left\langle \nabla v_i,x-x_0\right \rangle\right)\, dx\\
  -\sum_{i=1}^k \int_\Omega \frac{1}{2}|\nabla \eta_\delta|(\mu_i^2 u_i^2+\nu_i v_i^2)\left\langle \frac{\nabla \eta_\delta}{|\nabla \eta_\delta|},x-x_0\right\rangle\, dx
\end{align*}
which, by the co-area formula, is equal to
\begin{align*}
\sum_{i=1}^k a_i \int_0^1 \int_{\{\eta_\delta=t\}}\left(  \frac{1}{2}|\nabla u_i|^2\left\langle\frac{\nabla \eta_\delta}{|\nabla \eta_\delta|},x-x_0\right\rangle - \left(\nabla u_i\cdot   \frac{\nabla \eta_\delta}{|\nabla \eta_\delta|}\right)\left\langle \nabla u_i,x-x_0\right \rangle\right)\, d\sigma dt\\
+\sum_{i=1}^k b_i \int_0^1 \int_{\{\eta_\delta=t\}}\left(  \frac{1}{2}|\nabla v_i|^2\left\langle\frac{\nabla \eta_\delta}{|\nabla \eta_\delta|},x-x_0\right\rangle - \left(\nabla v_i\cdot   \frac{\nabla \eta_\delta}{|\nabla \eta_\delta|}\right)\left\langle \nabla v_i,x-x_0\right \rangle\right)\, d\sigma dt\\
-\sum_{i=1}^k \int_0^1 \int_{\{\eta_\delta=t\}} \frac{1}{2}(\mu_i u_i^2+\nu_i v_i^2)\left\langle \frac{\nabla \eta_\delta}{|\nabla \eta_\delta|},x-x_0\right\rangle \, d\sigma dt.
\end{align*}
Thus, as $\delta \to 0$, \eqref{eq:auxiliary2} becomes
\begin{align*}
\sum_{i=1}^k \int_{B_r(x_0)} \frac{2-N}{2}(a_i |\nabla u_i|^2+b_i |\nabla v_i|^2)\, dx+\sum_{i=1}^k \int_{B_r(x_0)} \frac{N}{2}(\mu_i u_i^2+\nu_i v_i^2)\, dx\\
=\sum_{i=1}^k a_i \int_{\partial B_r(x_0)} \left( (\nabla u_i\cdot \nu) \langle \nabla u_i,x-x_0\rangle -\frac{1}{2}|\nabla u_i|^2\langle \nu,x-x_0\rangle \right)\, d\sigma\\
+\sum_{i=1}^k b_i \int_{\partial B_r(x_0)} \left( (\nabla v_i\cdot \nu) \langle \nabla v_i,x-x_0\rangle -\frac{1}{2}|\nabla v_i|^2\langle \nu,x-x_0\rangle \right)\, d\sigma\\
+\sum_{i=1}^k \int_{\partial B_r(x_0)} \frac{1}{2} (\mu_i u_i^2+\nu_i v_i^2)\langle \nu,x-x_0\rangle\, d\sigma.
\end{align*}
As $\nu=\frac{x-x_0}{r}$ on $\partial B_r(x_0)$, the right-hand-side of the previous inequality is equal to
\begin{align*}
\sum_{i=1}^k a_i \int_{\partial B_r(x_0)} \left( r (\partial_n u_i)^2-\frac{r}{2}|\nabla u_i|^2\right)\, d\sigma+\sum_{i=1}^k b_i \int_{\partial B_r(x_0)} \left( r (\partial_n v_i)^2-\frac{r}{2}|\nabla v_i|^2\right)\, d\sigma\\
+\sum_{i=1}^k \int_{\partial B_r(x_0)} \frac{r}{2}(\mu_i u_i^2+\nu_i v_i^2)\, d\sigma
\end{align*}
and the conclusion follows.
\end{proof}

Ahead we will only need the local Pohozaev identities of Corollary \ref{coro:Field_second}. We have stated as well the more general ones of Corollary \ref{coro:Field_first}, as we think they are of independent interest (see for instance \cite{DancerWangZhang} for an application).

Under the previous notations, now define the quantities
\[
E(x_0,(u,v),r)=\frac{1}{r^{N-2}} \sum_{i=1}^k \int_{B_r(x_0)}  \left( a_i |\nabla u_i|^2+b_i |\nabla v_i|^2-\mu_i u_i^2-\nu_i v_i^2\right)\, dx,
\]
\[
H(x_0,(u,v),r)=\frac{1}{r^{N-1}} \sum_{i=1}^k \int_{\partial B_r(x_0)}  (a_i u_i^2+b_i v_i^2)\, d\sigma
\]
and the Almgren's quotient by
\[
N(x_0,(u,v),r)=\frac{E(x_0,(u,v),r)}{H(x_0,(u,v),r)},
\]
whenever $H(x_0,(u,v),r)\neq 0$.
\begin{lemma}\label{fracddrE(x_0,(u,v),r)=}
Given $x_0\in \Omega$ and $r\in (0,\dist(x_0,\partial \Omega))$ we have that
\[
\begin{split}
\frac{d}{dr} E(x_0,(u,v),r)=\frac{2}{r^{N-2}} \sum_{i=1}^k \int_{\partial B_r(x_0)} \left( a_i (\partial_n u_i)^2+b_i (\partial_n v_i)^2\right)\, d\sigma\\
					-\frac{2}{r^{N-1}} \sum_{i=1}^k \int_{B_r(x_0)} (\mu_i u_i^2+\nu_i v_i^2)\, dx.
\end{split}
\]
\end{lemma}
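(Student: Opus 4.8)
The plan is a direct differentiation of $E(x_0,(u,v),r)$ combined with the local Pohozaev identity of Corollary \ref{coro:Field_second}. Fix $x_0$ and suppress it from the notation; set
\[
\mathcal{D}(r):=\sum_{i=1}^k\int_{B_r(x_0)}\big(a_i|\nabla u_i|^2+b_i|\nabla v_i|^2\big)\,dx,\qquad \mathcal{P}(r):=\sum_{i=1}^k\int_{B_r(x_0)}\big(\mu_i u_i^2+\nu_i v_i^2\big)\,dx,
\]
so that $E(x_0,(u,v),r)=r^{2-N}\big(\mathcal{D}(r)-\mathcal{P}(r)\big)$. Since each $u_i,v_i$ lies in $H^1$ of a neighbourhood of $\overline{B_r(x_0)}$, the functions $\mathcal{D},\mathcal{P}$ are absolutely continuous in $r$, with $\mathcal{D}'(r)=\sum_i\int_{\partial B_r(x_0)}(a_i|\nabla u_i|^2+b_i|\nabla v_i|^2)\,d\sigma$ and $\mathcal{P}'(r)=\sum_i\int_{\partial B_r(x_0)}(\mu_i u_i^2+\nu_i v_i^2)\,d\sigma$ for a.e.\ $r$. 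Hence $E$ is absolutely continuous and, for a.e.\ $r\in(0,\dist(x_0,\partial\Omega))$,
\[
\frac{d}{dr}E(x_0,(u,v),r)=(2-N)r^{1-N}\big(\mathcal{D}(r)-\mathcal{P}(r)\big)+r^{2-N}\big(\mathcal{D}'(r)-\mathcal{P}'(r)\big).
\]

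The next step is to eliminate the bulk term $(2-N)r^{1-N}\mathcal{D}(r)$. Dividing the identity of Corollary \ref{coro:Field_second} by $r^{N-1}$ gives
\[
(2-N)r^{1-N}\mathcal{D}(r)=r^{2-N}\sum_{i=1}^k\int_{\partial B_r(x_0)}\!\big(a_i(2(\partial_n u_i)^2-|\nabla u_i|^2)+b_i(2(\partial_n v_i)^2-|\nabla v_i|^2)\big)\,d\sigma+r^{2-N}\sum_{i=1}^k\int_{\partial B_r(x_0)}\!(\mu_i u_i^2+\nu_i v_i^2)\,d\sigma-Nr^{1-N}\mathcal{P}(r).
\]
Substituting this into the previous display, the boundary terms $r^{2-N}\int_{\partial B_r}(a_i|\nabla u_i|^2+b_i|\nabla v_i|^2)$ cancel against the corresponding part of $r^{2-N}\mathcal{D}'(r)$, the boundary terms $r^{2-N}\int_{\partial B_r}(\mu_i u_i^2+\nu_i v_i^2)$ cancel against $-r^{2-N}\mathcal{P}'(r)$, and, using the elementary identity $-(2-N)r^{1-N}\mathcal{P}(r)-Nr^{1-N}\mathcal{P}(r)=-2r^{1-N}\mathcal{P}(r)$, one is left precisely with
\[
\frac{d}{dr}E(x_0,(u,v),r)=\frac{2}{r^{N-2}}\sum_{i=1}^k\int_{\partial B_r(x_0)}\big(a_i(\partial_n u_i)^2+b_i(\partial_n v_i)^2\big)\,d\sigma-\frac{2}{r^{N-1}}\sum_{i=1}^k\int_{B_r(x_0)}\big(\mu_i u_i^2+\nu_i v_i^2\big)\,dx,
\]
which is the asserted formula.

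The only point needing care is that a priori $(u,v)$ is merely in $H^1_0\cap C^{0,\alpha}$, so neither the differentiation in $r$ nor the Pohozaev identity is justified by classical smoothness across the interface $\partial\omega_u\cap\partial\omega_v$; this is why I do not expect anything genuinely hard here. The point is handled by what is already available: Corollary \ref{coro:Field_second} has been proved for the limit pair $(u,v)$ (by passing to the limit from the smooth solutions $(u_\beta,v_\beta)$ of \eqref{eq:equation_for_u_beta_v_beta}), and the absolute continuity of $\mathcal{D}$ and $\mathcal{P}$ with the stated derivatives is the standard coarea/Fubini fact for $H^1$ functions. The identity therefore holds for a.e.\ $r$, which is all the monotonicity argument leading to Theorem \ref{thm:Almgren} requires; once the Lipschitz regularity of $u$ and $v$ is established, both sides are continuous in $r$ and the identity holds for every $r\in(0,\dist(x_0,\partial\Omega))$.
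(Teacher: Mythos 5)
Your proof is correct and follows essentially the same route as the paper: differentiate $E(x_0,(u,v),r)$ directly, substitute the bulk Dirichlet term via the local Pohozaev identity of Corollary \ref{coro:Field_second}, and simplify, with all cancellations and the sign bookkeeping $-(2-N)-N=-2$ carried out correctly. Your extra remark on absolute continuity in $r$ and validity for a.e.\ $r$ is a harmless refinement of the same argument, not a different approach.
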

\begin{proof}
The derivative of $r\mapsto E(x_0,(u,v),r)$ is given by
\begin{align*}
\frac{2-N}{r^{N-1}}\sum_{i=1}^k \int_{B_r(x_0)} (a_i |\nabla u_i|^2+b_i |\nabla v_i|^2)\, dx-\frac{2-N}{r^{N-1}}\sum_{i=1}^k \int_{B_r(x_0)} (\mu_i u_i^2+\nu_i v_i^2)\, dx\\
+\frac{1}{r^{N-2}}\sum_{i=1}^k\int_{\partial B_r(x_0)} (a_i|\nabla u_i|^2+b_i |\nabla v_i|^2)\, d\sigma-\frac{1}{r^{N-2}}\sum_{i=1}^k \int_{\partial B_r(x_0)} (\mu_i u_i^2+\nu_i v_i^2)\, d\sigma.
\end{align*}
By using Corollary \ref{coro:Field_second}, this quantity is equal to
\begin{align*}
\frac{1}{r^{N-2}}\sum_{i=1}^k \int_{\partial B_r(x_0)} a_i (2(\partial_n u_i)^2-|\nabla u_i|^2)\, d\sigma+\frac{1}{r^{N-2}}\sum_{i=1}^k \int_{\partial B_r(x_0)} b_i (2(\partial_n v_i)^2-|\nabla v_i|^2)\, d\sigma\\
+\frac{1}{r^{N-2}}\sum_{i=1}^k \int_{\partial B_r(x_0)} (\mu_i u_i^2+\nu_i v_i^2)\, d\sigma -\frac{N}{r^{N-1}}\sum_{i=1}^k \int_{B_r(x_0)} (\mu_i u_i^2 +\nu_i v_i^2)\, dx\\
-\frac{2-N}{r^{N-1}}\sum_{i=1}^k \int_{B_r(x_0)} (\mu_i u_i^2 +\nu_i v_i^2)\, dx + \frac{1}{r^{N-2}}\sum_{i=1}^k \int_{\partial B_r(x_0)} (a_i |\nabla u_i|^2+b_i |\nabla v_i|^2)\, d\sigma\\
-\frac{1}{r^{N-2}}\sum_{i=1}^k \int_{\partial B_r(x_0)} (\mu_i v_i^2+\nu_i v_i^2)\, d\sigma
\end{align*}
or, equivalently, to
$$
\frac{2}{r^{N-2}}\sum_{i=1}^k \int_{\partial B_r(x_0)} (a_i (\partial_n u_i)^2+b_i (\partial_n v_i)^2)\, d\sigma-\frac{2}{r^{N-1}}\sum_{i=1}^k \int_{B_r(x_0)}(\mu_i u_i^2+\nu_i v_i^2)\, dx.
$$
\end{proof}

\begin{lemma}\label{fracddrH(x_0,(u,v),r)=}
Given $x_0\in \Omega$ and $r\in (0,\dist(x_0,\partial \Omega))$ we have
$$
\frac{d}{dr}H(x_0,(u,v),r)=\frac{2}{r^{N-1}}  \sum_{i=1}^k \int_{\partial B_r(x_0)} \left(a_i u_i (\partial_n u_i)+ b_i v_i (\partial_n v_i)\right)\, d\sigma.
$$
\end{lemma}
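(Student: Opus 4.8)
The plan is to reduce the computation of $\frac{d}{dr}H$ to a differentiation under the integral sign on the fixed unit sphere. First I would rescale, using the change of variables $x=x_0+r\theta$, $\theta\in\partial B_1(0)$, for which $d\sigma(x)=r^{N-1}\,d\sigma(\theta)$; this gives
\[
H(x_0,(u,v),r)=\sum_{i=1}^k\int_{\partial B_1(0)}\bigl(a_i\,u_i(x_0+r\theta)^2+b_i\,v_i(x_0+r\theta)^2\bigr)\,d\sigma(\theta),
\]
so that the entire $r$--dependence sits inside the integrand and the constants $a_i,b_i$ factor out. Then I would differentiate under the integral sign and use $\frac{d}{dr}u_i(x_0+r\theta)=\nabla u_i(x_0+r\theta)\cdot\theta$ and $\frac{d}{dr}v_i(x_0+r\theta)=\nabla v_i(x_0+r\theta)\cdot\theta$, obtaining
\[
\frac{d}{dr}H(x_0,(u,v),r)=2\sum_{i=1}^k\int_{\partial B_1(0)}\bigl(a_i\,u_i\,\nabla u_i\cdot\theta+b_i\,v_i\,\nabla v_i\cdot\theta\bigr)(x_0+r\theta)\,d\sigma(\theta)
\]
for a.e.\ $r$. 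Finally, since on $\partial B_r(x_0)$ the outer unit normal is $n=(x-x_0)/r=\theta$, one has $\nabla u_i\cdot\theta=\partial_n u_i$ and $\nabla v_i\cdot\theta=\partial_n v_i$ there; changing variables back to $\partial B_r(x_0)$ reinserts the factor $1/r^{N-1}$ and yields exactly the asserted identity.

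The only point that really deserves care is the legitimacy of differentiating under the integral sign, since at this stage $u$ and $v$ are only known to lie in $H^1_0(\Omega;\R^k)\cap C^{0,\alpha}(\overline\Omega;\R^k)$ (their Lipschitz regularity is precisely what the Almgren formula, Theorem \ref{thm:Almgren}, is being set up to prove). This is handled by the ACL characterization of Sobolev functions: for a.e.\ direction $\theta$ the maps $r\mapsto u_i(x_0+r\theta)$ and $r\mapsto v_i(x_0+r\theta)$ are absolutely continuous on every compact subinterval of $(0,\dist(x_0,\partial\Omega))$, with radial derivatives $\nabla u_i(x_0+r\theta)\cdot\theta$ and $\nabla v_i(x_0+r\theta)\cdot\theta$; and since $u_i,v_i\in L^\infty(\Omega)$ while $\nabla u_i,\nabla v_i\in L^2(\Omega)$, the products $u_i\,\nabla u_i$ and $v_i\,\nabla v_i$ lie in $L^1$ on every annulus $\{a\le|x-x_0|\le b\}$ with $0<a<b<\dist(x_0,\partial\Omega)$. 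By Fubini's theorem this provides the integrability needed to conclude that each $r\mapsto\int_{\partial B_1(0)}u_i(x_0+r\theta)^2\,d\sigma$ (and likewise for $v_i$) is locally absolutely continuous on $(0,\dist(x_0,\partial\Omega))$ with the derivative computed above for a.e.\ $r$; summing the finitely many terms preserves this, so $r\mapsto H(x_0,(u,v),r)$ is locally absolutely continuous with the stated derivative. No further difficulty arises, the remaining computation being purely routine.

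As a consistency check one may rewrite the right-hand side via the divergence theorem and the equations of Theorem \ref{thm:convergences}(iii) as $\tfrac{2}{r}E(x_0,(u,v),r)$, which together with Lemma \ref{fracddrE(x_0,(u,v),r)=} will give the usual logarithmic-derivative relation $\frac{d}{dr}\log H=\frac{2N}{r}$ once $N=E/H$ is introduced. I would not, however, use that route for the proof itself, as it requires controlling $u_i\Delta u_i$, $v_i\Delta v_i$ across the free boundary, whereas the rescaling argument above needs nothing beyond $u,v\in H^1_0\cap C^{0,\alpha}$.
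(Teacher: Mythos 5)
Your proof is correct and follows essentially the same route as the paper: rescale to the unit sphere so the weight $r^{N-1}$ cancels, differentiate under the integral sign, identify the radial derivative with $\partial_n$, and scale back. The only difference is that you spell out the justification for differentiating under the integral (via the ACL characterization), which the paper's one-line computation leaves implicit.
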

\begin{proof}
After a change of variables,
$$
H(x_0,(u,v),r)=\sum_{i=1}^k \int_{\partial B_1(0)} (a_i u_i^2(x_0+rx)+b_i v_i^2(x_0+rx))\, d\sigma,
$$
whence
\[
\begin{split}
\frac{d}{dr}H(x_0,(u,v),r)=& 2\sum_{i=1}^k\int_{\partial B_1(0)}a_i u_i(x_0+rx)\langle \nabla u_i(x_0+rx),x\rangle\, d\sigma\\
					&+2\sum_{i=1}^k\int_{\partial B_1(0)} b_i v_i(x_0+rx)\langle \nabla v_i(x_0+rx),x\rangle\, d\sigma\\
					=& \frac{2}{r^{N-1}}\sum_{i=1}^k \int_{\partial B_r(x_0)} (a_i u_i (\partial_n u_i)+b_i v_i (\partial_n v_i))\, d\sigma.
\end{split}
\]
\end{proof}

Now we prove an auxiliary technical result.

\begin{lemma}\label{lemma:estimate1}
There exist $C>0$ and $\bar r>0$, both depending only on $N$ and $\|u_i\|, \|v_i\|$, $i=1,\ldots, k$, such that
$$
\frac{1}{r^{N-1}}  \sum_{i=1}^k \int_{B_r(x_0)}   (\mu_i u_i^2+\nu_i v_i^2)\, dx\leq C r (E(x_0,(u,v),r)+H(x_0,(u,v),r))
$$
for every $x_0\in \Omega$, $r\in (0,\bar r)$.
\end{lemma}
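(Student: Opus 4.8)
The plan is to reduce the estimate to a scaled Poincar\'e--trace inequality on balls, combined with the uniform two-sided bounds on the coefficients $a_i,b_i$ and the uniform upper bounds on $\mu_i,\nu_i$, and then to absorb one lower-order term by exploiting the smallness of $r$.

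First I would record the coefficient bounds. For the limit functions $(u,v)$ of Theorem~\ref{thm:convergences} the matrix $M(u)$ is diagonal with entries $\|u_i\|^2$, and the uniform $H^1_0$ estimates of Lemma~\ref{lemma:uniform_bounds_in_beta}, together with the $L^2$-normalization, give $\delta\leq\|u_i\|^2,\|v_i\|^2\leq C$. Since $\psi$ has strictly positive partial derivatives on $(\R^+)^k$, it follows that $0<\underline a\leq a_i,b_i\leq\bar a$, with constants depending only on $\psi$ and on the norms $\|u_i\|,\|v_i\|$. Moreover, passing to the limit in \eqref{eq:expression_for_mu_ij} (as in the proof of Proposition~\ref{prop:lemadoaviao}) yields $\mu_i=a_i\|u_i\|^2$ and $\nu_i=b_i\|v_i\|^2$, hence $0\leq\mu_i,\nu_i\leq\Lambda$ with $\Lambda$ of the same type; alternatively one may simply use that the Lagrange multipliers $\mu_{ij,\beta},\nu_{ij,\beta}$ are bounded uniformly in $\beta$.

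Next I would invoke the elementary inequality: for every $w\in H^1(B_r(x_0))$,
\[
\int_{B_r(x_0)} w^2\,dx\leq C_N\Bigl(r\int_{\partial B_r(x_0)} w^2\,d\sigma+r^2\int_{B_r(x_0)}|\nabla w|^2\,dx\Bigr).
\]
For $r=1$ this is standard (a minimizing sequence with unit $L^2(B_1)$ norm and vanishing Dirichlet energy and boundary $L^2$ norm would converge, by compactness of $H^1(B_1)\hookrightarrow L^2(B_1)$ and of the trace, to a nonzero constant vanishing on $\partial B_1$); the general $r$ follows by the rescaling $y=(x-x_0)/r$. Applying it to each of $u_1,\dots,u_k,v_1,\dots,v_k$, summing, dividing by $r^{N-2}$, and writing $G(r):=r^{-(N-2)}\sum_i\int_{B_r(x_0)}(u_i^2+v_i^2)\,dx$, the boundary contribution is bounded by $\underline a^{-1}C_N\,r^2 H(x_0,(u,v),r)$, while the Dirichlet contribution equals $C_N r^2\cdot r^{-(N-2)}\sum_i\int_{B_r(x_0)}(|\nabla u_i|^2+|\nabla v_i|^2)\,dx$, which by the definition of $E$ and the coefficient bounds is at most $\underline a^{-1}C_N r^2\bigl(E(x_0,(u,v),r)+\Lambda G(r)\bigr)$. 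Thus
\[
G(r)\leq \tfrac{C_N}{\underline a}\,r^2\bigl(E(x_0,(u,v),r)+H(x_0,(u,v),r)\bigr)+\tfrac{C_N\Lambda}{\underline a}\,r^2 G(r),
\]
and choosing $\bar r$ so that $\tfrac{C_N\Lambda}{\underline a}\bar r^2\leq\tfrac12$ lets me absorb the last term and get $G(r)\leq C'r^2\bigl(E(x_0,(u,v),r)+H(x_0,(u,v),r)\bigr)$ for $r\in(0,\bar r)$. The claimed estimate then follows at once from $r^{-(N-1)}\sum_i\int_{B_r(x_0)}(\mu_i u_i^2+\nu_i v_i^2)\,dx\leq\Lambda\,G(r)/r\leq C''r\bigl(E(x_0,(u,v),r)+H(x_0,(u,v),r)\bigr)$.

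The only genuinely delicate point is this absorption step, which is precisely why $\bar r$ must be taken small, depending on the size $\Lambda$ of the lower-order coefficients; beyond that, one only has to keep track that all constants ultimately depend on $N$ (through $C_N$) and on $\|u_i\|,\|v_i\|$ (through $a_i,b_i,\mu_i,\nu_i$, the function $\psi$ being fixed once and for all), as required by the statement. Everything else is routine bookkeeping.
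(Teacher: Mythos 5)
Your proof is correct and follows essentially the same route as the paper's: a scaled Poincar\'e--trace inequality on $B_r(x_0)$, rewriting the Dirichlet term as $E(x_0,(u,v),r)$ plus the mass term, and absorbing the latter for $r$ small. The only cosmetic difference is that you strip the weights $a_i,b_i$ (hence need the lower bound $\underline a>0$, which brings a dependence on $\psi$ into your constants), whereas the paper keeps the weighted quantities throughout and only uses $\mu_i\leq\kappa_1 a_i$, $\nu_i\leq\kappa_1 b_i$, so its constant $C=2\kappa_1/(N-1)$ depends only on $N$ and $\|u_i\|,\|v_i\|$.
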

\begin{proof}
By making $\beta\to +\infty$ in \eqref{eq:expression_for_mu_ij} we obtain
$$
\mu_i=a_i\int_\Omega |\nabla u_i|^2\, dx\leq \kappa_1 a_i \quad \text{ and } \quad \nu_i=b_i\int_\Omega |\nabla v_i|^2\, dx\leq \kappa_1b_i,
$$
with $\kappa_1$ depending only on $\|u_i\|, \|v_i\|$. Thus we have
\begin{align*}
\frac{1}{r^{N-1}}\sum_{i=1}^k \int_{B_r(x_0)}(\mu_i u_i^2+\nu_i v_i^2)\, dx \leq \frac{\kappa_1}{r^{N-1}}\sum_{i=1}^k \int_{B_r(x_0)} (a_i u_i^2+b_i v_i^2)\, dx\\
	\leq \frac{\kappa_1}{N-1} r \left( \frac{1}{r^{N-2}}\sum_{i=1}^k \int_{B_r(x_0)} (a_i |\nabla u_i|^2+b_i |\nabla v_i|^2)\, dx+\frac{1}{r^{N-1}}\sum_{i=1}^k\int_{\partial B_r(x_0)}(a_i u_i^2 +b_i v_i^2)\, d\sigma\right)
\end{align*}
by the Poincar\'e inequality. Next we  observe that
\begin{equation*}
\begin{split}
\frac{1}{r^{N-2}} \sum_{i=1}^k \int_{B_r(x_0)}  \left( a_i |\nabla u_i|^2 + b_i |\nabla v_i|^2 \right)=E(x_0,(u,v),r)+\frac{1}{r^{N-2}}\sum_{i=1}^k \int_{B_r(x_0)}(\mu_i u_i^2+\nu_i v_i^2)\, dx.
\end{split}
\end{equation*}
Thus for $r$ small enough such that $\frac{\kappa_1}{N-1}  r^2\leq 1/2$ the result follows, with $C=\frac{2\kappa_1}{N-1}$.
\end{proof}

\begin{remark}For later reference, we mention that in the proof of Lemma \ref{lemma:estimate1} we have established the following result:
There exists $\bar r>0$ depending only on $N$ and $\|u_i\|, \|v_i\|$, $i=1,\ldots, k$, such that
\begin{align}\label{eq:estimate1_bis}
\frac{1}{r^{N-2}}\sum_{i=1}^k \int_{B_r(x_0)} (a_i |\nabla u_i|^2+b_i |\nabla v_i|^2)\, dx+\frac{1}{r^{N-1}}\sum_{i=1}^k\int_{\partial B_r(x_0)}(a_i u_i^2 +b_i v_i^2)\, d\sigma \nonumber \\
 \leq 2 (E(x_0,(u,v),r)+H(x_0,(u,v),r)).
\end{align}
for every $x_0\in \Omega$, $r\in (0,\bar r)$.
\end{remark}

We now have all the ingredients to state and prove the monotonicity result.

\begin{theorem}[Almgren's Monotonicity Formula]\label{thm:Almgren}
Let $C=C(N, ||u_i||,||v_i||)$ be given by Lemma \ref{lemma:estimate1}. Then, given $\tilde \Omega\Subset \Omega$, there exists $\tilde r>0$ such that for every $x_0\in \tilde \Omega$ and $r\in (0,\tilde r]$ we have $H(x_0,(u,v),r)\neq 0$, $N(x_0,(u,v),\cdot)$ is absolutely continuous function, and
$$
\frac{d}{dr}N(x_0,(u,v),r)\geq -2Cr(N(x_0,(u,v),r)+1).
$$
In particular, $e^{C r^2}(N(x_0,(u,v),r)+1)$ is a non decreasing function for $r\in (0,\tilde r]$ and the limit $N(x_0,(u,v),0^+):=\lim_{r\to 0^+} N(x_0,(u,v),r)$ exists and is finite. Also,
$$
\frac{d}{dr}\log(H(x_0,(u,v),r))=\frac{2}{r}N(x_0,(u,v),r)\qquad \forall r\in (0,\tilde r).
$$
Moreover, the common nodal set
\[
\Gamma_{(u,v)}:=\{x\in \Omega:\ u_i(x)=v_i(x)=0 \text{ for every }i=1,\ldots,k\}
\]
has no interior points.
\end{theorem}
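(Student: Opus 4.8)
The analytic core follows the Almgren/Caffarelli--Lin pattern, using only the integral identities already available. Summing the identities of Theorem \ref{thm:convergences}(iii) over $i$ and over the $u$- and $v$-blocks gives
\[
\sum_{i=1}^k\int_{\partial B_r(x_0)}\bigl(a_iu_i\partial_nu_i+b_iv_i\partial_nv_i\bigr)\,d\sigma=r^{N-2}\,E(x_0,(u,v),r),
\]
so Lemma \ref{fracddrH(x_0,(u,v),r)=} becomes $\frac{d}{dr}H(x_0,(u,v),r)=\frac2r E(x_0,(u,v),r)$, hence $\frac{d}{dr}\log H=\frac2r N$ wherever $H\neq0$. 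Together with the formula for $\frac{d}{dr}E$ from Lemma \ref{fracddrE(x_0,(u,v),r)=}, one computes $N'=\frac{E'H-EH'}{H^2}$: its leading part is a positive multiple of
\[
\Bigl(\sum_i\int_{\partial B_r}(a_i(\partial_nu_i)^2+b_i(\partial_nv_i)^2)\Bigr)\Bigl(\sum_i\int_{\partial B_r}(a_iu_i^2+b_iv_i^2)\Bigr)-\Bigl(\sum_i\int_{\partial B_r}(a_iu_i\partial_nu_i+b_iv_i\partial_nv_i)\Bigr)^2,
\]
which is $\ge0$ by the Cauchy--Schwarz inequality on $\partial B_r(x_0)$ applied to the weighted vectors (here $a_i,b_i>0$ is essential). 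The remaining contribution of $N'$, carrying the factors $\mu_i,\nu_i$, is estimated via Lemma \ref{lemma:estimate1}, producing after division by $H$ a term bounded by $Cr(N+1)$; the same lemma also yields $N\ge-Cr^2$, so that $N+1>0$ for $r$ small. Altogether $\frac{d}{dr}N\ge-2Cr(N+1)$.

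From this inequality $\frac{d}{dr}\bigl(e^{Cr^2}(N+1)\bigr)=e^{Cr^2}\bigl(2Cr(N+1)+N'\bigr)\ge0$; being non-decreasing and $>0$, this quantity has a limit as $r\to0^+$, so $N(x_0,(u,v),0^+)$ exists and is finite. Absolute continuity of $N$ on $(0,\tilde r]$ follows from that of $E$ and $H$ (given by the explicit derivative formulas) once $H\neq0$ there, and $\frac{d}{dr}\log H=\frac2r N$ is the identity already obtained. It remains to secure $H\neq0$, and here the statement about $\Gamma_{(u,v)}$ enters: on any subinterval of $(0,\tilde r]$ on which $H>0$, the identity $\frac{d}{dr}\log H=\frac2r N$ and $N\ge-Cr^2$ give $\log H(r_2)-\log H(r_1)\ge-Cr_2^2$ for $r_1<r_2$, hence $H(r_2)\ge H(r_1)e^{-C\tilde r^2}$; by continuity of $H$ this forbids $H$ from reaching $0$ at the right endpoint of such an interval, and a standard continuation argument then shows that if $H(x_0,\bar r)=0$ for some $\bar r\le\tilde r$ then $H(x_0,r)=0$ for all $r\in(0,\bar r]$, i.e.\ $\overline{B_{\bar r}(x_0)}\subseteq\Gamma_{(u,v)}$. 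Thus $H\neq0$ on $\tilde\Omega\times(0,\tilde r]$ \emph{provided} $\Gamma_{(u,v)}$ has empty interior.

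To prove $\Gamma_{(u,v)}$ has empty interior, observe that, $\Gamma_{(u,v)}=(\Omega\setminus\omega_u)\cap(\Omega\setminus\omega_v)$ being the intersection of two relatively closed sets, $\mathrm{int}\,\Gamma_{(u,v)}=\widetilde\omega_u\cap\widetilde\omega_v$ with $\widetilde\omega_u,\widetilde\omega_v$ as in Proposition \ref{prop:lemadoaviao}. Assume $\widetilde\omega_u\cap\widetilde\omega_v\neq\emptyset$ and pick a ball $B$ with $\overline B\subseteq\widetilde\omega_u\cap\widetilde\omega_v$. On the connected component of $\widetilde\omega_u$ containing $B$, the $u_i$ solve $-a_i\Delta u_i=\mu_iu_i$ and vanish on the open set $B$, so $u\equiv0$ on that whole component by unique continuation; symmetrically $v\equiv0$ on its component in $\widetilde\omega_v$. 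Tracking which portions of $\omega_u$ and $\omega_v$ are reached, one isolates a nonempty open region where the optimal partition leaves room unused. Enlarging the adjacent component of the partition into that region — joining it to $\omega_u$ (resp.\ $\omega_v$) through a tube contained in $\widetilde\omega_u$ (resp.\ $\widetilde\omega_v$), and, in the degenerate case where the unused region is a separate component of both $\widetilde\omega_u$ and $\widetilde\omega_v$, exploiting the connectedness of $\Omega$ — strictly lowers a Dirichlet eigenvalue, hence, by $\partial\psi/\partial a_i>0$, strictly lowers $\vphi(M(\cdot))+\vphi(M(\cdot))$, contradicting that $(u,v)$ realizes $c_\infty$ (Corollary \ref{coro:characterization_of_c_infty}).

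I expect the main obstacle to be exactly this last step: checking that the enlargement is \emph{effective}, i.e.\ that the relevant Dirichlet eigenvalue strictly decreases. This is the only point where the minimality of the configuration must be genuinely combined with the unique continuation property of the limiting equations, and where the point-set topology of $\widetilde\omega_u$ and $\widetilde\omega_v$ has to be handled with some care; everything in Steps~1--2 is then routine bookkeeping given the integral identities above.
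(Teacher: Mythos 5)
Your analytic core (the two derivative formulas, the identity $\frac{d}{dr}\log H=\frac{2}{r}N$ via Theorem \ref{thm:convergences}(iii), the Cauchy--Schwarz sign of the leading part of $N'$, the estimate of the $\mu_i,\nu_i$ term through Lemma \ref{lemma:estimate1}, and the monotonicity of $e^{Cr^2}(N+1)$) is exactly the paper's argument, and your continuation step showing that $H(x_0,\bar r)=0$ forces $H(x_0,r)=0$ for all $r\le\bar r$, hence $B_{\bar r}(x_0)\subseteq\Gamma_{(u,v)}$, is an acceptable variant of the paper's second step (the paper instead extends a nontrivial component of $(u,v)$ by zero across $\partial B_r(x_0)$ and contradicts $\lambda_1(B_r(x_0))>\mu_1/a_1$ for $r$ small, which also gives the uniformity in $x_0\in\tilde\Omega$ directly). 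The genuine gap is your proof that $\Gamma_{(u,v)}$ has empty interior. First, the case you actually construct is vacuous: by your own unique-continuation step, the connected component $A$ of $\widetilde\omega_u$ containing the ball $B$ satisfies $u\equiv 0$ on $A$, hence $A\cap\omega_u=\emptyset$; consequently there is no tube inside $\widetilde\omega_u$ joining $B$ to $\omega_u$ (such a tube would place a point of $\omega_u$ in $A$), and symmetrically on the $v$ side. So you are always in your ``degenerate case'', where the unused region $W$ is a full component of both $\widetilde\omega_u$ and $\widetilde\omega_v$, and for that case you offer no argument. Second, the surgery does not obviously contradict minimality: adding $W$ to $\omega_u$ as a disjoint open piece gives only $\lambda_j(\omega_u\cup W)\le\lambda_j(\omega_u)$, with equality for every $j\le k$ as soon as $\lambda_1(W)>\lambda_k(\omega_u)$ (which happens for small $W$), so the cost need not decrease and Corollary \ref{coro:characterization_of_c_infty} is not violated; any open tube connecting $W$ to $\omega_u$ inside $\Omega$ must meet $\omega_v$ (its crossing points of $\partial W$ lie in $\overline{\omega_v}$, and an open set touching $\partial\omega_v$ meets $\omega_v$), so one would have to carve a neighborhood out of $\omega_v$, raising its eigenvalues with no quantitative control; and even when an attachment is possible, strict decrease of one of the first $k$ eigenvalues under domain enlargement is not automatic. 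In short, minimality plus unique continuation, as sketched, does not close the argument.

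The paper proves the empty-interior statement without any minimality, using the monotonicity formula itself: if $x_1$ were an interior point of $\Gamma_{(u,v)}$ with $d_1:=\dist(x_1,\partial\Gamma_{(u,v)})>0$, then $H(x_1,\cdot)$ vanishes on $(0,d_1]$ and is positive on some interval $(d_1,d_1+\ep)$; there $H'(r)=a(r)H(r)$ with $a(r)=2N(r)/r$, and $a$ extends continuously to $r=d_1$ because $e^{Cr^2}(N(r)+1)$ is monotone, so uniqueness for this linear ODE with initial value $H(d_1)=0$ forces $H\equiv0$ on $(d_1,d_1+\ep)$, a contradiction. Replacing your third step by this ODE argument (everything it needs is already contained in your Steps 1--2) would make the proof complete; note also that the separation-type facts one might be tempted to invoke, such as Corollary \ref{coro:One_component_positive}, are proved in the paper only after, and by means of, Theorem \ref{thm:Almgren}.
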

\begin{proof} We use the same argument as in \cite[Proposition 4.3]{NTTV1}. As a starting point, we observe that Theorem \ref{thm:convergences} (iii) implies that
\begin{equation}\label{second_expression_for_E(r)}
E(x_0,(u,v),r)=\frac{1}{r^{N-2}}\sum_{i=1}^k\int_{\partial B_r(x_0)}\left( a_i u_i(\partial_n u_i)+ b_i v_i (\partial_n v_i)\right)d\sigma.
\end{equation}
Suppose first that $H(x_0,(u,v),r)>0$ in some interval. For simplicity of notations, we write $H(r):=H(x_0,(u,v),r)$ and similarly for $E(r)$ and $N(r)$.
By combining  Lemma \ref{fracddrH(x_0,(u,v),r)=} and \eqref{second_expression_for_E(r)} we find that
\begin{equation}\label{fracddrlogH(r)=frac2rN(r)}
\frac{d}{dr}\log H(r)=\frac{2}{r}N(r).
\end{equation}
Also, by using Lemmas \ref{fracddrE(x_0,(u,v),r)=} and \ref{fracddrH(x_0,(u,v),r)=} together with \eqref{second_expression_for_E(r)}, we deduce that
{\small
\begin{align*}
\frac{d}{dr}N(r)=\frac{2}{r^{2N-3}H^2(r)}\left[ \int_{\partial B_r(x_0)} \sum_{i=1}^k (a_i (\partial_n u_i)^2 + b_i (\partial_n v_i)^2) d\sigma\int_{\partial B_r(x_0)} \sum_{i=1}^k ( a_i u_i^2+ b_i v_i^2 )d\sigma\right.\\
\left.- \left(\int_{\partial B_r(x_0)}\sum_{i=1}^k (a_i u_i(\partial_n u_i)+b_i v_i (\partial_n v_i))\, d\sigma\right)^2\right]
-\frac{2}{H(r)r^{N-1}}\int_{B_r(x_0)}\sum_{i=1}^k (\mu_i u_i^2+\nu_i v_i^2)\, dx. \end{align*}}
As a consequence, thanks to Lemma \ref{lemma:estimate1}, 
 $$
\frac{d}{dr}N(r) \geq -\frac{2}{H(r)r^{N-1}}\int_{B_r(x_0)}\sum_{i=1}^k (\mu_i u_i^2+\nu_i v_i^2)\, dx \geq -2Cr\, \frac{E(r)+H(r)}{H(r)}=-2Cr\, (N(r)+1).$$
Thus, in order to conclude the proof of Theorem \ref{thm:Almgren} we must show that     $H(x_0,(u,v),r)$ is indeed   positive  for small values of $r$ and $x_0\in \tilde \Omega$.

We first prove that $\Gamma_{(u,v)}$ has no interior points. Assume the contrary, and let $x_1\in \Gamma_{(u,v)}$ be such that $d_1:=\dist(x_1,\partial \Gamma_{(u,v)})>0$. Thus $H(x_1,(u,v),r)>0$ for $r\in (d_1,d_1+\varepsilon)$ for some small $\ep$. By what we have done so far, the function $H(r):=H(x_1,(u,v),r) $ satisfies the initial value problem $H'(r)=a(r)H(r)$ for $r\in (d_1,d_1+\ep)$ and $H(d_1)=0$, with $a(r)=2N(r)/r$, which is continuous also at the point $d_1$ by the monotonicity of the map $e^{Cr^2}(N(r)+1)$. Then $H(r)\equiv 0 $ for $r\in (d_1,d_1+\ep)$, a contradiction with the definition of $d_1$. This proves our claim that 
$\Gamma_{(u,v)}$ has empty interior.

Now, suppose $H(x_0,(u,v),r)=0$ for some   $r>0$ and $x_0\in \tilde \Omega$. This means that $u_i=v_j=0$ on $\partial B_{r}(x_0)$ for every $i,j$. By extending a non zero component of $(u,v)$, say $u_1|_{B_r(x_0)}$ by zero to the whole   $\Omega$ (such a non zero component exists, as $\Gamma_{(u,v)}$ has empty interior), call $\tilde{u}_1$ such an extension, since $-\Delta |\tilde{u}_1|\leq \frac{\mu_1}{a_1}|\tilde{u}_1| $ in $\Omega$, we find out that
$$
\frac{\mu_1}{a_1}\int_{B_r(x_0)} u_1^2\, dx\geq \int_{B_r(x_0)}|\nabla u_1|^2dx\geq \lambda_1(B_r(x_0))\int_{B_r(x_0)} u_1^2\, dx.$$
This cannot hold if $r=r(\mu_1,a_1)$ is small enough, uniformly in $x_0\in \tilde \Omega$, and we are done.
\end{proof}

As a consequence of the Almgren's Monotonicity Formula, we have the following.

\begin{corollary}\label{Corollaries 2.6_2.7_2.8_TaTe}
Let $(u,v)$ be as before. Then
\begin{itemize}
\item[1.] given $\tilde \Omega\Subset \Omega$ there exist $\tilde C=\tilde C(\tilde \Omega, N, \|u_i\|,\|v_i\|)$ and $\tilde r>0$ such that, for every $x\in \tilde \Omega$ and $0<r_1<r_2\leq \tilde r$,
$$
H(x,(u,v),r_2)\leq H(x,(u,v),r_1)\left(\frac{r_2}{r_1}\right)^{2\tilde C};
$$
\item[2.] the map $\Omega \mapsto \R$, $x\mapsto N(x,(u,v),0^+)$ is upper semi-continuous;
\item[3.] for every $x_0\in \Gamma_{(u,v)}$ we have $N(x_0,(u,v),0^+)\geq 1$.
\end{itemize}
\end{corollary}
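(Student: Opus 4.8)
The plan is to derive all three items directly from the Almgren Monotonicity Formula of Theorem~\ref{thm:Almgren}, following the scheme of \cite{NTTV1}. The key ancillary fact to keep in mind is that, after the change of variables $y=x_0+rz$, both $E(x_0,(u,v),r)$ and $H(x_0,(u,v),r)$ become integrals over $B_1(0)$, resp. $\partial B_1(0)$, of the continuous functions $u_i,v_i,\nabla u_i,\nabla v_i$ composed with $z\mapsto x_0+rz$; hence, for fixed $r$, $x_0\mapsto E(x_0,(u,v),r)$ and $x_0\mapsto H(x_0,(u,v),r)$ are continuous.

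\emph{Proof of Part 1.} Fix $\tilde\Omega\Subset\Omega$ and take $\tilde r>0$ as in Theorem~\ref{thm:Almgren}, shrunk so that in addition $\overline{\tilde\Omega}\Subset\Omega$ and $\tilde r<\dist(\overline{\tilde\Omega},\partial\Omega)$; then $H(x,(u,v),\tilde r)>0$ for every $x\in\overline{\tilde\Omega}$. Since $x\mapsto H(x,(u,v),\tilde r)$ is continuous and positive on the compact set $\overline{\tilde\Omega}$, it has a positive minimum there, while $E(\cdot,(u,v),\tilde r)$ is bounded above on $\overline{\tilde\Omega}$ (it is controlled by $\tilde r^{2-N}\sum_i(a_i\|u_i\|^2+b_i\|v_i\|^2+\mu_i+\nu_i)$). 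Hence $N(x,(u,v),\tilde r)\le \tilde C_0$ uniformly on $\tilde\Omega$, and the monotonicity of $r\mapsto e^{Cr^2}(N(x,(u,v),r)+1)$ gives $N(x,(u,v),r)\le e^{C\tilde r^2}(\tilde C_0+1)-1=:\tilde C$ for all $x\in\tilde\Omega$ and $0<r\le\tilde r$. Integrating the identity $\frac{d}{dr}\log H(x,(u,v),r)=\frac{2}{r}N(x,(u,v),r)\le\frac{2\tilde C}{r}$ from $r_1$ to $r_2$ yields $H(x,(u,v),r_2)\le H(x,(u,v),r_1)(r_2/r_1)^{2\tilde C}$.

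\emph{Proof of Part 2.} The monotonicity of $r\mapsto e^{Cr^2}(N(x,(u,v),r)+1)$ gives, for $x$ in a fixed $\tilde\Omega\Subset\Omega$,
\[
N(x,(u,v),0^+)+1=\inf_{0<r\le\tilde r}e^{Cr^2}\big(N(x,(u,v),r)+1\big).
\]
For each fixed $r\in(0,\tilde r]$ the map $x\mapsto e^{Cr^2}(N(x,(u,v),r)+1)$ is continuous on $\tilde\Omega$ (by the remark above and $H(\cdot,(u,v),r)>0$ there), and an infimum of a family of continuous functions is upper semicontinuous; since $\tilde\Omega\Subset\Omega$ is arbitrary, $x\mapsto N(x,(u,v),0^+)$ is upper semicontinuous on $\Omega$.

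\emph{Proof of Part 3.} I would argue by blow-up. Given $x_0\in\Gamma_{(u,v)}$, set $\bar u^r_i(z)=u_i(x_0+rz)/H(x_0,(u,v),r)^{1/2}$, $\bar v^r_i(z)=v_i(x_0+rz)/H(x_0,(u,v),r)^{1/2}$, so that $H(0,(\bar u^r,\bar v^r),1)=1$, $N(0,(\bar u^r,\bar v^r),\rho)=N(x_0,(u,v),r\rho)$, $\bar u^r_i\bar v^r_j\equiv 0$, and (by Theorem~\ref{thm:convergences}(iii)) $-a_i\Delta\bar u^r_i=r^2\mu_i\bar u^r_i$ on $\{\sum_j(\bar u^r_j)^2>0\}$, similarly for $\bar v^r$. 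Using the uniform bounds of Theorem~\ref{thm:convergences} together with Part 1 (exactly as in \cite{NTTV1}), the family $(\bar u^r,\bar v^r)$ is precompact in $H^1_{\mathrm{loc}}(\R^N)\cap C^0_{\mathrm{loc}}(\R^N)$; along a subsequence $r\to 0$ it converges to $(U,V)\not\equiv 0$ with $U_iV_j\equiv0$, $\Delta U_i=0$ in $\{\sum_jU_j^2>0\}$, $\Delta V_i=0$ in $\{\sum_jV_j^2>0\}$, and $N(0,(U,V),\rho)\equiv N(x_0,(u,v),0^+)=:\gamma$ for all $\rho>0$; the constancy of this Almgren quotient forces $(U,V)$ to be homogeneous of degree $\gamma$, and continuity together with $(U,V)(0)=0\neq(U,V)$ forces $\gamma>0$. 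Set $u^*:=(\sum_i a_iU_i^2)^{1/2}$, $v^*:=(\sum_i b_iV_i^2)^{1/2}$: both are non-negative, $\gamma$-homogeneous, subharmonic (from $\Delta U_i=0$ and Cauchy--Schwarz where positive, continuity of the gradient across the interface, and vanishing outside the positivity set), with $u^*v^*\equiv0$ and $u^*(0)=v^*(0)=0$. If $v^*\equiv0$, each $U_i$ is harmonic in $\R^N\setminus\{0\}$ and bounded near $0$, hence extends harmonically to $\R^N$; being homogeneous it is a harmonic polynomial, and vanishing at $0$ gives $\gamma\ge1$. If both $u^*,v^*$ are nontrivial, the Alt--Caffarelli--Friedman monotonicity formula says that
\[
r\mapsto\frac{1}{r^4}\Big(\int_{B_r(0)}\frac{|\nabla u^*|^2}{|z|^{N-2}}\,dz\Big)\Big(\int_{B_r(0)}\frac{|\nabla v^*|^2}{|z|^{N-2}}\,dz\Big)
\]
is non-decreasing; by $\gamma$-homogeneity each factor equals a positive constant times $r^{2\gamma}$, so the product is a positive constant times $r^{4\gamma-4}$, which is non-decreasing only if $\gamma\ge1$. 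In all cases $N(x_0,(u,v),0^+)=\gamma\ge1$.

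\emph{Main obstacle.} The only genuinely delicate point is the blow-up analysis in Part 3: proving that the rescaled sequence converges strongly enough in $H^1_{\mathrm{loc}}$ to pass $N(0,\cdot,\rho)$ to the limit for each fixed $\rho$, and that the resulting limit is exactly $\gamma$-homogeneous. This is the content of the corresponding analysis in \cite{NTTV1}; the presence of the positive weights $a_i,b_i$ and of the exponent $q$ does not affect it, since in the blow-up limit the competition term disappears and the weights only rescale the (equivalent) energy $E$, for which the monotonicity formula of Theorem~\ref{thm:Almgren} has already been established.
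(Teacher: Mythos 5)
Parts 1 and 2 of your proposal are correct and follow essentially the same monotonicity-based route as the results the paper invokes (the paper's own proof is a one-line citation of Corollaries 2.6--2.8 of \cite{TavaresTerracini1}): a uniform bound on $N(x,(u,v),\tilde r)$ over $\tilde\Omega$ plus the identity $\frac{d}{dr}\log H=\frac{2}{r}N$ gives the doubling estimate, and writing $N(x,(u,v),0^+)+1$ as an infimum of the continuous functions $x\mapsto e^{Cr^2}(N(x,(u,v),r)+1)$ gives upper semicontinuity. Nothing to object there.

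Part 3, however, has a genuine gap in the degenerate case of your blow-up dichotomy. When $v^*\equiv 0$ you assert that each $U_i$ is harmonic in $\R^N\setminus\{0\}$ and hence extends to a homogeneous harmonic polynomial, forcing $\gamma\ge 1$. But all that is known about a blow-up limit at this stage is that $-a_i\Delta U_i=-\bar{\Mr}_i$ with $\bar{\Mr}_i$ a measure concentrated on the zero set $\{\sum_j U_j^2=0\}$ (cf.\ Proposition \ref{eq:propositionmeasures} and Theorem \ref{thm:convergence_of_blowupsequence}); that zero set can be an $(N-1)$-dimensional cone and the measure need not vanish just because the $v$-group disappears, so profiles such as $U_i=\alpha_i\,|x\cdot\nu|$ are not excluded, and these are not harmonic across the hyperplane. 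Subharmonicity of $u^*$ plus $\gamma$-homogeneity cannot rescue the argument, since $r^\gamma$ is subharmonic for every $\gamma>0$. In the paper this degenerate case is exactly the delicate Case 2 of Proposition \ref{prop:jump_condition_on_N}, treated by an induction on the dimension, not by harmonic extension. There is also a circularity issue with the machinery you import: within this paper, the compactness, strong $H^1_{\rm loc}$ convergence and exact homogeneity of blow-ups (Theorem \ref{thm:convergence_of_blowupsequence}, Corollary \ref{coro:blowup_special_cases}) are proved \emph{using} items 1 and 3 of the very corollary under discussion (through Lemma \ref{H(x,(u_n,v_n),r)leqCr^2} and Step 4 of that proof), so if you insist on a blow-up proof you must first re-derive the fixed-center compactness without assuming $N\ge 1$ on the nodal set; in particular the uniform local H\"older bounds for the rescalings do not follow merely from Part 1 and Theorem \ref{thm:convergences}.

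Both difficulties are avoided by a direct scaling argument, which is essentially what the cited corollaries do: suppose $\gamma:=N(x_0,(u,v),0^+)<1$ and fix $\gamma<\alpha<1$, $\varepsilon>0$ with $\gamma+\varepsilon<\alpha$. Since $N(x_0,(u,v),r)\to\gamma$, for small $r$ one has $N(x_0,(u,v),r)\le\gamma+\varepsilon$, and integrating $\frac{d}{dr}\log H=\frac{2}{r}N$ yields $H(x_0,(u,v),r)\ge c\,r^{2(\gamma+\varepsilon)}$ with $c>0$; on the other hand $u,v\in C^{0,\alpha}(\overline\Omega)$ by Theorem \ref{thm:convergences} and $u(x_0)=v(x_0)=0$ give $H(x_0,(u,v),r)\le C\,r^{2\alpha}$, a contradiction as $r\to 0$. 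Letting $\alpha\to 1$ gives $N(x_0,(u,v),0^+)\ge 1$ with no blow-up and no Alt--Caffarelli--Friedman formula; your ACF argument for the non-degenerate case is fine but unnecessary.
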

\begin{proof}
The proof of this result follows word by word the ones of Corollaries 2.6, 2.7 and 2.8 in \cite{TavaresTerracini1}.
\end{proof}
The Almgren's monotonicity formula allows us to improve the regularity result for $(u,v)$ and to give and alternative characterization of the nodal set.

\begin{corollary}\label{coro:Lipschiz}
The functions $u$ and $v$ are Lipschitz continuous.
\end{corollary}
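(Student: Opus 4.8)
The plan is to deduce the Lipschitz bound from Almgren's monotonicity formula (Theorem~\ref{thm:Almgren}) in two steps: first a \emph{uniform linear growth} estimate for $(u,v)$ at the common nodal set $\Gamma_{(u,v)}$, then its conversion into a gradient bound by interior elliptic estimates on the regions where the competing system decouples. Lipschitz continuity being a local property, it suffices to bound $|\nabla u|,|\nabla v|$ on an arbitrary $\tilde\Omega'\Subset\tilde\Omega\Subset\Omega$; I would fix $\tilde r$ and $C$ as in Theorem~\ref{thm:Almgren}, shrinking $\tilde r$ so that $\dist(\tilde\Omega',\partial\tilde\Omega)>\tilde r$.

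\emph{Step 1 (linear growth at $\Gamma_{(u,v)}$).} For $x_0\in\Gamma_{(u,v)}\cap\tilde\Omega$, combining the monotonicity of $r\mapsto e^{Cr^2}(N(x_0,(u,v),r)+1)$ with $N(x_0,(u,v),0^+)\ge1$ (Corollary~\ref{Corollaries 2.6_2.7_2.8_TaTe}, item~3) yields $N(x_0,(u,v),r)\ge 2e^{-Cr^2}-1$ on $(0,\tilde r]$. Plugging this into $\frac{d}{dr}\log H(x_0,(u,v),r)=\frac2r N(x_0,(u,v),r)$ and integrating over $[\rho,\tilde r]$ (the extra term $2(e^{-Cr^2}-1)/r$ being bounded near $0$) will give $H(x_0,(u,v),\rho)\le C_1\rho^2$ for $\rho\in(0,\tilde r]$, with $C_1$ uniform in $x_0\in\Gamma_{(u,v)}\cap\tilde\Omega$ since $H(\cdot,(u,v),\tilde r)$ is continuous on $\tilde\Omega$. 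Integrating in $\rho$ and using $a_i,b_i\ge\delta>0$ gives $\int_{B_\rho(x_0)}\sum_i(u_i^2+v_i^2)\,dx\le C_1'\rho^{N+2}$. Setting $w:=\sum_{i=1}^k(|u_i|+|v_i|)$, one has $-\Delta w\le c_0 w$ in $\Omega$ for some $c_0>0$ (pass to the limit in the inequalities established in Lemma~\ref{lemma:uniform_bounds_in_beta}), so the De Giorgi--Nash--Moser local boundedness estimate for subsolutions (applied after rescaling, legitimate once $c_0\rho^2\le1$) yields
\[
\sup_{B_\rho(x_0)}w\ \le\ \frac{C_2}{\rho^{N/2}}\Bigl(\int_{B_{2\rho}(x_0)}w^2\,dx\Bigr)^{1/2}\ \le\ C_3\,\rho
\]
for $x_0\in\Gamma_{(u,v)}\cap\tilde\Omega$ and $\rho$ small.

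\emph{Step 2 (gradient bound).} Let $x_0\in\tilde\Omega'$ and $d:=\dist(x_0,\Gamma_{(u,v)})>0$. As $\omega_u$ and $\omega_v$ are disjoint open sets (because $u_iv_j\equiv0$) whose union is $\Omega\setminus\Gamma_{(u,v)}$, the ball $B_d(x_0)$ lies entirely in one of them, say $\omega_u$, where $-\Delta u_i=\frac{\mu_i}{a_i}u_i$. If $d\ge\tilde r/4$, interior estimates together with the $L^\infty$ bound on $u$ already give $|\nabla u(x_0)|\le C(\tilde r)$. If $d<\tilde r/4$, pick $y_0\in\Gamma_{(u,v)}\cap\tilde\Omega$ with $|x_0-y_0|=d$; applying Step~1 on $B_{2d}(y_0)\supseteq B_d(x_0)$ gives $\|u_i\|_{L^\infty(B_d(x_0))}\le 2C_3 d$, and the interior gradient estimate for the Poisson equation
\[
|\nabla u_i(x_0)|\ \le\ \frac{C_N}{d}\,\|u_i\|_{L^\infty(B_d(x_0))}+C_N\,d\,\Bigl\|\tfrac{\mu_i}{a_i}u_i\Bigr\|_{L^\infty(B_d(x_0))}
\]
is then uniformly bounded. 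The argument for $v$ is symmetric. Since moreover $\nabla u_i=0$ a.e.\ on $\Omega\setminus\omega_u$ and $\nabla v_i=0$ a.e.\ on $\Omega\setminus\omega_v$, we conclude that $u,v\in W^{1,\infty}_{\mathrm{loc}}(\Omega)$, hence are locally Lipschitz.

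\emph{Main difficulty.} The substantial ingredients — the monotonicity of the Almgren quotient and the bound $N\ge1$ on $\Gamma_{(u,v)}$ — are already in hand, which is precisely why this is stated as a corollary; the one delicate point is passing, \emph{uniformly in $x_0$}, from the qualitative $N(x_0,(u,v),0^+)\ge1$ to the quantitative growth $H(x_0,(u,v),\rho)\le C_1\rho^2$, carried out above via the $e^{Cr^2}$-monotonicity, together with the bookkeeping of the $\omega_u/\omega_v$ decoupling on small balls. We refer to \cite[Section~2]{TavaresTerracini1} for the full details.
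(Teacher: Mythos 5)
Your proposal is correct and is essentially the paper's own proof: the paper simply defers to \cite[Proposition 4.1]{NTTV1} (see also \cite[Subsection 2.4]{TavaresThesis}), whose argument is exactly your two steps — the quadratic bound $H(x_0,(u,v),\rho)\leq C\rho^{2}$ at points of $\Gamma_{(u,v)}$ obtained from Theorem \ref{thm:Almgren} and Corollary \ref{Corollaries 2.6_2.7_2.8_TaTe}, converted into linear growth of $|u|+|v|$ by a subsolution (mean value) estimate, and then into a uniform gradient bound via interior elliptic estimates on $\omega_u$ and $\omega_v$, where the equations decouple. There is nothing substantive to add.
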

\begin{proof}
Once Theorem \ref{thm:Almgren} and Corollary \ref{Corollaries 2.6_2.7_2.8_TaTe} are settled, we can reason exactly as in \cite[Proposition 4.1]{NTTV1} (see also \cite[Subsection 2.4]{TavaresThesis}).
\end{proof}

\begin{corollary}\label{coro:One_component_positive}
If $A$ is a connected component of $\widetilde \omega_u$ (respectively $\widetilde \omega_v$), then there exists a component $u_i$ such that $A\subseteq \{u_i^2>0\}$ (respectively $A\subseteq \{v_i^2> 0\}$).

As a consequence, we have $\omega_u=\widetilde \omega_u$, $\omega_v=\widetilde \omega_v$, and
\[
\begin{split}
\Gamma_{(u,v)}&:=\{x\in \Omega:\ u_i(x)=v_i(x)=0 \ \text{for every }i=1,\ldots, k\}\\
			&=\partial \omega_u\cap \Omega=\partial \omega_v\cap \Omega
\end{split}
\]
\end{corollary}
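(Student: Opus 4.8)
The plan is to argue connected component by connected component in $\widetilde\omega_u$ (the case of $\widetilde\omega_v$ being symmetric). Fix a connected component $A$ of $\widetilde\omega_u$, and record two preliminary facts. First, since $A$ is relatively clopen in $\widetilde\omega_u=\Omega\setminus\overline{\omega_v}$, its boundary inside $\Omega$ is contained in $\overline{\omega_v}$, where every $u_i$ vanishes (because $u_i\cdot v_j\equiv 0$ and the $u_i$ are continuous); as each $u_i$ is Lipschitz (Corollary \ref{coro:Lipschiz}) and vanishes on $\partial A$, it restricts to an element of $H^1_0(A)$, and by Proposition \ref{prop:lemadoaviao} it solves $-\Delta u_i=(\mu_i/a_i)u_i$ in $A$. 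Recalling the identity $\mu_i=a_i\|u_i\|^2$ (obtained, as in the proof of Lemma \ref{lemma:estimate1}, by letting $\beta\to\infty$ in \eqref{eq:expression_for_mu_ij}), this means that any $u_i$ which is not identically zero on $A$ is a Dirichlet eigenfunction of $A$ with eigenvalue $\|u_i\|^2$, whence $\|u_i\|^2\ge\lambda_1(A)$. Second, not all the $u_i$ can vanish identically on $A$: otherwise $A$ would be a nonempty open subset of $\Gamma_{(u,v)}$ (note that all $v_j$ vanish on $A\subseteq\Omega\setminus\overline{\omega_v}$ as well), contradicting the emptiness of the interior of $\Gamma_{(u,v)}$ proved in Theorem \ref{thm:Almgren}.

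The core of the argument is to show that
\[
\lambda_1(A)=\min\bigl\{\|u_j\|^2:\ u_j\not\equiv 0 \text{ on }A\bigr\}.
\]
I would argue by contradiction: suppose $\|u_j\|^2>\lambda_1(A)$ for every index $j$ active on $A$, and fix one such $j_0$. Let $w$ denote the positive, $L^2(A)$-normalized first eigenfunction of $A$, extended by zero to $\Omega$, so $\|w\|^2=\lambda_1(A)$. The crucial point is that $w$ is orthogonal, both in $L^2(\Omega)$ and in $H^1_0(\Omega)$, to every $u_i$ and to every $v_j$: for the $u_i$ active on $A$ because these are eigenfunctions of $A$ with eigenvalue $\|u_i\|^2\ne\lambda_1(A)$, and for the inactive $u_i$ and for all the $v_j$ because they vanish on $A\supseteq\supp w$. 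Hence, for any $t\in(0,1)$, replacing the $j_0$-th component of $u$ by $\sqrt{1-t^2}\,u_{j_0}+t\,w$ and leaving $v$ untouched produces a pair $(\hat u,\hat v)$ that still belongs to $\Sigma(L^2)$, still satisfies $\hat u_i\cdot\hat v_j\equiv 0$, and (using also that the $u_i$ are mutually $H^1_0$-orthogonal, by Theorem \ref{thm:convergences}(ii)) has $M(\hat u)=\diag(\dots)$ diagonal, with $j_0$-th entry $(1-t^2)\|u_{j_0}\|^2+t^2\lambda_1(A)<\|u_{j_0}\|^2$. By the strict monotonicity of $\psi$ in each variable this gives $\vphi(M(\hat u))+\vphi(M(\hat v))<\vphi(M(u))+\vphi(M(v))=c_\infty$, contradicting the variational characterization of $c_\infty$ in Corollary \ref{coro:characterization_of_c_infty}.

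Granted this, some active index $j_0$ satisfies $\|u_{j_0}\|^2=\lambda_1(A)$, so $u_{j_0}|_A$ is a first Dirichlet eigenfunction of the connected open set $A$; by the strong maximum principle it has constant sign and never vanishes in $A$, so $A\subseteq\{u_{j_0}^2>0\}$. This is the first assertion of the corollary. Taking the union over the connected components of $\widetilde\omega_u$ gives $\widetilde\omega_u\subseteq\{\sum_j u_j^2>0\}=\omega_u$, hence $\omega_u=\widetilde\omega_u$ (the reverse inclusion being already known), and symmetrically $\omega_v=\widetilde\omega_v$. For the last identities I would use the tautological decomposition $\Omega=\omega_u\sqcup(\partial\omega_u\cap\Omega)\sqcup(\Omega\setminus\overline{\omega_u})$ together with $\Omega\setminus\overline{\omega_u}=\widetilde\omega_v=\omega_v$ and $\omega_u=\widetilde\omega_u=\Omega\setminus\overline{\omega_v}$, which yield
\[
\Gamma_{(u,v)}=\Omega\setminus(\omega_u\cup\omega_v)=\partial\omega_u\cap\Omega=\partial\omega_v\cap\Omega.
\]

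The step I expect to be most delicate is the competitor construction: one must carefully check that slipping the first eigenfunction $w$ of $A$ into the $j_0$-th slot preserves the $L^2$-normalization and mutual $L^2$-orthogonality of the components of $u$, the segregation $u_i\cdot v_j\equiv 0$, and the diagonal form of $M(\hat u)$ (so that $\vphi(M(\hat u))$ is read off from $\psi$). All of these rest on the orthogonality relations between $w$ and the $u_i,v_j$, which themselves use the identity $\mu_i/a_i=\|u_i\|^2$ and the fact that $w$ is supported in the region where the inactive components vanish. As usual, it is enough to carry out the argument for $m=2$ and $k_1=k_2=k$.
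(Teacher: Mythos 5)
Your proposal is correct and follows essentially the same route as the paper: the paper's (very terse) proof invokes exactly the fact that the $u_i$'s are eigenfunctions on $A$ together with the minimality from Corollary \ref{coro:characterization_of_c_infty}, and rules out the case $u_j\equiv 0$ on $A$ for all $j$ via the empty interior of $\Gamma_{(u,v)}$, just as you do. Your contribution is simply to make the minimality step explicit through the competitor $\sqrt{1-t^2}\,u_{j_0}+t\,w$ built from the first Dirichlet eigenfunction of $A$, which is a faithful (and correctly verified) implementation of what the paper leaves implicit.
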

\begin{proof}
Let $A$ be a connected component of $\widetilde \omega_u$, so that in particular $v_j\equiv 0$ in $A$ for every $j$. Since all $u_i$'s
 are eigenfunctions on the set $A$, and by the minimality provided by Corollary \ref{coro:characterization_of_c_infty},  then either $u_j\equiv 0$ in $A$ for every $j$, or $A\subseteq\{u_i^2>0\}$ for some $i$. The first case cannot occur since $\Gamma_{(u,v)}$ has an empty interior, and thus the first statement of the corollary is proved. All the other statements now follow in a straightforward way.
 \end{proof}
 
We end this subsection with a more detailed description of the equation satisfied by the functions $u_i,v_i$ in the whole $\Omega$.
\begin{proposition}\label{eq:propositionmeasures}
Let $(u,v)$ be the limiting configuration obtained before. Then for each $i\in \{1,\ldots, k\}$ there exist measures $\Mr_i$ and $\Nr_i$, both supported on $ \Gamma_{(u,v)}\footnote{That is, $\Mr_i(A)=\Mr_i(A\cap  \Gamma_{(u,v)})$ and $\Nr_i(A)=\Nr_i(A\cap \Gamma_{(u,v)})$ for each measurable set $A$.}$, such that
\begin{equation}\label{eq:limiting_equations}
-a_i \Delta u_i=\mu_i u_i-\Mr_i,\quad -b_i \Delta v_i=\nu_i v_i-\Nr_i \qquad \text{ in }\mathscr{D}'(\Omega)=(C^\infty_\textrm{c}(\Omega))',\ i=1,\ldots,k.
\end{equation}
\end{proposition}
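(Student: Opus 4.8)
The plan is to exhibit $\Mr_i$ and $\Nr_i$ explicitly and then to show that they are measures by a Kato--type argument; I describe the construction of $\Mr_i$, the measure $\Nr_i$ being obtained identically after exchanging $(u,a_i,\mu_i,\omega_u)$ with $(v,b_i,\nu_i,\omega_v)$. Set $\Mr_i:=a_i\Delta u_i+\mu_i u_i\in\mathscr{D}'(\Omega)$, so that tautologically $-a_i\Delta u_i=\mu_i u_i-\Mr_i$. By Theorem~\ref{thm:convergences}(iii) (or Proposition~\ref{prop:lemadoaviao}), $-a_i\Delta u_i=\mu_i u_i$ in the open set $\omega_u$, hence $\Mr_i=0$ there; and $u_i\equiv 0$ on $\omega_v$, hence $\Mr_i=0$ on $\omega_v$ as well. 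Since $\Omega\setminus\Gamma_{(u,v)}=\omega_u\cup\omega_v$ by Corollary~\ref{coro:One_component_positive}, it follows that $\operatorname{supp}\Mr_i\subseteq\Gamma_{(u,v)}$, so the only remaining point is to prove that $\Mr_i$ is a (locally finite, signed) Radon measure.

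For this I would decompose $u_i=u_i^{+}-u_i^{-}$ with $u_i^{\pm}=\max(\pm u_i,0)\in H^1_0(\Omega)$, so that $\Mr_i=\Mr_i^{+}-\Mr_i^{-}$ with $\Mr_i^{\pm}:=a_i\Delta u_i^{\pm}+\mu_i u_i^{\pm}$; it then suffices to prove that each $\Mr_i^{\pm}$ is a nonnegative Radon measure, equivalently that
\[
-a_i\Delta u_i^{\pm}-\mu_i u_i^{\pm}\leq 0\qquad\text{in }\mathscr{D}'(\Omega).
\]
Since $u_i$ vanishes on $\omega_v\cup\Gamma_{(u,v)}$, the open set $\{u_i>0\}$ is contained in $\omega_u$, where $u_i$ is a classical ($C^\infty$) solution of $-a_i\Delta u_i=\mu_i u_i$; moreover, for every $\varepsilon>0$ the superlevel set $D_\varepsilon:=\{u_i>\varepsilon\}$ satisfies $\overline{D_\varepsilon}\subseteq\{u_i\geq\varepsilon\}\Subset\omega_u$. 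Fix $\phi\in C^\infty_c(\Omega)$ with $\phi\geq 0$; by Sard's lemma, for a.e.\ $\varepsilon>0$ the boundary $\partial D_\varepsilon=\{u_i=\varepsilon\}$ is a smooth hypersurface, and an integration by parts on $D_\varepsilon$ gives
\[
a_i\int_{D_\varepsilon}\nabla u_i\cdot\nabla\phi\,dx=\mu_i\int_{D_\varepsilon}u_i\phi\,dx+a_i\int_{\partial D_\varepsilon}\phi\,\partial_\nu u_i\,d\sigma\leq\mu_i\int_{D_\varepsilon}u_i\phi\,dx,
\]
because on $\partial D_\varepsilon$ the exterior normal derivative equals $\partial_\nu u_i=-|\nabla u_i|\leq 0$ while $\phi\geq 0$. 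Letting $\varepsilon\downarrow 0$ along regular values, using $\nabla u_i^{+}=\nabla u_i\,\mathbf{1}_{\{u_i>0\}}$ a.e.\ together with dominated convergence (recall $u_i\in C^{0,1}(\Omega)$ by Corollary~\ref{coro:Lipschiz}), one gets $a_i\int_\Omega\nabla u_i^{+}\cdot\nabla\phi\,dx\leq\mu_i\int_\Omega u_i^{+}\phi\,dx$, that is $\langle -a_i\Delta u_i^{+}-\mu_i u_i^{+},\phi\rangle\leq 0$; the same computation applied to $-u_i$ yields the inequality for $u_i^{-}$.

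A nonpositive distribution on $\Omega$ is the opposite of a nonnegative locally finite Radon measure, so $-a_i\Delta u_i^{\pm}-\mu_i u_i^{\pm}=-\Mr_i^{\pm}$ with $\Mr_i^{\pm}$ nonnegative Radon measures; hence $\Mr_i=\Mr_i^{+}-\Mr_i^{-}$ is a signed Radon measure, supported on $\Gamma_{(u,v)}$ by the first paragraph, and \eqref{eq:limiting_equations} follows. The only genuinely delicate point is transferring the Kato--type inequality for $u_i^{\pm}$ across the free boundary $\Gamma_{(u,v)}$, which is exactly why it is proved by exhausting $\{u_i>0\}$ by the sets $D_\varepsilon\Subset\omega_u$, on which $u_i$ solves the eigenvalue equation classically; all the other ingredients are immediate from Theorem~\ref{thm:convergences} and Corollary~\ref{coro:One_component_positive}. (Alternatively, the statement may be deduced from the analogous facts established in \cite{NTTV1, TavaresTerracini1}.)
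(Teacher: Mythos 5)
Your proposal is correct, but it proves the proposition by a genuinely different route than the paper. The paper never works at the level of the limit profile alone: it applies Kato's inequality to the $\beta$--system \eqref{eq:equation_for_u_beta_v_beta}, integrates over $\Omega$ (using $\partial_n|u_{i,\beta}|\leq 0$ on $\partial\Omega$) to get a bound, uniform in $\beta$, for $\beta\, u_{i,\beta}\bigl(\sum_j u_{j,\beta}^2\bigr)^{\frac{q}{2}-1}\bigl(\sum_j v_{j,\beta}^2\bigr)^{\frac{q}{2}}$ in $L^1(\Omega)$, extracts weak--$\ast$ limits $\Mr_i,\Nr_i\in \Meh(\Omega)$, and passes to the limit in the equations (recalling $\mu_{ij,\beta}\to 0$ for $i\neq j$); concentration on $\Gamma_{(u,v)}$ then follows because off $\Gamma_{(u,v)}$ the eigenvalue equation holds. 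You instead define $\Mr_i:=a_i\Delta u_i+\mu_i u_i$ intrinsically and show it is a signed Radon measure through the one-sided Kato-type inequalities $-a_i\Delta u_i^{\pm}\leq \mu_i u_i^{\pm}$, obtained by exhausting $\{\pm u_i>0\}\subseteq\omega_u$ with smooth superlevel sets (Sard) on which $u_i$ is a classical solution; since the distributional equation determines $\Mr_i$ uniquely, you recover the same object. Your argument is self-contained at the limit level and, as a bonus, the decomposition $\Mr_i=\Mr_i^+-\Mr_i^-$ with $\Mr_i^{\pm}\geq 0$ furnishes exactly the dominating measures and the inequality $-a_i\Delta|u_i|\leq \mu_i|u_i|-(\Mr_i^++\Mr_i^-)$, $|\Mr_i|\leq \Mr_i^++\Mr_i^-$, that the paper needs later (Step~3 of Theorem \ref{thm:convergence_of_blowupsequence}). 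What the paper's route buys in exchange is the identification of $\Mr_i$ as the weak--$\ast$ limit of the penalization terms and a global bound, $\Mr_i\in\Meh(\Omega)$ with finite total mass, whereas your construction a priori yields only local finiteness (which is all the statement and its later uses require); also note that your compactness claim $\{u_i\geq\varepsilon\}\Subset\omega_u$ uses $u_i\in C(\overline\Omega)$ with $u_i=0$ pointwise on $\partial\Omega$, an implicit boundary-regularity assumption of the same nature as the boundary flux term $\int_{\partial\Omega}\partial_n|u_{i,\beta}|\,d\sigma$ used in the paper's own proof.
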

\begin{proof}By using Kato's inequality in \eqref{eq:equation_for_u_beta_v_beta}, we deduce that
$$
-a_{ij,\beta}\Delta |u_{i,\beta}|\leq \sum_{j=1}^k |\mu_{ij,\beta}||u_{j,\beta}|-\beta |u_{i,\beta}|\Bigl( \sum_{j=1}^k u_{j,\beta}^2 \Bigr)^{\frac{p}{2}-1}\Bigl(\sum_{j=1}^k v_{j,\beta}^2\Bigr)^\frac{p}{2}.
$$
Integrating over $\Omega$, we deduce that
$$
\int_\Omega \beta |u_{i,\beta}|\Bigl( \sum_{j=1}^k u_{j,\beta}^2 \Bigr)^{\frac{p}{2}-1}\Bigl(\sum_{j=1}^k v_{j,\beta}^2\Bigr)^\frac{p}{2}\, dx\leq \int_{\partial \Omega} a_{i,\beta}\partial_n |u_{i,\beta}|\, d\sigma + \sum_{j=1}^k \int_\Omega |\mu_{ij,\beta}||u_{j,\beta}|\, dx\leq C,
$$
where we have used the fact that $\partial_n |u_{i,\beta}|\leq 0$, and $C$ depends only on the uniform bounds of $\|u_{i,\beta}\|,\|v_{i,\beta}\|$, $i=1,\ldots,k$. Thus $\beta u_{i,\beta}\Bigl( \sum_{j=1}^k u_{j,\beta}^2 \Bigr)^{\frac{p}{2}-1}\Bigl(\sum_{j=1}^k v_{j,\beta}^2\Bigr)^\frac{p}{2}$ is bounded in $L^1(\Omega)$ independently of $\beta$, whence there exists $\Mr_i\in \Meh(\Omega)=(C_0(\Omega))'$, a Radon measure, such that
$$
\beta u_{i,\beta}\Bigl( \sum_{j=1}^k u_{j,\beta}^2 \Bigr)^{\frac{p}{2}-1}\Bigl(\sum_{j=1}^k v_{j,\beta}^2\Bigr)^\frac{p}{2}\rightharpoonup \Mr_i \qquad \text{ weak --}\ast \text{ in }\Meh(\Omega).
$$
Reasoning in the same manner with the equation for $v_{i,\beta}$, we obtain the existence of $\Nr_i$ such that
$$
\beta v_{i,\beta}\Bigl( \sum_{j=1}^k v_{j,\beta}^2 \Bigr)^{\frac{p}{2}-1}\Bigl(\sum_{j=1}^k u_{j,\beta}^2\Bigr)^\frac{p}{2}\rightharpoonup \Nr_i \qquad \text{ weak --}\ast \text{ in }\Meh(\Omega).
$$
Thus by Theorem \ref{thm:convergences} we conclude that \eqref{eq:limiting_equations} holds; we recall that, thanks to \eqref{eq:expression_for_mu_ij}, we have that $\mu_{ij,\beta}\to 0$, $\nu_{ij,\beta}\to 0$ for $i\neq j$.

Let $x_0\in \Omega\setminus \Gamma_{(u,v)}$. Then there exists $\delta>0$ such that either $B_\delta(x_0)\subseteq \omega_u$ or $B_\delta(x_0)\subseteq\omega_v$ and the hence the measures are concentrated on $\Gamma_{(u,v)}$.

\end{proof}

\section{Regularity of the nodal set of the approximated solution}\label{sec:Regularity_of_nodal_set}

Along this section, $(u,v)$ will always denote the limiting function obtained in Theorem \ref{thm:convergences}. Our aim is to prove a regularity result for the common nodal set $\Gamma_{(u,v)}$, completing the proof of Theorem \ref{thm:3rdmain}. Although with different proofs, initially we follow the structure of \cite{CaffarelliLinJAMS,TavaresTerracini1}: first we make a detailed study of blowup sequences, and afterwards we use the Almgren's quotient to split the nodal set $\Gamma_{(u,v)}$ and give a first characterization of its regular part. On this subset, locally we prove that locally $\Omega$ is a Reifenberg flat and NTA domain, and that $\Gamma$ separates exactly two different connected components of the partition. Finally through an iterative procedure we construct the normal vector to $\Gamma_{(u,v)}$, proving an extremality condition and a non degeneracy property.

\subsection{Compactness of blowup sequences}\label{subset:Blowupsequences}

In order to understand the local behavior of the functions $(u,v)$ around any given point, let us study the behavior of blowup sequences. To that purpose, let $\tilde \Omega\Subset \Omega$ and let $x_n\in \tilde \Omega$, $t_n\to 0^+$ be a given sequence. We define a blowup sequence by
$$
u_{i,n}(x):=\frac{u_i(x_n+t_n x)}{\rho_n},\quad v_{i,n}(x)=\frac{v_i(x_n+t_n x)}{\rho_n}\qquad \forall\ x\in \Omega_n:=\frac{\Omega-x_n}{t_n},
$$
where we have normalized using the quantity
\[
\begin{split}
\rho_n^2&=H(x_n,(u,v),t_n)=\frac{1}{t_n^{N-1}}\sum_{i=1}^k \int_{\partial B_{t_n}(x_n)} (a_i u_i^2+b_i v_i^2)\, d\sigma\\
	      &= \sum_{i=1}^k \left(a_i \|u_i(x_n+t_n \cdot)\|^2_{L^2(\partial B_1(0))}+b_i \|v_i(x_n+t_n \cdot)\|^2_{L^2(\partial B_1(0))}\right).
\end{split}
\]
We observe that
\begin{equation}\label{H(x_n,(u,v),t_n)=rho_n^2H(0,(u_n,v_n),t_n)}
 H(0,(u_n,v_n),R)=H(x_n, (u,v), t_n R)\rho_n^{-2}\quad \forall R>0,
\end{equation}
\begin{equation}\label{eq:L2_norm_equal_to_1}
\sum_{i=1}^k \left(a_i \|u_{i,n}\|^2_{L^2(\partial B_1(0))}+b_i \|v_{i,n}\|^2_{L^2(\partial B_1(0))}\right)=1
\end{equation}
and, for every $x_0\in \R^N$, $r>0$ (cf. Theorem \ref{thm:convergences} (iii))
\begin{equation}\label{multiply_by_u_i_n}
a_i\int_{B_r(x_0)}|\nabla u_{i,n}|^2\, dx=a_i\int_{\partial B_r(x_0)}u_{i,n}\, \partial_nu_{i,n}\, d\sigma+t_n^2\mu_i \int_{B_r(x_0)} u_{i,n}^2\, dx,
\end{equation}
\begin{equation}\label{multiply_by_v_i_n}
b_i\int_{B_r(x_0)}|\nabla v_{i,n}|^2\, dx=b_i\int_{\partial B_r(x_0)}v_{i,n}\, \partial_nv_{i,n}\, d\sigma+t_n^2\nu_i \int_{B_r(x_0)} v_{i,n}^2\, dx.
\end{equation}
It is clear that
\begin{equation}\label{-a_iDelta_u_,n_leq}
-a_i\Delta u_{i,n}=t_n^2\mu_i u_{i,n}-\Mr_{i,n}, \qquad -b_i\Delta v_{i,n}=t_n^2\nu_i v_{i,n}-\Nr_{i,n} \qquad \text{ in }\mathscr{D}'(\Omega_n),
\end{equation}
where $\Mr_{i,n}$ and $\Nr_{i,n}$ are measures  defined by
\[
\Mr_{i,n}(A)=\frac{1}{\rho_n t_n^{N-2}}\Mr_{i}(x_n+t_n A), \qquad \Nr_{i,n}(A)=\frac{1}{\rho_n t_n^{N-2}}\Nr_{i}(x_n+t_n A)
\]
for every measurable set $A\subseteq \Omega_n$.

\begin{theorem}\label{thm:convergence_of_blowupsequence}
Under the previous notations there exists $(\bar u,\bar v)\not\equiv (0,0)$ such that $\bar u_i\cdot \bar v_j\equiv 0 \text{ in $\R^N$ } \forall i, j$ and, up to a subsequence,
$$
(u_n,v_n)\to (\bar u,\bar v) \quad \text{ in }C^{0,\alpha}_\textrm{loc}(\R^N)\quad \text{ for all }0<\alpha<1,\ \text{strongly in }H^1_\textrm{loc}(\R^N).
$$
Moreover, there exist $\bar \Mr_i,\bar \Nr_i\in \Meh_\textrm{loc}(\R^N)$, both concentrated on  the set
$$
\Gamma_{(\bar u,\bar v)}:=\{x\in \R^N:\ \bar u_i(x)=\bar v_i(x)=0,\ i=1,\ldots, k\},
$$
such that
$$
\Mr_{i,n}\rightharpoonup \bar \Mr_i,\quad \Nr_{i,n}\rightharpoonup \bar \Nr_i \quad \text{ weakly in }\Meh_\textrm{loc}(\R^N), 
$$
and
$$
-a_i \Delta \bar u_i=-\bar \Mr_i,\quad -b_i \Delta \bar v_i=-\bar \Nr_i \quad \text{ in }\mathscr{D}'(\R^N).
$$
In particular,
\begin{equation}\label{multiply_by_baru_i_and_barv_i}
\int_{B_r(x_0)}|\nabla \bar u_{i}|^2\, dx=\int_{\partial B_r(x_0)} \bar u_{i}\, \partial_{n} \bar u_{i}\, d\sigma \quad \mbox{ and } 
\int_{B_r(x_0)}|\nabla \bar v_{i}|^2\, dx=\int_{\partial B_r(x_0)}\bar v_{i}\, \partial_{n}\bar v_{i}\, d\sigma.
\end{equation}
Finally, for each $x_0\in \R^N$ and $r>0$,
\begin{equation}\label{eq:pohozaev_for_bar_uv2}
\begin{split}
(2-N) \sum_{i=1}^k \int_{B_r(x_0)} (a_i|\nabla\bar u_i|^2+b_i |\nabla\bar v_i|^2)\, dx=&\sum_{i=1}^k \int_{\partial B_r(x_0)} a_i r(2(\partial_{n} \bar u_i)^2-|\nabla \bar u_i|^2)\, d\sigma \\
&+\sum_{i=1}^k \int_{\partial B_r(x_0)} b_i r (2(\partial_n \bar v_i)^2-|\nabla \bar v_i|^2)\, d\sigma.
\end{split}
\end{equation}

\end{theorem}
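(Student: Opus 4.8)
The plan is to run the blowup analysis of \cite{NTTV1} and \cite{TavaresTerracini1}, the only genuine novelties being that the equations for $u_i,v_i$ carry the bounded and uniformly positive coefficients $a_i,b_i$ and the right hand sides are the measures $\Mr_i,\Nr_i$ of Proposition \ref{eq:propositionmeasures}; since $\underline a\le a_i,b_i\le C$, this changes nothing of substance. First I would establish uniform bounds. By the upper semicontinuity of $x\mapsto N(x,(u,v),0^+)$ (Corollary \ref{Corollaries 2.6_2.7_2.8_TaTe}(2)) and the monotonicity of $e^{Cr^2}(N+1)$ (Theorem \ref{thm:Almgren}), the Almgren quotient $N(x_n,(u,v),r)$ is bounded by a constant $\Lambda$ for $r\in(0,\tilde r]$, uniformly in $x_n\in\tilde\Omega$; combined with the scaling identity \eqref{H(x_n,(u,v),t_n)=rho_n^2H(0,(u_n,v_n),t_n)}, the doubling estimate of Corollary \ref{Corollaries 2.6_2.7_2.8_TaTe}(1), and the normalization \eqref{eq:L2_norm_equal_to_1}, this yields $H(0,(u_n,v_n),R)\le C(1+R)^{2\Lambda}$ for all $R<\tilde r/t_n$, with $C$ independent of $n$. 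Integrating in $R$ gives uniform $L^2$ bounds on every fixed ball; since $\Mr_{i,n},\Nr_{i,n}$ are (essentially) nonnegative and $t_n^2\mu_i,t_n^2\nu_i\to0$, Kato's inequality applied to \eqref{-a_iDelta_u_,n_leq} shows that $\sum_i|u_{i,n}|$ and $\sum_i|v_{i,n}|$ are nonnegative, quasi-subharmonic, with mutually disjoint supports (because $u_{i,n}v_{j,n}\equiv0$), whence uniform $L^\infty_{\mathrm{loc}}$ bounds; finally the segregation/Alt--Caffarelli--Friedman-type argument of \cite{CaffarelliLinJAMS,NTTV1} upgrades these to uniform bounds in $C^{0,\alpha}_{\mathrm{loc}}(\R^N)$ for every $\alpha<1$. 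Up to a subsequence, $(u_n,v_n)\to(\bar u,\bar v)$ in $C^{0,\alpha}_{\mathrm{loc}}$ and weakly in $H^1_{\mathrm{loc}}$; the segregation $\bar u_i\bar v_j\equiv0$ passes to the limit, and $(\bar u,\bar v)\not\equiv(0,0)$ follows by letting $n\to\infty$ in \eqref{eq:L2_norm_equal_to_1} (traces on $\partial B_1(0)$ converge uniformly).

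Next I would treat the measures. Writing $\Mr_{i,n}(B_R)=\rho_n^{-1}t_n^{-(N-2)}\Mr_i(B_{t_nR}(x_n))$, the local bound $\Mr_i(B_r(x_0))\le Cr^{N-2}\bigl(E(x_0,(u,v),r)+H(x_0,(u,v),r)\bigr)^{1/2}$ — obtained by integrating the equation over $B_r(x_0)$ and applying Cauchy--Schwarz on $\partial B_r(x_0)$ together with the Lipschitz bound of Corollary \ref{coro:Lipschiz} — combined with $E\le CH$ (from $N$ locally bounded and $H>0$ for small radii, Theorem \ref{thm:Almgren}) and once more the doubling estimate, gives $\Mr_{i,n}(B_R)\le CR^{N-2+\Lambda}$ uniformly in $n$, and analogously for $\Nr_{i,n}$. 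Hence, up to a subsequence, $\Mr_{i,n}\rightharpoonup\bar\Mr_i$, $\Nr_{i,n}\rightharpoonup\bar\Nr_i$ weakly-$\ast$ in $\Meh_{\mathrm{loc}}(\R^N)$. Since $\Mr_{i,n}$ is supported on $\Gamma^n:=\{x:\ u_{j,n}(x)=v_{j,n}(x)=0\ \forall j\}$ (the rescaling of $\Gamma_{(u,v)}=\supp\Mr_i$) and $(u_n,v_n)\to(\bar u,\bar v)$ uniformly on compacta, every accumulation point of $\Gamma^n$ lies in $\Gamma_{(\bar u,\bar v)}$; therefore $\bar\Mr_i,\bar\Nr_i$ are concentrated on $\Gamma_{(\bar u,\bar v)}$. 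Passing to the limit in \eqref{-a_iDelta_u_,n_leq}, using $t_n^2\to0$, yields $-a_i\D\bar u_i=-\bar\Mr_i$ and $-b_i\D\bar v_i=-\bar\Nr_i$ in $\mathscr D'(\R^N)$.

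Then I would promote weak to strong $H^1_{\mathrm{loc}}$ convergence: fixing a cutoff $\phi$ and testing the equation for $u_{i,n}-\bar u_i$ against $(u_{i,n}-\bar u_i)\phi^2$, the only nonstandard term is $\int(u_{i,n}-\bar u_i)\phi^2\,d\Mr_{i,n}$, which tends to zero because $u_{i,n}\equiv0$ on $\supp\Mr_{i,n}$ while there $|\bar u_i|\le\|u_{i,n}-\bar u_i\|_{L^\infty(\supp\phi)}\to0$, and $\Mr_{i,n}$ has uniformly bounded mass on $\supp\phi$. Strong $H^1_{\mathrm{loc}}$ convergence in hand, I would pass to the limit in \eqref{multiply_by_u_i_n}--\eqref{multiply_by_v_i_n} (again $t_n^2\to0$) to get \eqref{multiply_by_baru_i_and_barv_i}, and likewise pass to the limit in the local Pohozaev identity of Corollary \ref{coro:Field_second} applied to $(u_n,v_n)$ — whose lower order terms carry a factor $t_n^2$ and hence disappear — to obtain \eqref{eq:pohozaev_for_bar_uv2}.

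The main obstacle is the first step: the uniform $C^{0,\alpha}$ bounds, and in particular ruling out that the normalization degenerates (i.e.\ controlling $\rho_n$ against $t_n^\alpha$), which genuinely relies on the Almgren monotonicity formula and its corollaries rather than on naive Lipschitz rescaling; closely related is the uniform mass bound for the rescaled measures. Both amount to the careful bookkeeping already performed in \cite{NTTV1,TavaresTerracini1,CaffarelliLinJAMS}, here carried over verbatim once one checks that the coefficients $a_i,b_i$ — bounded above and away from zero — play no role.
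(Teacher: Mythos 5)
Your outline follows essentially the same route as the paper (which itself transports the blowup compactness scheme of \cite{NTTV1,TavaresTerracini1}): uniform control of $H(0,(u_n,v_n),R)$ via the doubling estimate and the Almgren quotient, local $L^\infty$ bounds, weak-$\ast$ compactness of the rescaled measures, identification of the limit equation, and passage to the limit in \eqref{multiply_by_u_i_n}--\eqref{multiply_by_v_i_n} and in the Pohozaev identity, with the $t_n^2$ lower-order terms disappearing. Two points, however, are stated imprecisely and would need repair. First, the measures $\Mr_{i,n}$, $\Nr_{i,n}$ are \emph{not} ``essentially nonnegative'': the components $u_i,v_i$ may change sign, so $\Mr_i$ from Proposition \ref{eq:propositionmeasures} is a signed measure, and ``integrating the equation over $B_r(x_0)$'' only controls the signed mass, not the total variation needed for weak-$\ast$ compactness in $\Meh_{\mathrm{loc}}$. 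The paper fixes this (Step 3 of its proof) by introducing dominating \emph{positive} measures $\widetilde\Mr_{i,n}\geq|\Mr_{i,n}|$, inherited from Kato's inequality at the $\beta$-level, and testing the resulting differential inequality against cutoffs, so that the mass bound comes directly from the uniform $H^1(B_{2R})$ bound on the rescaled functions rather than from rescaling an estimate on the unscaled $\Mr_i$; your rescaled bound can be made to work, but only after this total-variation correction.

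Second, the uniform H\"older (in fact Lipschitz) bounds are not obtained here by an Alt--Caffarelli--Friedman-type segregation argument (that device is used in \cite{NTTV1} for the $\beta\to\infty$ limit, a different situation); for the already segregated rescaled functions the mechanism is the quadratic growth estimate $H(x,(u_n,v_n),r)\leq Cr^2$ at points of the rescaled free boundary (Lemma \ref{H(x,(u_n,v_n),r)leqCr^2}, which encodes $N(\cdot,(u,v),0^+)\geq1$ on $\Gamma_{(u,v)}$ via Corollary \ref{Corollaries 2.6_2.7_2.8_TaTe} and the almost-monotonicity), combined with a mean-value inequality near the free boundary and interior elliptic estimates for the eigenfunction equation away from it; this is precisely why naive rescaling of the Lipschitz constant of $(u,v)$ fails, since $\rho_n$ may decay like $t_n^{\tilde C}$ with $\tilde C>1$. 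You correctly flag in your closing paragraph that the crux is Almgren-based bookkeeping and point to the right references, so this is a misattribution of the mechanism rather than a structural gap, but the quadratic growth lemma at free-boundary points is the one quantitative ingredient your sketch should name explicitly.
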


In order to prove Theorem \ref{thm:convergence_of_blowupsequence} we will need to prove first a  technical result. Define
\begin{equation*}
\Gamma_{( u_n, v_n)}:=\{x\in \Omega_n: u_{i,n}(x)=0=v_{i,n}(x) \quad \forall i=1,\ldots,k\}.
\end{equation*}

\begin{lemma} \label{H(x,(u_n,v_n),r)leqCr^2}  Let $\tilde r$ be given by Theorem {\rm \ref{thm:Almgren}}.  Given $R>0$ then, for $n$ sufficiently large,
$$
H(x,(u_n,v_n),r)\leq Cr^2 \qquad \forall 0<r<\tilde r/2, \; x\in B_{2R}(0)\cap \Gamma_{(u_n,v_n)},$$
with $C=C(\tilde \Omega,N,\|u_i\|, \|v_i \|)$.
\end{lemma}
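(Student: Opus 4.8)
The plan is to establish the bound $H(x,(u_n,v_n),r)\le Cr^2$ by pulling it back to the original functions $(u,v)$ via the scaling relation \eqref{H(x_n,(u,v),t_n)=rho_n^2H(0,(u_n,v_n),t_n)}, and then invoking the Almgren monotonicity formula (Theorem \ref{thm:Almgren}) together with the growth estimate on $H$ from Corollary \ref{Corollaries 2.6_2.7_2.8_TaTe}. First I would observe that $x\in B_{2R}(0)\cap\Gamma_{(u_n,v_n)}$ corresponds to the point $y:=x_n+t_nx\in\tilde\Omega'$ (a slightly larger relatively compact subset of $\Omega$ containing a $2R\,t_n$--neighbourhood of $\tilde\Omega$, which is legitimate for $n$ large since $t_n\to 0$), and $y$ lies on the nodal set $\Gamma_{(u,v)}$. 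By the scaling identity, $H(x,(u_n,v_n),r)=\rho_n^{-2}H(y,(u,v),t_nr)$, so the claim is equivalent to
\[
H(y,(u,v),t_nr)\le C\,\rho_n^2\,r^2\qquad\text{for all }0<r<\tilde r/2,\ y\in\tilde\Omega'\cap\Gamma_{(u,v)}.
\]

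The key point is the following: for $y\in\Gamma_{(u,v)}$ we have, by Corollary \ref{Corollaries 2.6_2.7_2.8_TaTe}(3), that $N(y,(u,v),0^+)\ge 1$; combined with the monotonicity of $e^{Cr^2}(N(y,(u,v),r)+1)$ from Theorem \ref{thm:Almgren} this gives a lower bound $N(y,(u,v),s)\ge c_0>0$ for $s\le\tilde r$ with $c_0$ uniform. Then, using $\frac{d}{ds}\log H(y,(u,v),s)=\frac{2}{s}N(y,(u,v),s)$ and integrating from $t_nr$ up to a fixed radius $s_0=\tilde r$ (here I use $t_nr<t_n\tilde r/2<\tilde r=s_0$ for $n$ large), I obtain
\[
\log\frac{H(y,(u,v),s_0)}{H(y,(u,v),t_nr)}=\int_{t_nr}^{s_0}\frac{2N(y,(u,v),s)}{s}\,ds\ge 2c_0\log\frac{s_0}{t_nr},
\]
hence $H(y,(u,v),t_nr)\le H(y,(u,v),s_0)\,(t_nr/s_0)^{2c_0}$. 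Since $H(y,(u,v),s_0)$ is bounded by a constant depending only on $\|u_i\|,\|v_i\|$ and $\tilde\Omega'$ (using $H(y,(u,v),s_0)=s_0^{1-N}\sum_i\int_{\partial B_{s_0}(y)}(a_iu_i^2+b_iv_i^2)$ and the $L^\infty$--bounds from Lemma \ref{lemma:uniform_bounds_in_beta}, noting $a_i,b_i,\mu_i,\nu_i$ are also uniformly bounded), and since $\rho_n^2=H(x_n,(u,v),t_n)\ge c\,t_n^{2}$ again by the same monotonicity argument applied at $x_n$ (which need not be on $\Gamma_{(u,v)}$, but one still has a lower bound of the form $H(x_n,(u,v),t_n)\ge c_1 t_n^{2\tilde C}$... ) --- here I must be careful: what is actually needed is $H(y,(u,v),t_nr)\le C\rho_n^2 r^2$, i.e. I want to compare $H$ at radius $t_nr$ around a nodal point $y$ with $H$ at radius $t_n$ around $x_n$.

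The cleanest route, and the one I expect to be the main technical obstacle, is to avoid comparing $H$ at two different centres directly and instead argue: by Corollary \ref{Corollaries 2.6_2.7_2.8_TaTe}(1), $H(x,(u_n,v_n),\cdot)$ has at most polynomial growth with a \emph{uniform} exponent $2\tilde C$ once we know $H(x,(u_n,v_n),r_0)$ is controlled for one fixed $r_0$; but since $x\in\Gamma_{(u_n,v_n)}$, the exponent can be sharpened to $2$ precisely because the Almgren frequency is $\ge 1$ there, and since $H$ is continuous in $r$ with $H(x,(u_n,v_n),0^+)=0$ at nodal points (this being exactly the content of $N\ge 1$: $H(x,(u_n,v_n),r)\le H(x,(u_n,v_n),r')(r/r')^{2c_0}$ with $c_0\ge 1$ for $r<r'\le\tilde r$, after translating Theorem \ref{thm:Almgren} and Corollary \ref{Corollaries 2.6_2.7_2.8_TaTe}(3) to the rescaled functions which satisfy \eqref{-a_iDelta_u_,n_leq} with the \emph{same} $a_i,b_i$ and with $\mu_i,\nu_i$ replaced by $t_n^2\mu_i\to 0$). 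The remaining task is then to bound $H(x,(u_n,v_n),r')$ at the fixed radius $r'=\tilde r/2$ for $x\in B_{2R}(0)$ uniformly in $n$; this follows from \eqref{eq:L2_norm_equal_to_1} together with Corollary \ref{Corollaries 2.6_2.7_2.8_TaTe}(1) (which propagates the unit-sphere bound at radius $1$ up to radius $\tilde r/2$, with uniform constants, possibly using that $x$ ranges over a bounded set so a finite chain of doubling estimates suffices), giving $H(x,(u_n,v_n),\tilde r/2)\le C$. Putting these together yields $H(x,(u_n,v_n),r)\le C(r/(\tilde r/2))^{2}\le Cr^2$ for $0<r<\tilde r/2$, which is the assertion. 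The main obstacle is making the constants genuinely uniform in $n$ and in $x\in B_{2R}(0)$: this requires checking that the Almgren formula of Theorem \ref{thm:Almgren} applies to $(u_n,v_n)$ with constants independent of $n$ (true, since the coefficients $a_i,b_i$ are fixed and $t_n^2\mu_i,t_n^2\nu_i\to 0$ are uniformly bounded, so the constant $C$ in $\frac{d}{dr}N\ge -2Cr(N+1)$ can be taken uniform), and that $B_{2R}(0)\subset\Omega_n$ with room to spare for $n$ large so that radii up to $\tilde r/2$ are admissible.
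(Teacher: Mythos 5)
Your ``cleanest route'' is in substance the paper's own argument: by scale invariance of the Almgren quotient, $N(x,(u_n,v_n),s)=N(x_n+t_nx,(u,v),t_ns)$, and since $x\in\Gamma_{(u_n,v_n)}$ means $x_n+t_nx\in\Gamma_{(u,v)}$, Corollary \ref{Corollaries 2.6_2.7_2.8_TaTe}(3) together with the monotonicity of $e^{Cr^2}(N+1)$ from Theorem \ref{thm:Almgren} allows one to integrate $\frac{d}{dr}\log\bigl(H(x,(u_n,v_n),r)/r^2\bigr)=\frac{2}{r}\bigl(N(x_n+t_nx,(u,v),t_nr)-1\bigr)$ between $r$ and a fixed $\bar r$; the unknown normalization $\rho_n^2$ cancels in this ratio, which is exactly how the two-centre comparison you worried about in your first attempt is avoided. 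What remains is a uniform bound on $H(x,(u_n,v_n),\bar r)$ at the fixed radius, which is also the (unjustified) last step ``$\leq C(\bar r)$'' in the paper.

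Two steps of your write-up need repair, though. First, the parenthetical ``$c_0\geq 1$'' is not what the monotonicity gives: at positive radii one only has $N(x_n+t_nx,(u,v),t_nr)\geq 2e^{-C(t_nr)^2}-1<1$, so the exponent $2$ must be recovered by keeping the correction, i.e. $H(x,(u_n,v_n),r)\leq \frac{H(x,(u_n,v_n),\bar r)}{\bar r^2}\,r^2\exp\bigl(\int_r^{\bar r}\tfrac{4}{s}(1-e^{-C(t_ns)^2})\,ds\bigr)$, whose exponential factor is bounded because the integrand is $O(s)$; your first display, with a generic $c_0>0$, only yields $r^{2c_0}$ and does not prove the lemma. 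Second, the claim that $H(x,(u_n,v_n),\tilde r/2)\leq C$ for $x\in B_{2R}(0)$ ``follows from \eqref{eq:L2_norm_equal_to_1} together with Corollary \ref{Corollaries 2.6_2.7_2.8_TaTe}(1)'' does not go through as stated: that corollary compares radii at the \emph{same} centre and bounds the value at the larger radius by the value at the smaller one, so it cannot transport the normalization at centre $0$, radius $1$, to the centre $x$ at radius $\tilde r/2$, and there is no chain of doubling estimates between different centres for the sphere averages $H$. A correct closure is, for instance: use the near-monotonicity of $r\mapsto H(x,(u_n,v_n),r)$ (from Theorem \ref{thm:Almgren}, since $N(\cdot,(u,v),0^+)\geq 0$ gives $N(r)\geq e^{-Cr^2}-1$) to bound $H(x,(u_n,v_n),\tilde r/2)$ by an average of $H(x,(u_n,v_n),s)$ over $s\in[2R+1,2R+2]$, convert that average into the solid integral of $\sum_i(a_iu_{i,n}^2+b_iv_{i,n}^2)$ over $B_{4R+2}(0)$, and estimate the latter as $\int_0^{4R+2}s^{N-1}H(0,(u_n,v_n),s)\,ds$ using Corollary \ref{Corollaries 2.6_2.7_2.8_TaTe}(1) at the centre $x_n$, where \eqref{eq:L2_norm_equal_to_1} lives (all radii are admissible for $n$ large since $t_n\to0$); alternatively one may invoke the local bounds of Steps 1--2 in the proof of Theorem \ref{thm:convergence_of_blowupsequence}, which do not rely on this lemma.
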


\begin{proof} As long as we have $r<\tilde r$, $t_n<1$ and $x_n+t_nx\in \tilde \Omega$, we have that
$$
\rho_n^2H(x,(u_n,v_n),r)=H(x_n+t_nx, (u,v), rt_n)$$
and so (cf. Theorem \ref{thm:Almgren})
\begin{align}\label{fracddrlogfracrho_n^2}
\frac{d}{dr} \log \frac{\rho_n^2H(x,(u_n,v_n),r)}{r^2}&=t_n \frac{d }{dr}\left[\log  H(x_n+t_nx,(u,v),\cdot)\right]|_{r=t_n x}-\frac{2}{r} \\
&=\frac{2}{r} ( N(x_n+t_n x,(u,v),rt_n)-1),
\end{align}
implying that
$$
\frac{r}{2}\, \frac{d}{dr} \log \frac{\rho_n^2H(x,(u_n,v_n),r)}{r^2}=
( N(x_n+t_n x,(u,v),rt_n)+1)e^{C(rt_n)^2}e^{-C(rt_n)^2}-2.
$$
Now, applying Theorem \ref{thm:Almgren},
$$
( N(x_n+t_n x,(u,v),rt_n)+1)e^{C(t_nr)^2}\geq N(x_n+t_n x,(u,v),0^+)+1.
$$
But since, by assumption, $x_n+t_n x\in \Gamma_{(u,v)}$, it follows from Corollary \ref{Corollaries 2.6_2.7_2.8_TaTe} that
$$
( N(x_n+t_n x,(u,v),rt_n)+1)e^{C(rt_n)^2}\geq 2.
$$
Going back to \eqref{fracddrlogfracrho_n^2}, we conclude that
$$
\frac{d}{dr} \log \frac{\rho_n^2H(x,(u_n,v_n),r)}{r^2}\geq \frac{4}{r} (e^{-C(rt_n)^2}-1)\geq \frac{4}{r}(e^{-C' r^2}-1)\qquad \forall 0<r<\tilde r.
$$
This implies, after an integration, that, for every $0<r<\bar r<\tilde r$,
\[
\frac{H(x,(u_n,v_n),r)}{r^2}\leq \frac{H(x,(u_n,v_n),\bar r)}{\bar r^2}\;  {\rm exp} (\int_r^{\bar r} \frac{4}{s}(1-e^{-C's^2})\, ds )\leq C(\bar r)<\infty,
\]
thus proving the lemma.
\end{proof}

\medbreak

\begin{proof}[Proof of Theorem \ref{thm:convergence_of_blowupsequence}] We follow the argument in the proof of \cite[Theorem 3.3]{TavaresTerracini1}.
\medbreak

\noindent \emph{Step 1.} We first prove that given $R>1$ then, for $n$ sufficiently large,
\[
\|(u_n,v_n)\|_{H^1_0(B_R(0))}\leq C.
\]
Indeed, if $n$ is  so large that $t_nR\leq \tilde r$ (cf. Theorem {\rm \ref{thm:Almgren}}) we have
\[
\begin{split}
\sum_{i=1}^k\int_{\partial B_R(0)}(a_iu_{i,n}^2+b_iv_{i,n}^2)\, d\sigma &=\frac{1}{\rho_n^2t_n^{N-1}} \sum_{i=1}^k\int_{\partial B_{t_nR}(x_n)}(a_iu_{i}^2+b_iv_{i}^2)\, d\sigma\\
					&= R^{N-1}\frac{H(x_n,(u,v),t_nR)}{H(x_n,(u,v),t_n)} \leq R^{N-1} R^{2\tilde C}
\end{split}
\]
(cf. Corollary \ref{Corollaries 2.6_2.7_2.8_TaTe}), yielding that 
\begin{equation}\label{H(0,(u_n,v_n),R)leqC(R)}
H(0,(u_n,v_n),R)\leq C(R).
\end{equation}
On the other hand,
\[
\begin{split}
\frac{1}{R^{N-2}} &\sum_{i=1}^k \int_{B_R(0)}(a_i|\nabla u_{i,n}|^2+b_i|\nabla v_{i,n}|^2)\, dx  \\
 \leq&    \frac{C(R)}{H(0,(u_n,v_n),R)} \frac{1}{R^{N-2}}\sum_{i=1}^k \int_{B_R(0)}(a_i|\nabla u_{i,n}|^2+b_i|\nabla v_{i,n}|^2)\, dx \\
=&\frac{C(R)}{H(0,(u_n,v_n),R)} \left( \frac{1}{R^{N-2}} \sum_{i=1}^k \int_{B_R(0)}(a_i|\nabla u_{i,n}|^2+b_i|\nabla v_{i,n}|^2)\, dx\right. \\
&+  \left. \frac{1}{R^{N-1}}\sum_{i=1}^k \int_{\partial B_R(0)}(a_i u_{i,n}^2+b_i v_{i,n}^2)\, d\sigma \right)-C(R)\\
=&\frac{C(R)\rho_n^{-2}}{H(0,(u_n,v_n),R)} \left( \frac{1}{(t_nR)^{N-2}}\sum_{i=1}^k \int_{B_{t_nR}(x_n)}(a_i|\nabla u_{i}|^2+b_i|\nabla v_{i}|^2)\, dx\right.\\
&+ \left. \frac{1}{(t_nR)^{N-1}}\sum_{i=1}^k \int_{\partial B_{t_nR}(x_n)}(a_i u_{i}^2+b_i v_{i}^2)\, d\sigma \right)-C(R)\\
 \leq & \frac{2C(R)\rho_n^{-2}}{H(0,(u_n,v_n),R)} \Big(  E(x_n,(u,v),t_nR)+H(x_n,(u,v),t_nR) \Big)-C(R)
\end{split}
\]
where we have used \eqref{eq:estimate1_bis} in the last inequality. By taking \eqref{H(x_n,(u,v),t_n)=rho_n^2H(0,(u_n,v_n),t_n)} into account, we conclude that
\[
\begin{split}
\frac{1}{R^{N-2}}&\sum_{i=1}^k \int_{B_R(0)}(a_i|\nabla u_{i,n}|^2+b_i|\nabla v_{i,n}|^2)\, dx\\
& \leq \frac{2C(R)}{H(x_n,(u,v),t_nR)}\Big(  E(x_n,(u,v),t_nR)+H(x_n,(u,v),t_nR) \Big)-C(R)\\
&= 2C(R)(N(x_n,(u,v),t_nR)+1)-C(R)\\
&\leq 2C(R)(N(x_n,(u,v),t_nR)+1)e^{\tilde C (t_n R)^2}-C(R)\\
&\leq 2C(R)(N(x_n,(u,v),\tilde r)+1)e^{\tilde C \tilde r^2}-C(R)\leq C'(R),
\end{split}
\]
where we have used Theorem \ref{thm:Almgren} in the last inequality. The new constant $C'(R)$ depends on $\sup_{y\in \tilde \Omega} |N(y,(u,v),\tilde r)|<\infty$.
\medbreak

\noindent \emph{Step 2.} We next show that given $R>0$ then, for $n$ sufficiently large,
$$
\|(u_n,v_n)\|_{L^{\infty}(B_R(0))}\leq C'.$$
Indeed, it follows from \eqref{-a_iDelta_u_,n_leq} that, by letting $U_n=\sum_{i=1}^k|u_{i,n}|$, then $-\Delta U_n\leq C U_n$. This, together with the estimate in Step 1 and a standard Brezis-Kato  bootstrap argument, proves our claim for  $\|u_n\|_{L^{\infty}(B_R(0))}$ and for $\|v_n \|_{L^{\infty}(B_R(0))}$ as well.

\medbreak

\noindent \emph{Step 3.} We next prove that given $R>0$ the measures $\Mr_{i,n}$ and $\Nr_{i,n}$  are bounded in  $\Meh(B_R(0))$. Let  $\widetilde \Mr_{i,n}$ and $\widetilde \Nr_{i,n}$ be positive measures satisfying
\begin{equation}\label{-a_iDelta|u_,n|leq}
-a_i\Delta |u_{i,n}|\leq t_n^2|\mu_i|\,  |u_{i,n}|-\widetilde \Mr_{i,n}, \qquad -b_i\Delta |v_{i,n}|\leq t_n^2 |\nu_i|\,  |v_{i,n}|- \widetilde \Nr_{i,n} \qquad \text{ in }\mathscr{D}'(\Omega_n)
\end{equation}
and
\begin{equation}\label{comparing_measures_i_n}
|\Mr_{i,n}|\leq \widetilde \Mr_{i,n}, \qquad  |\Nr_{i,n}|\leq  \widetilde\Nr_{i,n}. 
\end{equation}
Multiply the first equation in \eqref{-a_iDelta|u_,n|leq} by a smooth cut-off function $\varphi$ such that $0\leq \varphi\leq 1$, $\varphi=1$  in $B_R(0)$ and $\varphi=0$ in $\R^N\setminus B_{2R}(0)$. It holds
\begin{eqnarray*}
\| \widetilde\Mr_{i,n}\|_{\Meh(B_R(0))}&=&\widetilde \Mr_{i,n}(B_R(0)) \leq  \int_{B_{2R}(0)}\varphi\, d \widetilde \Mr_{i,n} \\
&\leq& -a_i\int_{B_{2R}(0)} \langle \nabla |u_{i,n}|,\nabla \varphi\rangle\, dx+t_n^2|\mu_i|\int_{B_{2R}(0)}   |u_{i,n}| \, \varphi  dx.
\end{eqnarray*}
Thanks to the conclusion in Step 1, we deduce that $(\| \widetilde \Mr_{i,n}\|_{\Meh(B_R(0))})_n$ is a bounded sequence. It follows then from \eqref{comparing_measures_i_n} that also 
$(\|\Mr_{i,n}\|_{\Meh(B_R(0))})_n$ is  bounded. A similar conclusion holds for $(\| \widetilde \Nr_{i,n}\|_{\Meh(B_R(0))})_n$ and  $(\|\Nr_{i,n}\|_{\Meh(B_R(0))})_n$.

\medbreak

\noindent \emph{Step 4.} We now turn to the proof of a Lipschitz estimate. Namely, we prove that,   given $R>0$ then, for $n$ sufficiently large,
\[
\|(u_n,v_n)\|_{C^{0,1}(B_R(0))}\leq C''.
\]
To this aim, suppose without loss of generality, that
\begin{eqnarray*}
[(u_n,v_n)]_{C^{0,1}(B_R(0))}&=&\max_{i=1,\ldots,k}\left\{  \mathop{\sup_{x,y\in B_R(0)}}_{x\neq y}\frac{|u_{i,n}(x)-u_{i,n}(y)|}{|x-y|}, \mathop{\sup_{x,y\in B_R(0)}}_{x\neq y}\frac{|v_{i,n}(x)-v_{i,n}(y)|}{|x-y|} \right\}\\
&=&\frac{|u_{1,n}(y_n)-u_{1,n}(z_n)|}{|y_n-z_n|},
\end{eqnarray*}
with $|y_n|\leq R$, $|z_n|\leq R$ and $y_n\neq z_n$. Let
\[
r_n:=|y_n-z_n|>0, \qquad 2R_n:=\max\{\dist(y_n,\Gamma_{u_n}), \dist(z_n,\Gamma_{u_n}) \}=\dist(z_n,\Gamma_{u_n}),
\]
where we have denoted
\[
\Gamma_{u_n}:=\{x\in \R^N: u_{i,n}(x)=0 \quad \forall i=1,\ldots,k\}.
\]
In view of proving our claim, we may already assume that $R_n>0$ and, thanks to Step 2, that $r_n\to 0$. Moreover, since $-a_1\Delta u_{1,n}=t_n^2\mu_1u_{1,n}$ in $\Omega_n\setminus \Gamma_{u_n}$, the desired conclusion follows easily from standard elliptic estimates in case $\liminf_n R_n>0$. In conclusion, we may already assume that
\[
r_n\to 0 \qquad \mbox{ and } \qquad 0<R_n\to 0.
\]
A crucial remark in our subsequent argument is the observation that
\begin{equation}\label{d(z_n,Gamma_n)=d(z_n,Gamma_(u_n,v_n))}
d(z_n,\Gamma_{u_n})=d(z_n,\Gamma_{(u_n,v_n)}),
\end{equation}
where we recall the definition
\begin{equation*}
\Gamma_{( u_n, v_n)}:=\{x\in \R^N: u_{i,n}(x)=0=v_{i,n}(x) \quad \forall i=1,\ldots,k\}.
\end{equation*}
This, of course, is a consequence of the fact that $\Big( \sum_{i=1}^k u_{i,n}^2\Big)\Big( \sum_{i=1}^k v_{i,n}^2\Big)\equiv 0$ and  $R_n>0$.

\noindent{\em Case 4a. Suppose first that $(R_n/r_n)_n$ is a bounded sequence.} By letting $U_n:=\sum_{i=1}^k u_{i,n}^2 $ we have $-\Delta U_n\leq 2Ct_n^2U_n$ in $\Omega_n$. By applying \cite[Lemma 3.9]{TavaresTerracini1} to this inequality we obtain that
\begin{align}\label{U_n^2(z_n)leq}
U_n(z_n)&\leq \frac{1}{|B_{R_n}(z_n)|} \int_{B_{R_n}(z_n)} U_n(x)\, dx + \frac{Ct_n^2R_n^2}{2(N+2)}\|U_n\|_{L^\infty(B_{R_n}(z_n))} \nonumber \\ 
		&\leq \frac{1}{|B_{R_n}(z_n)|} \int_{B_{R_n}(z_n)} U_n(x)\, dx + C'R_n^2.
\end{align}
Now, thanks to \eqref{d(z_n,Gamma_n)=d(z_n,Gamma_(u_n,v_n))} we can choose $w_n\in \Gamma_{(u_n,v_n)} \cap B_{2R}(0)$ in such a way that
\[
d(z_n,\Gamma_{(u_n,v_n)})=|z_n-w_n|=d(z_n,\Gamma_n)=2R_n.
\]
It follows then from Lemma \ref{H(x,(u_n,v_n),r)leqCr^2} that, for large $n$ and small $r$,
\[
H(w_n,(u_n,v_n),r)\leq Cr^2.
\]
In particular, we have
\[
\frac{1}{r^{N-1}} \int_{\partial B_r(w_n)}U_n(\sigma)\, d\sigma \leq C'r^2,
\]
thus also
\[
\frac{1}{r^{N}} \int_{ B_r(w_n)}U_n(x)\, dx \leq C''r^2.
\]
Since $R_n\to 0$, we obtain from \eqref{U_n^2(z_n)leq} that $U_n(z_n)\leq C''' R_n^2$. In particular, since by assumption the sequence $(R_n/r_n)_n$ is bounded, 
\[
u_{1,n}^2(z_n)\leq Cr_n^2,
\]
with $C$ independent of $n$.
This proves our desired estimate
\begin{equation}\label{|u_1,n(z_n)-u_1,n(y_n)|^2leqC |y_n-z_n|^2=Cr_n^2}
|u_{1,n}(z_n)-u_{1,n}(y_n)|^2\leq C |y_n-z_n|^2=Cr_n^2
\end{equation}
at least in the case where $u_{1,n}(y_n)=0$. But in case $u_{1,n}(y_n)\neq 0$ we have
\[
0<d(y_n,\Gamma_n)\leq d(z_n,\Gamma_n)=2R_n\to 0,
\]
and we obtain as above that $u_{1,n}^2(y_n)\leq Cd(y_n,\Gamma_n)^2\leq C'r_n^2$, leading still  to \eqref{|u_1,n(z_n)-u_1,n(y_n)|^2leqC |y_n-z_n|^2=Cr_n^2}.

\medbreak

\noindent{\em Case 4b.} Suppose secondly that $r_n/R_n \to 0$. By applying elliptic estimates to the equation $-a_1\Delta u_{1,n}=t_n^2\mu_1u_{1,n}$ together with the bound in Step 2 we obtain that
\begin{equation}\label{case2}
\|u_{1,n}\|_{C^{0,1}(B_{R_n/2}(z_n))} \leq C(R_n^{-1}\|u_{1,n}\|_{L^{\infty}(B_{R_n}(z_n))}+1).
\end{equation}
Arguing as in Case 1 above we prove the existence of $C>0$ such that for large $n$ and every $x\in B_{R_n}(z_n)$ it holds $u_{1,n}^2(x)\leq C d(x,\Gamma_n)^2\leq C'R_n^2$. By \eqref{case2} we deduce that the sequence $\|u_{1,n}\|_{C^{0,1}(B_{R_n/2}(z_n))} $ is bounded. Since by assumption $y_n \in B_{R_n/2}(z_n) $ for large $n$, this completes the argument if Case 2  holds and finishes the  proof of the estimates in Step 4.

\medbreak

\noindent \emph{Step 5.} By what we have proved so far, by taking subsequences we can find
\[
(u_n,v_n)\to (\bar u,\bar v) \quad \text{ in }C^{0,\alpha}_\textrm{loc}(\R^N)\quad \text{ for all }0<\alpha<1,\ \text{weakly in }H^1_\textrm{loc}(\R^N),
\]
as well as measures $\bar \Mr_i$, $\bar \Nr_i$, with the properties stated in Theorem \ref{thm:convergence_of_blowupsequence}. The normalization property \eqref{eq:L2_norm_equal_to_1} implies that $(\bar u,\bar v)\not\equiv (0,0)$, while the fact that  $\bar \Mr_i$ and $\bar \Nr_i$ are concentrated on   
 $\tilde \Gamma_{(\bar u,\bar v)}$ follows from the convergence $(u_n,v_n)\to (\bar u,\bar v)$ in $L^{\infty}_\textrm{loc}(\R^N)$. Next we show that
 \[
(u_n,v_n)\to (\bar u,\bar v) \quad \text{ in } \text{strongly in }H^1_\textrm{loc}(\R^N).
\]
To show this, we multiply the equation $-a_i\Delta (u_{i,n}-\bar u_i)=t_n^2\mu_i u_{i,n}+\bar \Mr_i- \Mr_{i,n}$ by $(u_{i,n}-\bar u_i)\varphi$, where a smooth cut-off function $\varphi$ such that $0\leq \varphi\leq 1$, $\varphi=1$  in $B_R(0)$ and $\varphi=0$ in $\R^N\setminus B_{2R}(0)$. Since
\[
\left|\int_{\R^N}(u_{i,n}-\bar u_i)\varphi\, d \Mr_{i,n}\right|\leq \int_{\R^N}|u_{i,n}-\bar u_i| \varphi\, d |\Mr_{i,n}|\leq \|u_{i,n}-\bar u_i \|_{L^{\infty}(B_{2R}(0))}|\Mr_{i,n}|(B_{2R}(0))
\]
and similarly for $|\int_{\R^N}(u_{i,n}-\bar u_i)\varphi\, d \Mr_{i}|$, we obtain that $a_i\int_{B_{R}(0)}|\nabla (u_{i,n}-\bar u_i)|^2\, dx\to 0$, and the conclusion follows. In particular, we obtain \eqref{multiply_by_baru_i_and_barv_i} by passing to the limit in \eqref{multiply_by_u_i_n} and in \eqref{multiply_by_v_i_n}.

\medbreak

\noindent {\em Step 6.} Let us now check \eqref{eq:pohozaev_for_bar_uv2}. From Corollary \ref{coro:Field_second} we deduce, by taking suitable rescailings, that 
\begin{align*}
(2-N) \sum_{i=1}^k \int_{B_r(x_0)} (a_i|\nabla u_{i,n}|^2+b_i |\nabla v_{i,n}|^2)\, dx\\
=\sum_{i=1}^k \int_{\partial B_r(x_0)} a_i r(2(\partial_n u_{i,n})^2-|\nabla u_{i,n}|^2)\, d\sigma +\sum_{i=1}^k \int_{\partial B_r(x_0)} b_i r (2(\partial_n v_{i,n})^2-|\nabla v_{i,n}|^2)\, d\sigma\\
+\sum_{i=1}^k \int_{\partial B_r(x_0)} r t_n^2(\mu_i u_{i,n}^2+\nu_i v_{i,n}^2)\, d\sigma-\sum_{i=1}^k \int_{B_r(x_0)} Nt_n^2 (\mu_i u_{i,n}^2+\nu_i v_{i,n}^2)\, dx.
\end{align*}
Then  \eqref{eq:pohozaev_for_bar_uv2} follows by letting $n\to \infty$ in this identity. 
\end{proof}

\medbreak

Having established a strong convergence result for any blowup sequence, based on the properties of $(u,v)$ we can characterize in a better way the possible blowup limits. First we observe that the Almgren's monotonicity formula valid from $(u,v)$ is transported to any limit $(\bar u,\bar v)$.

\begin{corollary}[Almgren's formula for $(\bar u,\bar v)$]\label{coro:Almgren_for_baru_barv}
For any $x_0\in \R^N$, $r>0$, let
\[
E(x_0,(\bar u,\bar v),r)=\frac{1}{r^{N-2}}\sum_{i=1}^k \int_{B_r(x_0)}(a_i |\nabla \bar u_i|^2+b_i|\nabla \bar v_i|^2)\, dx,
\]
\[
H(x_0,(\bar u,\bar v),r)=\frac{1}{r^{N-1}}\sum_{i=1}^k \int_{\partial B_r(x_0)} (a_i \bar u_i^2+b_i \bar v_i^2)\, d\sigma,
\]
and define the Almgren's quotient by
\[
N(x_0,(\bar u,\bar v),r)=\frac{E(x_0,(\bar u,\bar v), r)}{H(x_0,(\bar u, \bar v), r)}.
\]
Then $H(x_0,(\bar u,\bar v),r)\neq 0$ for every $r>0$, $r\mapsto N(x_0,(\bar u,\bar v), r)$ is a nondecreasing map, and
\begin{equation}\label{eq:derivative of H(bar u, bar v, r)}
\frac{d}{dr}\log (H(x_0,(\bar u,\bar v),r))=\frac{2}{r}N(x_0,(\bar u,\bar v), r).
\end{equation}
In particular, the set
\[
\Gamma_{(\bar u,\bar v)}=\{x\in \R^N:\ \bar u_i(x)=\bar v_i(x)=0 \text{ for every }i=1,\ldots,k\}
\]
has no interior points.
\end{corollary}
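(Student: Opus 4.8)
The plan is to mimic, in this limiting setting, the proof of Theorem~\ref{thm:Almgren}; the situation is in fact simpler here, because the system satisfied by $(\bar u,\bar v)$ in Theorem~\ref{thm:convergence_of_blowupsequence} carries no zero-order terms. First I would record the two basic derivative identities. Writing $H(x_0,(\bar u,\bar v),r)=\sum_{i=1}^k\int_{\partial B_1(0)}\bigl(a_i\bar u_i^2(x_0+rx)+b_i\bar v_i^2(x_0+rx)\bigr)\,d\sigma$, differentiating in $r$ and using \eqref{multiply_by_baru_i_and_barv_i} gives
\[
\frac{d}{dr}H(x_0,(\bar u,\bar v),r)=\frac{2}{r^{N-1}}\sum_{i=1}^k\int_{\partial B_r(x_0)}\bigl(a_i\bar u_i\partial_n\bar u_i+b_i\bar v_i\partial_n\bar v_i\bigr)\,d\sigma=\frac{2}{r}\,E(x_0,(\bar u,\bar v),r)\ge 0,
\]
so that $r\mapsto H(x_0,(\bar u,\bar v),r)$ is continuous and nondecreasing, and $E(x_0,(\bar u,\bar v),r)=r^{2-N}\sum_i\int_{\partial B_r(x_0)}(a_i\bar u_i\partial_n\bar u_i+b_i\bar v_i\partial_n\bar v_i)\,d\sigma$. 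Differentiating the definition of $E$ and inserting the Pohozaev identity \eqref{eq:pohozaev_for_bar_uv2} exactly as in Subsection~\ref{subsec:Almgren} (now with the $\mu_i,\nu_i$ terms absent) yields
\[
\frac{d}{dr}E(x_0,(\bar u,\bar v),r)=\frac{2}{r^{N-2}}\sum_{i=1}^k\int_{\partial B_r(x_0)}\bigl(a_i(\partial_n\bar u_i)^2+b_i(\partial_n\bar v_i)^2\bigr)\,d\sigma.
\]

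On any open interval where $H(x_0,(\bar u,\bar v),\cdot)>0$ the quotient $N=E/H$ is differentiable, and combining the identities above exactly as in the proof of Theorem~\ref{thm:Almgren}---the term involving $\mu_i,\nu_i$ there being now absent---one obtains $\frac{d}{dr}N(x_0,(\bar u,\bar v),r)\ge 0$ from the Cauchy--Schwarz inequality in $L^2(\partial B_r(x_0);\R^{2k})$ applied to the vectors $(\sqrt{a_i}\,\partial_n\bar u_i,\sqrt{b_i}\,\partial_n\bar v_i)_{i=1,\dots,k}$ and $(\sqrt{a_i}\,\bar u_i,\sqrt{b_i}\,\bar v_i)_{i=1,\dots,k}$; moreover $\frac{d}{dr}\log H=\frac{2}{r}N$ holds on such intervals.

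It remains to prove that $H(x_0,(\bar u,\bar v),r)\ne 0$ for every $r>0$, after which the two identities above hold on all of $(0,\infty)$ and $N$ is globally nondecreasing; I would obtain this together with the emptiness of the interior of $\Gamma_{(\bar u,\bar v)}$. If $\Gamma_{(\bar u,\bar v)}$ had an interior point then, since $(\bar u,\bar v)\not\equiv(0,0)$ forces it to be a proper closed subset of $\R^N$, one can choose $x_1$ with $d_1:=\dist(x_1,\partial\Gamma_{(\bar u,\bar v)})>0$; then $\overline{B_{d_1}(x_1)}\subseteq\Gamma_{(\bar u,\bar v)}$, so $H(x_1,(\bar u,\bar v),r)=0$ for $r\le d_1$, while $H(x_1,(\bar u,\bar v),r_0)>0$ for some $r_0>d_1$ (otherwise all components would vanish on a ball centred at $x_1$ of radius larger than $d_1$, contradicting the definition of $d_1$). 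Put $r_*:=\sup\{r:\ H(x_1,(\bar u,\bar v),r)=0\}\in[d_1,r_0)$. On $(r_*,r_0]$ the continuous function $H(r):=H(x_1,(\bar u,\bar v),r)$ solves $H'=\tfrac{2N(r)}{r}H$ with $H(r_*)=0$; as $N$ is nondecreasing and bounded there, the coefficient $2N(r)/r$ extends continuously up to $r_*$, and uniqueness for this linear ODE forces $H\equiv 0$ on $[r_*,r_0]$, contradicting $H(r_0)>0$. Hence $\Gamma_{(\bar u,\bar v)}$ has empty interior. Finally, if $H(x_0,(\bar u,\bar v),r_0)=0$ for some $r_0>0$, then $\bar u_i=\bar v_i=0$ on $\partial B_{r_0}(x_0)$ for all $i$, whence \eqref{multiply_by_baru_i_and_barv_i} gives $\int_{B_{r_0}(x_0)}|\nabla\bar u_i|^2=\int_{B_{r_0}(x_0)}|\nabla\bar v_i|^2=0$, so each component is constant and thus $\equiv 0$ on $B_{r_0}(x_0)$, i.e.\ $B_{r_0}(x_0)\subseteq\Gamma_{(\bar u,\bar v)}$, which is impossible. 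The only genuinely delicate point is the ODE argument ruling out the vanishing of $H$; it is carried out exactly as the corresponding step in the proof of Theorem~\ref{thm:Almgren}.
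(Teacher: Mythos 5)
Your proposal is correct and follows essentially the same route as the paper: the identities for $H'$ and $E'$ obtained from \eqref{multiply_by_baru_i_and_barv_i} and the Pohozaev identity \eqref{eq:pohozaev_for_bar_uv2}, monotonicity of $N$ via Cauchy--Schwarz, the ODE/Gronwall argument showing $\Gamma_{(\bar u,\bar v)}$ has empty interior, and then $H\neq 0$ as a consequence of \eqref{multiply_by_baru_i_and_barv_i}. Your careful handling of the endpoint $r_*$ simply makes explicit what the paper compresses into ``arguing precisely as in Theorem \ref{thm:Almgren}''.
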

\begin{proof}
The proof is similar to the one in Theorem \ref{thm:Almgren}. We provide the full details. We simplify the notations by writing $E(r)=E(x_0,(\bar u,\bar v),r)$ and similarly for $  H(r)$ and $N(r)$. At first, thanks to \eqref{multiply_by_baru_i_and_barv_i} we see that
\begin{equation}\label{E(x_0,(baru,barv),r)=}
E(r)=\frac{1}{r^{N-2}}\sum_{i=1}^k \int_{\partial B_r(x_0)}(a_i \bar u_i\, \partial_n \bar u_i+b_i \bar v_i\, \partial_n \bar v_i)\, d\sigma
\end{equation}
On the other hand, by a simple computation similar to the the one in Lemma \ref{fracddrH(x_0,(u,v),r)=},
\begin{equation}\label{H'(x_0,(baru,barv),r)=}
H'(r)=\frac{2}{r^{N-1}}  \sum_{i=1}^k \int_{\partial B_r(x_0)}(a_i \bar u_i\, \partial_n \bar u_i+b_i \bar v_i\, \partial_n \bar v_i)\, d\sigma
\end{equation}
and, as in Lemma \ref{fracddrE(x_0,(u,v),r)=}, the derivative $E'(r)=\frac{d}{dr} E(x_0,(\bar u,\bar v),r)$ is given by
\begin{equation*}
\frac{2-N}{r^{N-2} }\sum_{i=1}^k \int_{B_r(x_0)} (a_i|\nabla\bar u_i|^2+b_i |\nabla\bar v_i|^2)\, dx+
\frac{1}{r^{N-2}}
\sum_{i=1}^k \int_{\partial B_r(x_0)} (a_i|\nabla\bar u_i|^2+b_i |\nabla\bar v_i|^2)\, d\sigma 
\end{equation*}
that is, thanks to \eqref{eq:pohozaev_for_bar_uv2}
\begin{equation}\label{E'(x_0,(baru,barv),r)=}
E'(r)=\frac{2}{r^{N-2}} \sum_{i=1}^k \int_{\partial B_r(x_0)} \left( a_i (\partial_n \bar u_i)^2+b_i (\partial_n \bar v_i)^2\right)\, d\sigma.
\end{equation}
By putting together \eqref{E(x_0,(baru,barv),r)=} to \eqref{E'(x_0,(baru,barv),r)=} we obtain that, as long as $H(r)>0$,
\begin{equation}\label{fracddrlogH(x_0,(baru,bar v),r)=}
(\log H)'(r)=\frac{2}{r} N(r)
\end{equation}
and
\begin{multline}\label{eq:Cauchy-Schwarz}
\frac{1}{2}H^2(r)r^{2N-3}  N'(r)= \sum_{i=1}^k \int_{\partial B_r(x_0)} (a_i(\partial_n \bar u_i)^2+b_i (\partial_n \bar v_i)^2)\, d\sigma \\ \times  \sum_{i=1}^k  \int_{\partial B_r(x_0)} (a_i \bar u_i^2+b_i\bar v_i^2)\, d\sigma
 - \left(\sum_{i=1}^k \int_{\partial B_r(x_0)} (a_i \bar u_i\partial_n \bar u_i+b_i \bar v_i \partial_n \bar v_i)\, d\sigma  \right)^2\geq 0,
\end{multline}
by the Cauchy-Schwarz inequality.

Now, arguing precisely as in Theorem \ref{thm:Almgren}, \eqref{fracddrlogH(x_0,(baru,bar v),r)=} together with the monotonicity of the map $N(r)$ implies that $\Gamma_{(\bar u,\bar v)}$ has empty interior. In particular, thanks also to  \eqref{multiply_by_baru_i_and_barv_i}, we must have that $H(r)>0$ for every $r>0$.
\end{proof}

\medbreak

We now introduce a very important definition, which will help us in the future to lighten the notation.

\begin{definition}\label{def_of_BU_y}
Given $y\in \Omega$, we define the set of all possible blowup limits at $y$ by
\[
\Bcal\Ucal_{y}=\left\{(\bar u,\bar v):  \begin{array}{l}\exists\; x_n\to y,\ t_n\to 0 \text{ such that, for every $i$,}\\[4pt]
					 \displaystyle u_{i,n}:=\frac{u_i(x_n+t_n\cdot)}{\sqrt{H(x_n,(u,v), t_n)}}\to \bar u\ \text{ and }\ v_{i,n}:=\frac{v_i(x_n+t_n\cdot)}{\sqrt{H(x_n,(u,v), t_n)}}\to \bar v \\[10pt]
					 \text{ strongly in }  H^1_\textrm{loc}(\R^N)\cap C^{0,\alpha}_\textrm{loc}(\R^N),\ \forall 0<\alpha<1
					 		 \end{array}\right\}
\]
\end{definition}

\medbreak 

From now on we will talk about the associated sequences $x_n,t_n$ defined for each $(\bar u,\bar v)\in \Bcal\Ucal_{y}$, with obvious meaning. Observe that at this point we cannot say that the blowup limit is unique, and it could a priori depende on the chosen sequences. Particular choices of the sequence $(x_n)_n$ provide additional information on the limit $(\bar u,\bar v)$, as a consequence of the Almgren's Formula.
\begin{corollary}\label{coro:blowup_special_cases}
Let $x_0\in \Omega$ and $(\bar u,\bar v)\in \Bcal\Ucal_{x_0}$ with associated sequences $x_n,t_n$. Assume that one of the following conditions hold:
\begin{itemize}
\item[1.] $(x_n)_n$ is a constant sequence, i.e. $x_n=x_0$ for every $n$;
\item[2.] $(x_n)_n\subseteq \Gamma_{(u,v)}$, and $x_0\in \Gamma_{( u, v)}$ is such that $N(x_0,(u,v),0^+)=1$.
\end{itemize}
Then $N(0,(\bar u,\bar v),r)\equiv N(x_0,(u,v),0^+)=:\alpha$ for every $r>0$, and there exist $g_i,h_i:S^{N-1}\to \R$ such that
$$
\bar u_i=r^\alpha g_i(\theta),\qquad \bar v_i=r^\alpha h_i(\theta),
$$
where $r=|x|$, $\theta=x/|x|$.
\end{corollary}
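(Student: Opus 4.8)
The plan is to combine the monotonicity of the Almgren quotient $N(x_0,(u,v),\cdot)$ for the limiting profile $(u,v)$ (Theorem \ref{thm:Almgren}) with the scaling identity \eqref{H(x_n,(u,v),t_n)=rho_n^2H(0,(u_n,v_n),t_n)}, and then exploit the Cauchy--Schwarz equality case in \eqref{eq:Cauchy-Schwarz} to deduce homogeneity. First I would establish that $N(0,(\bar u,\bar v),r)$ is constant in $r$. Recall from the proof of Lemma \ref{H(x,(u_n,v_n),r)leqCr^2} (see \eqref{fracddrlogfracrho_n^2}) that for the rescaled sequence one has, whenever $r<\tilde r/t_n$,
\[
N(x_n+t_n\cdot\,,\,(u,v),\,r t_n)=\text{(quotient computed with }(u_n,v_n)\text{ at scale }r),
\]
more precisely, by the very definitions of $E,H$ and the change of variables, $N(0,(u_n,v_n),r)=N(x_n,(u,v),r t_n)$. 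Fix $r>0$; for $n$ large, $r t_n<\tilde r$, and by the monotonicity in Theorem \ref{thm:Almgren} together with $e^{C(rt_n)^2}\to 1$,
\[
N(x_n,(u,v),r t_n)\ \longrightarrow\ N(x_0,(u,v),0^+)=:\alpha
\]
as $n\to\infty$, provided $x_n\to x_0$ and $N(x_n,(u,v),rt_n)\to N(x_0,(u,v),0^+)$. In case 1 ($x_n\equiv x_0$) this is immediate from the existence of the limit $N(x_0,(u,v),0^+)$; in case 2 ($x_n\in\Gamma_{(u,v)}$, $N(x_0,(u,v),0^+)=1$) one uses the upper semicontinuity of $y\mapsto N(y,(u,v),0^+)$ (Corollary \ref{Corollaries 2.6_2.7_2.8_TaTe}, item 2) together with the lower bound $N(x_n,(u,v),0^+)\ge 1$ (item 3 of the same corollary) to sandwich the limit and conclude $N(x_n,(u,v),rt_n)\to 1$. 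On the other hand, by the strong $H^1_{\mathrm{loc}}$ convergence $(u_n,v_n)\to(\bar u,\bar v)$ and the fact that $H(0,(\bar u,\bar v),r)\neq 0$ (Corollary \ref{coro:Almgren_for_baru_barv}), we get $N(0,(u_n,v_n),r)\to N(0,(\bar u,\bar v),r)$. Comparing the two limits gives $N(0,(\bar u,\bar v),r)=\alpha$ for every $r>0$.

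Once $r\mapsto N(0,(\bar u,\bar v),r)\equiv\alpha$ is constant, its derivative vanishes, so by \eqref{eq:Cauchy-Schwarz} the Cauchy--Schwarz inequality
\[
\Bigl(\sum_{i=1}^k\int_{\partial B_r(0)}(a_i\bar u_i\partial_n\bar u_i+b_i\bar v_i\partial_n\bar v_i)\,d\sigma\Bigr)^2
=\sum_{i=1}^k\int_{\partial B_r(0)}(a_i(\partial_n\bar u_i)^2+b_i(\partial_n\bar v_i)^2)\,d\sigma
\cdot\sum_{i=1}^k\int_{\partial B_r(0)}(a_i\bar u_i^2+b_i\bar v_i^2)\,d\sigma
\]
holds with equality for a.e.\ $r>0$. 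The equality case forces, for each such $r$, the existence of a scalar $\lambda(r)$ with $\partial_n\bar u_i=\lambda(r)\bar u_i$ and $\partial_n\bar v_i=\lambda(r)\bar v_i$ on $\partial B_r(0)$ for all $i$ (here one uses that the vectors $(\sqrt{a_i}\,\partial_n\bar u_i,\sqrt{b_i}\,\partial_n\bar v_i)_i$ and $(\sqrt{a_i}\,\bar u_i,\sqrt{b_i}\,\bar v_i)_i$ in $L^2(\partial B_r(0);\R^{2k})$ must be proportional). Then, from \eqref{fracddrlogH(x_0,(baru,bar v),r)=}, $(\log H)'(r)=2N(r)/r=2\alpha/r$, so $H(0,(\bar u,\bar v),r)=c\,r^{2\alpha}$; combining with $E(0,(\bar u,\bar v),r)=\alpha H(0,(\bar u,\bar v),r)$ and \eqref{E(x_0,(baru,barv),r)=} one identifies $\lambda(r)=\alpha/r$. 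Writing $\bar u_i(x)=\bar u_i(r\theta)$ and using $\partial_r\bar u_i=(\alpha/r)\bar u_i$ along every ray, an integration in $r$ yields $\bar u_i(r\theta)=r^\alpha g_i(\theta)$ with $g_i(\theta):=\bar u_i(\theta)$, and similarly $\bar v_i(r\theta)=r^\alpha h_i(\theta)$ with $h_i:=\bar v_i|_{S^{N-1}}$, which is exactly the claim.

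The main obstacle I anticipate is the convergence $N(x_n,(u,v),rt_n)\to N(x_0,(u,v),0^+)$ in case 2, where $x_n$ is only assumed to lie on $\Gamma_{(u,v)}$ and approach $x_0$ along an a priori arbitrary sequence: one must genuinely use both directions of the semicontinuity/bound package from Corollary \ref{Corollaries 2.6_2.7_2.8_TaTe} (upper semicontinuity of $N(\cdot,(u,v),0^+)$ and the lower bound $\ge1$ on $\Gamma_{(u,v)}$) together with the monotone quantity $e^{C r^2}(N(x_n,(u,v),r)+1)$ to control $N(x_n,(u,v),rt_n)$ from above by something tending to $N(x_0,(u,v),0^+)+1=2$ and from below by $N(x_n,(u,v),0^+)+1\ge2$; the squeeze then forces the limit. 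A secondary (routine) point is justifying that equality in the vector Cauchy--Schwarz inequality on $\partial B_r(0)$ forces pointwise proportionality of the two $\R^{2k}$-valued functions with an $r$-dependent but $i$- and $\theta$-independent factor, and that this factor is exactly $\alpha/r$; both follow from the Hilbert-space equality case and from matching with $(\log H)'$. Everything else is a direct assembly of results already proved in the excerpt.
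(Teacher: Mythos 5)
Your proposal is correct and follows essentially the same route as the paper: the paper establishes constancy of $N(0,(\bar u,\bar v),\cdot)$ via the scaling identity $N(0,(u_n,v_n),r)=N(x_n,(u,v),t_nr)$ together with Theorem \ref{thm:Almgren} and Corollary \ref{Corollaries 2.6_2.7_2.8_TaTe} (delegating the squeeze you spell out to a citation of an earlier work), and then, exactly as you do, uses the equality case in \eqref{eq:Cauchy-Schwarz} to get pointwise proportionality $\partial_n\bar u_i=C(r)\bar u_i$, $\partial_n\bar v_i=C(r)\bar v_i$, identifies $C(r)=\alpha/r$ from $(\log H)'=2N/r$, and integrates radially. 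Your filling in of the case-2 argument (monotonicity of $e^{Cr^2}(N+1)$, the bound $N\geq 1$ on $\Gamma_{(u,v)}$, and continuity at a fixed scale $s$ followed by $s\to 0$) is precisely what the cited reference does, so there is no gap.
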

\begin{proof}
The proof that $N(r):=N(0,(\bar u,\bar v),r)$ has constant value equal to $\alpha=N(x_0,(u,v),0^+)$ follows exactly as in \cite[Corollary 3.12]{TavaresTerracini1}, on the ground of Theorem \ref{thm:Almgren}, Corollary \ref{Corollaries 2.6_2.7_2.8_TaTe}, and the fact that $N(0,(u_n,v_n),r)=N(x_n,(u,v),t_nr)$. In this situation we have  $N'(r)\equiv 0$ and from \eqref{eq:Cauchy-Schwarz} we know that, for a.e. $r>0$ there exists $C(r)>0$ such that
$$
(\partial_n \bar u_1,\ldots, \partial_n \bar u_k,\partial_n \bar v_1,\ldots, \partial_n \bar v_k)=C(r)(\bar u_1,\ldots, \bar u_k,\bar v_1,\ldots, \bar v_k)\qquad \text{ on }\partial B_r(0).
$$
By inserting this information in \eqref{eq:derivative of H(bar u, bar v, r)} we obtain that
$$
\frac{2}{r}\alpha=\frac{H'(r)}{H(r)}=\frac{2\sum_{i=1}^k \int_{\partial B_r(0)}(a_i \bar u_i (\partial_n \bar u_i)+b_i \bar v_i (\partial_n \bar v_i))\, d\sigma}{\sum_{i=1}^k \int_{\partial B_r(0)} (a_i \bar u_i^2+b_i \bar v_i^2)\, d\sigma}=2C(r)
$$
and $C(r)=\alpha/r$. As on $\partial B_r(0)$ the normal derivative corresponds to the derivative with respect to the radius, the conclusion follows.
\end{proof}


\subsection{Hausdorff dimension estimates for the nodal and singular sets. Definition of $\Reh_{(u,v)}$, the regular part of the nodal set}\label{subset:HausdorffDim}

In this subsection we provide upper estimates for the Hausdorff dimension of the nodal set as well as for some of its subsets. Recall from Corollary \ref{Corollaries 2.6_2.7_2.8_TaTe} that $N(x,(u,v),0^+)\geq 1$ whenever $x\in \Gamma_{(u,v)}$.

\begin{definition}
Let $(u,v)$ be the limit given by Theorem \ref{thm:convergences}. We split the nodal set $\Gamma_{(u,v)}$ into the following two sets:
\[
\Reh_{(u,v)}=\{x\in \Gamma_{(u,v)}:\ N(x,(u,v),0^+)=1\}
\] 
and
\[
\Seh_{(u,v)}=\Gamma_{(u,v)}\setminus \Reh_{(u,v)}=\{x\in \Gamma_{(u,v)}:\ N(x,(u,v),0^+)>1\}.
\]
\end{definition}
We will prove in the subsequent subsections that $\Reh_{(u,v)}$ turns out to be the regular part of the nodal set, being of class $C^{1,\alpha}$ for some $0<\alpha<1$, whereas the singular part $\Seh_{(u,v)}$ has small Hausdorff measure: more precisely at most $N-2$. In the spirit of \cite[Lemma 4.1]{CaffarelliLinJAMS} or \cite[Lemma 6.1]{TavaresTerracini1}, we prove a jump condition for the value of the map $x\in \Gamma_{(u,v)}\mapsto N(x,(u,v),0^+)$, which will yield in particular that $\Seh_{(u,v)}$ is relatively close in $\Gamma_{(u,v)}$. Here, the fact that we have groups of components, which eventually change sign, makes the proof more delicate.

\begin{proposition}\label{prop:jump_condition_on_N}
Let $(u,v)$ be the limit given by Theorem \ref{thm:convergences}. Then there exists $\delta_N>0$ (depending only on the dimension) such that, for every $x_0\in \Gamma_{(u,v)}$, either
$$
N(x_0,(u,v),0^+)=1 \quad \text{ or } \quad N(x_0,(u,v),0^+)\geq 1+\delta _N
$$
\end{proposition}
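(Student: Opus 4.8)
The plan is to prove this gap/jump statement by a blow-up and compactness argument, following the classical scheme of Caffarelli--Lin and its adaptation in \cite{TavaresTerracini1}. First I would argue by contradiction: suppose there is no such $\delta_N$, so there exists a sequence of points $x_n\in \Gamma_{(u,v)}$ with $1<N(x_n,(u,v),0^+)<1+\tfrac{1}{n}$, hence $N(x_n,(u,v),0^+)\to 1^+$. Passing to a subsequence, $x_n\to x_0$ for some $x_0\in\overline\Omega$; since for points away from $\Gamma_{(u,v)}$ the Almgren quotient tends to $0$ as $r\to 0^+$ (the functions being locally smooth and nonvanishing there), and using the upper semicontinuity of $x\mapsto N(x,(u,v),0^+)$ from Corollary \ref{Corollaries 2.6_2.7_2.8_TaTe} together with the lower bound $N\geq 1$ on $\Gamma_{(u,v)}$, one sees $x_0\in\Gamma_{(u,v)}$ and $N(x_0,(u,v),0^+)=1$.

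Next I would perform a blow-up at $x_0$ along well-chosen scales. Using Almgren's monotonicity (Theorem \ref{thm:Almgren}) and the almost-monotonicity of $e^{Cr^2}(N(x,(u,v),r)+1)$, one can find radii $t_n\to 0$ such that, after rescaling by $\rho_n^2=H(x_n,(u,v),t_n)$, the blow-up sequence $(u_n,v_n)$ has $N(0,(u_n,v_n),1)=N(x_n,(u,v),t_n)$ staying strictly above $1$ by a definite amount at some fixed radius while $N(x_n,(u,v),0^+)$ is close to $1$ — exploiting that the monotone quantity cannot both be near $1$ at scale $0^+$ and stay bounded away from $1$ at scale $1$ unless it genuinely increases; a diagonal choice of $t_n$ produces a limit blow-up $(\bar u,\bar v)\in \Bcal\Ucal_{x_0}$ whose Almgren quotient $N(0,(\bar u,\bar v),\cdot)$ is \emph{constant} (by Corollary \ref{coro:blowup_special_cases}, case 2, since $x_n\in\Gamma_{(u,v)}$ and $N(x_0,(u,v),0^+)=1$) and equal to $1$, so by the same corollary $(\bar u,\bar v)$ is a $1$-homogeneous solution: $\bar u_i=r\,g_i(\theta)$, $\bar v_i=r\,h_i(\theta)$ on $\R^N$. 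The core of the argument is then a classification step: a globally defined, $1$-homogeneous blow-up profile with $\bar u_i\cdot\bar v_j\equiv 0$, $-a_i\Delta\bar u_i=-\bar\Mr_i$ concentrated on $\Gamma_{(\bar u,\bar v)}$, and with $\Gamma_{(\bar u,\bar v)}$ having empty interior (Corollary \ref{coro:Almgren_for_baru_barv}), must in fact have $\Gamma_{(\bar u,\bar v)}$ a hyperplane and each group of components linear on one side; this is because a $1$-homogeneous function with $N\equiv 1$ whose spherical part $g_i$ lies in the first eigenspace of the spherical Laplacian on a subdomain of $S^{N-1}$, combined with the reflection-type identities \eqref{multiply_by_baru_i_and_barv_i}–\eqref{eq:pohozaev_for_bar_uv2}, forces $\Gamma_{(\bar u,\bar v)}$ to be exactly a great subsphere, i.e. $\Gamma_{(\bar u,\bar v)}$ is a hyperplane. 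But then, by the quantitative closeness of $(u_n,v_n)$ to $(\bar u,\bar v)$ in $C^{0,\alpha}_{\mathrm{loc}}\cap H^1_{\mathrm{loc}}$ and a Federer-type dimension/improvement argument, one can show that $N(x_n,(u,v),r)$ at scale $t_n$ must actually be close to $1$, not bounded away from it — contradicting the construction.

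The main obstacle, and the step requiring the most care here compared with \cite{TavaresTerracini1}, is precisely the \emph{classification of homogeneous blow-up profiles of degree $1$} in the presence of groups of components that may change sign. In the scalar first-eigenvalue case, each limiting component is harmonic on its support and nonnegativity gives strong structural constraints; here each $\bar u_i$ is harmonic on $\omega_{\bar u}=\{\sum_j\bar u_j^2>0\}$ but need not be sign-definite, and the relevant monotonicity quantity mixes the weights $a_i,b_i$. I would handle this by working with the scalar subharmonic functions $U=\sum_i a_i\bar u_i^2$ and $V=\sum_i b_i\bar v_i^2$ (or $\sqrt{U}$, $\sqrt{V}$) and their associated Alt--Caffarelli--Friedman / Almgren two-phase quotient: the constancy of $N$ plus the Cauchy--Schwarz equality case in \eqref{eq:Cauchy-Schwarz} forces all $\partial_n\bar u_i$ proportional to $\bar u_i$ on spheres, which upgrades $U,V$ to functions whose square roots are $1$-homogeneous with an ACF quotient pinned at the value corresponding to a half-space; the rigidity in the ACF monotonicity formula then yields that $\{U>0\}$ and $\{V>0\}$ are complementary half-spaces. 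Once this classification is in hand, the contradiction with the construction follows by the standard compactness/improvement mechanism (as in \cite[Lemma 4.1]{CaffarelliLinJAMS}, \cite[Lemma 6.1]{TavaresTerracini1}), and $\delta_N$ emerges as the spectral gap between the first homogeneity value $1$ and the next admissible homogeneity of a nontrivial segregated profile on $S^{N-1}$, which depends only on $N$.
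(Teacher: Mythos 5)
Your overall blow-up strategy points in the right direction, but two steps are genuinely flawed or missing. First, the contradiction scaffolding breaks at the outset: from $1<N(x_n,(u,v),0^+)<1+\tfrac1n$ and the upper semicontinuity of $x\mapsto N(x,(u,v),0^+)$ you can only deduce $N(x_0,(u,v),0^+)\geq \limsup_n N(x_n,(u,v),0^+)=1$, which is nothing beyond the bound already known on $\Gamma_{(u,v)}$; upper semicontinuity allows an upward jump at the limit point, so you cannot conclude $N(x_0,(u,v),0^+)=1$, and hence you are not entitled to invoke Corollary \ref{coro:blowup_special_cases} (case 2) to produce a homogeneous blow-up limit along moving centers. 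In fact no contradiction argument is needed: the paper argues directly, for each fixed $x_0\in\Gamma_{(u,v)}$, by blowing up with constant center $x_0$ (Corollary \ref{coro:blowup_special_cases}, case 1), so that the limit is exactly $\alpha$--homogeneous with $\alpha=N(x_0,(u,v),0^+)$, and the dichotomy becomes a classification of the admissible values of $\alpha$.

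Second --- and this is the heart of the matter --- your classification step addresses (at best) the structure of degree-one profiles, whereas the proposition requires a quantitative gap: no homogeneous segregated profile can have degree in $(1,1+\delta_N)$. Your ACF-rigidity argument for $U=\sum_i a_i\bar u_i^2$, $V=\sum_i b_i\bar v_i^2$ applies only when both groups survive the blow-up, and even then it characterizes the equality case rather than producing $\delta_N$. The paper obtains the gap by elementary spherical eigenvalue estimates: whenever the positivity sets on $S^{N-1}$ give rise to at least three pairwise disjoint spherical domains, one of them has measure at most one third of $S^{N-1}$, so $\alpha(\alpha+N-2)=\lambda\geq \lambda_1(E(\pi/3))=N-1+\gamma$ and $\alpha\geq 1+\delta_N$; when instead all nontrivial components of each group share the same connected support, the problem reduces to the scalar class of \cite{TavaresTerracini1}, giving $\alpha=1$. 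Most importantly, you do not treat the degenerate case in which an entire group vanishes in the blow-up limit (say $\bar v\equiv 0$ while $\Gamma_{(\bar u,\bar v)}\cap S^{N-1}\neq\emptyset$): there the two-phase ACF machinery is vacuous, the surviving components may all change sign, and the paper needs a separate induction on the dimension, blowing up again at points of $\Gamma_{(\bar u,\bar v)}\cap S^{N-1}$ and using homogeneity to reduce to $N-1$ variables. Without this case your argument does not exclude homogeneity degrees accumulating at $1$ from above, which is precisely the content of the proposition.
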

\begin{proof}
Consider a blowup sequence at a fixed center $x_0 \in \Gamma_{(u,v)}$:
$$
u_{i,n}(x)=\frac{u_i(x_0+t_n x)}{\rho_n},\qquad v_{i,n}(x)=\frac{v_i(x_0+t_n x)}{\rho_n},
$$
where $\rho_n^2=H(x_0,(u,v),t_n)$ and $t_n\downarrow 0$. By Theorem \ref{thm:convergence_of_blowupsequence} and Corollary \ref{coro:blowup_special_cases} (case 1) we have convergence (up to a subsequence) to a limiting configuration $(\bar u,\bar v)$, where
$$
\bar u_i=r^\alpha g_i(\theta), \quad \bar v_i=r^\alpha h_i(\theta),\qquad \text{ with }\alpha=N(x_0,(u,v),0^+).
$$
Moreover, these functions are harmonic in $\R^N\setminus  \Gamma_{(\bar u,\bar v)}$, whence
$$
-\Delta_{S^{N-1}} g_i=\lambda g_i \quad \text{ and } \quad -\Delta_{S^{N-1}} h_i=\lambda h_i \qquad \text{ in } \partial B_1(0)\setminus \Gamma_{(\bar u,\bar v)},
$$
$\lambda=\alpha(\alpha+N-2)$. In particular, if $A$ is a connected component of either $\partial B_1(0)\cap \{\bar u_i\neq 0\}$ or $\partial B_1(0)\cap \{\bar v_i\neq 0\}$ for some $i$, then we have $\lambda=\lambda_1(A)$, the first eigenvalue of $-\Delta_{S^{N-1}}$ in $H^1_0(A)$. We now split the proof in two cases:

\medbreak

\noindent \emph{Case 1:} We have $\bar u_i\not\equiv 0$ and $\bar v_j\not\equiv 0$ for some $i,j$. 

Without loss of generality, suppose that for $1\leq l,m\leq k$ we have
$$
\bar u_1,\ldots, \bar u_l\not \equiv 0,\qquad \bar v_1,\ldots, \bar v_m\not \equiv 0,
$$
and that all the remaining components are 0.

\medbreak

\emph{Subcase 1.1.} Suppose that for one component, say $\bar u_1$, the set $\{\bar u_1\neq 0\}$ has at least two connected components: $A_1$ and $A_2$. Let $B$ be a connected component of $\{\bar v_1\neq 0\}$. As $\bar u_i\cdot \bar v_j \equiv 0$, the sets $A_1,A_2$ and $B$ are disjoint, and thus there exists $C\in \{A_1\cap \partial B_1(0),A_2\cap \partial B_1(0),B\cap \partial B_1(0)\}$ such that 
$$
\Hh^{N-1}(C)\leq \Hh^{N-1}(\partial B_1(0))/3.
$$
Whence
$$
\lambda=\lambda_1(C)\geq \lambda_1(E(\pi/3))=\lambda_1(\{x\in \partial B_1(0):\ \arccos (\langle x,(0,\ldots, 0,1)\rangle<\pi/3)\}).
$$
Since $E(\pi/2)$ (the half-sphere) has first eigenvalue equal to $N-1$, we obtain the existence of $\gamma>0$ such that $\lambda_1(E(\pi/3))=N-1+\gamma$, and 
$$
\alpha=\sqrt{\left(\frac{N-2}{2}\right)^2+\lambda}-\frac{N-2}{2}\geq 1+\delta_N \qquad \text{ for some }\delta_N>0.
$$

\medbreak
 
\noindent \emph{Subcase 1.2.} Suppose that the sets $\{\bar u_i\neq 0\}$, $\{\bar v_i\neq 0\}$ have at most one connected component. Then each nontrivial component is signed, and to fix ideas we can suppose that the nontrivial components are
$$
\bar u_1,\ldots,\bar u_l \geq 0,\qquad \text{ and } \qquad \bar v_1,\ldots, \bar v_m\geq 0.
$$
If there exists $i,j$ such that $\{\bar u_i> 0\}\cap \{\bar u_j> 0\}=\emptyset$ (or the same for $\bar v$), then we can argue exactly as in Subcase 1.1 and obtain $\alpha\geq 1+\delta_N$. Thus, in the remaining case, $\{\bar u_i>0\}\cap \{\bar u_j>0\}\neq \emptyset$ for every $i,j=1,\ldots, l$. Since each component $\bar u_i$ is harmonic in $\cup_{j}\{\bar u_j>0\}$, by the maximum principle we must have $\{ \bar u_j>0\}=\{ \bar u_1>0\}$. Analogously, $\{ \bar v_i>0\}=\{ \bar v_1> 0\}$ for every $i=1,\ldots, m$. In this case $\bar u_j=l_j \bar u_1$, $\bar v_j=\tilde l_j \bar v_1$ (with $l_1=\tilde l_1=1$) and the functions $\tilde u=\sqrt{\sum_{i=1}^k a_il_i^2}\bar u_1=\sqrt{\sum_{i=1}^l a_i \bar u_i^2}$, $\tilde v=\sqrt{\sum_{i=1}^m b_i \tilde l_i^2}\bar v_1=\sqrt{\sum_{i=1}^m b_i \bar v_i^2}$  belong to the class $\Geh_\textrm{loc}(\R^N)$  introduced in \cite[Definition 3.2]{TavaresTerracini1}. Thus by \cite[Lemma 6.1]{TavaresTerracini1} we have that  $N(0,(\bar u,\bar v),0^+)=N(0,(\tilde u,\tilde v), 0^+)=\alpha$ is equal to 1 and moreover that $\Gamma_{(\bar u,\bar v)}=\Gamma_{(\tilde u,\tilde v)}$ is a hyperplane.

\medbreak

\noindent \emph{Case 2:} Either $\bar u_1\equiv \ldots \equiv \bar u_k\equiv 0$ or $\bar v_1\equiv \ldots \equiv \bar v_k\equiv 0$.

Suppose that $\bar u_1,\ldots, \bar u_l\not \equiv 0$, $l\leq k$, and that all the other functions are zero.

If $\Gamma_{(\bar u,\bar v)}\cap S^{N-1}=\emptyset$, then all nontrivial components are harmonic, and thus $\alpha\in \N$. Moreover, if $\alpha=1$, then $\bar u_i=x\cdot \nu_i$ for some $\nu_i\in S^{N-1}$ and in particular $\Gamma_{\bar u_i}$ is a hyperplane and $\Gamma_{(\bar u,\bar v)}$ is a vector space of dimension at most $N-1$.

Let us now suppose that $\Gamma_{(\bar u,\bar v)}\cap S^{N-1}\neq \emptyset$. We will prove this case via an induction argument which follows very closely \cite[Section 6]{TavaresTerracini1}. Because of that, here we will be rather sketchy, providing however all the precise references for the details. We will show that either $\alpha\geq 1+\delta_N$, or $\alpha=1$ and $\Gamma_{(\bar u,\bar v)}$ is a hyperplane.

Observe that, in dimension 2, each connected component of $S^{N-1}\cap \{\bar u_i\neq 0\}$ is an arc, and that all these arcs must have the same length. Moreover, $\Gamma_{\bar u_i}\cap S^{N-1}$ cannot consist of a single point, as in that case
\[
\lambda=\lambda_1(E(\pi))=\frac{1}{4}<1, \qquad \text{ a contradiction}.
\]
Take $\bar x\in \Gamma_{(\bar u,\bar v)}\cap S^{N-1}$, and let $A$ be a connected component of $\{\bar u_1 \neq 0\}\cap S^{N-1}$ (we take the first component, without loss of generality) so that $\bar x$ is one end of the arc $A$. Let $\bar y$ denote the other end of $A$, so that $\bar u_1(\bar y)=0$. Clearly $\bar x\neq \bar y$, and thus there exists another arc $B$ disjoint from $A$, having $\bar y$ as one end point. Let $\bar z$ be the other end of $B$. If $\bar z\neq \bar x$, we obtain the existence of at least three disjoint arcs and, as in Subcase 1.1, $\alpha\geq 1+\delta_2$ for some $i$. If $\bar z=\bar x$ then we conclude that each arc must be a half circle, and in this case we have $\alpha=1$ and $\Gamma_{(\bar u,\bar v)}$ is a hyperplane.

Suppose now that we can prove that the same situation occurs for $N-1$, Case 2. Take $(\bar u,\bar v)=r^\alpha(G(\theta),H(\theta))$ in dimension $N$, belonging to $\Bcal\Ucal_{x_0}$. First of all, we can suppose once again that $S^{N-1}\setminus \Gamma_{(\bar u,\bar v)}$ has at least one, and at most two connected components. As in \cite[p. 306]{TavaresTerracini1}, for each $y_0\in \Gamma_{(\bar u,\bar v)}\cap S^{N-1}$ we can take a blowup sequence centered at $y_0$:
\[
\tilde u_{i,n}(x)=\frac{\bar u_i(y_0+t_n x)}{\sqrt{H(y_0,(\bar u,\bar v),t_n)}},\qquad \tilde v_{i,n}(x)=\frac{\bar v_i(y_0+t_n x)}{\sqrt{H(y_0,(\bar u,\bar v),t_n)}}
\]
which converges to
\[
\tilde u_i=r^{\gamma_{y_0}}\tilde g_i(\theta),\qquad \tilde v_i=r^{\gamma_{y_0}}\tilde h_i(\theta)
\]
for some $\gamma_{y_0}>0$. By the homogeneity of $\bar u,\bar v$, as in \cite[Lemma 6.3]{TavaresTerracini1}, $(\tilde u,\tilde v)$ only depends on $N-1$ variables, and by the induction hypothesis either $\gamma_{y_0}=1$ (and $\Gamma_{\tilde u,\tilde v}$ is a hyperplane) or $\gamma_{y_0}\geq 1+\delta_{N-1}$. Now either $\gamma_{y_0}\geq 1+\delta_{N-1}$ for some $y_0$ -- which gives $\alpha\geq 1+\delta_{N-1}$ by the upper semicontinuity of $x\mapsto N(x,(\bar u,\bar v),0^+)$ as in \cite[Lemma 6.5]{TavaresTerracini1} -- or $\gamma_{y_0}=1$ for every $y_0$. In the latter case, we can reason exactly as in \cite[p. 307]{TavaresTerracini1} to conclude that $S^{N-1}\cap \Gamma_{(\bar u,\bar v)}$ is a hyperplane, and $\alpha=1$.
\end{proof}

\begin{remark}\label{rem:at_most_hyperplane}
From the previous proof we learn that, given $(\bar u,\bar v)\in \Bcal\Ucal_{x_0}$ with $x_0\in \Reh_{( u, v)}$, we have that $\Gamma_{(\bar u,\bar v)}$ is a vector space having dimension at most $N-1$. Moreover, it is actually a hyperplane except if possibly either $\bar u\equiv 0$ or $\bar v\equiv 0$. Via a ``Clean-Up'' result, in the next subsection we will check that neither $\bar u$ nor $\bar v$ can vanish when we blowup at a point of $\Reh_{(u, v)}$. This key fact will yield a more complete characterization of $(\bar u,\bar v)$, cf. Theorem \ref{coro:hyperplane} ahead.
\end{remark}

\begin{corollary}
The set $S_{(u,v)}$ is closed in $\Omega$ for every $N\geq 2$.
\end{corollary}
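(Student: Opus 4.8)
The plan is to deduce the result immediately from the jump condition in Proposition \ref{prop:jump_condition_on_N} together with the upper semicontinuity of the Almgren limit established in Corollary \ref{Corollaries 2.6_2.7_2.8_TaTe}. First I would recall that $\Gamma_{(u,v)}$ is itself a closed subset of $\Omega$, since $u$ and $v$ are continuous and $\Gamma_{(u,v)}=\{x\in\Omega:\ u_i(x)=v_i(x)=0,\ i=1,\dots,k\}$. Therefore it suffices to show that $\Seh_{(u,v)}$ is relatively closed in $\Gamma_{(u,v)}$.

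Let $(x_n)_n\subseteq \Seh_{(u,v)}$ be a sequence with $x_n\to x_0\in\Omega$. Since $x_n\in\Gamma_{(u,v)}$ for every $n$ and $\Gamma_{(u,v)}$ is closed in $\Omega$, we have $x_0\in\Gamma_{(u,v)}$; in particular $N(x_0,(u,v),0^+)$ is well defined and finite by Theorem \ref{thm:Almgren}. By the definition of $\Seh_{(u,v)}$ we have $N(x_n,(u,v),0^+)>1$ for every $n$, and by Proposition \ref{prop:jump_condition_on_N} this forces $N(x_n,(u,v),0^+)\geq 1+\delta_N$ for every $n$, where $\delta_N>0$ depends only on the dimension. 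Invoking the upper semicontinuity of the map $x\mapsto N(x,(u,v),0^+)$ (Corollary \ref{Corollaries 2.6_2.7_2.8_TaTe}, item 2), we obtain
\[
N(x_0,(u,v),0^+)\ \geq\ \limsup_{n\to\infty} N(x_n,(u,v),0^+)\ \geq\ 1+\delta_N\ >\ 1,
\]
so that $x_0\in\Seh_{(u,v)}$. Hence $\Seh_{(u,v)}$ is closed in $\Gamma_{(u,v)}$, and since $\Gamma_{(u,v)}$ is closed in $\Omega$, $\Seh_{(u,v)}$ is closed in $\Omega$ as well.

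There is no real obstacle here: all the work has already been done in Proposition \ref{prop:jump_condition_on_N} (the discreteness/jump of the admissible values of $N(\cdot,0^+)$ at $1$) and in Corollary \ref{Corollaries 2.6_2.7_2.8_TaTe} (upper semicontinuity of the Almgren limit). The only point to be careful about is that the argument is uniform in $N\geq 2$, which is guaranteed because $\delta_N$ in Proposition \ref{prop:jump_condition_on_N} is a fixed positive constant depending only on the dimension; consequently the conclusion holds for every $N\geq 2$.
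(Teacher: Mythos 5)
Your proof is correct and follows exactly the paper's argument: the corollary is deduced from the jump condition of Proposition \ref{prop:jump_condition_on_N} combined with the upper semicontinuity of $x\mapsto N(x,(u,v),0^+)$ from Corollary \ref{Corollaries 2.6_2.7_2.8_TaTe}. The only difference is that you spell out the routine sequence argument and the closedness of $\Gamma_{(u,v)}$, which the paper leaves implicit.
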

\begin{proof}
This is a direct consequence of Proposition \ref{prop:jump_condition_on_N} combined with the upper semicontinuity of the map $x\mapsto N(x,(u,v),0^+)$.
\end{proof}

\begin{theorem}\label{thm:hausdorff_measures_of_nodalsets}
For any $N\geq 2$ we have that:
\begin{itemize}
\item[1.] $\Hh_\textrm{dim}(\Gamma_{(u,v)})\leq N-1$;
\item[2.] $\Hh_\textrm{dim}(S_{(u,v)})\leq N-2$. If $N=2$ then moreover for any given compact set $\tilde \Omega \Subset \Omega$ we have that $S_{(u,v)}\cap \tilde \Omega$ is a finite set.
\end{itemize}
\end{theorem}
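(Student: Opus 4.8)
The plan is to run the Federer--Almgren dimension reduction principle on the stratification of $\Gamma_{(u,v)}$ by the value of the Almgren frequency $x\mapsto N(x,(u,v),0^+)$, following the scheme of \cite[Section 4]{CaffarelliLinJAMS} and \cite[Section 6]{TavaresTerracini1}. The ingredients are already in place: $\Gamma_{(u,v)}$ is relatively closed in $\Omega$ (it is the zero set of the continuous function $x\mapsto \sum_i(u_i^2+v_i^2)$), the map $x\mapsto N(x,(u,v),0^+)$ is upper semicontinuous on $\Omega$ and bounded below by $1$ on $\Gamma_{(u,v)}$ (Corollary~\ref{Corollaries 2.6_2.7_2.8_TaTe}), blow-up sequences are precompact with nonzero limits (Theorem~\ref{thm:convergence_of_blowupsequence}), and, when the centers are kept fixed, each limit $(\bar u,\bar v)\in\Bcal\Ucal_{x_0}$ is an $\alpha$-homogeneous configuration with $\alpha=N(x_0,(u,v),0^+)$ satisfying its own Almgren formula (Corollary~\ref{coro:blowup_special_cases}, Corollary~\ref{coro:Almgren_for_baru_barv}).

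The algebraic heart of the argument is the description of the spine of a homogeneous blow-up $(\bar u,\bar v)$, namely $S(\bar u,\bar v):=\{y\in\R^N: N(y,(\bar u,\bar v),0^+)=N(0,(\bar u,\bar v),0^+)\}$. By homogeneity together with the monotonicity formula of Corollary~\ref{coro:Almgren_for_baru_barv}, $S(\bar u,\bar v)$ is a linear subspace and $(\bar u,\bar v)$ is invariant under translations along it; since a nonzero constant configuration has Almgren frequency $0$ while $N(0,(\bar u,\bar v),0^+)=\alpha\geq 1$, the spine cannot have dimension $N$, hence it has dimension at most $N-1$. If in addition $N(0,(\bar u,\bar v),0^+)\geq 1+\delta_N$, then a spine of dimension $N-1$ would force, after restricting to the one-dimensional orthogonal complement, a nonconstant $\alpha$-homogeneous one-dimensional competition profile, which necessarily has frequency $1$ (each nontrivial block is a multiple of $t^{+}$ or $t^{-}$); this contradicts $\alpha\geq 1+\delta_N$, so in that case the spine has dimension at most $N-2$.

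Feeding these facts into Federer's reduction lemma, in the abstract form of \cite[Lemma~4.1, Lemma~4.2]{CaffarelliLinJAMS} (or its analogue in \cite{TavaresTerracini1}) --- whose hypotheses are precisely the closedness, blow-up compactness, homogeneity of limits and cone structure of the spine recalled above --- one obtains that $\Hh_{\mathrm{dim}}$ of the stratum $\{x\in\Gamma_{(u,v)}: N(x,(u,v),0^+)=\alpha\}$ is at most the maximal spine dimension of an $\alpha$-homogeneous blow-up. Part 1 follows at once: $\Hh_{\mathrm{dim}}(\Gamma_{(u,v)})\leq N-1$. For part 2, Proposition~\ref{prop:jump_condition_on_N} gives $\Seh_{(u,v)}=\{x\in\Gamma_{(u,v)}: N(x,(u,v),0^+)\geq 1+\delta_N\}$, so the refined spine bound yields $\Hh_{\mathrm{dim}}(\Seh_{(u,v)})\leq N-2$. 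For the last assertion, in dimension $N=2$ this means $\Hh_{\mathrm{dim}}(\Seh_{(u,v)})=0$; if $\Seh_{(u,v)}\cap\tilde\Omega$ were infinite for some $\tilde\Omega\Subset\Omega$, it would accumulate at a point $x_0\in\overline{\tilde\Omega}\subset\Omega$, which belongs to $\Seh_{(u,v)}$ since the latter is closed in $\Omega$. Blowing up at $x_0$ along singular points $x_n\to x_0$ with $t_n=|x_n-x_0|$, the limit $(\bar u,\bar v)$ is $\alpha$-homogeneous with $\alpha\geq 1+\delta_2$, the unit vectors $(x_n-x_0)/t_n$ accumulate at some $\xi$ with $|\xi|=1$, and the scale invariance of the frequency together with its upper semicontinuity give $N(\xi,(\bar u,\bar v),0^+)\geq 1+\delta_2$, i.e.\ $\xi\in\Seh_{(\bar u,\bar v)}$; but $(\bar u,\bar v)$ is itself a blow-up limit, so part 2 applies to it and $\Seh_{(\bar u,\bar v)}$ has dimension $0$, while being a cone by homogeneity --- hence $\Seh_{(\bar u,\bar v)}=\{0\}$, contradicting $\xi\neq 0$.

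The main obstacle I expect is not the reduction machinery itself, which is by now standard, but checking that the present setting --- competition between \emph{groups} of possibly sign-changing components, with the extra positive weights $a_i,b_i=\partial\vphi/\partial\xi_{ii}$ --- does not alter the classification of homogeneous blow-ups and their spines used above; most of this has already been absorbed into the frequency gap of Proposition~\ref{prop:jump_condition_on_N}, so the residual work is the cone/spine bookkeeping of \cite[Section~6]{TavaresTerracini1} rather than anything genuinely new.
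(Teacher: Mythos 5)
Your proposal follows essentially the same route as the paper: the paper's proof simply observes that $\Gamma_{(u,v)}$ and $\Seh_{(u,v)}$ are relatively closed and then invokes the Federer Reduction Principle as carried out in \cite[Theorem 4.5 and Remark 4.7]{TavaresTerracini1} (see also \cite[Theorem 4.7]{CaffarelliLinJAMS}), using exactly the ingredients you list (blow-up compactness, homogeneity of blow-ups with fixed center, the frequency gap of Proposition \ref{prop:jump_condition_on_N}, and the cone/spine classification, including the accumulation argument for finiteness when $N=2$). Your sketch is a correct unpacking of that cited argument rather than a different proof.
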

\begin{proof}
Once one knows that both the sets $\Gamma_{(u,v)}$ and $\Seh_{(u,v)}$ are relatively close in each open subset of $\Omega$, and taking in considerations the results on the previous and current subsection, the proof follows almost word by word the one presented in \cite[Theorem 4.5 \& Remark 4.7]{TavaresTerracini1} for a similar situation (see also \cite[Theorem 4.7(b)]{CaffarelliLinJAMS}). The proof therein is based on the Federer Reduction Principle.
\end{proof}

\subsection{A Clean-Up result. $\Reh_{(u,v)}$ is locally flat}\label{subsec:Cleanup}

Having established that $S_{(u,v)}$ is negligible, we will now focus on the remaining part of nodal set, namely $\Reh_{(u,v)}$. We prove in this and in the following subsection that $\Reh_{(u,v)}$ is a regular hypersurface. Here we will prove a \emph{Clean-Up} result, which will imply that both $\bar u,\bar v \not \equiv 0$, whenever $(\bar u,\bar v)\in \Bcal\Ucal_{x_0}$ with $x_0\in \Reh_{(u,v)}$. This will allow us to prove that, after a blowup at any point, $\Reh_{(u,v)}$ is a hyperplane, and it is locally flat.

Given $x\in \Omega$, $\nu\in S^{N-1}$ and $\ep>0$, we define the strip
$$
S(x,\nu,\ep):=\{y\in \R^N:\ |(y-x)\cdot \nu|\leq \ep\}.
$$

We saw in the previous subsection (cf. Remark \ref{rem:at_most_hyperplane}) that, whenever we take a blowup sequence centered at a point of $\Reh_{(u,v)}$, the limiting nodal set is a vector space, being at most a hyperplane. This implies that $\Reh_{(u,v)}$ is locally contained in a small strip.

\begin{lemma}\label{lemma:Gamma_locally_in_a_strip}
Given $x_0\in \Reh_{(u,v)}$, let $R_0>0$ be such that 
\[
B_{2R_0}(x_0)\cap \Reh_{(u,v)}=B_{2R_0}(x_0)\cap \Gamma_{(u,v)}
\]  
(recall that $\Reh_{(u,v)}$ is an open set of $\Gamma_{(u,v)}$). Then 
\begin{eqnarray*}\label{eq:ep-flat_degree0}
&\forall \ep>0,\ \exists 0<r_0<R_0:\ \forall 0<r<r_0,\ \forall x\in \Gamma_{(u,v)}\cap B_{R_0}(x_0), \ \exists \nu=\nu(x,r)\in S^{N-1} \text{ such that}&\nonumber\\
&\Gamma_{(u,v)}\cap B_r(x)\subset S(x,\nu(x,r),\ep r).&
\end{eqnarray*}
\end{lemma}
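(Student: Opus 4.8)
The plan is to argue by contradiction, exploiting the compactness of blowup sequences (Theorem~\ref{thm:convergence_of_blowupsequence}), the characterization of blowup limits at points of $\Reh_{(u,v)}$ (Corollary~\ref{coro:blowup_special_cases} and Remark~\ref{rem:at_most_hyperplane}), and the upper semicontinuity of $x\mapsto N(x,(u,v),0^+)$ together with Proposition~\ref{prop:jump_condition_on_N}. Suppose the statement fails: then there exist $\ep_0>0$, sequences $r_n\to 0^+$ and $x_n\in \Gamma_{(u,v)}\cap B_{R_0}(x_0)$ (so that, up to a subsequence, $x_n\to x_\ast\in \overline{\Gamma_{(u,v)}\cap B_{R_0}(x_0)}\subseteq \Reh_{(u,v)}$, using that $B_{2R_0}(x_0)\cap\Gamma_{(u,v)}=B_{2R_0}(x_0)\cap\Reh_{(u,v)}$ and that $\Reh_{(u,v)}$ is open in $\Gamma_{(u,v)}$), such that for every $\nu\in S^{N-1}$ the inclusion $\Gamma_{(u,v)}\cap B_{r_n}(x_n)\subset S(x_n,\nu,\ep_0 r_n)$ is violated; i.e.\ there is $z_n\in \Gamma_{(u,v)}\cap B_{r_n}(x_n)$ with $|(z_n-x_n)\cdot \nu|> \ep_0 r_n$ for all $\nu$, which, taking $\nu$ appropriately, forces the rescaled points $\zeta_n:=(z_n-x_n)/r_n\in \overline{B_1}$ to stay at distance at least $\ep_0$ from \emph{every} hyperplane through the origin that one might pick --- equivalently, the rescaled nodal set cannot be contained in any $\ep_0$-strip.

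Next I would blow up: set $\rho_n^2=H(x_n,(u,v),r_n)$ and $u_{i,n}(x)=u_i(x_n+r_n x)/\rho_n$, $v_{i,n}(x)=v_i(x_n+r_n x)/\rho_n$. Since $x_n\to x_\ast\in\Reh_{(u,v)}$ and $x_n\in\Gamma_{(u,v)}$ with $N(x_\ast,(u,v),0^+)=1$, Theorem~\ref{thm:convergence_of_blowupsequence} gives (up to a subsequence) strong $H^1_\textrm{loc}\cap C^{0,\alpha}_\textrm{loc}$ convergence to a blowup limit $(\bar u,\bar v)\in\Bcal\Ucal_{x_\ast}$; by Corollary~\ref{coro:blowup_special_cases} (case 2, since $(x_n)_n\subseteq\Gamma_{(u,v)}$ and $N(x_\ast,(u,v),0^+)=1$) we have $N(0,(\bar u,\bar v),r)\equiv 1$, so $(\bar u,\bar v)$ is $1$-homogeneous, and by Remark~\ref{rem:at_most_hyperplane} $\Gamma_{(\bar u,\bar v)}$ is a vector subspace of dimension at most $N-1$ (a hyperplane unless $\bar u\equiv 0$ or $\bar v\equiv 0$, but in all cases at most a hyperplane, hence contained in \emph{some} hyperplane $\Pi$). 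Now I would use the key fact that the uniform convergence $(u_n,v_n)\to(\bar u,\bar v)$ forces convergence of nodal sets in the Hausdorff sense on compact subsets of $B_1$: because $\Gamma_{(\bar u,\bar v)}\subseteq\Pi$ has empty interior and $(\bar u,\bar v)\not\equiv(0,0)$, for any $\ep_0>0$ there is $n$ large so that $\Gamma_{(u_n,v_n)}\cap \overline{B_1}\subset S(0,\nu_\Pi,\ep_0)$ where $\nu_\Pi$ is the unit normal to $\Pi$; rescaling back gives $\Gamma_{(u,v)}\cap B_{r_n}(x_n)\subset S(x_n,\nu_\Pi,\ep_0 r_n)$, contradicting the choice of $x_n,r_n$. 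This last ``no nodal points escape a thin neighbourhood of $\Gamma_{(\bar u,\bar v)}$'' step is the technical heart: it requires a nondegeneracy statement showing $\sum_i (u_{i,n}^2+v_{i,n}^2)$ is bounded below away from $\Gamma_{(\bar u,\bar v)}$, which one extracts from the strong convergence combined with the unique continuation / subharmonicity of $\sum_i|u_{i,n}|$ and $\sum_i|v_{i,n}|$ (as in \cite[p.~848 ff.]{CaffarelliLinJAMS} or the analogous argument in \cite{TavaresTerracini1}).

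The main obstacle I anticipate is precisely establishing this Hausdorff-convergence of the nodal sets from the $C^{0,\alpha}$-convergence of the functions: uniform convergence alone gives only an upper bound (points where $(u_n,v_n)$ is small cluster near $\Gamma_{(\bar u,\bar v)}$ \emph{after passing to the limit}, which is what we want), so the argument really does go through cleanly, but one must be careful that the limit $\Gamma_{(\bar u,\bar v)}$ is genuinely contained in a hyperplane rather than being, say, a lower-dimensional subspace around which nodal points of the approximating sequence could a priori wobble transversally --- here containment in \emph{a} hyperplane $\Pi$ suffices, and $\ep$ in the statement is taken as $\ep_0$, so no nondegeneracy of the normal direction is needed at this stage, only nondegeneracy of the magnitude. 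A secondary point is to make sure the $\nu(x,r)$ can be chosen so that the inclusion holds \emph{simultaneously} for all $x\in\Gamma_{(u,v)}\cap B_{R_0}(x_0)$ and all $r<r_0$; this is handled automatically by the contradiction scheme above since the bad pair $(x_n,r_n)$ is allowed to range over all such $x$ and $r$.
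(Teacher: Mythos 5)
Your argument is correct and is essentially the paper's own proof: negate the statement, blow up along the bad pairs $(x_n,r_n)$ with $x_n\to \bar x\in\Reh_{(u,v)}$, invoke Remark \ref{rem:at_most_hyperplane} (via Corollary \ref{coro:blowup_special_cases}) to place $\Gamma_{(\bar u,\bar v)}$ inside a hyperplane with normal $\bar\nu$, and transfer the lower bound $\sum_i(\bar u_i^2+\bar v_i^2)\geq 2\gamma$ on $B_1(0)\setminus S(0,\bar\nu,\ep)$ to $(u_n,v_n)$ by uniform convergence, contradicting the assumed escape of rescaled nodal points from every $\ep$--strip. Two minor remarks: the negation only provides, for each $\nu$, some nodal point outside $S(x_n,\nu,\ep r_n)$ (a single $z_n$ working for all $\nu$ cannot exist), which is exactly the corrected form you in fact use; and the step you call the technical heart needs no unique continuation or subharmonicity --- continuity of the limit together with compactness of $\overline{B_1(0)}\setminus\{y:\ |y\cdot\bar\nu|<\ep\}$ already gives the uniform lower bound off the strip, which is all the paper's proof uses.
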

\begin{proof}
Supposing that the conclusion does not hold, then there exists $\ep>0$, $r_n\to 0$, $x_n \in \Reh_{(u,v)}\cap B_{R_0}(x_0)$ such that
\[
\Gamma_{(u,v)}\cap B_{r_n}(x_n)\not \subseteq S(x_n,\nu,\ep r_n),\qquad \forall \nu\in S^{N-1}.
\]
Consider the blowup sequences
$$
u_{i,n}(x)=\frac{u_i(x_n+r_nx)}{\rho_n},\qquad v_{i,n}(x)=\frac{v_i(x_n+r_n x)}{\rho_n}.
$$
Then we have
\begin{equation}\label{eq:strip:contradiction!}
\Gamma_{(u_n,v_n)}\cap B_1(0)\not \subseteq S(0,\nu, \ep) \qquad \forall\nu\in S^{N-1}.
\end{equation}
Suppose that $x_n\to \bar x\in \Reh_{(u,v)}\cap \overline{B_{R_0}(x_0)}$. We have (up to a subsequence)
$$
u_{i,n}\to \bar u_i,\qquad v_{i,n}\to \bar v_i \qquad \text{strongly in }C^{0,\alpha}(\overline B_1(0))\cap H^1(B_1(0))
$$
and, as $N(\bar x,(u,v),0^+)=1$, we know (Remark \ref{rem:at_most_hyperplane}) that $\Gamma_{(\bar u,\bar v)}$ is a vector space of dimension at most $N-1$. Thus there exists $\bar \nu\in S^{N-1}$ and $\gamma>0$ such that
$$
\sum_{i=1}^k (\bar u_i^2+\bar v_i^2)\geq 2\gamma,\qquad \text{ in } B_1(0)\setminus S(0,\bar \nu,\ep)=\{y\in B_1(0):\ |y\cdot \bar \nu|> \ep\}
$$
and hence, for large $n$,
$$
\sum_{i=1}^k (\bar u_{i,n}^2+\bar v_{i,n}^2)\geq \gamma,\qquad \text{ in } B_1(0)\setminus S(0,\bar \nu,\ep),
$$
which contradicts \eqref{eq:strip:contradiction!}.
\end{proof}

\medbreak

Next we state an independent result, based on a similar one for harmonic functions present in \cite[Lemma 12]{Aram}.

\begin{lemma}\label{lemma:saint_venant}
Consider the ball $B_1(0)\subseteq \R^N$. Then for every $a>0$ there exist $C,\bar \ep>0$ such that, for every $0<\ep<\bar \ep$, and every $w\in C(B_1(0))$ nonnegative, satisfying $-\Delta w\leq aw$ in $B_1(0)$ and $w\equiv 0$ in $B_1(0)\setminus S(0,\nu,\ep)$ for some $\nu\in S^{N-1}$, we have
$$
\sup_{B_{1/2}(0)} w \leq e^{-C/\ep} \sup_{B_1(0)} w.
$$
\end{lemma}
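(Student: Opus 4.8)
The plan is to establish the decay estimate via a rescaling plus comparison argument, reducing the problem to a barrier constructed on a thin slab. First I would observe that the differential inequality $-\Delta w \le aw$ with $w \ge 0$ supported in the strip $S(0,\nu,\ep)$ essentially forces $w$ to behave, inside the strip, like the principal eigenfunction of the cross-section. After a rotation we may assume $\nu = e_N$, so $w$ vanishes outside $\{|x_N| \le \ep\}$. The idea of Aram's Lemma 12 is that a function which is harmonic (or subharmonic up to a lower-order term) and confined to a thin slab must decay exponentially as one moves horizontally away from the boundary of the slab's intersection with the ball, because the "room" available in the $x_N$-direction is only $O(\ep)$.

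The key steps, in order, would be: (1) Reduce to a comparison problem. Fix $a>0$, set $\nu = e_N$, and let $M = \sup_{B_1(0)} w$. I would build a supersolution $\Phi$ to $-\Delta \Phi = a\Phi$ (or $-\Delta\Phi \ge a\Phi$) on the slab region $S(0,e_N,\ep) \cap B_1(0)$, of product form $\Phi(x) = M\, \phi(x_N)\, \chi(x')$ where $\phi$ handles the transverse confinement and $\chi$ provides exponential decay in the tangential variables $x' = (x_1,\dots,x_{N-1})$. A natural choice is $\phi(x_N) = \cos(\tfrac{\pi x_N}{2\ep}) $-type profile, giving a spectral gain of order $\ep^{-2}$ from the transverse Laplacian, and $\chi(x') = \cosh(\kappa |x'|)$ or $e^{\kappa(|x'|^2 - 1)}$ type with $\kappa \sim 1/\ep$; the transverse term $-\phi''/\phi \sim \pi^2/(4\ep^2)$ dominates both $a$ and the tangential contribution $|\nabla\chi|^2/\chi^2 + \Delta\chi/\chi \sim \kappa^2$ provided $\kappa$ is chosen as a small enough multiple of $1/\ep$. (2) Check the boundary inequalities: on $\partial S \cap B_1$, i.e. where $|x_N| = \ep$, one has $w = 0 \le \Phi$; on $\partial B_1 \cap S$ one has $w \le M \le \Phi$ if $\chi$ is normalized so $\chi \ge 1$ on $|x'|$ near $1$; and outside the strip $w \equiv 0 \le \Phi$. (3) Apply the maximum principle for the operator $-\Delta - a$ on the bounded region $S \cap B_1$ — here I must be slightly careful that $-\Delta - a$ satisfies the maximum principle on this domain, which it does once the domain is thin enough, i.e. $\lambda_1(S\cap B_1) > a$, guaranteed for $\ep < \bar\ep$ with $\bar\ep$ depending only on $a$ and $N$. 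This yields $w \le \Phi$ throughout. (4) Evaluate on $B_{1/2}(0)$: there $|x'| \le 1/2$ so $\chi(x') \le \chi$ evaluated at radius $1/2$, which by the exponential form is $\le e^{-\kappa/2 \cdot(\text{const})} \le e^{-C/\ep}$; combined with $\phi \le 1$ and the normalization we get $\sup_{B_{1/2}} w \le e^{-C/\ep} M$, which is the claim.

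The main obstacle I expect is the bookkeeping in the supersolution construction: one needs the transverse confinement to produce a gain that simultaneously beats the zeroth-order term $a$ and, crucially, allows room for a tangential exponential factor with rate $\sim 1/\ep$, all while keeping $\Phi$ an actual supersolution on the whole slab cap and respecting the boundary conditions on the curved part $\partial B_1 \cap S$ (where the slab is "pinched" and the geometry is least favorable). A clean way around the curved-boundary difficulty is to first prove the estimate on a cylinder $\{|x'| < 1\} \times \{|x_N| < \ep\}$ rather than the ball cap, using that $B_1(0) \subset$ such a cylinder and $B_{1/2}(0) \subset \{|x'|<1/2\}\times\{|x_N|<\ep\}$ — on the cylinder the separated-variables barrier is exact and the computation is transparent. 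One then transfers the conclusion back to balls at the cost of adjusting the constant $C$. Alternatively, one can invoke directly the statement of \cite[Lemma 12]{Aram} for the harmonic case and absorb the term $aw$ by treating $-\Delta - a$ as a small perturbation on a thin domain (its first eigenvalue on $S\cap B_1$ being $\gtrsim \ep^{-2} \gg a$), which is arguably the shortest route and matches the phrasing "based on a similar one for harmonic functions" in the paper.
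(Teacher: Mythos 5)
Your barrier argument is correct in substance, but it is a genuinely different proof from the one in the paper. The paper does not use a comparison function at all: it relies on a mean-value--type inequality for continuous subsolutions of $-\Delta w\leq aw$ (the adaptation of the mean value theorem for subharmonic functions cited there), applied on balls of radius $\kappa\ep$ chosen so that the strip $S(0,\nu,\ep)$ occupies at most a quarter of each such ball; since $w$ vanishes off the strip this gives $w(x)\leq \tfrac12\sup_{B_{\kappa\ep}(x)}w$, and iterating roughly $(1-|x|)/(\kappa\ep)$ times produces the factor $e^{-C/\ep}$ with $C=\log 2/(4\kappa)$. That route is more elementary than yours: no maximum principle for $-\Delta-a$, no supersolution bookkeeping, no issue with the curved boundary, and it works verbatim for merely continuous distributional subsolutions, which matters here since $w\in C(B_1(0))$ is only continuous on the open ball. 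Your separated-variables supersolution $M\,\phi(x_N)\chi(x')$ with transverse gain $\sim\ep^{-2}$ beating $a+(N-1)\kappa^2$ for $\kappa\sim c_N/\ep$ does close, provided you (i) justify the comparison principle for continuous distributional sub/supersolutions on the thin region (where $\lambda_1\gtrsim \ep^{-2}>a$, as you note), and (ii) avoid $\partial B_1$, where $w$ need not extend continuously --- the simplest fix is to run the comparison on $S\cap B_{3/4}(0)$, bounding $w$ by $\sup_{B_1}w$ on $\partial B_{3/4}\cap S$, which only changes the constant. Your proposed detour through the cylinder $\{|x'|<1\}\times\{|x_N|<\ep\}$ does not work as stated: the cylinder is not contained in $B_1(0)$ (nor conversely), so $w$ is not defined on it and the differential inequality is unavailable there; likewise the suggestion to ``absorb $aw$ as a perturbation'' of the harmonic case in \cite{Aram} is not an argument as written. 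Neither of these side remarks is needed, however: your main construction, suitably localized, yields the lemma, at the price of more machinery than the paper's iteration.
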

\begin{proof}
Take $\kappa>0$ large so that
\[
|B_\kappa(0)\cap S(0,\nu,1)|\leq \frac{1}{4}|B_\kappa(0)|
\]
and take $\bar \ep=\min\{\frac{1}{2\kappa},\sqrt{\frac{N+2}{2\kappa^2 a}}\}$. Given $0<\ep<\bar \ep$, and $x\in B_{1/2}(0)$, we have $B_{\kappa \ep}(x)\subset B_1(0)$, and
\[
\frac{|B_{\kappa\ep}(x)\cap S(0,\nu,\ep))|}{|B_{\kappa\ep}(x)|}\leq \frac{1}{4}.
\]
Then, given $x\in B_{1/2}(0)$, as $-\Delta w\leq aw$ in $B_1(0)$,\footnote{The first inequality is an easy adaptation of the standard proof of the mean value theorem for subharmonic functions, see for instance \cite[Theorem 2.1]{GilbargTrudinger}.}
\[
\begin{split}
w(x) &\leq \frac{1}{|B_{\kappa\ep}(x)|}\int_{B_{\kappa\ep}(x)} w\, dx + \frac{a(\kappa\ep)^2}{2(N+2)}\sup_{B_{\kappa\ep}(x)}w\\
	&\leq \frac{|B_{\kappa\ep}(x)\cap S(0,\nu,\ep)|}{|B_{\kappa\ep}(x)|} \sup_{B_{\kappa\ep}(x)}w + \frac{1}{4}\sup_{B_{\kappa\ep}(x)}w\\
	&\leq \frac{1}{2}\sup_{B_{\kappa\ep}(x)}w,
\end{split}
\]
We can now iterate this procedure $\text{int}\left(\frac{1-|x|}{\kappa \ep}\right)$ times (where $\text{int}(\, \cdot\,)$ denotes the integer part of a number), and hence
\[
w(x)\leq \left(\frac{1}{2}\right)^{\frac{1-|x|}{\kappa\ep}-1} \sup_{B_1(0)} w \leq \left(\frac{1}{2}\right)^\frac{1}{4\kappa\ep}\sup_{B_1(0)} w \qquad \forall x\in B_{1/2}(0),
\]
Thus the conclusion holds for $C:=\frac{\log 2}{4k}>0$.
\end{proof}

\medbreak

Given $x_0\in \Reh_{(u,v)}$ and $0<\ep<\bar \ep$, let $R_0$, $r_0$ be as in Lemma \ref{lemma:Gamma_locally_in_a_strip}.
For $x\in \Gamma_{(u,v)}\cap B_{R_0}(x_0)$ and $0<r<r_0$, we define the set 
\[
\Upsilon_{(x,r)}=\{\nu\in S^{N-1}:\ \Gamma_{(u,v)}\cap B_r(x)\subseteq S(x,\nu,\ep r)\}\neq \emptyset.
\]

\begin{lemma}
If there exists $\bar r\leq r_0$, $\bar x\in \Gamma_{(u,v)}\cap B_{R_0}(x_0)$, and $\bar \nu\in \Upsilon_{(\bar x,\bar r)}$ such that
\[
\sum_{i=1}^k u_i^2=0 \quad \text{ in } B_{\bar r}(\bar x)\setminus S(\bar x,\bar \nu, \ep \bar r),
\]
then
\[
\sum_{i=1}^k u_i^2=0 \quad \text{ in } B_r(x)\setminus S(x,\nu, \ep  r)\ \forall x\in  \Gamma_{(u,v)}\cap B_{R_0}(x_0),\ r\leq r_0,\ \nu\in \Upsilon_{(x,r)}.
\]
\end{lemma}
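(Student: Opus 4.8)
The plan is to restate ``cleaned'' as a purely geometric condition on the two caps cut out by the strip, and then to propagate it along $\Gamma_{(u,v)}$ by a covering argument, using only the flatness of Lemma~\ref{lemma:Gamma_locally_in_a_strip} (the Saint--Venant estimate of Lemma~\ref{lemma:saint_venant} is not needed for this particular step).

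\emph{Reformulation.} We may assume $\ep<1/8$, shrinking $\bar\ep$ if necessary. Call a triple $(x,r,\nu)$ \emph{admissible} if $x\in\Gamma_{(u,v)}\cap B_{R_0}(x_0)$, $0<r<r_0$ and $\nu\in\Upsilon_{(x,r)}$; by Lemma~\ref{lemma:Gamma_locally_in_a_strip} every $x\in\Gamma_{(u,v)}\cap B_{R_0}(x_0)$ is the centre of an admissible triple of every radius $r<r_0$. For admissible $(x,r,\nu)$ the open set $B_r(x)\setminus S(x,\nu,\ep r)$ is the union of two disjoint spherical caps $C^{+}_{(x,r,\nu)}$, $C^{-}_{(x,r,\nu)}$, and $C^{\pm}_{(x,r,\nu)}$ contains the ball of radius $r/8$ centred at $x\pm\frac{3}{4}r\nu$. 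Since $\Gamma_{(u,v)}\cap B_r(x)\subseteq S(x,\nu,\ep r)$, the caps do not meet $\Gamma_{(u,v)}$, and as $\Omega\setminus\Gamma_{(u,v)}=\omega_u\sqcup\omega_v$ is a disjoint union of open sets (Corollary~\ref{coro:One_component_positive}), each cap lies entirely in $\omega_u$ or entirely in $\omega_v$. Recalling that $\omega_v\subseteq\{\sum_i u_i^2=0\}$ (because $u_i\,v_j\equiv 0$), the condition ``$\sum_i u_i^2=0$ in $B_r(x)\setminus S(x,\nu,\ep r)$'' is equivalent to ``both caps of $(x,r,\nu)$ lie in $\omega_v$'', and also to ``$\omega_u\cap B_r(x)\subseteq S(x,\nu,\ep r)$''; if this holds we say $(x,r,\nu)$ is \emph{cleaned}, and we say $x$ is \emph{fully cleaned} if every admissible triple centred at $x$ is cleaned.

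\emph{Propagation.} The elementary observation is: if $(y,s,\mu)$ is cleaned and $(z,t,\lambda)$ is admissible, and each cap $C^{\pm}_{(z,t,\lambda)}$ contains a ball of radius $>\ep s$ which is contained in $B_s(y)$, then $(z,t,\lambda)$ is cleaned. Indeed, if $C^{+}_{(z,t,\lambda)}$ were contained in $\omega_u$, that ball would sit inside $\omega_u\cap B_s(y)\subseteq S(y,\mu,\ep s)$, which is impossible since a slab of half-width $\ep s$ contains no ball of radius larger than $\ep s$; the same argument handles $C^{-}_{(z,t,\lambda)}$. Applying this with $z=y$ and $s<t<\min(s/(2\ep),r_0)$ (the relevant ball of radius $s/8>\ep s$ is the one centred at $y\pm\frac34 s\lambda$) shows that $(y,t,\lambda)$ is cleaned for all such $t$ and all $\lambda$; applying it with $z=y$ and $8\ep s<t<s$ (the ball of radius $t/8>\ep s$ centred at $y\pm\frac34 t\lambda$) shows $(y,t,\lambda)$ is cleaned for all such $t$. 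Iterating these two steps yields (a): if some admissible triple centred at $y$ is cleaned, then $y$ is fully cleaned. Applying the observation once more with $z\neq y$, $|z-y|+t<s$ and $8\ep s<t$, yields (b): if $y$ is fully cleaned, then every $z\in\Gamma_{(u,v)}\cap B_{R_0}(x_0)$ with $|z-y|<(1-8\ep)s$ (for $s<r_0$ close to $r_0$) is cleaned at some radius, hence fully cleaned by (a). Thus the set of fully cleaned points is open in $\Gamma_{(u,v)}\cap B_{R_0}(x_0)$, and, by the contrapositive of (b), so is its complement; in other words ``fully cleaned'' is a locally constant property on $\Gamma_{(u,v)}\cap B_{R_0}(x_0)$.

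\emph{Conclusion.} Since $\Gamma_{(u,v)}\cap B_{R_0}(x_0)$ is connected — which, after possibly shrinking $R_0$ and $r_0$, is extracted from the $\ep$-flatness of Lemma~\ref{lemma:Gamma_locally_in_a_strip} — a locally constant property valid at one point is valid everywhere. By hypothesis $(\bar x,\bar r,\bar\nu)$ is cleaned, so $\bar x$ is fully cleaned by (a); therefore every point of $\Gamma_{(u,v)}\cap B_{R_0}(x_0)$ is fully cleaned, which is exactly the asserted statement. The one genuinely non-formal input here is the connectedness of $\Gamma_{(u,v)}\cap B_{R_0}(x_0)$: it has to be deduced from the flatness of Lemma~\ref{lemma:Gamma_locally_in_a_strip} (without some such input the statement would be false, since two distinct connected components of the nodal set could carry opposite phases in their complementary caps). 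The remaining effort is careful bookkeeping of the geometric constants, so that the chains of propagations in (a)--(b) really do reach every admissible radius $r<r_0$ and every $\nu\in\Upsilon_{(x,r)}$.
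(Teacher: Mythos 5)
Your propagation machinery is, in substance, the paper's own proof written out in full: the paper's argument consists precisely of your two observations (a cleaned triple forces $\sum_i v_i^2>0$ on the two caps, and cleanedness is independent of the choice of $\nu\in\Upsilon_{(x,r)}$) followed by the assertion that the $\{0,1\}$--valued map $\varphi(x,r)$ recording cleanedness is continuous in $(x,r)$, ``hence constant''. Your steps (a)--(b), with the witness balls of radius $>\ep s$ and $\ep<1/8$, are a correct quantitative implementation of that local constancy, and the constants do check out.

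The problem is the final input that you yourself single out: you claim that the connectedness of $\Gamma_{(u,v)}\cap B_{R_0}(x_0)$ ``is extracted from the $\ep$-flatness of Lemma \ref{lemma:Gamma_locally_in_a_strip}'', and this deduction is not available. That lemma controls $\Gamma_{(u,v)}$ only at scales $r<r_0$, where $r_0=r_0(\ep)$ may be far smaller than $R_0$ (which is fixed beforehand by $B_{2R_0}(x_0)\cap\Gamma_{(u,v)}=B_{2R_0}(x_0)\cap\Reh_{(u,v)}$); it is perfectly compatible with $\Gamma_{(u,v)}\cap B_{R_0}(x_0)$ consisting of, say, two nearly flat sheets at mutual distance of order $R_0$, each $\ep$-flat at every scale below $r_0$, and shrinking $r_0$ or $R_0$ ``after the fact'' does not change this. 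Since your propagation steps only transfer cleanedness between triples whose caps genuinely overlap — hence between centres at distance $\lesssim r_0$ — openness and closedness of the fully cleaned set give constancy only on each connected component of $\Gamma_{(u,v)}\cap B_{R_0}(x_0)$, and nothing in your argument bridges components farther apart than $r_0$. To be fair, the paper's own proof is equally laconic at exactly this point (``$\varphi$ is continuous, hence constant''), so you have put your finger on the real subtlety; but the repair is not the one you propose. A self-contained way out is to give up the full ball $B_{R_0}(x_0)$: restricting centres to $\Gamma_{(u,v)}\cap B_{r_0/4}(\bar x)$ and using radii comparable to $r_0$ (clean $(\bar x,0.9\,r_0,\cdot)$ by your step (a), then clean $(y,0.3\,r_0,\lambda)$ for every such $y$ by your overlap observation, then apply (a) again), one cleans every admissible triple there with no connectedness input at all; combined with Lemma \ref{lemma:saint_venant} this still yields the exponential decay at every nodal point of a ball of radius comparable to $r_0$ around $\bar x$, and the covering-plus-unique-continuation argument of Proposition \ref{prop:cleanup} then gives $u\equiv 0$ on such a ball, which already produces the contradiction with $\bar x\in\partial\omega_u$ needed in Theorem \ref{coro:hyperplane}. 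As written, however, your connectedness step is an unproved claim, and it cannot be derived from the lemma you cite.
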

\begin{proof}
Take $x\in \Gamma_{(u,v)}\cap B_{R_0}$, $r< r_0$. Observe that if $\sum_{i=1}^k u_i^2=0$ in $B_r(x)\setminus S(x,\nu,\ep r)$ for some $\nu\in \Upsilon_{(x,r)}$, then $\sum_{i=1}^k v_i^2>0$ in such set. Moreover,  if $\sum_{i=1}^k u_i^2=0$ in $B_r(x)\setminus S(x,\nu,\ep r)$ for some $\nu\in \Upsilon_{(x,r)}$, then the same is true for all vectors in the latter set. 

Thus the map $\varphi:B_{R_0}(x_0)\times(0,r_0)\to \{0,1\}$ defined as $\varphi(x,r)=0$ if  $\sum_{i=1}^k u_i^2=0$ on $B_r(x)\setminus S(x,\nu,\ep r)$ for some $\nu\in \Upsilon_{(x,r)}$, $\varphi(x,r)=1$ otherwise, 
is continuous, hence constant. 
\end{proof}

Combining this result with Lemma \ref{lemma:saint_venant}, we have the following.

\begin{lemma}
Suppose there exists $\bar r\leq r_0$, $\bar x\in \Gamma_{(u,v)}\cap B_{R_0}(x_0)$, and $\bar \nu\in \Upsilon_{(\bar x,\bar r)}$ such that
\[
\sum_{i=1}^k u_i^2=0 \quad \text{ in } B_{\bar r}(\bar x)\setminus S(\bar x,\bar \nu, \ep \bar r),
\]
Then
$$
\sup_{B_{r/2}(x)}\sum_{i=1}^k u_i^2 \leq e^{-C/\ep} \sup_{B_r(x)} \sum_{i=1}^k u_i^2,\qquad \forall x\in \Gamma_{(u,v)}\cap B_{R_0}(x_0), \ r\leq r_0.
$$

\end{lemma}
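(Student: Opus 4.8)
The plan is to combine the propagation lemma just established with the Saint--Venant decay estimate of Lemma~\ref{lemma:saint_venant}, after rescaling to the unit ball. First I would invoke the previous lemma: under the stated hypothesis the vanishing of $\sum_{i=1}^k u_i^2$ outside the strip propagates, so that $\sum_{i=1}^k u_i^2\equiv 0$ in $B_r(x)\setminus S(x,\nu,\ep r)$ for every $x\in \Gamma_{(u,v)}\cap B_{R_0}(x_0)$, every $r\le r_0$, and every $\nu\in \Upsilon_{(x,r)}$, this last set being nonempty by Lemma~\ref{lemma:Gamma_locally_in_a_strip}. Fixing such an $x$ and $r$ and a choice of $\nu\in\Upsilon_{(x,r)}$, it then remains to bound $\sup_{B_{r/2}(x)}w$ by $\sup_{B_r(x)}w$, where $w:=\sum_{i=1}^k u_i^2$.

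Next I would record the differential inequality for $w$. By Proposition~\ref{eq:propositionmeasures} we have $-a_i\Delta u_i=\mu_i u_i-\Mr_i$ with $\Mr_i\ge 0$ concentrated on $\Gamma_{(u,v)}$, where $u_i$ vanishes, so $u_i\Mr_i=0$; using $\Delta(u_i^2)=2u_i\Delta u_i+2|\nabla u_i|^2$ this gives $-\Delta(u_i^2)\le \tfrac{2\mu_i}{a_i}u_i^2$, and summing over $i$ yields $-\Delta w\le a\,w$ in $\mathscr{D}'(\Omega)$ with $a:=2\max_i \mu_i/a_i>0$, while $w\ge 0$ is continuous (indeed Lipschitz, by Corollary~\ref{coro:Lipschiz}). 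This is exactly the computation already carried out for $U_n=\sum_i u_{i,n}^2$ in Step~2 of the proof of Theorem~\ref{thm:convergence_of_blowupsequence}, so it is routine. Then I would rescale: setting $\tilde w(y):=w(x+ry)$ for $y\in B_1(0)$, the function $\tilde w$ is continuous and nonnegative, the dilation carries $S(x,\nu,\ep r)$ onto $S(0,\nu,\ep)$ so that $\tilde w\equiv 0$ in $B_1(0)\setminus S(0,\nu,\ep)$, and $-\Delta\tilde w(y)=r^2(-\Delta w)(x+ry)\le r^2a\,\tilde w(y)\le a_0\,\tilde w(y)$ in $B_1(0)$, where $a_0:=R_0^2 a$ (using $r<r_0<R_0$ and $\tilde w\ge0$). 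Applying Lemma~\ref{lemma:saint_venant} with the value $a_0$ --- which produces constants $C,\bar\ep>0$ depending only on $N$ and $a_0$, hence only on $N$, $R_0$ and the fixed data $\mu_i,a_i$ --- and taking $\ep$ below this $\bar\ep$, the lemma gives $\sup_{B_{1/2}(0)}\tilde w\le e^{-C/\ep}\sup_{B_1(0)}\tilde w$; undoing the rescaling this reads $\sup_{B_{r/2}(x)}\sum_{i=1}^k u_i^2\le e^{-C/\ep}\sup_{B_r(x)}\sum_{i=1}^k u_i^2$, and since $x$ and $r$ were arbitrary, we are done.

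I do not expect any genuine obstacle here. The one point deserving care is the uniformity of the constant $C$ in $x$ and $r$: this is why one fixes $a_0$ in terms of $R_0$ rather than of $r$, so that the rescaled inequality $-\Delta\tilde w\le a_0\tilde w$ holds with the \emph{same} $a_0$ for all admissible $r$, the inequality only improving under the dilation by $r$. One also uses implicitly that $B_r(x)\subseteq B_{2R_0}(x_0)\subseteq\Omega$, which is built into the choice of $R_0$ in Lemma~\ref{lemma:Gamma_locally_in_a_strip}.
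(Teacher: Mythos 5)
Your proposal is correct and follows essentially the same route as the paper: invoke the preceding propagation lemma, note that $w=\sum_i u_i^2$ satisfies $-\Delta w\leq 2\tilde\lambda w$ with $\tilde\lambda=\max_i\mu_i/a_i$, rescale to $B_1(0)$ with the uniform constant $a=2R_0^2\tilde\lambda$, and apply Lemma \ref{lemma:saint_venant}. Your extra care in justifying the differential inequality (via $u_i\Mr_i=0$ since $\Mr_i$ is concentrated where $u_i$ vanishes) and in making the uniformity of the constant explicit only spells out what the paper's proof leaves implicit.
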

\begin{proof}
Take $x\in  \Gamma_{(u,v)}\cap B_{R_0}(x_0)$, $r\leq r_0$ and $\nu\in \Upsilon_{(x,r)}$. From the previous lemma we know that $\tilde u:=\sum_{i=1}^k u_i^2=0$ in $B_r(x)\setminus S(x,\nu, \ep  r)$ . Moreover, $-\Delta \tilde u \leq 2 \tilde \lambda \tilde u$ in $B_r(x)$, with $\tilde \lambda=\max_i \{\mu_i/a_i\}$. Hence, rescaling $\tilde u$: $\tilde u_1(y)=\tilde u(x+r y)$, we have
$$
-\Delta \tilde u_1\leq a \tilde u_1 \text{ in } B_1(0) \text{ for $a=2R_0^2\tilde \lambda$},\quad \text{ and } \qquad \tilde u_1=0 \text{ in } B_1(0)\setminus S(0,\nu,\ep).
$$
Then, by Lemma \ref{lemma:saint_venant}, 
$$
\sup_{B_{1/2}(0)}\tilde u_1\leq e^{-C/\ep} \sup_{B_1(0)}\tilde u_1,
$$
and the lemma follows going back to $\tilde u$.
\end{proof}

\begin{proposition}[Clean-Up]\label{prop:cleanup} Take $\bar x\in \Gamma_{(u,v)}\cap B_{R_0}(x_0)$ and suppose that, for small $r$, 
\[
\sum_{i=1}^k u_i^2 \equiv 0 \qquad  \text{ in } B_r(\bar x)\setminus S(\bar x, \nu(\bar x, r), \ep r)
\]
for some $\nu(\bar x,r)\in \Upsilon_{(\bar x,r)}$. Then 
\[
\sum_{i=1}^k u_i^2 \equiv 0\qquad  \text{ in } B_{R_0}(x_0).
\] 
The same results holds for $\sum_{i=1}^k v_i^2$ in the place of $\sum_{i=1}^k u_i^2$.
\end{proposition}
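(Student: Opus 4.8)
\smallskip
\noindent\emph{Proposed proof.} I would treat only $\sum_{i=1}^k u_i^2$, the statement for $\sum_{i=1}^k v_i^2$ following verbatim after exchanging $u$ and $v$. The starting point is the lemma proved just above, which says that the vanishing of $\sum_i u_i^2$ outside \emph{one} thin strip propagates along all of $\Gamma_{(u,v)}\cap B_{R_0}(x_0)$: since by hypothesis $\sum_i u_i^2\equiv0$ in $B_{\bar r}(\bar x)\setminus S(\bar x,\nu(\bar x,\bar r),\ep\bar r)$ for some small $\bar r\le r_0$ (with $\bar x\in\Gamma_{(u,v)}\cap B_{R_0}(x_0)$ and $\nu(\bar x,\bar r)\in\Upsilon_{(\bar x,\bar r)}$), that lemma yields
\[
\sum_{i=1}^k u_i^2\equiv0 \ \text{ in } B_r(x)\setminus S(x,\nu,\ep r)\qquad\text{for all } x\in\Gamma_{(u,v)}\cap B_{R_0}(x_0),\ 0<r\le r_0,\ \nu\in\Upsilon_{(x,r)},
\]
where $\Upsilon_{(x,r)}\neq\emptyset$ by Lemma~\ref{lemma:Gamma_locally_in_a_strip}. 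So, near every point of $\Gamma_{(u,v)}\cap B_{R_0}(x_0)$ and at every scale, the positivity set $\omega_u$ of $\sum_i u_i^2$ is confined to arbitrarily thin strips.

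The heart of the argument is then a purely geometric observation: \emph{for each $x\in\Gamma_{(u,v)}\cap B_{R_0}(x_0)$ there is $\eta(x)>0$ with $\omega_u\cap B_{\eta(x)}(x)=\emptyset$}. I would argue by contradiction: if $z\in\omega_u\cap B_{\eta(x)}(x)$ then, recalling that $\Omega\setminus\Gamma_{(u,v)}=\omega_u\cup\omega_v$ (Corollary~\ref{coro:One_component_positive}), the ball $B_d(z)$ with $d:=\dist(z,\Gamma_{(u,v)})>0$ is entirely contained in $\omega_u$. Choosing $x'\in\Gamma_{(u,v)}$ realizing this distance and taking $\eta(x)$ small enough that $2d\le r_0$ and $x'\in B_{R_0}(x_0)$, the displayed propagated strip condition applied at $x'$ with radius $2d$ forces $B_d(z)\subseteq S(x',\nu,2\ep d)$ for some $\nu$; but a ball of radius $d$ cannot sit inside a strip of width $4\ep d$ since $\ep<\bar\ep\le\frac1{2\kappa}<\frac12$, a contradiction. (Alternatively the Saint--Venant decay estimate of the previous lemma could be invoked to get polynomial vanishing of $\sum_i u_i^2$ along $\Gamma_{(u,v)}\cap B_{R_0}(x_0)$, but this elementary fact is all that is needed; note it also rules out $\omega_u$ touching $\Gamma_{(u,v)}\cap B_{R_0}(x_0)$ along a cusp.)

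Finally I would conclude by connectedness. If $V:=\omega_u\cap B_{R_0}(x_0)\neq\emptyset$, then $V$ is a non-empty proper open subset of the connected ball $B_{R_0}(x_0)$ — proper because $x_0\in\Gamma_{(u,v)}$, hence $x_0\notin\omega_u$ — so $\partial V\cap B_{R_0}(x_0)\neq\emptyset$, and any $x$ in it satisfies $x\in\overline{\omega_u}\setminus\omega_u\subseteq\partial\omega_u\cap\Omega=\Gamma_{(u,v)}$ (again by Corollary~\ref{coro:One_component_positive}), so $x\in\Gamma_{(u,v)}\cap B_{R_0}(x_0)$; but $x\in\overline V\subseteq\overline{\omega_u}$ contradicts $\omega_u\cap B_{\eta(x)}(x)=\emptyset$ from the previous step. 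Hence $V=\emptyset$, i.e. $\sum_{i=1}^k u_i^2\equiv0$ in $B_{R_0}(x_0)$. The only delicate point, and the one I would be most careful with, is the bookkeeping of the radii in the geometric step, ensuring that the nearest boundary point $x'$ stays inside the region $B_{R_0}(x_0)$ on which the strip information has been propagated.
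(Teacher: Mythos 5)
Your proof is correct, and it takes a genuinely different route from the paper's. The paper argues quantitatively: it iterates the Saint--Venant type estimate of Lemma \ref{lemma:saint_venant} to obtain exponential decay $\sup_{B_{1/2^n}(\bar x)}\sum_i u_i^2\leq C_1 e^{-\frac{C}{\ep}(n-\bar n)}$ at every $\bar x\in \Gamma_{(u,v)}\cap B_{R_0}(x_0)$, covers the nodal set by dyadic balls of radius $2^{-(n+1)}$, shows that the measure $\Delta u_i+\lambda_i u_i$ (concentrated on $\Gamma_{(u,v)}$) then has total mass of order $2^{nN}e^{-\frac{C}{2\ep}n}\to 0$ provided $\ep<\frac{C}{N\log 2}$, concludes that $-\Delta u_i=\lambda_i u_i$ holds in all of $B_{R_0}(x_0)$, and finishes with unique continuation, since each $u_i$ vanishes on an open set. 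You instead use only the unlabeled propagation lemma following Lemma \ref{lemma:saint_venant} plus an elementary geometric incompatibility: if $z\in\omega_u\cap B_{R_0}(x_0)$ and $x'$ is a nearest point of $\Gamma_{(u,v)}$ at distance $d$, then $B_d(z)\subseteq \omega_u$ (segregation plus connectedness of the ball), $B_d(z)\subseteq B_{2d}(x')$, and the propagated strip condition would force $B_d(z)\subseteq S(x',\nu,2\ep d)$, a strip of width $4\ep d<2d$ since $\ep<\bar\ep\leq \frac{1}{2\kappa}<\frac12$ --- impossible; a final connectedness argument in $B_{R_0}(x_0)$, using the elementary inclusion $\partial\omega_u\cap\Omega\subseteq\Gamma_{(u,v)}$, then gives $\omega_u\cap B_{R_0}(x_0)=\emptyset$. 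Your radius bookkeeping, the point you rightly flag, does check out: $\eta(x)\leq \min\{r_0/2,(R_0-|x-x_0|)/2\}$ guarantees $x'\in\Gamma_{(u,v)}\cap B_{R_0}(x_0)$ and $2d<r_0$, so Lemma \ref{lemma:Gamma_locally_in_a_strip} gives $\Upsilon_{(x',2d)}\neq\emptyset$ and the propagation lemma applies. What your route buys is elementarity: no decay iteration, no covering/measure computation, no unique continuation, and any $\ep<1/2$ suffices instead of the constraint $\ep<C/(N\log 2)$ the paper must impose from the start. What the paper's route buys is quantitative intermediate information --- exponential smallness of $u$ near $\Gamma_{(u,v)}$ and the vanishing of the concentrated measures $\Mr_i$ in $B_{R_0}(x_0)$, in the distributional spirit of Proposition \ref{eq:propositionmeasures} --- and it would tolerate replacing exact vanishing outside the strip by quantitative smallness, whereas your ball-in-strip argument uses exact vanishing (openness of $\omega_u$ confined to the strip) in an essential way.
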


\begin{proof}
We want to check that the measures $\Delta u_i+\lambda_i u_i$  over $B_{R_0}(x_0)$ is zero for each $i=1,\ldots, k$, doing this by covering this set with balls of radius $1/2^{n+1}$ (here, $\lambda_i=\mu_i/a_i$). Take $\bar n$ such that $1/2^{\bar n}\leq r_0$. Then
$$
\sup_{B_{1/2^{n+1}}(\bar x)} \sum_{i=1}^k u_i^2\leq e^{-C/\ep} \sup_{B_{1/2^n}(\bar x)} \sum_{i=1}^k u_i^2,\qquad \forall \bar x\in \Gamma_{(u,v)}\cap B_{R_0}(x_0),\ n\geq \bar n.
$$
We can then iterate, obtaining
$$
\sup_{B_{1/2^n}(\bar x)} \sum_{i=1}^k u_i^2\leq C_1 e^{-\frac{C}{\ep}(n-\bar n)}, \qquad \forall n\geq \bar n+1,
$$
for $C_1$ depending on $\|u\|_\infty$.  Observe that if $B_{1/2^{n+1}}(x)\cap \Gamma_{(u,v)}= \emptyset$, then $\Delta u_1+\lambda_1 u_1=0$ on such a ball. Thus (in the sense of measures)
\[
 \int_{B_{R_0}(x_0)}(\Delta u_i+\lambda_i u_i)\, dx \displaystyle=\int_A (\Delta u_i+\lambda_i u_i)\, dx=\int_A (\Delta u_i^++\lambda_i u_i^+)\, dx\ -\int_A(\Delta u_i^-+\lambda_i u_i^-)\, dx
\]
for 
\[
A:=\bigcup_{\bar x\in \Gamma_{(u,v)}\cap B_{R_0}(x_0)} B_{1/2^{n+1}}(\bar x).
\]
On the other hand, using a cut-off function argument, it is straightforward to show that
\[
\begin{split}
0\leq \int_{B_{1/2^{n+1}}(\bar x)}(\Delta u_i^\pm +\lambda_i u_i^\pm)\, dx &\leq C_2 2^{2n}\int_{B_{1/2^n}(\bar x)} u_i^{\pm}\, dx \leq C_2 2^{2n}\int_{B_{1/2^n}(\bar x)} \sqrt{\sum_{i=1}^ku_i^2}  \, dx\\
												&\leq C_2 \frac{2^{2n}}{2^{nN}}e^{-\frac{C}{2\ep}(n-\bar n)}	 \leq C_3 e^{-\frac{C}{2\ep}n}, \qquad \forall n\geq \bar n,\ N\geq 2.
\end{split}
\]
Since we can cover $B_{R_0}(x_0)$ with $((\text{int}(R_0)+1)2^{n+2}+2)^N$ balls of radius $1/2^{n+1}$, we deduce that
\[
\begin{split}
0\leq \int_A (\Delta u_i^++\lambda_i u_i^+)\, dx &\leq  \sum_{\bar x\in A} \int_{B_{1/2^{n+1}(\bar x)}} (\Delta u_i^++\lambda_i u_i^+)\, dx \\
&\leq \tilde C_4 2^{nN} e^{-\frac{C}{2\ep}n}=C_4 e^{n(N\log 2-\frac{C}{\ep})}\to 0\quad \text{ as $n\to \infty$},
\end{split}
\]
 if we can choose right from the start $\ep<\frac{C}{N\log 2}$. Since the same result clearly holds also for $u_i^-$, we conclude that $-\Delta u_i=\lambda_i u_i$ in $B_{R_0}(x_0)$, and by the unique continuation property, since $u_i$ vanishes in a set with nonempty interior, then $u_i\equiv 0$.
\end{proof}

This result has many strong and important consequences.

\begin{theorem}\label{coro:hyperplane} Take $x_0\in \Reh_{(u,v)}$ and $(\bar u,\bar v)\in \Bcal\Ucal_{x_0}$. Then $\bar u,\bar v\not \equiv 0$. Moreover, $\Gamma_{(\bar u,\bar v)}$ is a hyperplane, and there exist $\nu\in S^{N-1}$ and $\alpha_1,\ldots, \alpha_k,\beta_1,\ldots,\beta_k\in \R$ such that
\begin{equation}\label{eq:u=x_1^+_v=x_1^-}
\bar u_i=\alpha_i (x\cdot \nu)^+,\qquad \bar v_i=\beta_i (x\cdot \nu)^- \qquad \text{ in } \R^N.
\end{equation}
Furthermore,
\begin{equation}\label{eq:Reflection_Principle}
\sum_{i=1}^k a_i \alpha_i^2=\sum_{i=1}^k b_i \beta_i^2,
\end{equation}
so that the function 

\[
\sqrt{a_1 \bar u_1^2+\ldots + a_k \bar u_k^2}-\sqrt{b_1 \bar v_1^2+\ldots+b_k \bar v_k^2} \qquad \text{ is harmonic in } \R^N.
\]
\end{theorem}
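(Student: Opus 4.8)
The plan is to combine three ingredients established earlier: the fact that every blowup limit at a point of $\Reh_{(u,v)}$ is homogeneous of degree $\alpha=N(x_0,(u,v),0^+)=1$ (Corollary \ref{coro:blowup_special_cases} case 2, together with the very definition of $\Reh_{(u,v)}$), the structural information from Remark \ref{rem:at_most_hyperplane} (namely that $\Gamma_{(\bar u,\bar v)}$ is a vector space of dimension $\leq N-1$, in fact a hyperplane unless one of $\bar u,\bar v$ vanishes identically), and the Clean-Up Proposition \ref{prop:cleanup}. The only genuinely new thing to prove here is that \emph{neither} $\bar u$ nor $\bar v$ can vanish identically; once that is done, the homogeneity degree $1$ forces the explicit form \eqref{eq:u=x_1^+_v=x_1^-}, and the reflection identity \eqref{eq:Reflection_Principle} is read off from the Pohozaev/no-measure-defect identity.

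First I would show $\bar u\not\equiv 0$ and $\bar v\not\equiv 0$. Argue by contradiction: suppose, say, $\bar u\equiv 0$, where $(\bar u,\bar v)\in\Bcal\Ucal_{x_0}$ comes with associated sequences $x_n\to x_0$, $t_n\to 0^+$ and $u_{i,n}(x)=u_i(x_n+t_nx)/\rho_n$, $v_{i,n}(x)=v_i(x_n+t_nx)/\rho_n$ with $\rho_n^2=H(x_n,(u,v),t_n)$. Since $u_{i,n}\to 0$ in $C^{0,\alpha}_{\mathrm{loc}}$, for $n$ large we have $\sum_i u_{i,n}^2\leq\varepsilon$ on $B_1(0)$; rescaling back, $\sum_i u_i^2$ is arbitrarily small on $B_{t_n}(x_n)$ relative to $\rho_n^2$. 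I would turn this into the hypothesis of Proposition \ref{prop:cleanup}: by Lemma \ref{lemma:Gamma_locally_in_a_strip} the nodal set $\Gamma_{(u,v)}\cap B_r(\bar x)$ lies in a thin strip $S(\bar x,\nu,\varepsilon r)$ for $\bar x$ near $x_0$, and because the limit configuration is one-phase ($\bar u\equiv0$, so $\bar v\not\equiv0$, hence $\sum_i \bar v_i^2>0$ off the hyperplane), a simple uniform-convergence argument shows that in fact $\sum_i u_i^2\equiv 0$ on $B_r(\bar x)\setminus S(\bar x,\nu,\varepsilon r)$ for small $r$ — precisely the hypothesis of the Clean-Up Proposition. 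That proposition then gives $\sum_i u_i^2\equiv 0$ on the whole ball $B_{R_0}(x_0)$, i.e. $u\equiv 0$ near $x_0$, so $B_{R_0}(x_0)\subseteq\omega_v=\widetilde\omega_v$ and $x_0\in\interior\,\Gamma_{(u,v)}$'s complement contradicts $x_0\in\Gamma_{(u,v)}$ — more precisely $x_0$ would be an interior point relative to one phase, contradicting $x_0\in\Gamma_{(u,v)}=\partial\omega_u\cap\Omega$. This is the step I expect to be the main obstacle, since one must carefully extract the ``identically zero on the complement of the strip'' statement from the mere smallness of $\sum_i u_{i,n}^2$ — the point being that the blowup limit being genuinely one-phase (not just small) is what the hypothesis of Proposition \ref{prop:cleanup} really requires, and this needs the dichotomy ``a harmonic, Hölder, globally defined function of homogeneity $1$ that vanishes on a half space must vanish, or the configuration is genuinely two-phase.''

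With $\bar u\not\equiv 0\not\equiv\bar v$ in hand, Remark \ref{rem:at_most_hyperplane} gives that $\Gamma_{(\bar u,\bar v)}$ is a hyperplane, say $\{x\cdot\nu=0\}$ for some $\nu\in S^{N-1}$. Each $\bar u_i$ is harmonic and homogeneous of degree $1$ on the half-space $\{x\cdot\nu>0\}$, vanishing on its boundary, hence (extending by odd reflection and invoking Liouville, or directly) $\bar u_i$ is a linear function there; since it also vanishes on $\{x\cdot\nu<0\}$ we get $\bar u_i=\alpha_i(x\cdot\nu)^+$, and likewise $\bar v_i=\beta_i(x\cdot\nu)^-$, which is \eqref{eq:u=x_1^+_v=x_1^-}. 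Finally, the identity \eqref{multiply_by_baru_i_and_barv_i} (equivalently $\bar\Mr_i=\bar\Nr_i=0$, i.e. no measure defect, guaranteed by Theorem \ref{thm:convergence_of_blowupsequence}) applied to the explicit profile \eqref{eq:u=x_1^+_v=x_1^-} yields the balancing: computing $\int_{B_r(0)}|\nabla\bar u_i|^2\,dx=\alpha_i^2|B_r\cap\{x\cdot\nu>0\}|$ and $\int_{\partial B_r(0)}\bar u_i\,\partial_n\bar u_i\,d\sigma$, the weak-harmonicity across the hyperplane of $\sqrt{\sum_i a_i\bar u_i^2}-\sqrt{\sum_i b_i\bar v_i^2}=(\sum_i a_i\alpha_i^2)^{1/2}(x\cdot\nu)^+-(\sum_i b_i\beta_i^2)^{1/2}(x\cdot\nu)^-$ forces $\sum_i a_i\alpha_i^2=\sum_i b_i\beta_i^2$, which is \eqref{eq:Reflection_Principle}, and makes the displayed difference equal to $(\sum_i a_i\alpha_i^2)^{1/2}\,(x\cdot\nu)$, a linear, hence harmonic, function on $\R^N$.
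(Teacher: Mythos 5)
Your first two steps are essentially the paper's own argument. The non-vanishing of $\bar u,\bar v$ is proved exactly as in the paper: if $\bar u\equiv 0$, then $\Gamma_{(\bar u,\bar v)}=\Gamma_{\bar v}$ is a vector space of dimension at most $N-1$, so $\sum_i \bar v_i^2\geq 2\gamma>0$ off a strip $S(0,\bar\nu,\ep)$; uniform convergence gives $\sum_i v_{i,n}^2\geq\gamma$ there, and then \emph{segregation} ($u_i\cdot v_j\equiv 0$ for the original functions, not the mere smallness of $u_{i,n}$ you emphasize at first) gives $\sum_i u_i^2\equiv 0$ on $B_{t_n}(x_n)\setminus S(x_n,\bar\nu,\ep t_n)$, so Proposition \ref{prop:cleanup} applies and contradicts $x_0\in\partial\omega_u\cap\Omega$ (Corollary \ref{coro:One_component_positive}). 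The ``dichotomy'' you invoke at the end of that step is not needed: positivity of the surviving group off the strip plus segregation is the whole mechanism. The passage from degree-$1$ homogeneity and the hyperplane structure to the profiles \eqref{eq:u=x_1^+_v=x_1^-} is also as in the paper.

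The genuine gap is your proof of \eqref{eq:Reflection_Principle}. First, it is false that $\bar \Mr_i=\bar \Nr_i=0$: Theorem \ref{thm:convergence_of_blowupsequence} only says these measures are concentrated on $\Gamma_{(\bar u,\bar v)}$, and indeed for $\bar u_i=\alpha_i(x\cdot\nu)^+$ one has $-a_i\Delta\bar u_i=-a_i\alpha_i\,\Hh^{N-1}\lfloor\{x\cdot\nu=0\}\neq 0$. Second, identity \eqref{multiply_by_baru_i_and_barv_i} contains no coupling between the $\alpha_i$'s and the $\beta_i$'s: it is satisfied identically by $\alpha_i(x\cdot\nu)^+$ and $\beta_i(x\cdot\nu)^-$ for \emph{arbitrary} coefficients (both sides equal $\tfrac12\alpha_i^2|B_r|$, resp. $\tfrac12\beta_i^2|B_r|$), precisely because each component vanishes on the support of its measure; so ``applying it to the explicit profile'' yields nothing. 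Third, appealing to the ``weak harmonicity across the hyperplane'' of $\sqrt{\sum_i a_i\bar u_i^2}-\sqrt{\sum_i b_i\bar v_i^2}$ is circular: that harmonicity is exactly the conclusion the theorem draws \emph{from} \eqref{eq:Reflection_Principle}, and in the higher-eigenvalue setting there is no differential inequality of type \eqref{eq:CTV} providing it for free. The missing ingredient is the Pohozaev-type identity \eqref{eq:pohozaev_for_bar_uv2}, also delivered by Theorem \ref{thm:convergence_of_blowupsequence}: combining it with the Rellich identity for the harmonic functions $\bar u_i$ on $B_r(x_0)\cap\{x\cdot\nu>0\}$ and $\bar v_i$ on $B_r(x_0)\cap\{x\cdot\nu<0\}$ gives $\sum_i\int_{B_r(x_0)\cap\{x\cdot\nu=0\}}\left(a_i|\nabla\bar u_i|^2-b_i|\nabla\bar v_i|^2\right)\langle\nu,x-x_0\rangle\,d\sigma=0$ for all $x_0$ and $r$, which forces $\sum_i a_i\alpha_i^2=\sum_i b_i\beta_i^2$. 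Without this (or an equivalent use of the domain-variation information), the balancing condition is not established.
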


\begin{proof}

\medbreak

1. Let $(\bar u,\bar v)\in \Bcal\Ucal_{x_0}$ with $x_0\in \Reh_{(u,v)}$ and suppose, in view of a contradiction, that $\bar u\equiv 0$. Take $x_n\in \Gamma_{(u,v)}$ with $x_n\to x_0$ and $t_n\to 0$ such that
\[
u_n(x)=\frac{u(x_n+t_n x)}{\sqrt{H(x_n,(u,v),t_n)}}\to \bar u,\qquad v_n(x)=\frac{v(x_n+t_n x)}{\sqrt{H(x_n,(u,v),t_n)}}\to \bar v.
\]
As $\Gamma_{(\bar u,\bar v)}=\Gamma_{\bar v}$ is a vector space of dimension at most $N-1$ (cf. Remark \ref{rem:at_most_hyperplane}), then there exists $\bar \nu\in S^{N-1}$ such that, for every $\ep>0$, $\bar u\equiv 0$ and $\bar v\neq 0$ in $B_1(0)\setminus S(0,\bar \nu,\ep)$. For $\ep>0$ fixed let $\gamma>0$ be such that
\[
\sum_{i=1}^k \bar v_i^2\geq 2\gamma \quad \text{ in } B_1(0)\setminus S(0,\bar \nu,\ep).
\]
Then, for sufficiently large $n$,
\[
\sum_{i=1}^k v_{i,n}^2\geq \gamma \quad \text{ in } B_1(0)\setminus S(0,\bar \nu,\ep),
\]
and so
\[
\sum_{i=1}^k v_i^2 > 0,\ \sum_{i=1}^k u_{i}^2\equiv 0 \quad \text{ in } B_{t_n}(x_n) \setminus  S(x_n,\bar \nu,t_n \ep ).
\]
As $t_n<r_0$, $x_n\in \Gamma_{(u,v)}\cap B_{R_0}(x_0)$ for large $n$, and $\bar \nu\in \Upsilon_{(x_n,t_n)}$, then by Proposition \ref{prop:cleanup} we obtain $u\equiv 0$ in $B_r(x_0)$ for small $r>0$. However, this contradicts the fact that $x_0\in \Gamma_{(u,v)}=\partial \omega_u\cap \Omega=\omega_v\cap \Omega$ (Corollary \ref{coro:One_component_positive}).

\medbreak

2. Next, taking into account the proof of Proposition \ref{prop:jump_condition_on_N} (see also Remark \ref{rem:at_most_hyperplane}), we have that $\Gamma_{(\bar u,\bar v)}$ is a hyperplane, $S^{N-1}$ splits into two half spheres $S^+$, $S^-$, and 
\[
\bar u_i = r g_i(\theta),\qquad \bar v_i= r h_i(\theta),
\]
where $g_i$, $h_i$ are all first eigenfunctions respectively on $S^+$ and $S^-$. Thus there exists $\nu$ so that \eqref{eq:u=x_1^+_v=x_1^-} holds.

\medbreak

3. As 
\[
\Delta \bar u_i=0\ \text{ in } \overline \Gamma^+:=\{x\cdot \nu>0\},\qquad \Delta \bar v_i=0\ \text{ in } \overline \Gamma^-:=\{x\cdot \nu<0\},
\]
and $\overline \Gamma:= \Gamma_{(\bar u,\bar v)}=\{x\cdot \nu=0\}$, given a ball $B_r(x_0)$ we have, by using the Rellich-type formula
\[
\div ((x-x_0)|\nabla \bar u_i|^2-2\langle x-x_0,\nabla \bar u_i \rangle\nabla \bar u_i=(N-2)|\nabla \bar u_i|^2-2\langle x-x_0,\nabla \bar u_i\rangle\Delta \bar u_i
\]
in $B_r(x_0)\cap \overline \Gamma^+$,
\begin{align*}
r\int_{\partial B_r(x_0)\cap \overline \Gamma^+} (|\nabla \bar u_i|^2-2(\partial_n \bar u_i)^2) d\sigma&=(N-2)\int_{B_r(x_0)\cap \overline \Gamma^+} |\nabla \bar u_i|^2\, dx\\
						&- \int_{B_r(x_0)\cap \overline \Gamma} |\nabla \bar u_i|^2 \langle \nu,x-x_0 \rangle\, d\sigma
\end{align*}
Analogously,
\begin{align*}
r\int_{\partial B_r(x_0)\cap \overline \Gamma^-} (|\nabla \bar v_i|^2-2(\partial_n \bar v_i)^2) d\sigma&=(N-2)\int_{B_r(x_0)\cap \overline \Gamma^-} |\nabla \bar v_i|^2\, dx\\
						&+ \int_{B_r(x_0)\cap \overline \Gamma} |\nabla \bar v_i|^2 \langle \nu,x-x_0 \rangle\, d\sigma.
\end{align*}
Therefore, combining this with the local Pohozaev--type identities \eqref{eq:pohozaev_for_bar_uv2},
\[
\sum_{i=1}^k \int_{B_r(x_0)\cap \overline \Gamma} \left( a_i |\nabla \bar u_i|^2-b_i |\nabla \bar v_i|^2\right)\langle \nu,x-x_0 \rangle \, d\sigma=0\qquad \forall x_0\in \R^N,\ r>0,
\]
and hence $\sum_{i=1}^k a_i \alpha_i^2=\sum_{i=1}^k b_i \beta_i^2$.
\end{proof}

\medbreak

We are now ready to improve the result of Lemma \ref{lemma:Gamma_locally_in_a_strip}, showing that $\Gamma_{(u,v)}\cap B_{R_0}(x_0)$ verifies the so called $(N-1)$--dimensional $(\delta, R_0)$-- Reifenberg flatness condition for every small $\delta$ and some $R_0=R_0(\delta)>0$.

\begin{corollary}\label{lemma:flatness_condition}
Within the previous framework, for any given $\delta>0$ there exists $R_0>0$ such that for every $x\in \Gamma_{(u,v)}\cap B_{R_0}(x_0)$ and $0<r<R_0$ there exists a hyperplane $H=H_{x,r}$ containing $x$ such that \footnote{Here $d_\Hh(A,B):=\max\{\sup_{a\in A}\text{dist}(a,B),\sup_{b\in B}\text{dist}(A,b)\}$ denotes the Hausdorff distance. Notice that $d_\Hh(A,B)\leq \delta$ if and only if $A\subseteq N_\delta(B)$ and $B\subseteq N_\delta (A)$, where $N_\delta(\cdot)$ is the closed $\delta$--neighborhood of a set. Observe moreover that when $H$ is a hyperplane, $N_\delta(H)$ is a strip $S(x,\nu,\delta)$ for $x\in H$ and a vector $\nu\in S^{N-1}$, orthogonal to $H$.}
\begin{equation}\label{eq:flatness_condition}
d_\Hh (\Gamma_{(u,v)}\cap B_r(x),H\cap B_r(x))\leq \delta r.
\end{equation}
\end{corollary}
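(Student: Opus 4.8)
The plan is to argue by contradiction, using the classification of blow-up limits at regular points. The Reifenberg condition \eqref{eq:flatness_condition} has two halves: that $\Gamma_{(u,v)}\cap B_r(x)$ is squeezed into a thin strip around some hyperplane through $x$, and that, conversely, this hyperplane is within $B_r(x)$ well approximated by $\Gamma_{(u,v)}$. The first half is already essentially contained in Lemma \ref{lemma:Gamma_locally_in_a_strip} (the strip $S(x,\nu,\ep r)$ being the $\ep r$--neighbourhood of $x+\nu^{\perp}$, onto which orthogonal projection does not leave $B_r(x)$); the genuine point is the non-degeneracy half, and I would obtain both halves at once from a single blow-up argument.

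Suppose the conclusion fails for some $\delta>0$: for every $m\in\N$ there are $x_m\in\Gamma_{(u,v)}\cap B_{1/m}(x_0)$ and $0<r_m<1/m$ such that $d_\Hh\big(\Gamma_{(u,v)}\cap B_{r_m}(x_m),H\cap B_{r_m}(x_m)\big)>\delta r_m$ for \emph{every} hyperplane $H\ni x_m$. Then $x_m\to x_0\in\Reh_{(u,v)}$ and $r_m\to0$. Blowing up at $x_m$ with scale $r_m$, i.e. setting $u_{i,m}(y)=u_i(x_m+r_my)/\rho_m$, $v_{i,m}(y)=v_i(x_m+r_my)/\rho_m$ with $\rho_m^2=H(x_m,(u,v),r_m)$, Theorem \ref{thm:convergence_of_blowupsequence} gives, along a subsequence, strong $H^1_\textrm{loc}\cap C^{0,\alpha}_\textrm{loc}$ convergence to some $(\bar u,\bar v)\in\Bcal\Ucal_{x_0}$. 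Since $(x_m)_m\subseteq\Gamma_{(u,v)}$ and $N(x_0,(u,v),0^+)=1$, Corollary \ref{coro:blowup_special_cases} (case 2) and Theorem \ref{coro:hyperplane} apply: $\bar u,\bar v\not\equiv0$, $\Gamma_{(\bar u,\bar v)}=H_0:=\{y\cdot\nu=0\}$ is a hyperplane, and $\bar u_i=\alpha_i(y\cdot\nu)^{+}$, $\bar v_i=\beta_i(y\cdot\nu)^{-}$ with $L:=\sum_ia_i\alpha_i^2=\sum_ib_i\beta_i^2>0$; in particular not all $\alpha_i$ and not all $\beta_i$ vanish.

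Next I would prove $d_\Hh\big(\Gamma_{(u_m,v_m)}\cap\overline B_1(0),H_0\cap\overline B_1(0)\big)\to0$. For the inclusion of $\Gamma_{(u_m,v_m)}\cap\overline B_1(0)$ into $N_\eta(H_0\cap\overline B_1(0))$: on the compact set $\{y\in\overline B_1(0):\dist(y,H_0)\ge\eta\}$ one has $\sum_i(a_i\bar u_i^2+b_i\bar v_i^2)=L(y\cdot\nu)^2\ge L\eta^2>0$, so by uniform convergence $\sum_i(a_iu_{i,m}^2+b_iv_{i,m}^2)\ge L\eta^2/2$ there for $m$ large, whence $\Gamma_{(u_m,v_m)}$ misses that set (and projecting onto $H_0$, a contraction fixing $0$, keeps one inside $\overline B_1(0)$). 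For the reverse inclusion, fix $h\in H_0\cap\overline B_1(0)$ and consider $h^{\pm}=h\pm(\eta/4)\nu$: there $\sum_i\bar u_i^2(h^{+})=\big(\sum_i\alpha_i^2\big)(\eta/4)^2>0$ and $\sum_i\bar v_i^2(h^{-})=\big(\sum_i\beta_i^2\big)(\eta/4)^2>0$, so for $m$ large (uniformly in $h$, by compactness) $\sum_iu_{i,m}^2(h^{+})>0$ and $\sum_iv_{i,m}^2(h^{-})>0$. Since $u_{i,m}v_{j,m}\equiv0$ (the segregation of Theorem \ref{thm:convergences} (ii), rescaled), the continuous function $g_m:=\sum_iu_{i,m}^2-\sum_iv_{i,m}^2$ is positive at $h^{+}$ and negative at $h^{-}$; by the intermediate value theorem it vanishes at some $z_m$ on the segment $[h^{-},h^{+}]\subset B_{\eta/2}(h)$, and there $\sum_iu_{i,m}^2(z_m)$ and $\sum_iv_{i,m}^2(z_m)$ must both be $0$ (their product vanishes). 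Hence $z_m\in\Gamma_{(u_m,v_m)}$ and $\dist(h,\Gamma_{(u_m,v_m)})<\eta$.

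Finally, rescaling back, $\Gamma_{(u_m,v_m)}\cap\overline B_1(0)=r_m^{-1}\big((\Gamma_{(u,v)}\cap\overline B_{r_m}(x_m))-x_m\big)$ and, with $\mathcal H_m:=x_m+r_mH_0$ a hyperplane through $x_m$, $H_0\cap\overline B_1(0)=r_m^{-1}\big((\mathcal H_m\cap\overline B_{r_m}(x_m))-x_m\big)$; since the Hausdorff distance is $1$--homogeneous under dilations, $d_\Hh\big(\Gamma_{(u,v)}\cap B_{r_m}(x_m),\mathcal H_m\cap B_{r_m}(x_m)\big)=o(r_m)<\delta r_m$ for $m$ large, contradicting the choice of $x_m,r_m$. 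I expect the main obstacle to be exactly the reverse Hausdorff inclusion of the previous paragraph: it is precisely there that one needs \emph{both} $\bar u\not\equiv0$ \emph{and} $\bar v\not\equiv0$, i.e. the Clean-Up Proposition \ref{prop:cleanup}/Theorem \ref{coro:hyperplane}, since otherwise $\Gamma_{(u_m,v_m)}$ could collapse away from $H_0$ and the nodal set would not approximate the hyperplane; the remaining bookkeeping (open versus closed balls, and passing from the full hyperplane to its trace on $B_r(x)$, which costs an arbitrarily small multiple of $r$) is routine and only affects how $R_0$ is chosen in terms of $\delta$.
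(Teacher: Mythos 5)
Your argument is correct and is essentially the paper's own: the paper proves this corollary by invoking \cite[Lemma 5.3]{TavaresTerracini1}, which is exactly the contradiction/blow-up argument you give, resting on the classification of blow-up limits at points of $\Reh_{(u,v)}$ (Theorem \ref{coro:hyperplane}, with $\bar u,\bar v\not\equiv 0$ guaranteeing both Hausdorff inclusions). The only loose end, which you rightly flag as routine, is that the zero $z_m$ produced by the intermediate value argument must be placed inside $B_1(0)$ so as to compare with $\Gamma_{(u,v)}\cap B_r(x)$ rather than the full nodal set; this is fixed by first moving $h$ slightly towards the origin.
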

\begin{proof}
In the ground of Theorem \ref{coro:hyperplane}, the proof follows exactly as the one of \cite[Lemma 5.3]{TavaresTerracini1}.
\end{proof}

\medbreak

The Reifenberg flat condition yields a local separation result.

\begin{corollary}\label{coro:positive}
Given $x_0\in \Reh_{(u,v)}$, there exists $R_0$ such that $\Gamma_{(u,v)}\cap B_{R_0}(x_0)=\Reh_{(u,v)}\cap B_{R_0}(x_0)$, and
\[
B_{R_0}(x_0)\setminus \Gamma_{(u,v)} \qquad \text{has exactly two connected components $\Omega_1,\Omega_2$.} 
\]
Moreover,
\[
\sum_{i=1}^k u_i^2>0,\ \sum_{i=1}^k v_i^2\equiv 0 \text{ on } \Omega_1\qquad \text{ and }\qquad \sum_{i=1}^k u_i^2=0,\ \sum_{i=1}^k v_i^2>0 \text{ on } \Omega_2.
\]
Finally, given $y\in \Gamma_{(u,v)}\cap B_{R_0}(x_0)$ and $0<r<R_0$ there exists a hyperplane $H=H_{y,r}$ containing $y$ and a unitary vector $\nu=\nu_{y,r}$ orthogonal to $H$ such that
\begin{equation}\label{eq:One_of_reifenberg_properties}
\{x+t\nu \in B_r(y):\ x\in H,\ t\geq \delta r\}\subset \Omega_1,\qquad \{x-t\nu \in B_r(y):\ x\in H,\ t\geq \delta r\}\subset \Omega_2.
\end{equation}
\end{corollary}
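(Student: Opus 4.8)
The plan is to derive all the statements from the Reifenberg flatness already obtained in Corollary \ref{lemma:flatness_condition} together with the description of $\omega_u$ and $\omega_v$ in Corollary \ref{coro:One_component_positive}. First I would fix $R_0>0$ small enough that $\Gamma_{(u,v)}\cap B_{2R_0}(x_0)=\Reh_{(u,v)}\cap B_{2R_0}(x_0)$, which is possible since $\Reh_{(u,v)}$ is relatively open in $\Gamma_{(u,v)}$ and $x_0\in\Reh_{(u,v)}$ — this is the same choice made in Lemma \ref{lemma:Gamma_locally_in_a_strip}, and it immediately gives the first assertion. Shrinking $R_0$ further I would invoke Corollary \ref{lemma:flatness_condition} with a small dimensional constant $\delta$, so that $\Gamma_{(u,v)}\cap B_{R_0}(x_0)$ satisfies the $(N-1)$-dimensional $(\delta,R_0)$--Reifenberg flatness condition \eqref{eq:flatness_condition}.

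Next I would quote the structure theory of Reifenberg flat sets as carried out in this exact setting in \cite[Section 5]{TavaresTerracini1} (following \cite{CaffarelliLinJAMS}): for $\delta$ small, $B_{R_0}(x_0)\setminus\Gamma_{(u,v)}$ has exactly two connected components $\Omega_1,\Omega_2$, which are in fact NTA domains. For the last claim of the corollary, given $y\in\Gamma_{(u,v)}\cap B_{R_0}(x_0)$ and $0<r<R_0$ I would take the hyperplane $H_{y,r}$ and unit normal $\nu_{y,r}$ provided by \eqref{eq:flatness_condition}; since $\Gamma_{(u,v)}\cap B_r(y)\subseteq S(y,\nu_{y,r},\delta r)$, each of the two ``caps'' $\{x\pm t\nu_{y,r}\in B_r(y):\ x\in H_{y,r},\ t\geq\delta r\}$ is connected and disjoint from $\Gamma_{(u,v)}$, hence lies in a single component of the complement of $\Gamma_{(u,v)}$; the Reifenberg separation property (again \cite[Section 5]{TavaresTerracini1}) forces the two caps into different components, and orienting $\nu_{y,r}$ so that the $+$ cap falls in $\Omega_1$ yields \eqref{eq:One_of_reifenberg_properties}. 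A minor technical point here is that $B_r(y)$ need not be contained in $B_{R_0}(x_0)$, which I would handle by running the argument on $B_{2R_0}(x_0)$ and restricting, or by a short covering argument.

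It then remains to identify $\Omega_1$ and $\Omega_2$ in terms of $u,v$. By Corollary \ref{coro:One_component_positive} one has $\omega_u=\Omega\setminus\overline{\omega_v}$ and $\omega_v=\Omega\setminus\overline{\omega_u}$, so $\Omega=\omega_u\sqcup\Gamma_{(u,v)}\sqcup\omega_v$ is a disjoint union of two open sets and their common boundary $\Gamma_{(u,v)}=\partial\omega_u\cap\Omega=\partial\omega_v\cap\Omega$. Hence $B_{R_0}(x_0)\setminus\Gamma_{(u,v)}=(\omega_u\cap B_{R_0}(x_0))\sqcup(\omega_v\cap B_{R_0}(x_0))$, and each connected component $\Omega_j$ lies entirely in one of these two pieces; both pieces are nonempty because $x_0\in\partial\omega_u\cap\partial\omega_v$, so every ball around $x_0$ meets both $\omega_u$ and $\omega_v$. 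Since there are exactly two components, one of them — say $\Omega_1$ — must coincide with $\omega_u\cap B_{R_0}(x_0)$ and the other with $\Omega_2=\omega_v\cap B_{R_0}(x_0)$. On $\Omega_1\subseteq\omega_u$ one then has $\sum_{i=1}^k u_i^2>0$ by definition of $\omega_u$ and $\sum_{i=1}^k v_i^2\equiv0$, since $\Omega_1$ is disjoint from $\omega_v$ and each $v_i$ vanishes outside $\omega_v$; the statement on $\Omega_2$ is symmetric.

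The only genuinely substantial ingredient is the ``exactly two components'' assertion together with the fact that the two caps lie in distinct components: this is precisely where the Reifenberg flat / NTA machinery enters. Since the required flatness \eqref{eq:flatness_condition} is already available and the situation is literally the one treated in \cite[Section 5]{TavaresTerracini1}, I expect this step to be quotable after only routine adjustments, while the remaining points (the identity $\Gamma_{(u,v)}\cap B_{R_0}(x_0)=\Reh_{(u,v)}\cap B_{R_0}(x_0)$ and the labelling of $\Omega_1,\Omega_2$) are elementary.
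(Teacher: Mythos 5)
Your proposal is correct and follows essentially the same route as the paper, which simply cites \cite[Proposition 5.4]{TavaresTerracini1} (see also \cite[Proposition 3.38]{TavaresThesis} and \cite[Theorem 4.1]{HongWang}) for exactly this Reifenberg-flat separation argument based on Corollary \ref{lemma:flatness_condition}. Your additional bookkeeping — identifying $\Omega_1=\omega_u\cap B_{R_0}(x_0)$ and $\Omega_2=\omega_v\cap B_{R_0}(x_0)$ via Corollary \ref{coro:One_component_positive} — is the elementary part left implicit in the paper and is carried out correctly.
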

\begin{proof}
See \cite[Proposition 5.4]{TavaresTerracini1}, or \cite[Proposition 3.38]{TavaresThesis} for a detailed proof. For a result in the same direction, check \cite[Theorem 4.1]{HongWang}
\end{proof}

Combining \eqref{eq:One_of_reifenberg_properties} with Corollary \ref{lemma:flatness_condition}, we see that both $\Omega_1$ and $\Omega_2$ as defined above are $(\delta,R_0)$-- Reifenberg flat domains. We suppose that $\delta>0$ is small enough so that $\Omega_1$ and $\Omega_2$ are also NTA domains (check Definitions \ref{def:NTA} and \ref{def:Reifenberg} ahead, and \cite[Theorem 3.1]{KenigToro} for the proof of this result). 

\subsection{The set $\Reh_{(u,v)}$ is locally a regular hypersurface }\label{subsec:regularity}

Define
\[
U(x):=(\sqrt{a_1}u_1(x),\ldots, \sqrt{a_k}u_k(x)),\qquad V(x):=(\sqrt{b_1}v_1(x),\ldots, \sqrt{b_k}v_k(x)),
\]
and observe that locally $\Reh_{(u,v)}$ is characterized as being the set of zeros of $|U(x)|-|V(x)|$. We will prove the regularity of such set by proving by definition that $|U|-|V|$ is differentiable at each point of $\Reh_{(u,v)}$, having nonzero gradient (Theorem \ref{thm:Regularity!}).

We fix $x_0\in \Reh_{(u,v)}$ and work in $B_{R_0}(x_0)$ as in the previous subsection. We also recall that
\[
\Omega_1=\{x\in B_{R_0}(x_0):\ \sum_{i=1}^k u_i^2>0\}, \quad \text{ and } \quad \Omega_2=\{x\in B_{R_0}(x_0):\ \sum_{i=1}^k v_i^2>0\}.
\]
To simplify the notation, in this subsection we denote $\Gamma:=\Gamma_{(u,v)}\cap B_{R_0}(x_0)$.
Given $x\in \Omega_1$, define
\[
\Ucal(x)=\frac{U(x)}{|U(x)|}=\frac{(\sqrt{a_1}u_1(x),\ldots, \sqrt{a_k} u_k(x))}{\sqrt{a_1u_1^2(x)+\ldots+a_k u_k^2(x)}}
\]
and, for $x\in \Omega_2$,
\[
\Vcal(x)=\frac{V(x)}{|V(x)|}=\frac{(\sqrt{b_1}v_1(x),\ldots, \sqrt{b_k}v_k(x))}{\sqrt{b_1 v_1^2(x)+\ldots+b_kv_k^2(x)}}.
\]

Our main interest will be to evaluate $\Ucal,\Vcal$ at $\Gamma$. By  Corollary \ref{coro:One_component_positive} (or Proposition \ref{lemma:survives_blowup_then_positive} combined with Theorem \ref{coro:hyperplane}) we have that at least one component of $u$ and of $v$ is signed. Suppose from now on, without loss of generality, that $u_1>0$ in $\Omega_1$ and $v_1>0$ in $\Omega_2$. This crucial fact allows us to apply the regularity results shown in Appendix \ref{appendix:NTA}. More precisely, Lemma \ref{prop:u_i/u_1_alpha_Holder_generalcase} implies that $\Ucal$ and $\Vcal$ can be uniquely extended up to $\Gamma$, as soon as one observes that for instance $\Ucal$ can be rewritten as
\[
\Ucal=\frac{(\sqrt{a_1}, \sqrt{a_2}\frac{u_2}{u_1}\ldots, \sqrt{a_k} \frac{u_k}{u_1})}{\sqrt{a_1+a_2\left(\frac{u_2}{u_1}\right)^2\ldots+a_k \left(\frac{u_k}{u_1}\right)^2}}
\]
Moreover, from this proposition we also deduce that
\begin{equation}\label{eq:Ucal_alphaHolder}
|\Ucal (x)-\Ucal(x_0)|\leq C r^\alpha,\quad |\Vcal(x)-\Vcal(x_0)|\leq C r^\alpha,\qquad \forall x\in B_r(x_0),\ r<R_0.
\end{equation}

Observe that, formally, 
\[
\Ucal(y)=\lim_{x\to y} \frac{U(x)/\dist(x,\partial \Omega)}{|U(x)/\dist(x,\partial \Omega)|}=\frac{-\partial_n U(y)}{|\partial_n U(y)|},\quad  \Vcal(y)=\frac{-\partial_n V(y)}{|\partial_n V(y)|}\qquad \text{for $y\in \Gamma$}.
\] 
Rigorously, however, we can only for the moment observe that $U(y)$, for $y\in \Gamma$, is related to the blowup limits centered at $y$. 

\begin{lemma}\label{lemma:interpretation_of_Ucal_and_Vcal_at_boundary}
Take $y\in \Gamma$ and let  
\[
(\bar u,\bar v)=(\alpha_1(x\cdot \nu)^+,\ldots, \alpha_k(x\cdot \nu)^+, \beta_1(x\cdot \nu)^-,\ldots, \beta_k(x\cdot \nu)^-)\in \Bcal\Ucal_{y}.
\]  
Then
\[
\Ucal(y)=\frac{(\sqrt{a_1}\bar u_1,\ldots,\sqrt{a_k}\bar u_k)}{\sqrt{a_1 \bar u_1^2+\ldots+a_k \bar u_k^2}}=\frac{(\sqrt{a_1}\alpha_1,\ldots, \sqrt{a_k}\alpha_k)}{\sqrt{a_1\alpha_1^2+\ldots+a_k \alpha_k^2}}
\]
and
\[
\Vcal(y)=\frac{(\sqrt{b_1}\bar v_1,\ldots,\sqrt{b_k}\bar v_k)}{\sqrt{b_1 \bar v_1^2+\ldots+b_k \bar v_k^2}}=\frac{(\sqrt{b_1}\beta_1,\ldots, \sqrt{b_k}\beta_k)}{\sqrt{b_1\beta_1^2+\ldots+b_k \beta_k^2}}.
\]
\end{lemma}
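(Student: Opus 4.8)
The statement asserts that the normalized vectors $\Ucal$ and $\Vcal$, which a priori are defined only on the open sets $\Omega_1$ and $\Omega_2$ but extend continuously to $\Gamma$ by Lemma \ref{prop:u_i/u_1_alpha_Holder_generalcase}, have boundary values at $y$ that can be read off from \emph{any} blowup limit $(\bar u,\bar v)\in\Bcal\Ucal_y$. The natural approach is to connect the two extension procedures — the interior H\"older extension up to $\Gamma$ (boundary Harnack) and the parabolic rescaling defining $\Bcal\Ucal_y$ — by picking a convenient sequence along which both are controlled. First I would fix an associated sequence $x_n\to y$, $t_n\to 0^+$ realizing $(\bar u,\bar v)$, so that $u_{i,n}(x)=u_i(x_n+t_nx)/\rho_n\to\bar u_i$ and $v_{i,n}(x)=v_i(x_n+t_nx)/\rho_n\to\bar v_i$ strongly in $C^{0,\alpha}_{\mathrm loc}(\R^N)$, with $\rho_n^2=H(x_n,(u,v),t_n)$.

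The key observation is that $\Ucal$ is \emph{scale-invariant}: for $x\in\Omega_1$ one has $\Ucal(x)=U(x)/|U(x)|$, and this quotient is unchanged if $U$ is multiplied by the positive scalar $1/\rho_n$. Hence, writing $U_n(x)=(\sqrt{a_1}u_{1,n}(x),\ldots,\sqrt{a_k}u_{k,n}(x))$, we have $\Ucal(x_n+t_nx)=U_n(x)/|U_n(x)|$ for every $x$ with $x_n+t_nx\in\Omega_1$. Now pick any fixed point $e$ in the open half-space $\{x\cdot\nu>0\}$; since $(\bar u,\bar v)$ is a blowup limit at $y\in\Reh_{(u,v)}$, by Theorem \ref{coro:hyperplane} we have $\bar u_i=\alpha_i(x\cdot\nu)^+$, so $|\bar U(e)|=\sqrt{\sum a_i\alpha_i^2}\,(e\cdot\nu)>0$, which is nonzero. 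By the strong $C^{0,\alpha}_{\mathrm loc}$ convergence $U_n(e)\to\bar U(e)\neq 0$, so for $n$ large $x_n+t_ne\in\Omega_1$ and
\[
\Ucal(x_n+t_ne)=\frac{U_n(e)}{|U_n(e)|}\longrightarrow \frac{\bar U(e)}{|\bar U(e)|}=\frac{(\sqrt{a_1}\alpha_1,\ldots,\sqrt{a_k}\alpha_k)}{\sqrt{a_1\alpha_1^2+\ldots+a_k\alpha_k^2}},
\]
the last equality because $(e\cdot\nu)>0$ cancels in numerator and denominator. On the other hand $x_n+t_ne\to y$ as $n\to\infty$, and by the continuous extension of $\Ucal$ to $\Gamma$ (and the H\"older estimate \eqref{eq:Ucal_alphaHolder}), $\Ucal(x_n+t_ne)\to\Ucal(y)$. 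Comparing the two limits gives the claimed formula for $\Ucal(y)$; the identical argument applied to $V_n$ and a fixed point in $\{x\cdot\nu<0\}$ yields the formula for $\Vcal(y)$. Finally, the expression $\frac{(\sqrt{a_1}\bar u_1,\ldots,\sqrt{a_k}\bar u_k)}{\sqrt{a_1\bar u_1^2+\ldots+a_k\bar u_k^2}}$ equals $\frac{(\sqrt{a_1}\alpha_1,\ldots,\sqrt{a_k}\alpha_k)}{\sqrt{a_1\alpha_1^2+\ldots+a_k\alpha_k^2}}$ at every point of the open half-space $\{x\cdot\nu>0\}$ by the same cancellation, which records the first displayed equality in the statement.

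The only genuinely delicate point is ensuring that the evaluation points $x_n+t_ne$ actually lie in $\Omega_1$ (so that $\Ucal$ is defined there in the interior sense) for all large $n$ — this is exactly where one needs $|\bar U(e)|\neq 0$, i.e. $\bar u\not\equiv 0$, which is guaranteed by the Clean-Up result (Proposition \ref{prop:cleanup}) together with Theorem \ref{coro:hyperplane}; and one should also check that $\rho_n>0$, which holds since $H(x_n,(u,v),t_n)>0$ by the Almgren formula (Theorem \ref{thm:Almgren}) because $\Gamma_{(u,v)}$ has empty interior. Everything else is a routine combination of scale invariance of the normalization map, strong local uniform convergence of the blowup sequence, and continuity of the boundary extension of $\Ucal$ and $\Vcal$.
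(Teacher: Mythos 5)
Your proof is correct and follows essentially the same route as the paper's: exploit the scale invariance of the normalization map $U\mapsto U/|U|$, evaluate along points $x_n+t_n e$ with $e$ chosen so that $\bar U(e)\neq 0$ (the paper calls this point $\bar z$), and compare the limit obtained from the strong $C^{0,\alpha}_{\mathrm{loc}}$ blowup convergence with the limit $\Ucal(y)$ given by the continuous boundary extension of $\Ucal$. Your additional checks (that $x_n+t_n e\in\Omega_1$ for large $n$ and that $\bar u,\bar v\not\equiv 0$, via Theorem \ref{coro:hyperplane}) are points the paper simply asserts, so the proposal is, if anything, slightly more explicit.
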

\begin{proof}
Let $(\bar u,\bar v)\in \Bcal\Ucal_{y}$ be as in the statement, limit of a blowup sequence centered at $x_n\to y$ and with parameter $t_n\downarrow 0$. Take $\bar z\in \R^N$ such that $|(\bar u_1(\bar z),\ldots, \bar u_k(\bar z))|\neq 0$ and (for large $n$) $x_n+t_n \bar z \in \Omega_1$. Then
\begin{equation}\label{eq:interpretation_of_Ucal(y)}
\Ucal (y)=\lim_{n\to \infty}\frac{U(x_n+t_n \bar z)}{|U(x_n+t_n \bar z)|} = \lim_{n\to \infty}\frac{U(x_n+t_n \bar z)/\rho_n}{|U(x_n+t_n \bar z)/\rho_n|}=\frac{(\sqrt{a_1} \bar u_1,\ldots, \sqrt{a_k}\bar u_k)}{\sqrt{a_1 \bar u_1^2 + \ldots a_k \bar u_k^2}},
\end{equation}
and a similar property holds for $\Vcal$.
\end{proof}

\medbreak 

We now define two important auxiliary functions.
\begin{definition}
Given $x_0\in \Gamma$ we define
\[
u_{x_0}(x)=\Ucal(x_0)\cdot U(x),\qquad v_{x_0}(x)=\Vcal(x_0)\cdot V(x).
\]
\end{definition}

The ideia for considering these functions is the following: given a blowup limit $(\bar u,\bar v)$, the Almgren's monotonicity formula tells us that a certain linear combination of $\bar u_i$ extends in an harmonic way with a linear combination of $\bar v_i$ (Theorem \ref{coro:hyperplane}). However, those blowup limits may vary in general from point. Fixing $\Ucal,\Vcal$ at $x_0$ together with estimate \eqref{eq:Ucal_alphaHolder} will allow us to control in some sense this variation.

Let us see that $u_{x_0}, v_{x_0}$ are positive close to $x_0$, and check which problem they solve.

\begin{lemma}
There exists $r_0$ such that $u_{x_0}>0$ in $B_{r_0}(x_0)\cap \Omega_1$, $v_{x_0}>0$ in $B_{r_0}(x_0)\cap \Omega_2$. In particular, there exist \emph{positive} Radon measures $\Mcal_{x_0}$, $\Ncal_{x_0}$, both concentrated on $\Gamma$, such that
\[
-\Delta u_{x_0}=\sum_{i=1}^k \frac{\mu_i}{a_i} \Ucal_i(x_0) u_i- \Mcal_{x_0}, \quad -\Delta v_{x_0}=\sum_{i=1}^k \frac{\nu_i}{b_i} \Vcal_i(x_0) v_i- \Ncal_{x_0}\quad \text{ in } B_{r_0}(x_0).
\]
\end{lemma}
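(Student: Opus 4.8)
The plan is to establish, in order, the positivity of $u_{x_0}$ near $x_0$, then the classical equation satisfied off $\Gamma$, and finally the positivity and the location of the residual measure; I describe the argument for $u_{x_0}$, the one for $v_{x_0}$ being word for word the same with $\Vcal,V$ in place of $\Ucal,U$.

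\emph{Positivity.} For $x\in\Omega_1$ one has $U(x)\neq 0$ (since $u_1>0$ there), so $u_{x_0}(x)=\Ucal(x_0)\cdot U(x)=|U(x)|\,\bigl(\Ucal(x_0)\cdot\Ucal(x)\bigr)$, and since $\Ucal(x_0),\Ucal(x)$ are unit vectors, $\Ucal(x_0)\cdot\Ucal(x)=1-\tfrac12|\Ucal(x)-\Ucal(x_0)|^2\geq 1-\tfrac{C^2}{2}r^{2\alpha}$ on $B_r(x_0)$ by the Hölder estimate \eqref{eq:Ucal_alphaHolder}. Fixing $r_0\le R_0$ (with $R_0$ as in Corollary~\ref{coro:positive}) small enough that $Cr_0^\alpha<\sqrt 2$, this gives $u_{x_0}>0$ on $B_{r_0}(x_0)\cap\Omega_1$; moreover all $u_i$ vanish on $\Omega_2$ and on $\Gamma$, so $u_{x_0}\equiv 0$ on $B_{r_0}(x_0)\setminus\Omega_1$, whence $u_{x_0}\geq 0$ on the whole ball with $\{u_{x_0}>0\}\cap B_{r_0}(x_0)=\Omega_1\cap B_{r_0}(x_0)$. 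The same computation with $\Vcal,V$ yields $v_{x_0}>0$ on $B_{r_0}(x_0)\cap\Omega_2$.

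\emph{Equation off $\Gamma$ and the measure.} By Corollary~\ref{coro:One_component_positive}, $\Omega_1\cap B_{r_0}(x_0)\subseteq\omega_u$, so Theorem~\ref{thm:convergences}(iii) gives $-\Delta u_i=(\mu_i/a_i)u_i$ weakly there, and by linearity $-\Delta u_{x_0}=g$ weakly in $\Omega_1\cap B_{r_0}(x_0)$, where $g$ is the (bounded) smooth term on the right-hand side of the stated identity; in $\Omega_2\cap B_{r_0}(x_0)$ both $u_{x_0}$ and $g$ vanish identically. Since $u_{x_0}$ is Lipschitz (Corollary~\ref{coro:Lipschiz}) and, by Proposition~\ref{eq:propositionmeasures}, $-\Delta u_{x_0}$ is a (signed) Radon measure, the distribution $\Mcal_{x_0}:=g-(-\Delta u_{x_0})$ is a Radon measure on $B_{r_0}(x_0)$ which, by the two previous remarks, vanishes on $B_{r_0}(x_0)\setminus\Gamma$, hence is concentrated on $\Gamma$.

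\emph{Positivity of $\Mcal_{x_0}$ --- the main point.} Given $\varphi\in C^\infty_c(B_{r_0}(x_0))$ with $\varphi\geq 0$, I set $\psi_\varepsilon:=\min\{1,(u_{x_0}-\varepsilon)^+/\varepsilon\}$ for $\varepsilon>0$; this is Lipschitz with $\{\psi_\varepsilon>0\}\subseteq\{u_{x_0}>\varepsilon\}\subseteq\Omega_1\cap B_{r_0}(x_0)$, so $\psi_\varepsilon\varphi\in H^1_0(\Omega_1\cap B_{r_0}(x_0))$ is admissible in $-\Delta u_{x_0}=g$, giving $\int\nabla u_{x_0}\cdot\nabla(\psi_\varepsilon\varphi)=\int g\,\psi_\varepsilon\varphi$. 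Since $\nabla u_{x_0}\cdot\nabla\psi_\varepsilon=\tfrac1\varepsilon\chi_{\{\varepsilon<u_{x_0}<2\varepsilon\}}|\nabla u_{x_0}|^2\geq 0$ and $\varphi\geq 0$, this forces $\int\psi_\varepsilon\,\nabla u_{x_0}\cdot\nabla\varphi\leq\int g\,\psi_\varepsilon\varphi$, and letting $\varepsilon\to 0$ (using $\psi_\varepsilon\to\chi_{\{u_{x_0}>0\}}$, $\nabla u_{x_0}=0$ a.e.\ on $\{u_{x_0}=0\}$, $g=0$ a.e.\ on $\{u_{x_0}=0\}$, and dominated convergence) yields $\int\nabla u_{x_0}\cdot\nabla\varphi\leq\int g\,\varphi$, i.e.\ $\langle\Mcal_{x_0},\varphi\rangle\geq 0$ for every such $\varphi$; hence $\Mcal_{x_0}\geq 0$, and the identical argument for $v_{x_0}$ produces the positive measure $\Ncal_{x_0}$ concentrated on $\Gamma$. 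The genuinely delicate step is precisely this one: for an arbitrary nonnegative Lipschitz function the residual measure need not have a sign, and here positivity is forced by the one-sided vanishing of $u_{x_0}$ (it is $0$ on $\Omega_2\cup\Gamma$) together with $-\Delta u_{x_0}$ being a \emph{bounded} function on $\{u_{x_0}>0\}$, which the truncation by $\psi_\varepsilon$ isolates; a minor technical care is simply to make the positivity step rest on the already-established Hölder bound \eqref{eq:Ucal_alphaHolder} for the extensions of $\Ucal,\Vcal$ up to $\Gamma$.
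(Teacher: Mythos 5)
Your proof is correct and follows essentially the same route as the paper: the positivity of $u_{x_0}$ near $x_0$ is obtained from the H\"older estimate \eqref{eq:Ucal_alphaHolder} exactly as in the paper's computation $u_{x_0}(x)=\Ucal(x)\cdot U(x)+(\Ucal(x_0)-\Ucal(x))\cdot U(x)\geq (1-Cr_0^\alpha)|U(x)|$, merely rewritten through the identity $\Ucal(x_0)\cdot\Ucal(x)=1-\tfrac12|\Ucal(x)-\Ucal(x_0)|^2$. The remaining assertions, which the paper dismisses with ``the rest of the proof follows easily,'' are precisely what your argument makes explicit, and it does so correctly: the defect distribution is a signed Radon measure concentrated on $\Gamma$ by Proposition \ref{eq:propositionmeasures} together with the validity of the equation on $\Omega_1$ and the vanishing on $\Omega_2$, and your truncation test with $\psi_\varepsilon\varphi$ legitimately yields $-\Delta u_{x_0}\leq \sum_i\frac{\mu_i}{a_i}\Ucal_i(x_0)u_i$ in $\mathscr{D}'(B_{r_0}(x_0))$, hence $\Mcal_{x_0}\geq 0$.
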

\begin{proof}
By using \eqref{eq:Ucal_alphaHolder} we see that
\[
u_{x_0}(x)=\Ucal(x)\cdot U(x)+(\Ucal(x_0)-\Ucal(x))\cdot U(x)\geq (1-C r_0^\alpha)|U(x)|>0 \quad \forall x\in B_{r_0}(x_0)\cap \Omega_1,
\]
whenever $r_0>0$ is small enough. The rest of the proof follows easily.
\end{proof}

\medbreak

Our aim now it to analyze the difference $\varphi_{x_0}:=u_{x_0}-v_{x_0}$ close to $x_0$, as a first step before analyzing $|U|-|V|$ at $x_0$. For that, let us take a blowup sequence centered at $y\in \Gamma\cap B_{r_0}(x_0)$. Define, given $t_n\to 0^+$,
\[
u_{x_0}^{y,t_n} (x):=\frac{u_{x_0}(y+t_n x)}{\sqrt{H(y,(u,v),t_n)}}, \qquad v_{x_0}^{y,t_n} (x):=\frac{v_{x_0}(y+t_n x)}{\sqrt{H(y,(u,v),t_n)}}.
\]
Up to a subsequence, there exists $ (\bar u,\bar v)\in \Bcal\Ucal_{y}$ such that
\begin{align*}
& u_{x_0}^{y,t_n} \to \Ucal(x_0)\cdot (\sqrt{a_1} \bar u_1,\ldots, \sqrt{a_k} \bar u_k)=\Ucal(x_0)\cdot \bar U=: \bar u_{x_0}^{y}\\
& v_{x_0}^{y,t_n} \to \Vcal(x_0)\cdot (\sqrt{b_1} \bar v_1,\ldots, \sqrt{b_k} \bar v_k)=\Vcal(x_0)\cdot \bar V=:\bar v_{x_0}^{y}.
\end{align*}
Observe that, if $y=x_0$, then by  Lemma \ref{lemma:interpretation_of_Ucal_and_Vcal_at_boundary} one has
\[
\bar u_y^{y}= \sqrt{a_1 \bar u_1^2+\ldots+a_k \bar u_k^2}, \quad \bar v_y^{y}= \sqrt{b_1 \bar v_1^2+\ldots+ b_k \bar v_k^2} \quad \text{ for some } (\bar u,\bar v)\in \Bcal\Ucal_{y}. 
\]
In such a case, by Theorem \ref{coro:hyperplane} we have
\[
\bar u_i= \alpha_i (x\cdot \nu)^+,\qquad \bar v_i=\beta_i (x\cdot \nu)^-
\]

and $\bar u_{y}^y-\bar v_y^y$ is harmonic.
In particular, on $\{x\cdot \nu=0\}$,
\begin{equation}\label{eq:equality of the gradients}
|\nabla \bar u_{y}^y|\equiv |\nabla \bar u_{y}^y(0)|=|\nabla \bar U(0)|=\left(\sum_{i=1}^k a_i\alpha_i ^2\right)^\frac{1}{2}=\left(\sum_{i=1}^k b_i \beta_i^2\right)^\frac{1}{2}= |\nabla \bar V(0)|= |\nabla \bar v_{y}^y(0)|\equiv |\nabla \bar v_y^y|
\end{equation}
\begin{lemma}
Under the previous notations, let 
\[
\Mcal_{x_0}^{y,t_n}(\cdot):=\frac{1}{\rho_n t_n^{N-2}}\Mcal_{x_0}(y+t_n \cdot),\quad \Ncal_{x_0}^{y,t_n}(\cdot):=\frac{1}{\rho_n t_n^{N-2}}\Ncal_{x_0}(y+t_n \cdot)
\]
with $\rho_n^2=H(y,(u,v),t_n)$, so that in $B_\frac{r_0}{t_n}(\frac{x_0-y}{t_n})$
\[
-\Delta u_{x_0}^{y,t_n}=t_n^2 \sum_{i=1}^k \frac{\mu_i}{a_i} \Ucal_i(x_0) u_{x_0}^{y,t_n} - \Mcal_{x_0}^{y,t_n}, \quad -\Delta v_{x_0}^{y,n}=t_n^2 \sum_{i=1}^k \frac{\nu_i}{b_i} \Vcal_i(x_0) v_{x_0}^{y,t_n} - \Ncal_{x_0}^{y,t_n} 
\]
Then there exists $\overline \Mcal_{x_0}^y$, $\overline \Ncal_{x_0}^y$ such that
\[
\Mcal_{x_0}^{y,t_n}\rightharpoonup \overline \Mcal_{x_0}^y,\quad \Ncal_{x_0}^{y,t_n}\rightharpoonup \overline \Ncal_{x_0}^y
\]
and moreover
\[
\overline \Mcal_{x_0}^y(E)=\int_{E\cap \overline \Gamma} |\nabla \bar u_{x_0}^y|\, d\sigma=|\nabla \bar u_{x_0}^y(0)| |E\cap \overline \Gamma|, \ \ \overline \Ncal_{x_0}^y(E)=\int_{E\cap \overline \Gamma} |\nabla \bar v_{x_0}^y|\, d\sigma=|\nabla \bar v_{x_0}^y(0)| |E\cap \overline \Gamma|,
\]
with $\overline \Gamma=\{u_{x_0}^y = \bar v_{x_0}^y=0\}$. In particular, when $y=x_0$, one has that the limiting measures coincide, that is
\begin{equation}\label{eq:the_limiting_measures_coincide}
\overline \Mcal_{y}^y \equiv \overline \Ncal_y^y.
\end{equation}
\end{lemma}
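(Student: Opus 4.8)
The plan is to pass to the limit, as $n\to\infty$, in the distributional equations for $u_{x_0}^{y,t_n}$ and $v_{x_0}^{y,t_n}$, to identify the limiting measures with the Laplacians of the limit profiles $\bar u_{x_0}^y,\bar v_{x_0}^y$, and then to read off their explicit form from the classification of blowups at points of $\Reh_{(u,v)}$ provided by Theorem \ref{coro:hyperplane}. First I would record that, by Theorem \ref{thm:convergence_of_blowupsequence} applied to the blowup of $(u,v)$ centered at $y$ with parameters $t_n$, the rescalings $u_i(y+t_n\cdot)/\rho_n$, $v_i(y+t_n\cdot)/\rho_n$ converge to $\bar u_i,\bar v_i$ strongly in $H^1_\textrm{loc}(\R^N)\cap C^{0,\alpha}_\textrm{loc}(\R^N)$; since $u_{x_0}^{y,t_n}$ and $v_{x_0}^{y,t_n}$ are fixed linear combinations of these, they converge to $\bar u_{x_0}^y$ and $\bar v_{x_0}^y$ in the same topologies (here I use that, since $y\in\Gamma_{(u,v)}\cap B_{r_0}(x_0)$ and $|x_0-y|<r_0$, the domains $B_{r_0/t_n}((x_0-y)/t_n)$ of the equations eventually contain every compact subset of $\R^N$). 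In particular $u_{x_0}^{y,t_n}$ is locally bounded uniformly in $n$, so $t_n^2\sum_i\frac{\mu_i}{a_i}\Ucal_i(x_0)u_{x_0}^{y,t_n}\to0$ locally uniformly, and therefore $\Mcal_{x_0}^{y,t_n}=\Delta u_{x_0}^{y,t_n}+o(1)\to\Delta\bar u_{x_0}^y$ in $\mathscr{D}'(\R^N)$. Setting $\overline\Mcal_{x_0}^y:=\Delta\bar u_{x_0}^y$ and $\overline\Ncal_{x_0}^y:=\Delta\bar v_{x_0}^y$, and recalling (by the preceding lemma, on the positivity of $\Mcal_{x_0},\Ncal_{x_0}$) that $\Mcal_{x_0}^{y,t_n},\Ncal_{x_0}^{y,t_n}$ are nonnegative measures, distributional convergence towards a Radon measure automatically yields locally uniform mass bounds, hence weak-$*$ convergence in $\Meh_\textrm{loc}(\R^N)$.

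Next I would identify $\overline\Mcal_{x_0}^y$. For $r_0$ small one has $y\in\Reh_{(u,v)}$ (Corollary \ref{coro:positive}), so Theorem \ref{coro:hyperplane} provides $\nu\in S^{N-1}$ and reals $\alpha_i,\beta_i$ with $\bar u_i=\alpha_i(x\cdot\nu)^+$, $\bar v_i=\beta_i(x\cdot\nu)^-$ and $\sum_i a_i\alpha_i^2=\sum_i b_i\beta_i^2$. Hence $\bar u_{x_0}^y=\Ucal(x_0)\cdot\bar U=c_+(x\cdot\nu)^+$ with $c_+=\sum_i\sqrt{a_i}\,\Ucal_i(x_0)\,\alpha_i$, and likewise $\bar v_{x_0}^y=c_-(x\cdot\nu)^-$. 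By Lemma \ref{lemma:interpretation_of_Ucal_and_Vcal_at_boundary}, $\Ucal(y)=(\sqrt{a_i}\alpha_i)_i/(\sum_j a_j\alpha_j^2)^{1/2}$, so $c_+=(\sum_j a_j\alpha_j^2)^{1/2}\,\Ucal(x_0)\cdot\Ucal(y)$; since $\bar u\not\equiv0$ and, by the Hölder estimate \eqref{eq:Ucal_alphaHolder} together with $|x_0-y|<r_0$, the quantity $\Ucal(x_0)\cdot\Ucal(y)=1-\tfrac12|\Ucal(x_0)-\Ucal(y)|^2$ can be kept strictly positive by shrinking $r_0$, we get $c_+>0$, and similarly $c_->0$; in particular the common zero set $\overline\Gamma$ of $\bar u_{x_0}^y,\bar v_{x_0}^y$ is the hyperplane $H=\{x\cdot\nu=0\}$. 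Using $(x\cdot\nu)^+=\tfrac12(|x\cdot\nu|+x\cdot\nu)$ and the elementary distributional identities $\Delta|x\cdot\nu|=2\,\Hh^{N-1}\lfloor H$ and $\Delta(x\cdot\nu)=0$, this gives $\overline\Mcal_{x_0}^y=c_+\,\Hh^{N-1}\lfloor H$; and since $|\nabla\bar u_{x_0}^y|\equiv c_+$ off $H$ (so $|\nabla\bar u_{x_0}^y(0)|=c_+$, read as a trace on $H$), this is precisely $\overline\Mcal_{x_0}^y(E)=\int_{E\cap\overline\Gamma}|\nabla\bar u_{x_0}^y|\,d\sigma=|\nabla\bar u_{x_0}^y(0)|\,|E\cap\overline\Gamma|$. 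The formula for $\overline\Ncal_{x_0}^y$ follows verbatim.

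Finally, for $y=x_0$ I would specialize: Lemma \ref{lemma:interpretation_of_Ucal_and_Vcal_at_boundary} now gives $\Ucal(x_0)=(\sqrt{a_i}\alpha_i)_i/(\sum_j a_j\alpha_j^2)^{1/2}$, whence $c_+=(\sum_i a_i\alpha_i^2)^{1/2}$ (and $\bar u_{x_0}^{x_0}=\sqrt{a_1\bar u_1^2+\ldots+a_k\bar u_k^2}$), and symmetrically $c_-=(\sum_i b_i\beta_i^2)^{1/2}$. The reflection identity \eqref{eq:Reflection_Principle}, i.e. $\sum_i a_i\alpha_i^2=\sum_i b_i\beta_i^2$, then forces $c_+=c_-$, and hence $\overline\Mcal_y^y=c_+\,\Hh^{N-1}\lfloor H=c_-\,\Hh^{N-1}\lfloor H=\overline\Ncal_y^y$.

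The whole argument is essentially bookkeeping on top of Theorems \ref{thm:convergence_of_blowupsequence} and \ref{coro:hyperplane} and Lemma \ref{lemma:interpretation_of_Ucal_and_Vcal_at_boundary}; the two places requiring a little care are the computation $\Delta(x\cdot\nu)^+=\Hh^{N-1}\lfloor H$ (one must check that the weight is exactly the jump $c_+$ of the normal derivative) and the non-degeneracy $c_\pm>0$, which is precisely where the smallness of $r_0$ and the Hölder continuity of $\Ucal,\Vcal$ up to $\Gamma_{(u,v)}$ enter; I do not expect any other obstacle.
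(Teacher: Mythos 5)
Your proposal is correct and follows essentially the route the paper intends (it omits the proof, pointing to the measure-convergence argument of Theorem \ref{thm:convergence_of_blowupsequence} and to the reflection identity \eqref{eq:Reflection_Principle}): you identify $\overline{\Mcal}_{x_0}^y=\Delta \bar u_{x_0}^y$ by passing to the distributional limit, compute it explicitly from the classification $\bar u_i=\alpha_i(x\cdot\nu)^+$, $\bar v_i=\beta_i(x\cdot\nu)^-$ of Theorem \ref{coro:hyperplane}, and conclude $\overline{\Mcal}_y^y=\overline{\Ncal}_y^y$ from $\sum_i a_i\alpha_i^2=\sum_i b_i\beta_i^2$. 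The two points you flag — the jump computation $\Delta(x\cdot\nu)^+=\Hh^{N-1}\lfloor H$ with weight $c_+$, and the strict positivity $c_\pm>0$ via \eqref{eq:Ucal_alphaHolder} for $r_0$ small (which also identifies $\overline\Gamma$ with the hyperplane) — are exactly the right places to be careful, and your treatment of them, as well as the mass-bound shortcut using positivity of the measures, is sound.
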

\begin{proof}
The proof is similar to the one of Theorem \ref{thm:convergence_of_blowupsequence} and of \eqref{eq:Reflection_Principle} in Theorem \ref{coro:hyperplane}, and so we omit it here.
\end{proof}

This result, together with \eqref{eq:Ucal_alphaHolder} and \eqref{eq:equality of the gradients}, allows us to compare $\Mcal_{x_0}$ and $\Ncal_{x_0}$ close to $x_0$.

\begin{lemma}
We have
\[
\Mcal_{x_0}\leq (1+C r^\alpha)\Ncal_{x_0} \quad \text{ and } \quad \Ncal_{x_0}\leq (1+C r^\alpha)\Mcal_{x_0} \qquad \text{ in } B_r(x_0)
\]
for some $C>0$ independent of $x_0$ and $r>0$. In particular, the new functions
\[
\overline \varphi_{x_0,r}:= (1+Cr^\alpha)u_{x_0}-v_{x_0},\quad \text{ and } \quad \underline \varphi_{x_0,r}:= u_{x_0}-(1+Cr^\alpha) v_{x_0}
\]
satisfy
\[
-\Delta \overline \varphi_{x_0,r} \leq (1+C r^\alpha)\sum_{i=1}^k \frac{\mu_i}{a_i} \Ucal_i(x_0) u_i-\sum_{i=1}^k \frac{\nu_i}{b_i} \Vcal_i(x_0) v_i
\]
and
\[
-\Delta \underline \varphi_{x_0,r} \geq \sum_{i=1}^k \frac{\mu_i}{a_i}\Ucal_i(x_0) u_i- (1+C r^\alpha)\sum_{i=1}^k \frac{\nu_i}{b_i} \Vcal_i(x_0) v_i
\]
in $B_{r}(x_0)$.
\end{lemma}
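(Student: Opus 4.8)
The plan is to prove the comparison of measures $\Mcal_{x_0}\leq(1+Cr^\alpha)\Ncal_{x_0}$ (and symmetrically) on $B_r(x_0)$ by a blow-up/contradiction argument combined with the explicit description of the limiting measures obtained in the preceding lemma. First I would observe that both $\Mcal_{x_0}$ and $\Ncal_{x_0}$ are positive Radon measures concentrated on $\Gamma$, and that comparing them amounts to comparing their densities with respect to $\Hh^{N-1}\lfloor \Gamma$ at $\Hh^{N-1}$-a.e.\ point of $\Gamma\cap B_r(x_0)$. At such a point $y$, the blow-up sequences $\Mcal_{x_0}^{y,t_n}$ and $\Ncal_{x_0}^{y,t_n}$ converge to $\overline{\Mcal}_{x_0}^y$ and $\overline{\Ncal}_{x_0}^y$, whose densities on the hyperplane $\overline\Gamma=\Gamma_{(\bar u,\bar v)}$ are precisely $|\nabla \bar u_{x_0}^y(0)|$ and $|\nabla \bar v_{x_0}^y(0)|$.

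Next I would estimate the ratio of these two densities. By definition $\bar u_{x_0}^y=\Ucal(x_0)\cdot\bar U$ and $\bar v_{x_0}^y=\Vcal(x_0)\cdot\bar V$, where $(\bar u,\bar v)\in \Bcal\Ucal_y$, so by Theorem~\ref{coro:hyperplane} we have $\bar U=(\sqrt{a_1}\alpha_1,\ldots,\sqrt{a_k}\alpha_k)(x\cdot\nu)^+$ and $\bar V=(\sqrt{b_1}\beta_1,\ldots,\sqrt{b_k}\beta_k)(x\cdot\nu)^-$ with $\sum a_i\alpha_i^2=\sum b_i\beta_i^2=:L^2$. Hence $|\nabla\bar u_{x_0}^y(0)|=|\Ucal(x_0)\cdot \bar{\boldsymbol\alpha}|$ and $|\nabla\bar v_{x_0}^y(0)|=|\Vcal(x_0)\cdot\bar{\boldsymbol\beta}|$ where $\bar{\boldsymbol\alpha},\bar{\boldsymbol\beta}$ are unit vectors (after dividing by $L$). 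Now Lemma~\ref{lemma:interpretation_of_Ucal_and_Vcal_at_boundary} identifies $\bar{\boldsymbol\alpha}=\Ucal(y)$ and $\bar{\boldsymbol\beta}=\Vcal(y)$, so the two densities are $L\,|\Ucal(x_0)\cdot\Ucal(y)|$ and $L\,|\Vcal(x_0)\cdot\Vcal(y)|$. Using $|\Ucal(x_0)\cdot\Ucal(y)|=1-\tfrac12|\Ucal(x_0)-\Ucal(y)|^2$ (both unit vectors) together with the Hölder bound \eqref{eq:Ucal_alphaHolder}, $|\Ucal(x_0)-\Ucal(y)|\le Cr^\alpha$ for $y\in B_r(x_0)$, gives $|\Ucal(x_0)\cdot\Ucal(y)|\geq 1-C'r^{2\alpha}$; the same for $\Vcal$. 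Therefore the density ratio of $\overline\Mcal_{x_0}^y$ to $\overline\Ncal_{x_0}^y$ is bounded above by $(1-C'r^{2\alpha})^{-1}\cdot 1\le 1+Cr^{\alpha}$ (absorbing constants, shrinking $r$), which is exactly the pointwise inequality of densities we need; integrating over Borel subsets of $B_r(x_0)$ yields $\Mcal_{x_0}\leq(1+Cr^\alpha)\Ncal_{x_0}$, and the symmetric argument gives the reverse.

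Finally, the statements about $\overline\varphi_{x_0,r}$ and $\underline\varphi_{x_0,r}$ follow immediately: by the previous lemma $-\Delta u_{x_0}=\sum_i\frac{\mu_i}{a_i}\Ucal_i(x_0)u_i-\Mcal_{x_0}$ and $-\Delta v_{x_0}=\sum_i\frac{\nu_i}{b_i}\Vcal_i(x_0)v_i-\Ncal_{x_0}$ on $B_{r_0}(x_0)$. Hence
\[
-\Delta \overline\varphi_{x_0,r}=(1+Cr^\alpha)\sum_{i=1}^k\frac{\mu_i}{a_i}\Ucal_i(x_0)u_i-\sum_{i=1}^k\frac{\nu_i}{b_i}\Vcal_i(x_0)v_i-(1+Cr^\alpha)\Mcal_{x_0}+\Ncal_{x_0},
\]
and since $(1+Cr^\alpha)\Mcal_{x_0}\geq \Ncal_{x_0}$ on $B_r(x_0)$ the measure part has a sign, giving the claimed differential inequality $\le$; the inequality for $\underline\varphi_{x_0,r}$ is obtained the same way using $\Ncal_{x_0}\leq(1+Cr^\alpha)\Mcal_{x_0}$. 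The main obstacle is the first step: making rigorous the reduction from the measure inequality to a pointwise inequality of blow-up densities, i.e.\ knowing that $\Mcal_{x_0}$ and $\Ncal_{x_0}$ are mutually absolutely continuous with respect to $\Hh^{N-1}\lfloor\Gamma$ (and in fact have bounded densities) so that blow-up recovers the Radon--Nikodym derivative $\Hh^{N-1}$-a.e. This should follow from the regularity of $\Reh_{(u,v)}$ established so far (the NTA/Reifenberg structure of $\Omega_1,\Omega_2$ and the boundary Harnack estimates in the appendix), which guarantees that the relevant quantities $|\nabla\bar u_{x_0}^y(0)|$ are finite and positive uniformly, but care is needed to ensure the blow-up density formula holds at a.e.\ point rather than merely at $x_0$; I expect the identity $|\Ucal(x_0)\cdot\Ucal(y)|\geq 1-C'|x_0-y|^{2\alpha}$ together with the uniform bounds to close this gap cleanly.
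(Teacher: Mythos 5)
Your pointwise density computation is correct and is essentially the one in the paper: you use Theorem \ref{coro:hyperplane} and Lemma \ref{lemma:interpretation_of_Ucal_and_Vcal_at_boundary} to write the blow-up densities as $|\nabla \bar u_{x_0}^y(0)|$ and $|\nabla \bar v_{x_0}^y(0)|$, and then combine \eqref{eq:Ucal_alphaHolder} with the balancing condition $\sum_i a_i\alpha_i^2=\sum_i b_i\beta_i^2$ to bound their ratio by $1\pm Cr^\alpha$ (your unit-vector identity even gives $r^{2\alpha}$, slightly better than needed). The final derivation of the differential inequalities for $\overline\varphi_{x_0,r}$ and $\underline\varphi_{x_0,r}$ from the measure comparison is also correct.

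The genuine gap is exactly the step you flag: reducing the measure inequality to a comparison of densities with respect to $\Hh^{N-1}\lfloor\Gamma$. At this point of the argument $\Gamma$ is only known to be a closed, Reifenberg flat (NTA) set of Hausdorff dimension at most $N-1$; this gives neither local finiteness of $\Hh^{N-1}\lfloor\Gamma$ nor any rectifiability or density property ensuring that blow-ups recover a Radon--Nikodym derivative with respect to $\Hh^{N-1}\lfloor\Gamma$ at $\Hh^{N-1}$-a.e.\ point, and even granting the upper density bounds that follow from the Lipschitz continuity of $(u,v)$ you would still have to rule out singular parts of $\Mcal_{x_0},\Ncal_{x_0}$ relative to $\Hh^{N-1}\lfloor\Gamma$. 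None of this is available, and none of it is needed: the paper differentiates $\Mcal_{x_0}$ directly with respect to $\Ncal_{x_0}$. Indeed, by the weak-$\ast$ convergence of the rescaled measures established in the preceding lemma, together with their explicit limits, one gets at \emph{every} $y\in\Gamma\cap B_r(x_0)$ (all such points being regular, by the choice of $R_0$)
\[
\lim_{t\to 0^+}\frac{\Mcal_{x_0}(\overline B_t(y))}{\Ncal_{x_0}(\overline B_t(y))}
=\frac{|\nabla \bar u_{x_0}^y(0)|}{|\nabla \bar v_{x_0}^y(0)|}\in\bigl[1-Cr^\alpha,\,1+Cr^\alpha\bigr],
\]
by your very estimate. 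Then the Besicovitch differentiation theorem for pairs of Radon measures (Evans--Gariepy, Sect.~1.6) gives $D_{\Ncal_{x_0}}\Mcal_{x_0}\leq 1+Cr^\alpha$ on $\Gamma\cap B_r(x_0)$, the singular part in the Radon--Nikodym decomposition vanishes because the derivative is finite at every point of the support, and integration over Borel sets yields $\Mcal_{x_0}\leq(1+Cr^\alpha)\Ncal_{x_0}$ in $B_r(x_0)$; the reverse inequality is symmetric. So your core estimate is exactly what is needed, but the differentiation basis must be the measure $\Ncal_{x_0}$ itself rather than $\Hh^{N-1}\lfloor\Gamma$, which removes the absolute-continuity and rectifiability issues entirely.
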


\begin{proof}
1. We claim that
\begin{equation}\label{eq:auxiliary_for_comparison between measures}
\lim_{t\rightarrow 0}\frac{\Mcal_{x_0}(\overline B_t(y))}{\Ncal_{x_0}(\overline B_t(y))}\geq 1-C r^\alpha \qquad \text{ for every }y\in \Gamma_{(u,v)}\cap B_{r}(x_0).
\end{equation}
Fix $y\in \Gamma_{(u,v)}\cap B_{r}(x_0)$ and consider any arbitrary sequence $t_n\downarrow 0$. Using the previous notations, take $u_{x_0}^{y,t_n}(x)$, $v_{x_0}^{y,t_n}$ and the rescaled measures $\Mcal_{x_0}^{y,t_n}$, $\Ncal_{x_0}^{y,t_n}$, and some corresponding blowup limits $\bar u_{x_0}^{y}=\Ucal(x_0)\cdot \bar U$, $\bar v_{x_0}^y=\Vcal(x_0)\cdot \bar V$, $\overline \Mcal_{x_0}^y$, $\overline \Ncal_{x_0}^y$.
One has (by \cite[Section 1.6, Theorem 1]{EvansGariepy})
\begin{equation}\label{eq:fraction_between_measures}
\lim_n \frac{\Mcal_{x_0}(\overline B_{t_n}(y))}{\Ncal_{x_0}(\overline B_{t_n}(y))}=\lim_n\frac{\Mcal_{x_0}^{y,t_n}(\overline B_1(0))}{\Ncal_{x_0}^{y,t_n}(\overline B_1(0))}=\frac{\overline \Mcal_{x_0}^y(\overline B_1(0))}{\overline \Ncal_{x_0}^y(\overline B_1(0))}=\frac{|\nabla \bar u_{x_0}^y(0)|}{|\nabla \bar v_{x_0}^y(0)|}.
\end{equation}
On the other hand, 
\[
\left| |\Ucal(x_0)\cdot \nabla \bar U(0)|-|\Ucal(y)\cdot \nabla \bar U(0)|\right| \leq   |(\Ucal(x_0)-\Ucal(y))\cdot \nabla \bar U(0)|\leq C r^\alpha |\nabla \bar U(0)|=Cr^\alpha |\nabla \bar u_y^y(0)|,
\]
so
\[
\left||\nabla \bar u_{x_0}^y(0)|-|\nabla \bar u_y^y(0)| \right| \leq Cr^\alpha |\nabla \bar u_y^y(0)|
\]
where we have used \eqref{eq:Ucal_alphaHolder} and \eqref{eq:equality of the gradients}. Analogously, one has
\[
\left||\nabla \bar v_{x_0}^y(0)|-|\nabla \bar v_y^y(0)| \right| \leq Cr^\alpha |\nabla \bar v_y^y(0)|
\]
Combining these two inequalities with $|\nabla \bar u_y^y(0)|=|\nabla \bar v_y^y(0)|$ (cf. \eqref{eq:equality of the gradients}) and \eqref{eq:fraction_between_measures} yields the validity of our initial claim \eqref{eq:auxiliary_for_comparison between measures}. 

\medbreak

\noindent 2. By the previous point, we have $D_{\Ncal_{x_0}}\Mcal_{x_0}(y)\leq (1+C r^\alpha)$ for $\Ncal_{x_0}$--a.e. $y\in B_{r_0}(x_0)$  and hence the Radon-Nikodym Decomposition Theorem (check for instance \cite[Sect. 1.6, Th. 3]{EvansGariepy}) yields that for every Borel set $E\subseteq B_{r_0}(x_0)$
\[
\Mcal_{x_0}(E)=(\Mcal_{x_0})_s(E)+ \int_E D_{\Ncal_{x_0}}\Mcal_{x_0}\, d\Ncal_{x_0} \leq (1 + C r^\alpha) \Ncal_{x_0}(E).
\]
(where $(\Mcal_{x_0})_s\geq 0$ represents the singular part of $\Mcal_{x_0}$ with respect to $\Ncal_{x_0}$), and this proves the first inequality in the proposition. The proof of the other inequality follows in the exact same way.
\end{proof}

\medbreak

Recall that $\varphi_{x_0}:=u_{x_0}-v_{x_0}$ and let $\psi_{x_0,r}$, for each small $r>0$, be the solution of
\[
\begin{cases}
-\Delta \psi_{x_0,r}=\sum_{i=1}^k \frac{\mu_i}{a_i} \Ucal_i(x_0)u_i-\sum_{i=1}^k \frac{\nu_i}{b_i}\Vcal_i(x_0)v_i  &\text{ in } B_r(x_0)\\
		\psi_{x_0,r}=\varphi_{x_0}=u_{x_0}-v_{x_0} \qquad 			& \text{ on } \partial B_r(x_0).
\end{cases}
\]
As we will see ahead, $\nabla \psi_{x_0,r}(x_0)$ will be a good approximation, for $r$ small, to a normal vector of $\Reh_{(u,v)}$ at $x_0$. Having that in mind, our next aim is to estimate the difference $\varphi_{x_0}-\psi_{x_0,r}$  in terms of $r$, in each ball $B_r(x_0)$. Introduce the auxiliary functions $\overline \psi_{x_0,r}$, $\underline \psi_{x_0,r}$ defined in $B_r(x_0)$ by the conditions:
\[
\begin{cases}
-\Delta \overline \psi_{x_0,r}=(1+Cr^\alpha)\sum_{i=1}^k \frac{\mu_i}{a_i} \Ucal_i(x_0)u_i-\sum_{i=1}^k \frac{\nu_i}{b_i}\Vcal_i(x_0)v_i  &\text{ in } B_r(x_0)\\
	    \overline \psi_{x_0,r}=\overline \varphi_{x_0,r}=(1+C r^\alpha)u_{x_0}-v_{x_0} \qquad 			& \text{ on } \partial B_r(x_0).
\end{cases}
\]
and
\[
\begin{cases}
-\Delta \underline \psi_{x_0,r}=\sum_{i=1}^k \frac{\mu_i}{a_i} \Ucal_i(x_0)u_i-(1+Cr^\alpha)\sum_{i=1}^k \frac{\nu_i}{b_i}\Vcal_i(x_0)v_i  &\text{ in } B_r(x_0)\\
		\underline \psi_{x_0,r}=\underline \varphi_{x_0,r}=u_{x_0}-(1+Cr^\alpha)v_{x_0} \qquad 			& \text{ on } \partial B_r(x_0).
\end{cases}
\]
By the comparison principle and since $u_{x_0},v_{x_0}>0$, we have that
\[
\underline \psi_{x_0,r}\leq \underline \varphi_{x_0,r}\leq \varphi_{x_0}\leq \overline \varphi_{x_0,r}\leq \overline \psi_{x_0,r} \qquad \text{ in } B_r(x_0),
\]
and moreover (since also $\sum_{i=1}^k \frac{\mu_i}{a_i} \Ucal_i(x_0)u_i, \sum_{i=1}^k \frac{\nu_i}{b_i}\Vcal_i(x_0)v_i>0$)
\[
\underline \psi_{x_0,r}\leq \psi_{x_0,r}\leq \overline \psi_{x_0,r} \qquad \text{ in } B_r(x_0).
\]
 Now, since $\overline \psi_{x_0,r}-\underline \psi_{x_0,r}$ satisfies the problem
\[
\begin{cases}
-\Delta (\overline \psi_{x_0,r}-\underline \psi_{x_0,r})= C r^\alpha \sum_{i=1}^k\left( \frac{\mu_i}{a_i}\Ucal_i(x_0)u_i+\frac{\nu_i}{b_i}\Vcal_i(x_0)v_i\right) & \text{ in } B_r(x_0)\\
\overline \psi_{x_0,r}-\underline \psi_{x_0,r}=Cr^\alpha (u_{x_0}+v_{x_0}) 					& \text{ on } \partial B_r(x_0).
\end{cases}
\]
we have, by using also the maximum principle \cite[Theorem 3.7]{GilbargTrudinger} and the fact that $u_i$, $v_i$ are Lipschitz continuous and zero at $x_0$,
\begin{align}
\|\varphi_{x_0}-\psi_{x_0,r}\|_{L^\infty(B_r(x_0))}&\leq \|\overline \psi_{x_0,r}-\underline \psi_{x_0,r}\|_{L^\infty(B_r(x_0))}\\
									&\leq C' r^\alpha \|u_{x_0}+v_{x_0}\|_{L^\infty(\partial B_r(x_0))}+C'r^\alpha \sum_{i=1}^k \|u_i+v_i\|_{L^\infty(B_r(x_0))}\\
									&\leq C'' r^{1+\alpha} \qquad \forall x\in B_r(x_0). \label{eq:|phi-psi|_leq_diff_of_psis}
\end{align}

\begin{proposition}
There exists
\[
\nu(x_0):=\lim_{r\to 0} \nabla \psi_{x_0,r}(x_0).
\]
Moreover, the map $\Gamma\to \R^N$, $x_0\mapsto \nu(x_0)$ is H\"older continuous of order $\alpha$.
\end{proposition}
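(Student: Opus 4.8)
The plan is to establish first the existence of the limit $\nu(x_0)$ together with a quantitative rate of convergence, and then to deduce the H\"older continuity from it by a two-center comparison. For the existence, the starting point is the observation that for $0<\rho<r<R_0$ the functions $\psi_{x_0,r}$ and $\psi_{x_0,\rho}$ solve the \emph{same} equation on $B_\rho(x_0)$ --- the right-hand side $\sum_i \frac{\mu_i}{a_i}\Ucal_i(x_0)u_i-\sum_i\frac{\nu_i}{b_i}\Vcal_i(x_0)v_i$ is independent of the radius --- so their difference is harmonic there. Combining this with \eqref{eq:|phi-psi|_leq_diff_of_psis} one gets $\|\psi_{x_0,r}-\psi_{x_0,\rho}\|_{L^\infty(B_\rho(x_0))}\le Cr^{1+\alpha}$, and the interior gradient estimate for harmonic functions, applied with $\rho=r/2$, then yields
\[
|\nabla\psi_{x_0,r}(x_0)-\nabla\psi_{x_0,r/2}(x_0)|\le \frac{C}{r}\,\|\psi_{x_0,r}-\psi_{x_0,r/2}\|_{L^\infty(B_{r/2}(x_0))}\le Cr^\alpha .
\]
Summing this geometric estimate along the dyadic scales $r_n=R_02^{-n}$ shows that $(\nabla\psi_{x_0,r_n}(x_0))_n$ is Cauchy, with limit $\nu(x_0)$; comparing an arbitrary radius $r$ with the nearest dyadic scale then gives the quantitative bound $|\nabla\psi_{x_0,r}(x_0)-\nu(x_0)|\le Cr^\alpha$ for $0<r<R_0$, with $C$ uniform in $x_0$.

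For the H\"older continuity I would take $x_0,x_1\in\Reh_{(u,v)}$ close to each other, set $\delta:=|x_0-x_1|$, and work at a scale $r$ comparable to $\delta$. By the rate estimate above it suffices to control $|\nabla\psi_{x_0,r}(x_0)-\nabla\psi_{x_1,r}(x_1)|$. On the ball $B_{r/2}(x_0)\subset B_r(x_0)\cap B_r(x_1)$ the difference $w:=\psi_{x_0,r}-\psi_{x_1,r}$ satisfies $-\Delta w=\sum_i\frac{\mu_i}{a_i}(\Ucal_i(x_0)-\Ucal_i(x_1))u_i-\sum_i\frac{\nu_i}{b_i}(\Vcal_i(x_0)-\Vcal_i(x_1))v_i$; since $u_i,v_i$ are Lipschitz and vanish at $x_0,x_1\in\Gamma_{(u,v)}$, while $|\Ucal(x_0)-\Ucal(x_1)|,\,|\Vcal(x_0)-\Vcal(x_1)|\le C\delta^\alpha$ by \eqref{eq:Ucal_alphaHolder}, this gives $\|\Delta w\|_{L^\infty(B_{r/2}(x_0))}\le C\delta^{1+\alpha}$. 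Writing $w=(\psi_{x_0,r}-\varphi_{x_0})+(\varphi_{x_0}-\varphi_{x_1})+(\varphi_{x_1}-\psi_{x_1,r})$, bounding the two outer terms by \eqref{eq:|phi-psi|_leq_diff_of_psis} and the middle one again by \eqref{eq:Ucal_alphaHolder} together with $|U|,|V|\le C\delta$ on $B_{r/2}(x_0)$, one obtains $\|w\|_{L^\infty(B_{r/2}(x_0))}\le C\delta^{1+\alpha}$. An interior elliptic estimate at $x_0$ then produces $|\nabla\psi_{x_0,r}(x_0)-\nabla\psi_{x_1,r}(x_0)|=|\nabla w(x_0)|\le C\delta^\alpha$. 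To close the argument one must pass from $\nabla\psi_{x_1,r}(x_0)$ to $\nabla\psi_{x_1,r}(x_1)$, which I would do as in \cite{TavaresTerracini1}: iterating the estimate of the first part (and using the Reifenberg flatness of $\Gamma_{(u,v)}$ from Corollary \ref{lemma:flatness_condition} to control $\varphi_{x_1}$ near $x_1$) one shows that $\nabla\psi_{x_1,r}$ stays within $Cr^\alpha$ of the constant vector $\nu(x_1)$ on the whole half-ball $B_{r/2}(x_1)$, and the triangle inequality with the rate estimates for $\nabla\psi_{x_0,r}(x_0)$ and $\nabla\psi_{x_1,r}(x_1)$ then yields $|\nu(x_0)-\nu(x_1)|\le C\delta^\alpha$.

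I expect the main obstacle to be precisely this last step: matching the two families $\psi_{x_0,\cdot}$ and $\psi_{x_1,\cdot}$, built around different centers, while keeping the H\"older exponent equal to $\alpha$. A direct use of interior Schauder estimates for $\psi_{x_1,r}$ at scale $r\sim\delta$ only controls $\nabla\psi_{x_1,r}$ up to oscillations of order one on the annulus $\{|x-x_1|\sim r\}$, which is too crude and would only give a H\"older exponent $\alpha/2$ after optimizing the scale; it is the finer statement that $\nabla\psi_{x_1,r}$ is $Cr^\alpha$-close to the constant $\nu(x_1)$ throughout $B_{r/2}(x_1)$ --- obtained through the dyadic iteration of the existence part together with the already established flatness of the nodal set --- that makes the argument go through with the sharp exponent.
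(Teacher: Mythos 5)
Your argument for the existence of $\nu(x_0)$ is exactly the paper's: the differences $\psi_{x_0,r}-\psi_{x_0,r/2}$ are harmonic in the smaller ball, \eqref{eq:|phi-psi|_leq_diff_of_psis} bounds them by $Cr^{1+\alpha}$ in $L^\infty$, interior gradient estimates yield the dyadic Cauchy property (the paper's \eqref{eq:auxiliary_for_nu(x_0)}), and arbitrary radii are handled by comparison with the nearest dyadic scale. For the H\"older continuity you also follow the paper's route (compare $\psi_{x_0,r}$ and $\psi_{x_1,r}$ at a scale $r$ comparable to $|x_0-x_1|$ through $\varphi_{x_0},\varphi_{x_1}$, using \eqref{eq:Ucal_alphaHolder} and \eqref{eq:|phi-psi|_leq_diff_of_psis}, then an interior estimate for the difference), and you are in fact more careful than the paper's terse write-up on two points: the paper literally takes $r=|x_0-x_1|$, which places $x_0$ on $\partial B_r(x_1)$ where no interior estimate is available, whereas your choice $r\sim c\,|x_0-x_1|$ avoids this; and you record explicitly that the difference of the two $\psi$'s is not harmonic but solves a Poisson equation with right-hand side of size $O(\delta^{1+\alpha})$, which is the correct way to make the paper's ``by combining all this'' precise.

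The one step you do not prove is the bridge from $\nabla\psi_{x_1,r}$ evaluated near $x_0$ to $\nu(x_1)$, for which you invoke $\sup_{B_{r/2}(x_1)}|\nabla\psi_{x_1,r}-\nu(x_1)|\leq Cr^\alpha$ and assert that it follows from the dyadic iteration together with Reifenberg flatness. Be aware that the telescoping alone does not deliver this: for a point $y$ with $|y-x_1|=\rho$ the harmonic dyadic differences can only be summed down to a scale $s\sim\rho$, and the remainder $|\nabla\psi_{x_1,s}(y)-\nabla\psi_{x_1,s}(x_1)|$, with $y$ at distance comparable to $s$ from the center, is only $O(1)$ by interior estimates applied to $\|\psi_{x_1,s}\|_{L^\infty}\leq Cs$ --- precisely the obstruction you point out in your final paragraph; flatness of $\Gamma_{(u,v)}$ does not by itself repair it, since an affine approximation of $\varphi_{x_1}$ at scale $s$ with error $o(s)\cdot s$ is essentially the $C^{1,\alpha}$ statement one is trying to prove. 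This is, however, the same point the paper itself glosses over: the rate obtained from \eqref{eq:auxiliary_for_nu(x_0)} controls the gradient only at the center, and the uniform version you need is exactly the content of Remark \ref{rem:other_definition_of_nu(x_0)}, which the paper asserts as ``clear from the previous proof'' without argument. So your proposal is the paper's proof, carried out with the same ingredients and to the same level of completeness, with the merit of locating the crux explicitly; a fully self-contained argument would have to actually establish that uniform gradient estimate near the center rather than defer it.
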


\begin{proof}
1. Fix $r>0$ and let us prove that $\{\nabla \psi_{x_0,r/2^k}(x_0)\}_k$ is a Cauchy sequence. One has 
\[
\|\varphi_{x_0}-\psi_{x_0,r}\|_{L^\infty(B_r(x_0))}\leq C r^{1+\alpha},\quad \|\varphi_{x_0}-\psi_{x_0,r/2}\|_{L^\infty(B_{r/2}(x_0))}\leq \frac{C}{2^{1+\alpha}}r^{1+\alpha},
\]
which implies that
\begin{align*}
\|\psi_{x_0,r}-\psi_{x_0,r/2}\|_{L^\infty(B_{r/2}(x_0))}&\leq \| \psi_{x_0,r}-\varphi_{x_0}\|_{L^\infty(B_{r/2}(x_0))}+\| \psi_{x_0,r/2}-\varphi_{x_0}\|_{L^\infty(B_{r/2}(x_0))}\\
									&\leq C r^{1+\alpha}+\frac{C}{2^{1+\alpha}}r^{1+\alpha}=C\left(1+\frac{1}{2^{1+\alpha}}\right)r^{1+\alpha}
\end{align*}
As $\psi_{x_0,r}-\psi_{x_0,r/2}$ is harmonic in $B_{r/2}(x_0)$, then by using classical estimates we have
\[
|\nabla \psi_{x_0,r}(x_0)-\nabla \psi_{x_0,r/2}(x_0)|\leq C'(1+\frac{1}{2^{1+\alpha}})r^\alpha.
\]
Iterating this procedure, we obtain
\begin{equation}\label{eq:auxiliary_for_nu(x_0)}
|\nabla \psi_{x_0,r/2^{n+m}}(x_0)-\nabla \psi_{x_0,r/2^m}(x_0)|\leq C' (1+\frac{1}{2^{1+\alpha}}) \left( \sum_{i=m}^{m+n-1}\frac{1}{2^{\alpha i}}\right) r^\alpha.
\end{equation}
Thus, for each $r>0$, $\{\nabla \psi_{x_0,r/2^k}(x_0)\}_k$ is a Cauchy sequence, and thus $\nu(x_0):=\lim_{k\to \infty} \nabla \psi_{x_0,r/2^k}(x_0)$ exists.

\medbreak

\noindent 2. Now let $r_n\to 0$ be an arbitrary sequence. Suppose, without loss of generality, that $\frac{r}{2^{n+1}}<r_n\leq \frac{r}{2^n}$ (if not, one can pass to a subsequence of $\{r/2^n\}_n$. Then
\[
\|\varphi_{x_0}-\psi_{x_0,r_n}\|_{L^\infty(B_{r/2^{n+1}}(x_0))} \leq C r_n^{1+\alpha},\quad \|\varphi_{x_0}-\psi_{r/2^{n+1}}\|_{L^\infty(B_{r/2^{n+1}}(x_0))}\leq \frac{C}{2^{(n+1)(1+\alpha)}}r^{1+\alpha}.
\]
and so
\begin{align*}
\|\psi_{x_0,r_n}-\psi_{x_0,r/2^{n+1}}\|_{L^\infty(B_{r/2^{n+1}}(x_0))}&\leq C r_n^{1+\alpha}+\frac{C}{2^{(n+1)(1+\alpha)}}r^{1+\alpha}\\
													&\leq \frac{C}{2^{n(1+\alpha)}}r^{1+\alpha}+\frac{C}{2^{(n+1)(1+\alpha)}}r^{1+\alpha}.
\end{align*}
Once again by classical estimates,
\[
|\nabla \psi_{x_0,r_n}(x_0)-\nabla \psi_{x_0,r/2^{n+1}}(x_0)| \leq C\left(\frac{1}{2^{n(1+\alpha)}}+\frac{1}{2^{(n+1)(1+\alpha)}}\right) r^\alpha 
\]
and thus
\[
|\nabla \psi_{x_0,r_n}(x_0)-\nu(x_0)|\leq | \nabla \psi_{x_0,r_n}(x_0)-\nabla \psi_{x_0,r/2^{n+1}}(x_0)|+|\nabla \psi_{x_0,r/2^{n+1}}(x_0)-\nu(x_0)| 
\] and now the conclusion follows by taking the limit as $n\to \infty$.

\medbreak

\noindent 3. Finally, let us prove that $x_0\mapsto \nu(x_0)$ is H\"older continuous of order $\alpha$. Take $x_0,x_1\in \Gamma$ and take $r:=|x_0-x_1|$. From \eqref{eq:auxiliary_for_nu(x_0)} we have show that there exists $C>0$ (independent of the point and of the radius) such that
\[
|\nabla \psi_{x_i,r}-\nu(x_i)|\leq C r^\alpha.
\]
On the other hand, the H\"older continuity of $\Ucal$, $\Vcal$ one can easily show that $\|\varphi_{x_1}-\varphi_{x_0}\|_{L^\infty(B_r(x_0)\cap B_r(x_1))}\leq C r^{1+\alpha}$.
Moreover, $\| \varphi_{x_i}-\psi_{r,x_i}\|_{L^\infty(B_r(x_0))}\leq C r^{1+\alpha}$. By combining all this we obtain 
\begin{equation*}\label{eq:auxiliary_nu(x_0)}
|\nabla \psi_{x_0,r}(x_0)-\nabla \psi_{x_1,r}(x_0)|\leq C r^\alpha,
\end{equation*}
and the proof is complete.
\end{proof}

\begin{remark}\label{rem:other_definition_of_nu(x_0)}
For future reference, we observe that from the previous proof it is also clear that, given $x_r\in B_{r/2}(x_0)$, we also have
\[
\nabla \psi_{x_0,r}(x_r)\to \nu(x_0) \qquad \text{ as } r\to 0.
\]
\end{remark}

\medbreak

The following lemma relates $\nu(x_0)$ some blowup limits, providing at the same time that $\nu(x_0)\neq 0$.
\begin{lemma}
Given $x_0\in \Gamma_{(u,v)}$. Then for every $(\bar u,\bar v)\in \Bcal\Ucal_{x_0}$ with fixed center $x_0$ we have that
\[
\bar u_i=\alpha_i \left(x\cdot \frac{\nu(x_0)}{|\nu(x_0)|} \right)^+\quad \text{ and }\quad  \bar v_i=\beta_i \left(x\cdot \frac{\nu(x_0)}{|\nu(x_0)|}\right)^-,
\] 
for some $\alpha_i,\beta_i\in \R$ (not all zero). In particular, $\nu(x_0)\neq 0$.
\end{lemma}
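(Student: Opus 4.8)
The goal is to identify every blowup limit at $x_0\in\Gamma_{(u,v)}$ (with fixed center $x_0$) with a one-dimensional profile whose direction is precisely $\nu(x_0)/|\nu(x_0)|$, and in particular to deduce $\nu(x_0)\neq 0$. First I would invoke Theorem \ref{coro:hyperplane}: since $x_0\in\Reh_{(u,v)}$ (note $\Gamma_{(u,v)}$ near a point where this lemma is used coincides with $\Reh_{(u,v)}$ by Corollary \ref{coro:positive}), any $(\bar u,\bar v)\in\Bcal\Ucal_{x_0}$ already has the form $\bar u_i=\alpha_i(x\cdot\mu)^+$, $\bar v_i=\beta_i(x\cdot\mu)^-$ for \emph{some} unit vector $\mu=\mu_{(\bar u,\bar v)}\in S^{N-1}$ and coefficients not all zero (not all $\alpha_i$ zero and not all $\beta_i$ zero, by the Clean-Up, Proposition \ref{prop:cleanup} / Theorem \ref{coro:hyperplane}). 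So the content of the lemma is the identification $\mu=\pm\nu(x_0)/|\nu(x_0)|$ and $\nu(x_0)\neq 0$.

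The key step is to compute the blowup limit of $\psi_{x_0,t_n}$ along the associated sequence $t_n\to 0^+$ of $(\bar u,\bar v)$. By \eqref{eq:|phi-psi|_leq_diff_of_psis} we have $\|\varphi_{x_0}-\psi_{x_0,r}\|_{L^\infty(B_r(x_0))}\le C r^{1+\alpha}$, so the rescalings $\varphi_{x_0}(x_0+t_n x)/\rho_n$ and $\psi_{x_0,t_n}(x_0+t_n x)/\rho_n$ (with $\rho_n=\sqrt{H(x_0,(u,v),t_n)}$) have the same $C^{0,\gamma}_{\mathrm{loc}}$-limit, because $\rho_n^{-1}t_n^{1+\alpha}\to 0$ — here one uses the lower bound $\rho_n\ge c\,t_n$ coming from $N(x_0,(u,v),0^+)=1$ and the Almgren monotonicity formula (Theorem \ref{thm:Almgren}, Corollary \ref{Corollaries 2.6_2.7_2.8_TaTe}). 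On the other hand $\varphi_{x_0}=u_{x_0}-v_{x_0}=\Ucal(x_0)\cdot U-\Vcal(x_0)\cdot V$, so its blowup limit along $t_n$ is $\Ucal(x_0)\cdot\bar U-\Vcal(x_0)\cdot\bar V = \bar u_{x_0}^{x_0}-\bar v_{x_0}^{x_0}$, which by Lemma \ref{lemma:interpretation_of_Ucal_and_Vcal_at_boundary} and Theorem \ref{coro:hyperplane} equals the harmonic function $\sqrt{\sum_i a_i\alpha_i^2}\,(x\cdot\mu)^+-\sqrt{\sum_i b_i\beta_i^2}\,(x\cdot\mu)^- = \lambda\,(x\cdot\mu)$ where $\lambda:=\sqrt{\sum_i a_i\alpha_i^2}=\sqrt{\sum_i b_i\beta_i^2}>0$ (the positivity because not all $\alpha_i$ vanish). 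Combining: the rescalings of $\psi_{x_0,t_n}$ converge to the linear function $x\mapsto \lambda\,(x\cdot\mu)$. But $\psi_{x_0,t_n}$ is (asymptotically) harmonic with a boundary-data scaling that is linear at $x_0$, so by Remark \ref{rem:other_definition_of_nu(x_0)} and classical interior gradient estimates, $\nabla\psi_{x_0,t_n}(x_0) = \lambda\,\mu + o(1)$, i.e. $\nabla\psi_{x_0,t_n}(x_0)\to\lambda\mu$. By definition $\nu(x_0)=\lim_{r\to 0}\nabla\psi_{x_0,r}(x_0)$ (the limit exists by the preceding Proposition and is independent of the sequence $r\to 0$), so $\nu(x_0)=\lambda\mu$. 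In particular $\nu(x_0)\neq 0$ and $\mu=\nu(x_0)/|\nu(x_0)|$, which is exactly the claimed formula with these $\alpha_i,\beta_i$.

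To make the argument clean I would carry out the steps in this order: (1) record that $(\bar u,\bar v)\in\Bcal\Ucal_{x_0}$ has the hyperplane form $\bar u_i=\alpha_i(x\cdot\mu)^+$, $\bar v_i=\beta_i(x\cdot\mu)^-$ with direction $\mu$ a priori depending on the sequence, using Theorem \ref{coro:hyperplane}; (2) note $\rho_n^{-1}t_n^{1+\alpha}\to 0$ via the Almgren formula, hence the rescalings of $\psi_{x_0,t_n}$ and of $\varphi_{x_0}$ share the same limit; (3) compute that shared limit to be $\lambda\,(x\cdot\mu)$ with $\lambda>0$, using Lemma \ref{lemma:interpretation_of_Ucal_and_Vcal_at_boundary} and the reflection identity \eqref{eq:Reflection_Principle}; (4) pass from $L^\infty_{\mathrm{loc}}$-convergence of the rescaled (asymptotically) harmonic functions to convergence of gradients at the center, to get $\nabla\psi_{x_0,t_n}(x_0)\to\lambda\mu$; (5) conclude $\nu(x_0)=\lambda\mu\neq 0$. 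The main obstacle is step (2)/(4): one must be careful that $\psi_{x_0,t_n}$ is not exactly harmonic but solves $-\Delta\psi_{x_0,r}=\sum_i\frac{\mu_i}{a_i}\Ucal_i(x_0)u_i-\sum_i\frac{\nu_i}{b_i}\Vcal_i(x_0)v_i$, whose right-hand side, after rescaling and dividing by $\rho_n$, tends to $0$ in $L^\infty_{\mathrm{loc}}$ (since $u_i,v_i$ are Lipschitz and vanish at $x_0$, this term is $O(t_n/\rho_n)=O(1)$ — actually one needs the sharper $o(1)$, which follows because the limit profile is homogeneous of degree $1$ and the forcing rescales like $t_n\cdot(\text{degree-1 quantity})$, giving an extra $t_n$). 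Thus the rescaled $\psi$'s converge to a genuine harmonic function in $L^\infty_{\mathrm{loc}}$, and then interior elliptic estimates upgrade this to $C^1_{\mathrm{loc}}$ convergence, yielding the gradient convergence at $x_0$ needed to identify $\nu(x_0)$.
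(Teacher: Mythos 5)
Your proposal follows the same skeleton as the paper's proof: blow up at the fixed center $x_0$, identify the limit of the rescaled $\varphi_{x_0}=u_{x_0}-v_{x_0}$ as $\lambda\,(x\cdot\mu)$ with $\lambda=\bigl(\sum_i a_i\alpha_i^2\bigr)^{1/2}=\bigl(\sum_i b_i\beta_i^2\bigr)^{1/2}>0$ (Theorem \ref{coro:hyperplane} together with Lemma \ref{lemma:interpretation_of_Ucal_and_Vcal_at_boundary}), transfer this limit to $\psi_{x_0,t_n}$, and read off $\nu(x_0)$ from the gradient at the center. The genuine gap is in the transfer step. You pass from $\varphi_{x_0}$ to $\psi_{x_0,t_n}$ through \eqref{eq:|phi-psi|_leq_diff_of_psis}, which requires $t_n^{1+\alpha}/\rho_n\to 0$, and you justify this by the lower bound $\rho_n\ge c\,t_n$ ``coming from $N(x_0,(u,v),0^+)=1$ and the Almgren monotonicity formula''. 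This is backwards: monotonicity of $e^{Cr^2}(N+1)$ plus $N(x_0,0^+)\ge 1$ gives $N(r)\ge 1-Cr^2$, and via $\frac{d}{dr}\log H=\frac{2N(r)}{r}$ this yields the \emph{upper} bound $H(x_0,r)\le Cr^2$, i.e. $\rho_n\le Ct_n$ (this is exactly the computation in Lemma \ref{H(x,(u_n,v_n),r)leqCr^2}); a lower bound $H(x_0,r)\ge cr^2$ would need an integrable upper control on $N(r)-1$, which is not established anywhere — Corollary \ref{Corollaries 2.6_2.7_2.8_TaTe} only gives $H(x_0,r)\ge c\,r^{2\tilde C}$ with a possibly large exponent. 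The paper's proof is arranged precisely to avoid this: since $\psi_{x_0,r}$ is \emph{defined} with boundary datum $\varphi_{x_0}$ on $\partial B_r(x_0)$, the choice $r=t_n$ makes the rescaled $\widetilde\psi_{x_0,t_n}$ coincide with the rescaled $\varphi_{x_0}$ on $\partial B_1(0)$; the rescaled right-hand side is $O(t_n^2)$ because the blown-up $u_{i,n},v_{i,n}$ are locally bounded (Theorem \ref{thm:convergence_of_blowupsequence}), and elliptic estimates then give $\widetilde\psi_{x_0,t_n}\to\lambda(x\cdot\mu)$ in $C^1$ with no quantitative lower bound on $\rho_n$ at all.

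A second, related slip: from the convergence of the rescaled functions you conclude $\nabla\psi_{x_0,t_n}(x_0)\to\lambda\mu$ and hence $\nu(x_0)=\lambda\mu$. But $\nabla\widetilde\psi_{x_0,t_n}(0)=\frac{t_n}{\rho_n}\nabla\psi_{x_0,t_n}(x_0)$, so what one actually obtains is $A\,\nu(x_0)=\lambda\mu$ with $A=\lim_n t_n/\rho_n$, which is how the paper states it; your exact equality would force $t_n/\rho_n\to 1$, which is unjustified (only $t_n/\rho_n\ge 1/C$ is available). The proportionality with a positive factor is all the lemma needs, but observe that to deduce $\nu(x_0)\neq 0$ from $A\,\nu(x_0)=\lambda\mu$ one must know $A<\infty$, i.e. again $\liminf\rho_n/t_n>0$. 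The paper asserts this implicitly in its last line, so your instinct that such a nondegeneracy enters is not unreasonable; the point is that it cannot be attributed to the monotonicity formula, and the paper's identification of the blowup limit of $\psi_{x_0,t_n}$ does not rely on it, whereas your route makes it load-bearing twice. To repair your write-up, either adopt the matched-radius observation and keep the factor $A$, or supply an actual proof of $H(x_0,r)\ge cr^2$ at points of $\Reh_{(u,v)}$.
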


\begin{proof}
Take $\bar u_i,\bar v_i$ coming from a limit of $t_n\to 0$ with fixed center $x_0$, and define
\[
\varphi_{x_0,n}(x):=\frac{u_{x_0}(x_0+t_n x)}{\sqrt{H(x_0,(u,v),t_n)}}-\frac{v_{x_0}(x_n+t_n x)}{\sqrt{H(x_0,(u,v),t_n)}} \qquad \text{ in } B_1(0),
\]
which converges to 
\[
\Ucal(x_0)\cdot (\sqrt{a_1} \bar u_1,\ldots,\sqrt{a_k} \bar u_k)-\Vcal(x_0)\cdot (b_1 \bar v_1,\ldots, b_k \bar v_k)=\sqrt{\sum_{i=1}^k a_i \alpha_i^2}(x\cdot \tilde \nu)=\sqrt{\sum_{i=1}^k b_i \beta_i^2}(x\cdot \tilde \nu)
\]
for some $\alpha_i,\beta_i$ ($i=1,\ldots, k$) and $\tilde \nu\in S^{N-1}$. On the other hand, we have that
\[
\widetilde \psi_{x_0,t_n}(x):=\frac{\psi_{x_0,t_n}(x_0+t_n x)}{\sqrt{H(x_0,(u,v),t_n)}},\qquad \text{defined in } B_1(0),
\]
coincides at the boundary $\partial B_1(0)$ with $\varphi_{x_0,n}(x)$, and hence it converges strongly $C^{2,\gamma}$ (for every $\gamma\in (0,1)$) to $\sqrt{\sum_{i=1}^k a_i \alpha_i^2}(x\cdot \tilde \nu)$. In particular,
\[
\sqrt{\sum_{i=1}^k a_i \alpha_i^2} \tilde \nu=\lim_n \nabla \widetilde \psi_{x_0,t_n}(0)=\lim_n \frac{t_n}{\sqrt{H(x_0,(u,v),t_n)}}\nabla \psi_{x_0,t_n}(x_0)=A \nu(x_0),
\]
with $A\neq 0$ being the limit of $t_n/\sqrt{H(x_0,(u,v),t_n)}$. Hence $\nu(x_0) \parallel \tilde \nu$.
\end{proof}

We are now ready to prove the regularity of the set $\Reh_{(u,v)}$.

\begin{theorem}\label{thm:Regularity!} The map 
\[
|U(x)|-|V(x)|=\sqrt{a_1 u_1^2+\ldots+a_k u_k^2}-\sqrt{b_1 v_1^2+\ldots+b_k v_k^2}
\] 
is differentiable at each $x_0 \in \Reh_{(u,v)}$, with 
\begin{equation}\label{eq:final_extremality_condition}
\nabla \left(|U|-|V|\right)(x_0)=\nu(x_0).
\end{equation}
In particular, the set $\Reh_{(u,v)}$ is locally a $C^{1,\alpha}$--hypersurface, for some $\alpha\in (0,1)$.
\end{theorem}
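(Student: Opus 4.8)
The plan is to prove the differentiability of $|U|-|V|$ at a fixed point $x_0\in\Reh_{(u,v)}$ by showing that the function $\varphi_{x_0}=u_{x_0}-v_{x_0}$ is a good local approximation of $|U|-|V|$, and that $\varphi_{x_0}$ itself is well approximated by the harmonic-type function $\psi_{x_0,r}$ whose gradient at $x_0$ converges to $\nu(x_0)$.

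\begin{proof}
Fix $x_0\in\Reh_{(u,v)}$. By the previous proposition, $\nu(x_0)=\lim_{r\to 0}\nabla\psi_{x_0,r}(x_0)$ exists and is nonzero. Translating coordinates, we assume $x_0=0$ and $|U(x_0)|=|V(x_0)|=0$.

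\textbf{Step 1: $\varphi_{x_0}$ is differentiable at $x_0$ with gradient $\nu(x_0)$.} Since $\psi_{x_0,r}$ solves $-\Delta\psi_{x_0,r}=\sum_i\frac{\mu_i}{a_i}\Ucal_i(x_0)u_i-\sum_i\frac{\nu_i}{b_i}\Vcal_i(x_0)v_i$ in $B_r(x_0)$, with a right-hand side that is bounded (the $u_i,v_i$ are Lipschitz and vanish at $x_0$, so the RHS is $\leq Cr$ in $B_r(x_0)$), interior elliptic estimates give that for $|x-x_0|\leq r/2$,
\[
|\psi_{x_0,r}(x)-\psi_{x_0,r}(x_0)-\nabla\psi_{x_0,r}(x_0)\cdot(x-x_0)|\leq C\bigl(\|\psi_{x_0,r}-\psi_{x_0,r}(x_0)-\nabla\psi_{x_0,r}(x_0)\cdot(\cdot-x_0)\|_{L^\infty(B_r(x_0))}\tfrac{|x-x_0|^2}{r^2}+r\,|x-x_0|^2\bigr),
\]
and together with the already established decay $\|\varphi_{x_0}-\psi_{x_0,r}\|_{L^\infty(B_r(x_0))}\leq C''r^{1+\alpha}$ from \eqref{eq:|phi-psi|_leq_diff_of_psis} and $|\nabla\psi_{x_0,r}(x_0)-\nu(x_0)|\leq Cr^\alpha$, one concludes by a standard dyadic summation (evaluate at scale $r=|x-x_0|$, compare $\psi_{x_0,r}$ with $\psi_{x_0,|x-x_0|}$) that
\[
|\varphi_{x_0}(x)-\varphi_{x_0}(x_0)-\nu(x_0)\cdot(x-x_0)|=o(|x-x_0|)
\]
as $x\to x_0$. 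Since $\varphi_{x_0}(x_0)=0$, this says $\varphi_{x_0}$ is differentiable at $x_0$ with $\nabla\varphi_{x_0}(x_0)=\nu(x_0)$.

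\textbf{Step 2: $|U|-|V|=\varphi_{x_0}+o(|x-x_0|)$ near $x_0$.} On $\Omega_1$ (where $\sum u_i^2>0$, $\sum v_i^2\equiv 0$) we have $|V|=0=v_{x_0}$ and
\[
|U(x)|-u_{x_0}(x)=|U(x)|-\Ucal(x_0)\cdot U(x)=(\Ucal(x)-\Ucal(x_0))\cdot U(x),
\]
which by \eqref{eq:Ucal_alphaHolder} is bounded in modulus by $C|x-x_0|^\alpha\,|U(x)|\leq C'|x-x_0|^{1+\alpha}$, using the Lipschitz bound $|U(x)|\leq C|x-x_0|$. Symmetrically, on $\Omega_2$, $\bigl||V(x)|-v_{x_0}(x)\bigr|\leq C'|x-x_0|^{1+\alpha}$ and $|U|=0=u_{x_0}$. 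On $\Gamma$ both sides vanish. Hence
\[
\bigl|(|U(x)|-|V(x)|)-\varphi_{x_0}(x)\bigr|\leq C'|x-x_0|^{1+\alpha}\qquad\text{for all }x\in B_{r_0}(x_0).
\]
Combining with Step 1 yields that $|U|-|V|$ is differentiable at $x_0$ with gradient $\nu(x_0)$, which is \eqref{eq:final_extremality_condition}.

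\textbf{Step 3: regularity of $\Reh_{(u,v)}$.} By Corollary \ref{coro:positive}, in a neighborhood $B_{R_0}(x_0)$ we have $\Gamma_{(u,v)}\cap B_{R_0}(x_0)=\Reh_{(u,v)}\cap B_{R_0}(x_0)=\{|U|-|V|=0\}$. Steps 1--2 show $f:=|U|-|V|$ is differentiable at every point of this set with $\nabla f(y)=\nu(y)\neq 0$, and the previous proposition shows $y\mapsto\nu(y)$ is H\"older continuous of order $\alpha$. A function whose zero set carries a continuous, nonvanishing ``gradient field'' in this sense, with the first-order expansion holding uniformly (the constant $C'$ in Steps 1--2 is uniform in $x_0$ over compact subsets), is, by the implicit function theorem applied in the appropriate form, locally a $C^{1,\alpha}$ graph: fixing $y$, writing coordinates with $\nu(y)$ as last axis, the expansion $f(x)=\nu(y)\cdot(x-y)+o(|x-y|)$ uniformly in $y$ forces the zero set near $y$ to be trapped between two Lipschitz graphs with small slope, hence to be a graph $x_N=g(x')$; differentiability of $f$ with continuous nonzero gradient then upgrades $g$ to $C^1$, and the H\"older modulus of $\nu$ gives $g\in C^{1,\alpha}$. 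This completes the proof.
\end{proof}

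The main obstacle I expect is Step 3: turning pointwise differentiability of $f$ on its zero set, plus H\"older continuity of the ``normal'' $\nu$, into an honest $C^{1,\alpha}$ manifold statement. One must be careful that $f$ need not be differentiable (in the classical sense) at points \emph{off} $\Gamma$ close to it, so the usual implicit function theorem does not apply verbatim; the right tool is the characterization of $C^{1,\alpha}$ hypersurfaces via uniform first-order approximation by hyperplanes together with H\"older control of the approximating planes, which here follows from the uniformity of the constants in Steps 1--2 and the H\"older estimate for $x_0\mapsto\nu(x_0)$. (In the paper this is presumably where the Reifenberg-flatness and NTA machinery from the preceding subsection is invoked to make the uniformity rigorous.)
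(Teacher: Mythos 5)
Your overall route is the paper's own: write $|U|-|V|=\Ucal(x)\cdot U(x)-\Vcal(x)\cdot V(x)$, reduce to $\varphi_{x_0}=u_{x_0}-v_{x_0}$ up to the error $(\Ucal(x)-\Ucal(x_0))\cdot U(x)-(\Vcal(x)-\Vcal(x_0))\cdot V(x)$, which is $o(|x-x_0|)$ by \eqref{eq:Ucal_alphaHolder} and the Lipschitz continuity of $U,V$ (your Step 2 is literally the paper's computation), and then compare $\varphi_{x_0}$ with $\psi_{x_0,r}$ at the scale $r\sim 2|x-x_0|$ via \eqref{eq:|phi-psi|_leq_diff_of_psis} and $|\nabla\psi_{x_0,r}(x_0)-\nu(x_0)|\leq Cr^{\alpha}$. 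Your Step 3 is in fact more scrupulous than the paper, which simply invokes the implicit function theorem since $\nu(x_0)\neq 0$; your point that $f=|U|-|V|$ is only known to be differentiable \emph{on} $\Gamma_{(u,v)}$, so one should argue through the uniform first-order expansion at points of $\Reh_{(u,v)}$ together with the H\"older continuity of $x\mapsto\nu(x)$ (the constants in \eqref{eq:|phi-psi|_leq_diff_of_psis} and \eqref{eq:Ucal_alphaHolder} are indeed uniform over nearby base points), is a legitimate and welcome sharpening.

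The one genuine problem is in Step 1, in how you treat $\psi_{x_0,r}(x)-\psi_{x_0,r}(x_0)-\nu(x_0)\cdot(x-x_0)$. Your displayed interior estimate carries on its right-hand side the quantity $\|\psi_{x_0,r}-\psi_{x_0,r}(x_0)-\nabla\psi_{x_0,r}(x_0)\cdot(\cdot-x_0)\|_{L^\infty(B_r(x_0))}$ multiplied by $|x-x_0|^2/r^2\sim 1$; a priori you only know $\mathrm{osc}_{B_r(x_0)}\psi_{x_0,r}=O(r)$, which gives $O(|x-x_0|)$ rather than $o(|x-x_0|)$, and bounding that $L^\infty$ norm by $o(r)$ is, up to the $O(r^{1+\alpha})$ errors of \eqref{eq:|phi-psi|_leq_diff_of_psis}, equivalent to the differentiability of $\varphi_{x_0}$ at $x_0$ that you are in the middle of proving --- so the step is circular as written. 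The dyadic hint does not repair it, because with $r=2|x-x_0|$ the point $x$ lies on $\partial B_{r/2}(x_0)$, outside the interior balls where comparison with smaller-scale functions $\psi_{x_0,r/2^k}$ yields information. The paper's fix is cheap and uses exactly your ingredients: by the mean value theorem, $\psi_{x_0,r}(x)-\psi_{x_0,r}(x_0)=\nabla\psi_{x_0,r}(x_r)\cdot(x-x_0)$ for some $x_r$ on the segment $[x_0,x]\subset B_{r/2}(x_0)$, and Remark \ref{rem:other_definition_of_nu(x_0)} states precisely that $\nabla\psi_{x_0,r}(x_r)\to\nu(x_0)$ for such interior points (it follows from the harmonic-difference/dyadic argument of the preceding proposition, with the gradient estimates evaluated at points of $B_{r/2}(x_0)$ instead of at $x_0$). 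Replacing your Taylor bound by this mean value argument closes Step 1, and the rest of your proof then goes through.
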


\begin{proof}
We prove this by definition, namely we show that
\begin{equation*}\label{eq:auxiliary_final_proof}
\frac{|U(x)|-|V(x)|-\nu(x_0)\cdot (x-x_0)}{|x-x_0|}\to 0 \qquad \text{ as } x\to x_0.
\end{equation*}
In the following, take $r:=2|x-x_0|$. We have
\[
\frac{|\varphi_{x_0}(x)-\psi_{x_0,r}(x)|}{|x-x_0|}\leq C r^{\alpha}\to 0,
\]
and
\[
\frac{|\psi_{x_0,r}(x_0)|}{|x-x_0|}=\frac{|\psi_{x_0,r}(x_0)-\varphi_{x_0}(x_0)|}{|x-x_0|}\leq C r^{\alpha}\to 0.
\]
by \eqref{eq:|phi-psi|_leq_diff_of_psis}. Moreover,
\[
\frac{\psi_{x_0,r}(x)-\psi_{x_0,r}(x_0)-\nu(x_0)\cdot (x-x_0)}{|x-x_0|}=(\nabla \psi_{x_0,r}(x_r)-\nu(x_0))\cdot \frac{x-x_0}{|x-x_0|}\to 0
\]
by Remark \ref{rem:other_definition_of_nu(x_0)}.
Finally, \eqref{eq:Ucal_alphaHolder} and the Lipschitz continuity of $U$ and $V$ yield
\[
\frac{(\Ucal(x)-\Ucal(x_0))\cdot U(x)}{|x-x_0|}\to 0 \quad \frac{(\Vcal(x)-\Vcal(x_0))\cdot V(x)}{|x-x_0|}\to 0,
\]
and so
\begin{multline*}
\frac{\left|\, |U(x)|-|V(x)|-\nu(x_0)\cdot (x-x_0)\right|}{|x-x_0|} =\frac{|\,\Ucal(x)\cdot U(x)-\Vcal(x)\cdot V(x)-\nu(x_0)\cdot (x-x_0)|}{|x-x_0|}\\
\leq \frac{|\varphi_{x_0}(x)-\nu(x_0)\cdot (x-x_0)|}{|x-x_0|}+\textrm{o}(1)\\
\leq \frac{|\varphi_{x_0}(x)-\psi_{x_0,r}(x)|}{|x-x_0|}+ \frac{|\psi_{x_0,r}(x_0)|}{|x-x_0|}+\frac{|\psi_{x_0,r}(x)-\psi_{x_0,r}(x_0)-\nu(x_0)\cdot (x-x_0)|}{|x-x_0|}+\textrm{o}(1)
\end{multline*}
converges to 0. The last statement of the theorem is now a simple consequence of the Implicit Function Theorem, since $\nu(x_0)\neq 0$.
\end{proof}

\begin{proof}[Proof of Theorem \ref{thm:3rdmain} completed.] Items (i), (ii) and (iii) are a consequence of Theorem \ref{thm:minimizer_for_c_beta} and Corollaries \ref{coro:characterization_of_c_infty}, \ref{coro:Lipschiz} and \ref{coro:One_component_positive}. In item (iv), the fact that $(\bar \omega_1,\ldots, \bar \omega_m)$ solves $c_\infty$ is a consequence of Corollary \ref{coro:characterization_of_c_infty}, while its regularity is provided by Theorem \ref{thm:Regularity!} (for $N=2$, the more precise statement for $\Gamma\setminus \Reh$ follows also from Theorem \ref{thm:hausdorff_measures_of_nodalsets}, while the equal angle property, at this point, follows exactly as in and \cite[p. 305]{TavaresTerracini1}). Finally, the extremality condition in (v) comes out from \eqref{eq:final_extremality_condition}.
\end{proof}


\section{The limit of the approximating solutions as $p\to \infty$. Existence and regularity results for the initial problem}\label{sec:limit_as_p_to_infty}

\subsection{Properties of the approximating solutions $u_p$, $v_p$. Proof of Theorems \ref{thm:first_main_result} and \ref{thm:2ndmain}}\label{subsec:first_subsec_of_Chapter4}

We are now ready to prove our first two main results, Theorems \ref{thm:first_main_result} and \ref{thm:2ndmain}. Until now, we have proved existence and regularity for the approximate problem \eqref{eq:general_OPP_with_psi} and so, in particular, for \eqref{eq:OPPsecondary}. As explained in the introduction, a solution of our initial model problem \eqref{eq:OPPmain} will be obtained by taking $p\to +\infty$ in \eqref{eq:OPPsecondary}. Let us summarize what we have proved so far regarding the latter problem.

Given $p\in \N$, we have shown the existence of $u_p=(u_{1p}, \ldots, u_{kp})$, $v_p=(v_{1p},\ldots, v_{kp})$, with components in $H^1_0(\Omega)\cap C^{0,1}(\overline \Omega)$, such that $u_p$ and $v_p$ are segregated (in the sense that $u_{ip}\cdot v_{jp}\equiv 0$ for every $i,j$), they are $L^2$--normalized
\[
\int_\Omega u_{ip}^2\, dx=\int_\Omega v_{ip}^2\, dx=1 \qquad \forall i=1,\ldots, k,
\]
and mutually $H^1_0$--orthogonal, 
\[
\int_\Omega \nabla u_{ip} \cdot \nabla u_{jp}\, dx=\int_\Omega \nabla v_{ip}\cdot \nabla v_{jp}\, dx=0,\qquad \forall i \neq j.
\]
We may assume that the components are ordered in the following way
\begin{equation}\label{eq:ordering_of_u1,...,up}
\int_\Omega |\nabla u_{1p}|^2\, dx\leq \ldots \leq \int_\Omega |\nabla u_{kp}|^2\, dx, \qquad \int_\Omega |\nabla v_{1p}|^2\, dx\leq \ldots \leq \int_\Omega |\nabla v_{kp}|^2\, dx.
\end{equation}
Moreover, there exist coefficients $a_{1p},\ldots, a_{kp}, b_{1p},\ldots, b_{kp}>0$ and $ \mu_{1p},\ldots, \mu_{kp},\nu_{1p},\ldots, \nu_{kp}>0$ such that 
\[
-a_{ip}\Delta u_{ip}=\mu_{ip}u_{ip} \quad \text{ in } \omega_{u_p},\qquad -b_{ip}\Delta v_{ip}=\nu_{ip}v_{ip} \quad \text{ in } \omega_{v_p}
\]
with 
\[
\omega_{u_p}=\left\{|u_p|>0\right\}=\Omega \setminus \overline{\left\{ |v_p|>0\right\}}, \quad \omega_{v_p}=\left\{|v_p|>0\right\}=\Omega \setminus \overline{\left\{ |u_p| >0\right\}},
\]
while the coefficients $a_{ip}$ and $b_{ip}$ are given by the following formulas: \footnote{Recall that $a_{ip}=\frac{\partial \psi}{\partial \xi_{i}}(\|u_{1p}\|^2,\ldots, \|u_{kp}\|^2)$ and $b_{ip}=\frac{\partial \psi}{\partial \xi_{i}}(\|v_{1p}\|^2,\ldots, \|v_{kp}\|^2)$, for $\psi(\xi_1,\ldots, \xi_k)=\left(\sum_{i=1}^k \xi_i^p\right)^{1/p}$.}
\[
a_{ip}=\frac{\left(\int_\Omega |\nabla u_{ip}|^2\, dx\right)^{p-1}}{\left(\sum_{j=1}^k \left(\int_\Omega |\nabla u_{jp}|^2\, dx\right)^p\right)^\frac{p-1}{p}},\qquad  b_{ip}=\frac{\left(\int_\Omega |\nabla v_{ip}|^2\, dx\right)^{p-1}}{\left(\sum_{j=1}^k \left(\int_\Omega |\nabla v_{jp}|^2\, dx\right)^p\right)^\frac{p-1}{p}}
\]
From \eqref{eq:ordering_of_u1,...,up}, we have the ordering
\[
0<a_{1p}\leq \ldots \leq a_{kp},\qquad 0<b_{1p} \leq \ldots \leq b_{kp}.
\]
The partition $(\omega_{u_p},\omega_{v_p})$ solves \eqref{eq:OPPsecondary}, and 
\begin{equation}\label{eq:functions_achieving_c_p}
\begin{split}
\left(\sum_{i=1}^k \left(\int_\Omega |\nabla u_{ip}|^2\, dx\right)^p\right)^{1/p}+&\left(\sum_{i=1}^k \left(\int_\Omega |\nabla v_{ip}|^2\, dx\right)^p\right)^{1/p}\\ 
&=  \Bigl(\sum_{j=1}^{k} (\lambda_j(\omega_{u_p}))^p\Bigr)^{1/p}+\Bigl(\sum_{j=1}^{k} (\lambda_j(\omega_{v_p}))^p\Bigr)^{1/p}\\
& = \inf_{(\omega_1,\omega_2)\in \Peh_2(\Omega)} \sum_{i=1}^2 \Bigl(\sum_{j=1}^{k} (\lambda_j(\omega_i))^p\Bigr)^{1/p}=:c_p. 
\end{split}
\end{equation}
Finally, the following Pohozaev--type formula holds: given $x_0\in \Omega$ and $r\in (0,\dist(x_0,\partial \Omega))$, we have that
\begin{align}\label{eq:Pohozaev--type_for_p}
(2-N) \sum_{i=1}^k \int_{B_r(x_0)} (a_{ip}|\nabla u_{ip}|^2+b_{ip} |\nabla v_{ip}|^2)\, dx\\
=\sum_{i=1}^k \int_{\partial B_r(x_0)} a_{ip} r(2(\partial_n u_{ip})^2-|\nabla u_{ip}|^2)\, d\sigma +\sum_{i=1}^k \int_{\partial B_r(x_0)} b_{ip} r (2(\partial_n v_{ip})^2-|\nabla v_{ip}|^2)\, d\sigma\\
+\sum_{i=1}^k \int_{\partial B_r(x_0)} r(\mu_{ip} u_{ip}^2+\nu_{ip} v_{ip}^2)\, d\sigma-\sum_{i=1}^k \int_{B_r(x_0)} N(\mu_{ip} u_{ip}^2+\nu_{ip} v_{ip}^2)\, dx.
\end{align}

We have also proved that, up to a set of Hausdorff dimension at most $N-2$ (corresponding to $\Sigma_{(u_p,v_p)}$) the common boundary $\partial \omega_{u_p}\cap \Omega= \partial \omega_{v_p}\cap \Omega$ is locally a hypersurface of class $C^{1,\alpha}$, for some $0<\alpha<1$.

\begin{lemma}\label{lemma:upper_inequality c_p leq c_infty}
We have $c_p\leq k^{1/p} c_\infty$. In particular, $\|u_p\|_{H^1_0}, \|v_p\|_{H^1_0}$ are bounded independently of $p\in \N$.
\end{lemma}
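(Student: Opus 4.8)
The plan is to obtain the inequality $c_p\le k^{1/p}c_\infty$ from the pointwise monotonicity of the Dirichlet eigenvalues in the index, and then to read off the uniform $H^1_0$ bounds directly from the identity \eqref{eq:functions_achieving_c_p}. Here $c_\infty$ is, in the present reduced setting ($m=2$, $k_1=k_2=k$), the value of the target problem \eqref{eq:OPPmain}, namely $c_\infty=\inf_{(\omega_1,\omega_2)\in\Peh_2(\Omega)}\bigl(\lambda_k(\omega_1)+\lambda_k(\omega_2)\bigr)$, which is finite (test, e.g., with a partition into two disjoint balls).

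First I would fix an arbitrary partition $(\omega_1,\omega_2)\in\Peh_2(\Omega)$. Since the eigenvalues are ordered non-decreasingly, $\lambda_1(\omega_i)\le\cdots\le\lambda_k(\omega_i)$, one has $\sum_{j=1}^k\lambda_j(\omega_i)^p\le k\,\lambda_k(\omega_i)^p$, hence $\bigl(\sum_{j=1}^k\lambda_j(\omega_i)^p\bigr)^{1/p}\le k^{1/p}\,\lambda_k(\omega_i)$ for $i=1,2$. Summing over $i=1,2$ and then taking the infimum over all partitions on both sides of the resulting inequality yields $c_p\le k^{1/p}c_\infty$.

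Next I would plug this into \eqref{eq:functions_achieving_c_p}. Writing $\|u_{ip}\|^2=\int_\Omega|\nabla u_{ip}|^2\,dx$, the first line of \eqref{eq:functions_achieving_c_p} reads $\bigl(\sum_{i=1}^k\|u_{ip}\|^{2p}\bigr)^{1/p}+\bigl(\sum_{i=1}^k\|v_{ip}\|^{2p}\bigr)^{1/p}=c_p\le k^{1/p}c_\infty\le k\,c_\infty$. Since $\|u_{jp}\|^2=\bigl(\|u_{jp}\|^{2p}\bigr)^{1/p}\le\bigl(\sum_{i=1}^k\|u_{ip}\|^{2p}\bigr)^{1/p}$ for each $j$, and likewise for $v_{jp}$, every component satisfies $\|u_{jp}\|^2,\|v_{jp}\|^2\le k\,c_\infty$; summing over $j=1,\dots,k$ then gives $\|u_p\|_{H^1_0(\Omega)}^2,\|v_p\|_{H^1_0(\Omega)}^2\le k^2c_\infty$, a bound independent of $p\in\N$.

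There is no genuine obstacle here: the whole argument rests only on the elementary estimate $\|\cdot\|_{\ell^p}\le k^{1/p}\|\cdot\|_{\ell^\infty}$ on $\R^k$ together with the ordering of the eigenvalues. The only points requiring a little care are to identify correctly which $c_\infty$ is meant (the $p=\infty$ limit problem \eqref{eq:OPPmain}, not the general $\vphi$-level of Section \ref{sec:Existence}), and to recall, from \eqref{eq:functions_achieving_c_p} and the normalizations summarized just above the lemma, that the squared $H^1_0$-norms of the components of $u_p$ and $v_p$ are precisely the quantities entering the level $c_p$, so that $c_p$ controls them directly.
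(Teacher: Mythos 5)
Your proof is correct and follows essentially the same route as the paper: the key point in both is the elementary estimate $\bigl(\sum_{j=1}^k\xi_j^p\bigr)^{1/p}\leq k^{1/p}\max\{\xi_1,\ldots,\xi_k\}$ applied to the ordered eigenvalues of an arbitrary test partition, followed by reading the $H^1_0$ bounds off \eqref{eq:functions_achieving_c_p}. You merely spell out the details (including the correct identification of $c_\infty$ with the level of \eqref{eq:OPPmain} in the reduced setting) that the paper leaves implicit.
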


\begin{proof}
The comparison between the levels follows easily from the estimate
\[
\left(\sum_{j=1}^k \xi_j^p\right)^{1/p}\leq k^{1/p} \max\{\xi_1,\ldots, \xi_k\} \qquad \text{ for } \xi_1,\ldots, \xi_k\geq 0.
\]
The  $H^1_0$--boundedness of $u_p$ and $v_p$ then follows directly from \eqref{eq:functions_achieving_c_p}.
\end{proof}

\medbreak

Thus, we have guaranteed the existence of $\utilde=(\utilde_1,\ldots, \utilde_k)$ and $\vtilde=(\vtilde_1,\ldots, \vtilde_k)$ such that (up to a subsequence)
\[
u_p\ \rightharpoonup \utilde,\quad v_p \rightharpoonup \vtilde \qquad \text{ in } H^1_0(\Omega;\R^k),
\]
and
\[
u_p\ \to \utilde,\quad v_p \to \vtilde \qquad \text{ in } L^2(\Omega;\R^k).
\]
In particular, $\utilde_i\cdot \vtilde_j\equiv 0$ a.e. in $\Omega$ $\forall i,j$, and
\begin{equation}\label{eq:the limiting components are L^2 orthogonal}
\int_\Omega \utilde_i \utilde_j\, dx =\int_\Omega \vtilde_i\vtilde_j\, dx=\delta_{ij}.
\end{equation}
Moreover, it is easy to check that

\begin{equation}\label{eq:weightsbounds}
\left[ \frac{\int_\Omega |\nabla u_{ip}|^2\, dx}{k^{1/p}\int_\Omega |\nabla u_{kp}|^2\, dx}\right]^{p-1}\leq a_{ip}\leq 1,\qquad \left[ \frac{\int_\Omega |\nabla v_{ip}|^2\, dx}{k^{1/p}\int_\Omega |\nabla v_{kp}|^2\, dx}\right]^{p-1}\leq b_{ip}\leq 1.
\end{equation}

Therefore, there exists $1\leq l\leq k$ such that (up to a subsequence)
\begin{equation}\label{a_ip_to_0_and_non_zero}
a_{ip},b_{ip}\to 0,\quad \forall i=1,\ldots, l-1; \qquad  a_{ip}\to \atilde_i\neq 0, \ \ \ b_{ip} \to \btilde_i\neq 0 \qquad \forall i=l,\ldots, k 
\end{equation}
(observe that it is not necessarily true that the same number of components $a_i$ and $b_i$ vanish, and here we just suppose it without loss of generality to make the notations simpler).
As 
\[
a_{ip}\int_\Omega |\nabla u_{ip}|^2\, dx=\mu_{ip},\qquad b_{ip}\int_\Omega |\nabla v_{ip}|^2\, dx=\nu_{ip},
\]
also
\[
\mu_{ip},\nu_{ip}\to 0,\quad \forall i=1,\ldots, l-1; \qquad  \mu_{ip}\to \mutilde_i\neq 0, \ \ \ \nu_{ip} \to \nutilde_i\neq 0 \qquad \forall i=l,\ldots, k.
\]

In the rest of this section, our aim is to show that:
\begin{enumerate}
\item[(H1)] \label{item:goal1} $u_{ip}\to \utilde_i$ and $v_{ip}\to \vtilde_i$ in $C^{0,\gamma}\cap H^1_0(\Omega)$ for every $0<\gamma<1$, $\forall i=l,\ldots, k$;
\item[(H2)] \label{item:goal2} $\utilde_i$ and $\vtilde_i$ are Lipschitz continuous, for every $i=l,\ldots, k$;
\item[(H3)] \label{item:goal3} The nodal set $\Gamma^l_{(\utilde,\vtilde)}:=\{x\in \Omega: \utilde_i(x)=\vtilde_i(x)=0, \ \ \forall i=l,\ldots,k\}$ has Hausdorff dimension at most $N-1$, the pair  
\begin{equation}\label{eq:optimalpartition_final!}
(\widetilde \omega_1,\widetilde \omega_2):=\left( \interior\left(\, \overline{ \left\{\sum_{i=l}^k \utilde_i^2>0 \right\} } \, \right), \interior\left(\, \overline{ \left\{\sum_{i=l}^k \vtilde_i^2>0\right\} }\, \right)\right),
\end{equation}
belongs to $\Peh_2(\Omega)$, and it is a regular partition in the sense of Definition \ref{definition:regular_partition}. Moreover, $\partial \widetilde \omega_1\cap \Omega=\partial \widetilde \omega_2\cap \Omega$ and for each regular point $x_0\in \Reh$,
\[
\mathop{\lim_{x\to x_0}}_{x\in \widetilde \omega_1} \sum_{i=l}^k \tilde a_i |\nabla \utilde_i(x)|^2=\mathop{\lim_{x\to x_0}}_{x\in \widetilde \omega_2} \sum_{i=l}^k \tilde b_i |\nabla \vtilde_i(x)|^2\neq 0.
\]
\end{enumerate}
The main idea is that these properties only hold for $i=1,\ldots, k$ since all the other components do not appear in the limiting (as $p\to \infty$) Pohozaev identities \eqref{eq:Pohozaev--type_for_p}. This brings additional challenges, specially in the proof of (H3) - for instance, here in general we are dealing with vector solutions whose components are \emph{all} sign-changing.

Once these properties are proved, we can proceed as follows and end the proofs of Theorems \ref{thm:first_main_result} and \ref{thm:2ndmain}, proving that the optimal partition problem \eqref{eq:OPPmain} is achieved by the open regular partition $(\widetilde \omega_1,\widetilde \omega_2)$.

\begin{proof}[Proof of Theorems \ref{thm:first_main_result} and \ref{thm:2ndmain}]

The statements for $p$ fixed in Theorem \ref{thm:2ndmain} are a direct consequence of Theorem \ref{thm:3rdmain}, while the convergences of $a_{np}^i$ and $\utilde_{np}$ as $p\to \infty$ follow from \eqref{a_ip_to_0_and_non_zero} and (H1).

It is hard to check directly that $(\widetilde \omega_1,\widetilde \omega_2)$ solves \eqref{eq:OPPmain}, since we cannot ensure that $\{\tilde u_1,\ldots, \tilde u_k\}$ and $\{\tilde v_1,\ldots, \tilde v_k\}$ are mutually $H^1_0$--orthogonal (recall that (H1) only holds for the last $k-l+1$ components). So, in order to conclude our proof, we will use some auxiliary sets which are obtained via the theory of $\gamma$--convergence for quasi-open domains (cf. \cite[Section 5.3]{BucurButtazzoBook}). From \cite[Proposition 5.3.4]{BucurButtazzoBook}, we know that there exist quasi-open sets $\omega_1,\omega_2\subset \Omega$ such that
\[
\omega_{u_p}\stackrel{\gamma}{\rightharpoonup} \omega_1,\quad \omega_{v_p}\stackrel{\gamma}{\rightharpoonup} \omega_2,\qquad \text{ as } p\to \infty.
\]
Recall that this means that, taking $z_{1p}, z_{2p}$ to be the solutions of
\[
\begin{cases}
-\Delta z_{1p}=1\\
z_{1p}\in H^1_0(\omega_{u_p})
\end{cases}
\qquad 
\begin{cases}
-\Delta z_{2p}=1\\
z_{2p}\in H^1_0(\omega_{v_p})
\end{cases}
\]
then there exist $z_1,z_2\in H^1_0(\Omega)$ quasi-continuous such that (up to a subsequence)
\[
z_{1p}\rightharpoonup z_1,\quad z_{2p}\rightharpoonup z_2 \qquad \text{ in } H^1_0(\Omega),
\]
and $(\omega_1,\omega_2)=(\{ z_{1}>0\},\{ z_{2}>0\})$. Define $\widetilde \Gamma=\partial \widetilde \omega_1\cap \Omega=\partial \widetilde \omega_2\cap \Omega$.

\medbreak 

{\sc Claim 1.} We have
\begin{align*}
&\omega_1\subseteq \widetilde \omega_1\cup (\Seh_{(\tilde u,\tilde v)}\cap \widetilde \Gamma)= \widetilde \omega_1 \cup \{x\in \widetilde \Gamma:\ N(x,(\tilde u,\tilde v),0^+)>1\};\\
&\omega_2\subseteq \widetilde \omega_2\cup (\Seh_{(\tilde u,\tilde v)}\cap \widetilde \Gamma).
\end{align*}
Given $x_0\in \left\{\sum_{i=l}^k \vtilde_i^2>0\right\}$, from (H1) there exists $\delta>0$ such that $z_{2p}\geq \delta$ in $B_\delta(x_0)\subset \omega_{v_p}$. Then $z_2(x_0)>0$, and $z_1(x_0)=0$; whence $z_1=0$ a.e. in $\omega_v$, and also q.e. (quasi-everywhere). In conclusion, $\left\{\sum_{i=l}^k \vtilde_i^2>0\right\}\subset \{z_1=0\}$ in the sense of capacity. Moreover, since
\[
z_1(x)=\lim_{r\to 0} \frac{1}{|B_R|}\int_{B_r(x)} z_1 \qquad \text{q.e. in }\Omega
\]
(\cite[p. 160]{EvansGariepy}) and the common nodal $\Gamma_{(\tilde u,\tilde v)}^l$ has zero Lebesgue measure, then $\widetilde \omega_1\subseteq \{z_1=0\}$. Finally, since $z_1$ is approximately continuous q.e.\footnote{A function $u$ is approximately continuous at $x_0$ if there exists $A_{x_0}$ measurable such that $\lim_{r\to 0} |B_r(x_0)\cap A_{x_0}|/|B_r(x_0)|=1$ and $u|_{A_{x_0}}$ is continuous at $x_0$. From \cite[Remark 3.3.5]{Ziemer}, every $W^{1,2}(\R^N)$ function is approximately continuous q.e. $x\in \R^N$.} and $\widetilde \Gamma\cap \Reh_{(\tilde u,\tilde v)}$ is a regular hypersurface by (H3), then $\widetilde \omega_1\cup (\widetilde \Gamma\cap \Reh_{(\tilde u,\tilde v)})\subseteq \{z_1=0\}$, and Claim 1 follows.

\medbreak 

{\sc Claim 2.} The set $\Seh_{(\tilde u,\tilde v)}\cap \widetilde \Gamma$ has zero capacity.

\smallbreak

\noindent This is a consequence of a stratification result in the line of a result by Caffarelli and Lin, namely \cite[Theorem 4.2]{CaffarelliLinStrat}. There it is proved, for the case $k_i=1$ in \eqref{eq:OPPmain}, that the singular set $\Seh_{(\tilde u,\tilde v)}$ is stratified:
\[
S_0\subset S_1\subset \ldots \subset S_{N-2}=\Seh_{(\tilde u,\tilde v)},
\]
with $\mathscr{H}_{\rm dim}(S_i)\leq i$ and $S_{N-2}\setminus S_{N-3}$ is contained in a finite union of $(N-2)$--dimensional $C^1$ manifolds. In particular, from this we deduce that $\mathscr{H}^{N-2}(\Seh_{(\tilde u,\tilde v)}\cap \widetilde \Gamma)<\infty$, and the claim follows. The case $k_i>1$ is more delicate, but the arguments of \cite{CaffarelliLinStrat} can be adapted: the most relevant fact is that we already have a compatibility condition in the regular part of the free boundary - see condition (H3). For more details of this we refer to \cite{TavaresTerraciniStrat}.

\medbreak

Combining both claims, we have that $\lambda_k(\widetilde \omega_1)=\lambda_k(\widetilde \omega_1\cup ( \Seh_{(\tilde u,\tilde v)} \cap \widetilde \Gamma) )\leq \lambda_k(\omega_1)$ and 
$\lambda_k(\widetilde \omega_2)=\lambda_k(\widetilde \omega_2\cup ( \Seh_{(\tilde u,\tilde v)} \cap \widetilde \Gamma) )\leq \lambda_k(\omega_2)$. Thus, recalling Lemma \ref{lemma:upper_inequality c_p leq c_infty} and the fact that $\lambda_k(\cdot)$ is lower semi-continuous for the weak $\gamma$-convergence (\cite[Example 5.4.2]{BucurButtazzoBook} and \cite[Theorem 3.1]{BucurButtazzoHenrot}),

\begin{align*}
c_\infty &=  \lim_{p\to \infty} (k^{1/p} c_\infty) \geq \limsup_{p\to \infty} c_p\geq \liminf c_p \\
		&=\liminf_{p\to \infty}\left( \left(\sum_{i=1}^k \left(\lambda_i(\omega_{u_p})\right)^p\right)^{1/p}+\left(\sum_{i=1}^k \left(\lambda_i(\omega_{v_p})\right)^p\right)^{1/p}\right)\\
	         & \geq	 \liminf (\lambda_k(\omega_{u_p})+\lambda_k(\omega_{v_p}))\\
		&\geq \lambda_k(\omega_1)+\lambda_k(\omega_2)\\
		&\geq \lambda_k\left(\widetilde \omega_1\right)+ \lambda_k\left(\widetilde \omega_2\right)\geq c_\infty,
\end{align*}
which implies that $c_\infty$ is achieved by the open regular partition \eqref{eq:optimalpartition_final!}. The remaining statements now follow from (H2) and (H3).
\end{proof}

\begin{remark}\label{rem:last_remark}
Observe that the proof for the more general case \eqref{eq:OPPmain_general_with_F} follows exactly in the same way as the one of \eqref{eq:OPPmain}. In fact, in the proof of the latter case we have only used properties (F1) and (F2) - cf. the Introduction - of the function $F(x_1,\ldots, x_m)=\sum_{i=1}^m x_i$, and never its specific form. Dealing with the general case, one would have to approximate it with
\[
\inf_{(\omega_1,\ldots,\omega_m)\in \Peh_m(\Omega)} F\left( \Bigl(\sum_{j=1}^{k_1} (\lambda_j(\omega_1)^p\Bigr)^{1/p},\ldots, \Bigl(\sum_{j=1}^{k_m} (\lambda_j(\omega_1)^p\Bigr)^{1/p}\right),
\]
instead of \eqref{eq:OPPsecondary}, which would only lead to heavier notations.
\end{remark}
So, to conclude, we will show (H1) in the following subsection, and (H2)--(H3) in Subsection \ref{sec:final_regularity}.


\subsection{Proof of property (H1): Strong convergence of the last components of $u_{p}$, $v_p$}\label{sec:strong_convergence_for_some_i}

We will follow the ideas of Subsection \ref{subsec:Holderbounds}, which here apply in an easier way. First, we deduce $L^\infty$ bounds.

\begin{lemma}
There exists $C>0$, independent of $p$, such that
\[
\|u_{ip}\|_{L^\infty(\Omega)}, \|v_{ip}\|_{L^\infty(\Omega)}\leq C,\qquad \forall i=1,\ldots, k.
\]
\end{lemma}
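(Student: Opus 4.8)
The plan is to combine the uniform $H^1_0$ bounds from Lemma \ref{lemma:upper_inequality c_p leq c_infty} with the Brezis–Kato / De Giorgi–Nash–Moser machinery, exactly in the spirit of the $L^\infty$ estimate in Lemma \ref{lemma:uniform_bounds_in_beta}, Step 3. The starting point is the observation that for each $p$ the functions $u_{ip}$, $v_{ip}$ satisfy, on the open sets $\omega_{u_p}$, $\omega_{v_p}$ respectively,
\[
-a_{ip}\Delta u_{ip}=\mu_{ip}u_{ip}\ \text{ in } \omega_{u_p},\qquad -b_{ip}\Delta v_{ip}=\nu_{ip}v_{ip}\ \text{ in }\omega_{v_p},
\]
while $u_{ip}\equiv 0$ on $\Omega\setminus\omega_{u_p}$ and $v_{ip}\equiv 0$ on $\Omega\setminus\omega_{v_p}$. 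First I would record that the coefficients are under control uniformly in $p$: from \eqref{eq:weightsbounds} we have $0<a_{ip},b_{ip}\leq 1$, and since $a_{ip}\int_\Omega|\nabla u_{ip}|^2=\mu_{ip}$, $b_{ip}\int_\Omega|\nabla v_{ip}|^2=\nu_{ip}$, the $H^1_0$ bounds of Lemma \ref{lemma:upper_inequality c_p leq c_infty} give $0\le \mu_{ip},\nu_{ip}\le \bar\mu$ for a constant $\bar\mu$ independent of $p$. This is what makes the argument $p$-uniform.

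Next I would pass to an inequality valid in all of $\Omega$ using the Kato inequality, as in Lemma \ref{lemma:uniform_bounds_in_beta}: since $\mu_{ip}\geq 0$ and the equation has no sign-bad lower-order terms, one gets in $\mathscr{D}'(\Omega)$
\[
-a_{ip}\Delta |u_{ip}|\leq \mu_{ip}|u_{ip}|\leq \bar\mu\,|u_{ip}|,\qquad -b_{ip}\Delta|v_{ip}|\leq \bar\mu\,|v_{ip}|.
\]
(Strictly, one should justify that the distributional Laplacian picks up no positive boundary measure on $\partial\omega_{u_p}$ — this follows because $u_{ip}$ is an eigenfunction which vanishes on that boundary, or alternatively one works with Proposition \ref{eq:propositionmeasures}-type measures which are \emph{subtracted}, so the inequality only improves.) Dividing by $a_{ip}$ and bounding $1/a_{ip}$ is the only delicate point, so instead I would keep $a_{ip}$ on the left and test with $|u_{ip}|^{2t-1}$-type powers, or equivalently note that $w:=|u_{ip}|$ satisfies $-\Delta w\le (\bar\mu/a_{ip})w$; the cleanest route is to avoid $1/a_{ip}$ altogether by summing: setting $U_p:=\sum_{i=1}^k|u_{ip}|$ and using $a_{ip}\le 1$ one gets $-\Delta U_p\le \bar\mu\sum_i (1/a_{ip})|u_{ip}|$, which is not yet uniform. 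The fix — and this is the main obstacle — is to observe that the components with $a_{ip}\to 0$ are precisely those with $\mu_{ip}\to 0$, and $\mu_{ip}/a_{ip}=\int_\Omega|\nabla u_{ip}|^2$ stays bounded by Lemma \ref{lemma:upper_inequality c_p leq c_infty}; hence in fact $-\Delta|u_{ip}|\le (\mu_{ip}/a_{ip})|u_{ip}|\le C|u_{ip}|$ with $C$ independent of $p$ and $i$, and likewise for $v$. Summing in $i$ gives $-\Delta U_p\le kC\,U_p$ and $-\Delta V_p\le kC\,V_p$ in $\mathscr{D}'(\Omega)$, $U_p,V_p\in H^1_0(\Omega)$.

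Finally I would invoke the standard Brezis–Kato bootstrap (as quoted in Lemma \ref{lemma:uniform_bounds_in_beta}) together with the uniform $H^1_0$ bound $\|U_p\|_{H^1_0}+\|V_p\|_{H^1_0}\le C$: the subsolution inequality $-\Delta U_p\le kC\,U_p$ with $U_p\in H^1_0(\Omega)$ and right-hand side linear in $U_p$ yields, via the Moser iteration scheme, $\|U_p\|_{L^\infty(\Omega)}\le C'\|U_p\|_{L^2(\Omega)}\le C''$, with all constants depending only on $N$, $\Omega$, $k$, $C$ — hence not on $p$. Since $\|u_{ip}\|_{L^\infty}\le\|U_p\|_{L^\infty}$ and $\|v_{ip}\|_{L^\infty}\le\|V_p\|_{L^\infty}$, the claimed bound follows. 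The only genuine subtlety, worth a sentence in the write-up, is the identity $\mu_{ip}/a_{ip}=\|u_{ip}\|^2$ which reroutes the estimate around the vanishing of $a_{ip}$; everything else is textbook elliptic regularity applied uniformly in $p$.
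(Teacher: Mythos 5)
Your argument is essentially the paper's own proof: the key point in both is the identity $\mu_{ip}/a_{ip}=\int_\Omega|\nabla u_{ip}|^2\,dx\leq c_p\leq k\,c_\infty$, which circumvents the possible vanishing of $a_{ip}$, combined with the Kato inequality $-\Delta|u_{ip}|\leq (\mu_{ip}/a_{ip})|u_{ip}|$ in $\Omega$ and a standard Brezis--Kato bootstrap using the uniform $H^1_0$ bounds of Lemma \ref{lemma:upper_inequality c_p leq c_infty}. Your additional detour (summing the components as in Lemma \ref{lemma:uniform_bounds_in_beta}) is harmless but unnecessary once the coefficient bound is in hand.
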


\begin{proof}
Observe that 
\[
-\Delta |u_{ip}|\leq \frac{\mu_{ip}}{a_{ip}}|u_{ip}| \qquad \text{ in } \Omega,
\]
and
\[
\left| \frac{\mu_{ip}}{a_{ip}} \right|=\int_\Omega |\nabla u_{ip}|^2\, dx\leq c_p \leq k^{1/p} c_\infty \leq k c_\infty.
\]
Thus the result follows from a standard Brezis--Kato type argument.
\end{proof}

This is the first step to prove uniform H\"older bounds for the last $k-l+1$ components.

\begin{lemma}
For each $0<\gamma<1$ there exists $D>0$ such that
\[
\| u_{ip}\|_{C^{0,\gamma}(\overline \Omega)}, \| v_{ip}\|_{C^{0,\gamma}(\overline \Omega)}\leq D,\qquad \forall i=l,\ldots, k.
\]
\end{lemma}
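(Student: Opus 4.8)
The plan is to argue by contradiction via a blow-up, following the scheme of the proof of Theorem~\ref{thm:convergences}(i); here the argument is substantially simpler, since there is no competition parameter and the segregation $u_{ip}v_{jp}\equiv 0$ is exact rather than asymptotic, so that no analogue of Claim~2 in that proof is needed. Fix $0<\gamma<1$ and suppose, towards a contradiction, that
\[
L_p:=\max_{i=l,\ldots,k}\bigl\{[u_{ip}]_{C^{0,\gamma}(\overline\Omega)},\ [v_{ip}]_{C^{0,\gamma}(\overline\Omega)}\bigr\}\to +\infty .
\]
By the uniform $L^\infty$-bound, the maximizing pair of points for the (attained) H\"older seminorm must have vanishing mutual distance; so, up to relabelling and exchanging $u$ with $v$, we may assume $L_p=[u_{lp}]_{C^{0,\gamma}}$ is attained at $x_p,y_p$ with $r_p:=|x_p-y_p|\to 0$. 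Rescale
\[
\bar u_{ip}(x)=\frac{u_{ip}(x_p+r_px)}{L_pr_p^\gamma},\qquad \bar v_{ip}(x)=\frac{v_{ip}(x_p+r_px)}{L_pr_p^\gamma},\qquad x\in\Omega_p:=\frac{\Omega-x_p}{r_p}.
\]
For $i=l,\dots,k$ one has $[\bar u_{ip}]_{C^{0,\gamma}(\overline\Omega_p)},[\bar v_{ip}]_{C^{0,\gamma}(\overline\Omega_p)}\le 1$, with value $1$ realized by $\bar u_{lp}$ at the pair $(0,(y_p-x_p)/r_p)$ whose second point stays on the unit sphere; moreover $\bar u_{ip}$ and $\bar v_{jp}$ have disjoint supports, $r_p^2\|\bar u_{ip}\|_\infty\to0$, and on the rescaled sets $-a_{ip}\Delta\bar u_{ip}=r_p^2\mu_{ip}\bar u_{ip}$, $-b_{ip}\Delta\bar v_{ip}=r_p^2\nu_{ip}\bar v_{ip}$, where for $i\ge l$ the coefficients $a_{ip},b_{ip}$ are bounded below by \eqref{a_ip_to_0_and_non_zero} while $r_p^2\mu_{ip}/a_{ip}=r_p^2\int_\Omega|\nabla u_{ip}|^2\,dx\to0$ (similarly for $v$).

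The key step — the analogue of Claim~1 in the proof of Theorem~\ref{thm:convergences}(i), and the main obstacle — is to show that $\sum_{i=l}^k\bar u_{ip}^2(0)$ and $\sum_{i=l}^k\bar v_{ip}^2(0)$ stay bounded. If not, the corresponding sum tends to $+\infty$ uniformly on every ball $B_R(0)$, since each summand has H\"older seminorm $\le1$. In the case $\sum_{i=l}^k\bar v_{ip}^2\to\infty$ on $B_2(0)$, rigid segregation forces $B_2(0)$ to be disjoint from the support of $\bar u_p$, so $\bar u_{lp}\equiv0$ on $B_2(0)$ — impossible, since its H\"older seminorm is realized there. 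In the case $\sum_{i=l}^k\bar u_{ip}^2\to\infty$ on every $B_R(0)$, each such ball lies in the rescaled $\omega_{u_p}$, so there $-\Delta\bar u_{lp}=r_p^2(\mu_{lp}/a_{lp})\bar u_{lp}$; the function $\bar u_{lp}-\bar u_{lp}(0)$ then has vanishing value at the origin, H\"older seminorm $\le1$, and Laplacian tending to $0$ in $L^\infty_{\mathrm{loc}}$ — indeed $r_p^2(\mu_{lp}/a_{lp})\bar u_{lp}(0)=r_p^{2-\gamma}(\mu_{lp}/a_{lp})\,u_{lp}(x_p)/L_p\to0$ — so it converges in $C^1_{\mathrm{loc}}$ to a globally $\gamma$-H\"older entire harmonic function, which interior gradient estimates force to be constant; but $(0,(y_p-x_p)/r_p)$ stays at unit distance and realizes seminorm $1$, a contradiction. (As in \cite{NTTV1}, the degenerate case in which the blow-up domain is a half-space is easier and handled analogously, using $u_{ip}=0$ on $\partial\Omega$.) Hence the values at the origin are bounded.

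Consequently $\bar u_{ip},\bar v_{ip}$ ($i=l,\dots,k$) are locally bounded and, by elliptic estimates, converge along a subsequence in $C^1_{\mathrm{loc}}(\R^N)$ to $\gamma$-H\"older limits $\bar u_i,\bar v_i$ satisfying $\bar u_i\bar v_j\equiv0$, with $\bar u_l$ non-constant, and harmonic on $\{\sum_{j\ge l}\bar u_j^2>0\}$, respectively $\{\sum_{j\ge l}\bar v_j^2>0\}$. Passing to the limit in the Pohozaev identity \eqref{eq:Pohozaev--type_for_p} (the terms with $i<l$ disappearing since $a_{ip},b_{ip}\to0$, the lower-order terms since $r_p^2\to0$) yields the identity \eqref{eq:pohozaev_for_bar_uv2} for $(\bar u,\bar v)$, which is exactly the input needed to run the Almgren monotonicity formula for the blow-up limit, as in Corollary~\ref{coro:Almgren_for_baru_barv}. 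Combining it with the fact that $\sum_{j\ge l}\bar u_j^2$ and $\sum_{j\ge l}\bar v_j^2$ are non-negative, subharmonic (Kato's inequality) and mutually segregated, one reaches a contradiction exactly as in Step~B of the proof of Theorem~\ref{thm:convergences}(i): the common nodal set of $(\bar u,\bar v)$ would be a subspace of codimension $\ge2$, hence of zero capacity, so $\bar u_l$ would extend to a non-constant, $\gamma$-H\"older, harmonic function on all of $\R^N$, which is impossible. This contradiction establishes the claimed uniform bound $D$.
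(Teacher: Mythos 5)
Your proof is correct and follows essentially the same strategy as the paper's: blow-up by contradiction, a dichotomy whose ``deep interior'' branch is settled by a harmonic Liouville theorem (after subtracting the value at the origin) and whose remaining branch is settled by the segregated-Liouville/Almgren machinery of Step B in the proof of Theorem \ref{thm:convergences}(i); the paper merely organizes the dichotomy by the ratio $\dist(x_p,\Gamma_{(u_p,v_p)})/r_p$ instead of by boundedness of the rescaled values at the origin, and the two splits lead to the same sub-arguments. One small imprecision: the rescaled functions are only Lipschitz across the free boundary, so the convergence of the blow-ups is in $C^{0,\gamma'}_{\mathrm{loc}}\cap H^1_{\mathrm{loc}}$ (as in Theorem \ref{thm:convergence_of_blowupsequence}) rather than $C^1_{\mathrm{loc}}$ --- which is all your argument actually needs, including for passing to the limit in the Pohozaev identity.
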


\begin{proof}
Suppose, by contradiction, that
\[
L_p:= \max_{i=l,\ldots, k} \left\{ \mathop{\sup_{x,y\in \overline \Omega}}_{x\neq y}\frac{|u_{ip}(x)-u_{ip}(y)|}{|x-y|^\gamma}, \mathop{\sup_{x,y\in \overline \Omega}}_{x\neq y}\frac{|v_{ip}(x)-v_{ip}(y)|}{|x-y|^\gamma}  \right\}\to \infty
\]
as $p\to \infty$, and suppose without loss of generality that $L_p$ is achieved for $u_{kp}$ at $x_p,y_p$, and that $x_p\in \omega_{u_p}$. Since we have uniform $L^\infty$ bounds, then $r_p:=|x_p-y_p|\to 0$. Take, for $i=l,\ldots, k$, the rescaled functions 
\[
\bar u_{ip}(x)=\frac{1}{L_p r_p^\gamma}u_{ip}(x_p+r_px),\qquad \bar v_{ip}(x)=\frac{1}{L_p r_p^\gamma} v_{ip}(x_p+r_p x), \qquad x\in \Omega_p=\frac{\Omega-x_p}{r_p},
\]
which solve
\[
\begin{cases}
- a_{ip} \Delta \bar u_{ip}=r_p^2 \mu_{ip} \bar u_{ip} \qquad  &\text{ in }\frac{\omega_{u_p}-x_p}{r_p} \\
-b_{ip} \Delta \bar v_{ip} =r_p^2 \nu_{ip} \bar v_{ip} \qquad  &\text{ in }\frac{\omega_{v_p}-x_p}{r_p}.
\end{cases}
\]
Moreover, due to the normalization,
\begin{align}\label{eq:normalized=1}
\max_{i=l,\ldots, k} \left\{ \mathop{\sup_{x,y\in \overline{\Omega}_p}}_{x\neq y}\frac{|\bar u_{ip}(x)- \bar u_{ip}(y)|}{|x-y|^\gamma}, \mathop{\sup_{x,y\in \overline{\Omega}_p}}_{x\neq y}\frac{|\bar v_{ip}(x)-\bar v_{ip}(y)|}{|x-y|^\gamma}  \right\}&\\
=\left|\bar u_{kp}(0)-\bar u_{kp}\left(\frac{y_p-x_p}{r_p}\right)\right|=1&
\end{align}

Let $z_p\in \Gamma_{(u_p,v_p)}$ be such that $\dist(x_p,\Gamma_{(u_p,v_p)})=|x_p-z_p|$. We now have to split the proof in two cases:

\medbreak

\noindent \emph{Case 1. } Suppose that $|x_p-z_p|/r_p\to \infty$.

Then, given $R>0$, $B_{R\,r_p}(x_p)\subset \omega_{u_p}$, and $-a_{kp}\Delta \bar u_{kp}=r_p^2 \mu_{kp}\bar u_{kp}$ in $B_R(0)$
for large $p$. By taking the auxiliary function $\tilde u_{kp}(x):= \bar u_{kp}(x)-\bar u_{kp}(0)$, we have that it is bounded in $C^{0,\gamma}(B_R(0))$, and there exists $\tilde v_k$ such that
\[
\tilde v_{kp}\to \tilde v_k \qquad \text{ in } C^{0,\gamma'}(B_R(0)),\ \forall 0<\gamma'<\gamma.
\]
As
\[
-a_{kp}\Delta \tilde u_{kp}=r_p^2 \mu_{kp}\bar u_{kp}=\frac{\mu_{kp}r_p^{2-\gamma}}{L_p}u_{kp}(x_p+r_p\cdot)\to 0 \qquad \text{ in } B_R(0),\ \forall R>0,
\]
and $a_{kp}\to \atilde_k\neq 0$, then $\tilde v_k$ is harmonic in $\R^N$, and
\begin{equation}\label{eq:normalized_tilde_v (aux)}
\mathop{\sup_{x,y\in \R^N}}_{x\neq y} \frac{|\tilde v_k(x)-\tilde v_k(x)|}{|x-y|^\gamma}=1<\infty.
\end{equation}
Thus \cite[Corollary 2.3]{NTTV1} yields that $\tilde v_k$ is constant, which contradicts \eqref{eq:normalized_tilde_v (aux)}.

\medbreak 

\noindent \emph{Case 2. } Suppose that $|x_p-z_p|/r_p$ is bounded.

In this case, observe that this information combined with \eqref{eq:normalized=1} and
\[
\bar u_{ip}\left(\frac{z_p-x_p}{r_p}\right)=\bar v_{ip}\left(\frac{z_p-x_p}{r_p}\right)=0
\]
yields boundedness of $\bar u_{ip}$ and $\bar v_{ip}$ in $C^{0,\gamma}(B_R(0))$--norm for every $R>0$ sufficiently large, $i=l,\ldots, k$. Thus there exists $\bar u_i$, $\bar v_i$ ($i=l,\ldots, k$) such that $\bar u_{ip}\to \bar u_i$, $\bar v_{ip}\to \bar v_i$ in $C^{0,\gamma'}(B_R(0))$ for every $0<\gamma'<\gamma$ and $R>0$. As
\[
-\Delta \left(\sum_{i=l}^k |\bar u_{ip}|\right)\leq r_p^2 \sum_{i=1}^k \frac{\mu_{ik}}{a_{ik}} |\bar u_{ip}|,\quad -\Delta \left(\sum_{i=l}^k |\bar v_{ip}|\right) \leq r_p^2 \sum_{i=l}^k \frac{\nu_{ip}}{b_{ik}} |\bar v_{ip}| \qquad \text{ in } \Omega,
\]
then the limiting functions $\sum_{i=l}^k |\bar u_{i}|$ and $\sum_{i=l}^k |\bar v_{i}|$ are nonnegative in $\R^N$, have a common zero, are subharmonic in $\R^N$, have finite $C^{0,\gamma}(\R^N)$ semi-norm  and satisfy $\left( \sum_{i=l}^k |\bar u_{i}|\right)\cdot \left( \sum_{i=l}^k |\bar v_{i}|\right)\equiv 0$. Thus by the Liouville--type result \cite[Proposition 2.2]{NTTV1} we have that one of these functions is zero. By \eqref{eq:normalized=1}, we necessarily have $\sum_{i=l}^k |\bar v_{i}|\equiv 0$. Now we can finish this proof \emph{exactly} as in the final part of Step B of the proof of Theorem \ref{thm:convergences}-(i) (or as in \cite[p. 293]{NTTV1}), as soon as we prove an Almgren monotonicity formula for the remaining components.

From \eqref{eq:Pohozaev--type_for_p}, we have
\begin{align*}
(2-N) \sum_{i=1}^k \int_{B_r(x_0)} (a_{ip}|\nabla \bar u_{ip}|^2+b_{ip} |\nabla \bar v_{ip}|^2)\, dx\\
=\sum_{i=1}^k \int_{\partial B_r(x_0)} a_{ip} r(2(\partial_n \bar u_{ip})^2-|\nabla \bar u_{ip}|^2)\, d\sigma +\sum_{i=1}^k \int_{\partial B_r(x_0)} b_{ip} r (2(\partial_n \bar v_{ip})^2-|\nabla \bar v_{ip}|^2)\, d\sigma\\
+\sum_{i=1}^k \int_{\partial B_r(x_0)} r_p^2 r(\mu_{ip} \bar u_{ip}^2+\nu_{ip} \bar v_{ip}^2)\, d\sigma-\sum_{i=1}^k \int_{B_r(x_0)} r_p^2N(\mu_{ip} \bar u_{ip}^2+\nu_{ip} \bar v_{ip}^2)\, dx.
\end{align*}
and, passing to the limit as $p\to \infty$,
\begin{align*}
(2-N) \sum_{i=l}^k \int_{B_r(x_0)} \atilde_{i}|\nabla \bar u_{i}|^2\, dx=\sum_{i=l}^k \int_{\partial B_r(x_0)} \atilde_{i} r(2(\partial_n \bar u_{i})^2-|\nabla \bar u_{i}|^2)\, d\sigma
\end{align*} 
for every $x_0\in \R^N$, $r>0$ (observe that, as $a_{ip}, b_{ip}\to 0$ from $i=1,\ldots, l-1$, the previous sums start from $i=l$). Then, as in Theorem \ref{thm:Almgren}, for 
\[
N(x_0,(\bar u_l,\ldots, \bar u_k), r):=\frac{E(x_0,(\bar u_l,\ldots, \bar u_k),r)}{H(x_0,(\bar u_l,\ldots, \bar u_k),r)},
\]
with
\[
E(x_0,(\bar u_l,\ldots, \bar u_k),r)=\frac{1}{r^{N-2}} \sum_{i=l}^k \int_{B_r(x_0)} \atilde_i |\nabla \bar u_i|^2\, dx
\]
and
\[
H(x_0,(\bar u_l,\ldots, \bar u_k),r)=\frac{1}{r^{N-1}} \sum_{i=l}^k \int_{\partial B_r(x_0)} \atilde_i \bar u_i^2\, d\sigma,
\]
we can prove a monotonicity result, namely that $N(x_0,(\bar u_l,\ldots, \bar u_k),r)$ is a nondecreasing function in $r$, and 
\[
\frac{d}{dr}\log H(x_0,(\bar u_l,\ldots, \bar u_k),r)=\frac{2 N(x_0,(\bar u_l,\ldots, \bar u_k),r)}{r},\qquad \forall x_0\in \R^N,\ r>0.
\]
\end{proof}
We can now end the subsection with the proof of our first aim.

\begin{proof}[Proof of property (H1)]
The previous lemma provides the existence of $\utilde_i$, $\vtilde_i$ ($i=l,\ldots,k$) such that
\[
u_{ip}\to \utilde_i,\quad v_{ip}\to \vtilde_i \qquad \text{ in } C^{0,\gamma}(\overline \Omega),\ \forall 0<\gamma<1.
\]
Moreover, it is now clear that
\[
\begin{cases}
-\atilde_i \Delta \utilde_i =\mutilde_i \utilde_i \qquad &\text{ in } \{\sum_{i=l}^k \utilde_i^2>0\}, \\
-\btilde_i \Delta \vtilde_i=\nutilde_i \vtilde_i \qquad    &\text{ in } \{\sum_{i=l}^k \vtilde_i^2>0\}.
\end{cases}
\]
The strong $H^1_0$--convergence now follows in a standard way, combining the previous system with the one for $u_{ip}$, $v_{ip}$.
\end{proof}


\subsection{Proof of properties (H2) and (H3): Regularity of the last components of $\utilde,\vtilde$, and of the partition $(\widetilde \omega_1,\widetilde \omega_2)$}\label{sec:final_regularity}

From property (H1), proved before, we garantee the existence of $\utilde_i,\vtilde_i\in H^1_0(\Omega)\cap C^{0,\gamma}(\overline \Omega)$ for every $i=l,\ldots, k$, which satisfy
\[
-\atilde_i \Delta \utilde_i = \mutilde_i \utilde_i  \text{ in the open set } \left\{\sum_{i=l}^k \utilde_i^2 >0\right\},  
\]
while 
\[
-\btilde_i \Delta \vtilde_i =\nutilde_i \vtilde_i  \text{ in the open set } \left\{\sum_{i=l}^k \vtilde_i^2 >0\right\}.  
\]
Moreover, by passing to the limit $p\to \infty$ in \eqref{eq:Pohozaev--type_for_p}, we obtain the limiting local Pohozaev--type formula:
\begin{align}\label{eq:Pohozaev--type_at_the_limit_of_p}
(2-N) \sum_{i=l}^k \int_{B_r(x_0)} (\atilde_{i}|\nabla \utilde_{i}|^2+\btilde_{i} |\nabla \vtilde_{i}|^2)\, dx\\
=\sum_{i=l}^k \int_{\partial B_r(x_0)} \atilde_{i} r(2(\partial_n \utilde_{i})^2-|\nabla \utilde_{i}|^2)\, d\sigma +\sum_{i=l}^k \int_{\partial B_r(x_0)} \btilde_{i} r (2(\partial_n \vtilde_{i})^2-|\nabla \vtilde_{i}|^2)\, d\sigma\\
+\sum_{i=l}^k \int_{\partial B_r(x_0)} r( \mutilde_{i} \utilde_{i}^2+\nutilde_{i} \vtilde_{i}^2)\, d\sigma-\sum_{i=l}^k \int_{B_r(x_0)} N(\mutilde_{i} \utilde_{i}^2+ \nutilde_{i} \vtilde_{i}^2)\, dx,
\end{align}
for every $x_0\in \Omega$ and $r\in (0,\dist(x_0,\partial \Omega))$.

Having these properties at hand, the regularity of the pair $(\widetilde \omega_1,\widetilde \omega_2)$ follows almost exactly as in Section \ref{sec:Regularity_of_nodal_set}. In this subsection we will simply go through the proof, highlighting the similarities and the differences with respect to what we already presented. One difference is that one needs to work with the components $(\utilde_l,\ldots,\utilde_k,\vtilde_l,\ldots, \vtilde_k)$ instead of with the whole vector $(\utilde,\vtilde)$. Another delicate point is that although some results apply to the common nodal set
\[
\Gamma^l_{\utilde,\vtilde}=\{x\in \Omega:\ \utilde_i(x)=\vtilde_i(x)=0, \ \ \forall i=l,\ldots,k\},
\]
the regularity is proved only for its subsets $\partial \widetilde \omega_1$, $\partial \widetilde \omega_2$. A first simple by crucial observation is the following.

\begin{lemma}\label{lemma:common_boundary_widetilde}
We have $\partial \widetilde \omega_1 \cap \Omega=\partial \widetilde \omega_2\cap \Omega \subseteq \Gamma_{(\utilde,\vtilde)}^l$.
\end{lemma}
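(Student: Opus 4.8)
\textbf{Proof proposal for Lemma \ref{lemma:common_boundary_widetilde}.}

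The plan is to prove the chain of equalities and the inclusion in three short steps, exploiting only the definition \eqref{eq:optimalpartition_final!} of $\widetilde\omega_1,\widetilde\omega_2$, the segregation property $\utilde_i\cdot\vtilde_j\equiv 0$ (valid for all $i,j$, and in particular for $i,j\in\{l,\ldots,k\}$), and the continuity of the functions $\utilde_i,\vtilde_i$ coming from property (H1). Throughout write $A:=\{\sum_{i=l}^k\utilde_i^2>0\}$ and $B:=\{\sum_{i=l}^k\vtilde_i^2>0\}$, so that $\widetilde\omega_1=\interior(\overline A)$ and $\widetilde\omega_2=\interior(\overline B)$; note $A$ and $B$ are open (by continuity) and disjoint (by segregation).

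\emph{Step 1: $\partial\widetilde\omega_1\cap\Omega=\partial\widetilde\omega_2\cap\Omega$.} Since $A,B$ are disjoint open sets, $\overline A$ and $B$ are disjoint, hence $\overline A\subseteq\Omega\setminus B$ and, taking interiors, $\widetilde\omega_1\subseteq\interior(\Omega\setminus B)=\Omega\setminus\overline B$. I would then show the reverse inclusion $\Omega\setminus\overline B\subseteq\widetilde\omega_1$, which is where the minimality enters: a point $x_0\in\Omega\setminus\overline B$ has a ball $B_\delta(x_0)$ on which all $\vtilde_i\equiv0$ ($i=l,\ldots,k$), so on the connected component of $\Omega\setminus\Gamma^l_{(\utilde,\vtilde)}$ containing $x_0$ the functions $\utilde_l,\ldots,\utilde_k$ satisfy $-\atilde_i\Delta\utilde_i=\mutilde_i\utilde_i$; arguing exactly as in Corollary \ref{coro:One_component_positive} (using that $\Gamma^l_{(\utilde,\vtilde)}$ has empty interior, which will be established in the course of proving (H2)--(H3), or alternatively that $(\widetilde\omega_1,\widetilde\omega_2)$ will be shown to achieve $c_\infty$), this component cannot have all $\utilde_i$ vanishing, hence it is contained in $A$, so $x_0\in\overline A$ and, being interior, $x_0\in\widetilde\omega_1$. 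Thus $\widetilde\omega_1=\Omega\setminus\overline B$ and symmetrically $\widetilde\omega_2=\Omega\setminus\overline A$. From $\widetilde\omega_1=\Omega\setminus\overline{\widetilde\omega_2}$ and $\widetilde\omega_2=\Omega\setminus\overline{\widetilde\omega_1}$ one reads off $\partial\widetilde\omega_1\cap\Omega=\overline{\widetilde\omega_1}\cap\overline{\widetilde\omega_2}\cap\Omega=\partial\widetilde\omega_2\cap\Omega$ in the standard way.

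\emph{Step 2: $\partial\widetilde\omega_1\cap\Omega\subseteq\Gamma^l_{(\utilde,\vtilde)}$.} Let $x_0\in\partial\widetilde\omega_1\cap\Omega=\partial\widetilde\omega_2\cap\Omega$. If some $\utilde_i(x_0)\neq0$ ($i\in\{l,\ldots,k\}$), then by continuity $\sum_{i=l}^k\utilde_i^2>0$ on a neighbourhood of $x_0$, so $x_0\in A\subseteq\widetilde\omega_1$, contradicting $x_0\in\partial\widetilde\omega_1$ (as $\widetilde\omega_1$ is open). Likewise $x_0\in\partial\widetilde\omega_2$ forces $\vtilde_i(x_0)=0$ for all $i=l,\ldots,k$. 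Hence $x_0\in\Gamma^l_{(\utilde,\vtilde)}$, which closes the argument.

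The only genuinely non-routine point is the reverse inclusion $\Omega\setminus\overline B\subseteq\widetilde\omega_1$ in Step 1, since it requires knowing that no connected component of $\Omega\setminus\Gamma^l_{(\utilde,\vtilde)}$ is ``spurious'' (i.e.\ with all of $\utilde_l,\ldots,\utilde_k$ and $\vtilde_l,\ldots,\vtilde_k$ vanishing); this is precisely the analogue, for the truncated system $(\utilde_l,\ldots,\utilde_k,\vtilde_l,\ldots,\vtilde_k)$, of the clean-up/minimality arguments of Section \ref{sec:Regularity_of_nodal_set}, and I would either defer it to the proof of (H2)--(H3) or note that, granting the eventual identification of $(\widetilde\omega_1,\widetilde\omega_2)$ as a minimizer of $c_\infty$, it follows from the strict monotonicity of $\psi$ exactly as in the commented-out lemma preceding Section \ref{sec:Regularity_of_nodal_set}. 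Everything else is pure point-set topology together with the continuity supplied by (H1) and the segregation $\utilde_i\vtilde_j\equiv0$.
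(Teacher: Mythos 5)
Your Step 2 and the topological bookkeeping of Step 1 are fine, and your overall plan (show $\widetilde \omega_1=\Omega\setminus\overline{\{\sum_{i=l}^k \vtilde_i^2>0\}}$ and symmetrically, then read off the equality of boundaries) is a legitimate, slightly stronger variant of what the paper does. The problem lies in the one step you yourself flag as non-routine, and it is a genuine gap as written. You argue for $x_0\in\Omega\setminus\overline B$ (with $B:=\{\sum_{i=l}^k\vtilde_i^2>0\}$) via ``the connected component of $\Omega\setminus\Gamma^l_{(\utilde,\vtilde)}$ containing $x_0$''. But the only non-trivial case is precisely $x_0\in\Gamma^l_{(\utilde,\vtilde)}$ (if $x_0\notin\Gamma^l_{(\utilde,\vtilde)}$ then, since the ball $B_\delta(x_0)$ misses $B$, segregation gives $x_0\in A:=\{\sum_{i=l}^k\utilde_i^2>0\}$ immediately), and for such $x_0$ there is no component of $\Omega\setminus\Gamma^l_{(\utilde,\vtilde)}$ containing it, so the argument does not apply where it is needed. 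Moreover, no PDE or minimality input is required at this point: by segregation, $\Omega\setminus\Gamma^l_{(\utilde,\vtilde)}=A\cup B$ with $A,B$ open and disjoint, so every point of $B_\delta(x_0)\setminus\Gamma^l_{(\utilde,\vtilde)}$ lies in $A$; combined with the fact that $\Gamma^l_{(\utilde,\vtilde)}$ has empty interior (the truncated Almgren formula, which you correctly cite as forthcoming in the proof of (H2)--(H3)), this gives $B_\delta(x_0)\subseteq\overline A$, hence $x_0\in\interior(\overline A)=\widetilde\omega_1$. This is exactly the paper's proof, which is a direct two-line contradiction of this type; the ``spurious components with all of $\utilde_l,\ldots,\vtilde_k$ vanishing'' you worry about are by definition open subsets of $\Gamma^l_{(\utilde,\vtilde)}$, so the empty-interior statement already disposes of them and no analogue of Corollary \ref{coro:One_component_positive} is needed.

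A second, more serious objection concerns your proposed fallback: invoking ``the eventual identification of $(\widetilde\omega_1,\widetilde\omega_2)$ as a minimizer of $c_\infty$'' is circular at this stage. That identification is only obtained in the proof of Theorems \ref{thm:first_main_result} and \ref{thm:2ndmain}, which relies on (H3), and (H3) (in particular the definition of $\widetilde\Gamma=\partial\widetilde\omega_1\cap\Omega=\partial\widetilde\omega_2\cap\Omega$ and the regularity theory built on it) uses the present lemma. So the minimality route is not available, and unlike the $\beta\to\infty$ setting you cannot assume the truncated limit enjoys the minimality used in Corollary \ref{coro:One_component_positive}. Once you replace the component/minimality argument by the segregation-plus-empty-interior argument above, your proof closes and coincides in substance with the paper's.
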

\begin{proof}
Take $x_0\in \partial \widetilde \omega_1\cap \Omega$, so that in particular $\sum_{i=l}^k \utilde_i^2(x_0)=0$. Suppose, in view of a contradiction, that $x_0\not\in \partial \widetilde \omega_2\cap \Omega$. Then there exists $\delta>0$ so that $\sum_{i=l}^k \vtilde_i^2\equiv 0$ in $B_\delta(x_0)$. For every $x\in B_\delta(x_0)$, as $\Gamma^l_{(\utilde,\vtilde)}$ has non empty interior, then in each neighborhood of $x$ there exists a point where $\sum_{i=l}^k \utilde_i^2>0$, whence $x\in \overline{\{\sum_{i=l}^k \utilde_i^2>0\}}$. We have concluded that $B_\delta(x_0)\subseteq \overline{\{\sum_{i=l}^k \utilde_i^2>0\}}$, therefore $x_0\in \widetilde \omega_1$, a contradiction. The other inclusion is proved in a similar way.
\end{proof}

\medbreak

Let us describe the proofs of (H2) and (H3). From the system and the local Pohozaev--type identities, we have a version of the Almgren's Monotonicity Formula (Theorem \ref{thm:Almgren}) for $(\bar u_l,\ldots, \bar u_k), (\bar v_l,\ldots, \bar v_k)$.

\begin{theorem}
Take
\[
\widetilde E(x_0,r)=\frac{1}{r^{N-2}}  \sum_{i=l}^k \int_{B_r(x_0)} (\atilde_i |\nabla \utilde_i|^2+\btilde_i |\nabla \vtilde_i|^2)\, dx,
\]
\[
\widetilde H(x_0,r)=\frac{1}{r^{N-1}}\sum_{i=l}^k \int_{\partial B_r(x_0)} (\atilde_i \utilde_i^2+\btilde_i \vtilde_i^2)\, d\sigma,
\]
and 
\[
\widetilde N(x_0,r)=\frac{\widetilde E(x_0,r)}{\widetilde H(x_0,r)}.
\]
Then given $\tilde \Omega \Subset \Omega$ there exists $C>0$ and $\tilde r>0$ such that, for every $x_0\in \tilde \Omega$, $e^{C r^2}(N(x_0,(u,v),r)+1)$ is a non decreasing function for $r\in (0,\tilde r]$, and the limit $N(x_0,(u,v),0^+):=\lim_{r\to 0^+} N(x_0,(u,v),r)$ exists and is finite. Also,
\[
\frac{d}{dr}\log(H(x_0,(u,v),r))=\frac{2}{r}N(x_0,(u,v),r)\qquad \forall r\in (0,\tilde r)
\]
and $\Gamma^l_{(\tilde u,\tilde v)}$ has no interior points.
\end{theorem}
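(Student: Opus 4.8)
The plan is to run the argument of Theorem \ref{thm:Almgren} (together with its companion Corollary \ref{coro:Almgren_for_baru_barv}) essentially verbatim, now for the truncated vector $(\utilde_l,\dots,\utilde_k,\vtilde_l,\dots,\vtilde_k)$ in place of $(u,v)$, with $\widetilde E(x_0,r)$ understood to include the subtraction of the zeroth-order term $\int_{B_r(x_0)}\sum_{i=l}^k(\mutilde_i\utilde_i^2+\nutilde_i\vtilde_i^2)\,dx$, exactly as in the definition of $E(x_0,(u,v),r)$ in Subsection \ref{subsec:Almgren}. The four structural ingredients that make that argument work are all available: the equations $-\atilde_i\Delta\utilde_i=\mutilde_i\utilde_i$ on $\{\sum_{j=l}^k\utilde_j^2>0\}$ and $-\btilde_i\Delta\vtilde_i=\nutilde_i\vtilde_i$ on $\{\sum_{j=l}^k\vtilde_j^2>0\}$, recorded just before the statement; the Kato inequalities $-\Delta|\utilde_i|\le(\mutilde_i/\atilde_i)|\utilde_i|$ and $-\Delta|\vtilde_i|\le(\nutilde_i/\btilde_i)|\vtilde_i|$ on all of $\Omega$, inherited by letting $p\to\infty$ in the corresponding inequalities for $u_{ip},v_{ip}$; the limiting local Pohozaev identity \eqref{eq:Pohozaev--type_at_the_limit_of_p}; and the relations $\mutilde_i=\atilde_i\int_\Omega|\nabla\utilde_i|^2\,dx$, $\nutilde_i=\btilde_i\int_\Omega|\nabla\vtilde_i|^2\,dx$, obtained from \eqref{eq:expression_for_mu_ij} as $p\to\infty$, which give $\mutilde_i\le\kappa\,\atilde_i$ and $\nutilde_i\le\kappa\,\btilde_i$ for a constant $\kappa$ depending only on the uniform $H^1_0$ bounds.

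First I would differentiate. Testing the equations with $\utilde_i$ and $\vtilde_i$ on $B_r(x_0)$ gives, parallel to \eqref{second_expression_for_E(r)}, the identity
\[
\widetilde E(x_0,r)=\frac{1}{r^{N-2}}\sum_{i=l}^k\int_{\partial B_r(x_0)}\bigl(\atilde_i\utilde_i\,\partial_n\utilde_i+\btilde_i\vtilde_i\,\partial_n\vtilde_i\bigr)\,d\sigma,
\]
a change of variables as in Lemma \ref{fracddrH(x_0,(u,v),r)=} gives $\widetilde H'(x_0,r)=\frac{2}{r^{N-1}}\sum_{i=l}^k\int_{\partial B_r(x_0)}(\atilde_i\utilde_i\,\partial_n\utilde_i+\btilde_i\vtilde_i\,\partial_n\vtilde_i)\,d\sigma$, and differentiating $\widetilde E$ combined with \eqref{eq:Pohozaev--type_at_the_limit_of_p} yields, as in Lemma \ref{fracddrE(x_0,(u,v),r)=}, $\widetilde E'(x_0,r)=\frac{2}{r^{N-2}}\sum_{i=l}^k\int_{\partial B_r(x_0)}(\atilde_i(\partial_n\utilde_i)^2+\btilde_i(\partial_n\vtilde_i)^2)\,d\sigma-\frac{2}{r^{N-1}}\sum_{i=l}^k\int_{B_r(x_0)}(\mutilde_i\utilde_i^2+\nutilde_i\vtilde_i^2)\,dx$. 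Combining these with the Cauchy--Schwarz inequality on $\partial B_r(x_0)$ and with the estimate $\frac{1}{r^{N-1}}\sum_{i=l}^k\int_{B_r(x_0)}(\mutilde_i\utilde_i^2+\nutilde_i\vtilde_i^2)\,dx\le C r\,(\widetilde E(x_0,r)+\widetilde H(x_0,r))$ — the analogue of Lemma \ref{lemma:estimate1}, proved identically from the Poincar\'e inequality and $\mutilde_i\le\kappa\,\atilde_i$ — produces $\frac{d}{dr}\widetilde N(x_0,r)\ge-2Cr(\widetilde N(x_0,r)+1)$ and $\frac{d}{dr}\log\widetilde H(x_0,r)=\frac{2}{r}\widetilde N(x_0,r)$, whence the monotonicity of $e^{Cr^2}(\widetilde N(x_0,r)+1)$ and the existence of the finite limit $\widetilde N(x_0,0^+)$.

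It then remains to show $\widetilde H(x_0,r)>0$ for small $r$ and that $\Gamma^l_{(\utilde,\vtilde)}$ has empty interior; both are copied word for word from the end of the proof of Theorem \ref{thm:Almgren}. If $\Gamma^l_{(\utilde,\vtilde)}$ had a point $x_1$ at positive distance $d_1$ from its topological boundary, then on $(d_1,d_1+\varepsilon)$ one would have $\widetilde H(x_1,\cdot)>0$ and $\widetilde H'=\frac{2}{r}\widetilde N\,\widetilde H$ with $\widetilde H(x_1,d_1)=0$ and $r\mapsto 2\widetilde N(x_1,r)/r$ continuous up to $d_1$ by the monotonicity just proved, forcing $\widetilde H(x_1,\cdot)\equiv 0$, a contradiction. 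And if $\widetilde H(x_0,r)=0$ for some $x_0\in\tilde\Omega$ then $\utilde_i=\vtilde_i=0$ on $\partial B_r(x_0)$ for all $i=l,\dots,k$; since $\Gamma^l_{(\utilde,\vtilde)}$ has empty interior there is a component, say $\utilde_l$, not vanishing on $B_r(x_0)$, and extending $\utilde_l|_{B_r(x_0)}$ by zero and using $-\Delta|\utilde_l|\le(\mutilde_l/\atilde_l)|\utilde_l|$ on $\Omega$ gives $\mutilde_l/\atilde_l\ge\lambda_1(B_r(x_0))$, which is impossible once $r=r(\mutilde_l,\atilde_l)$ is small enough, uniformly for $x_0\in\tilde\Omega$.

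The only genuinely new points relative to Subsection \ref{subsec:Almgren} are bookkeeping: one must check that the components $i=1,\dots,l-1$ have really disappeared, i.e. that \eqref{eq:Pohozaev--type_at_the_limit_of_p} and the limits $\mutilde_i=\atilde_i\|\utilde_i\|^2$ involve only indices $i\ge l$ — which is exactly how they were derived, since $a_{ip},b_{ip}\to 0$ for $i<l$ — and that the vanishing/clean-up step is run against the \emph{surviving} nodal set $\Gamma^l_{(\utilde,\vtilde)}$, whose complement is nonempty near every point of $\widetilde\omega_1=\interior(\overline{\{\sum_{i=l}^k\utilde_i^2>0\}})$ by definition. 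I expect no real obstacle here: the one (mild) caveat is that every $\utilde_i,\vtilde_i$ is sign-changing, but since $\widetilde E$, $\widetilde H$ and all the identities above involve only the squares $\utilde_i^2,\vtilde_i^2$ and the quadratic forms $|\nabla\utilde_i|^2,|\nabla\vtilde_i|^2$, the sign plays no role at this stage.
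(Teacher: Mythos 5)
Your proof is correct and takes essentially the same route the paper intends: the paper offers no separate argument for this theorem, simply invoking the scheme of Theorem \ref{thm:Almgren}, which is precisely what you reproduce for the surviving indices $i\geq l$ using the limiting equations, the inherited Kato-type inequalities, the relations $\mutilde_i=\atilde_i\|\utilde_i\|^2$, $\nutilde_i=\btilde_i\|\vtilde_i\|^2$, and the limiting Pohozaev identity \eqref{eq:Pohozaev--type_at_the_limit_of_p}, followed by the same empty-interior and uniform-positivity-of-$\widetilde H$ arguments. Your reading of $\widetilde E$ as containing the zeroth-order subtraction (so that it matches the definition of $E(x_0,(u,v),r)$ in Subsection \ref{subsec:Almgren}) is the intended one, since otherwise the identity $\frac{d}{dr}\log \widetilde H(x_0,r)=\frac{2}{r}\widetilde N(x_0,r)$ would not hold exactly.
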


This implies versions of Corollaries \ref{Corollaries 2.6_2.7_2.8_TaTe} and \ref{coro:Lipschiz}; in particular, $\utilde_i,\vtilde_i$ are Lipschitz continuous for $i=l,\ldots, k$ (hence (H2) is proved), and $N(x_0,0^+)\geq 1$ whenever $x_0\in \Gamma^l_{(\utilde,\vtilde)}$. Similarly to Proposition \ref{eq:propositionmeasures}, we prove that for $i\geq l$ there exist measures $\widetilde \Mcal_i$, $\widetilde \Ncal_i$ supported on $\Gamma_{(\tilde u,\tilde v)}^l$, such that
\[
-\atilde_i \Delta \utilde_i = \mutilde_i \utilde_i-\widetilde \Mcal_i,\quad -\btilde \Delta \vtilde_i=\nutilde_i \vtilde_i-\widetilde \Ncal_i \qquad \text{ in }\mathscr{D}'(\Omega).
\]

Let us now pass through the results of Section \ref{sec:Regularity_of_nodal_set}. From the monotonicity formula and the equations for $\utilde_i,\vtilde_i$, we can repeat the arguments of Subsection \ref{subset:Blowupsequences}, obtaining that, given $\tilde \Omega \Subset \Omega$ and sequences $x_n\in \widetilde \Omega$, $t_n\to 0$, then
\[
\utilde_{i,n}(x):=\frac{\bar u_i(x_n+t_n x)}{\sqrt{\widetilde H(x_n,t_n)}}\to \bar u_i,\quad \vtilde_{i,n}(x):=\frac{\bar v_i(x_n+t_n x)}{\sqrt{\widetilde H(x_n,t_n)}}\to \bar v_i
\]
in $C^{0,\gamma}_\textrm{loc}\cap H^1_\textrm{loc}(\R^N)$, for every $l \leq i\leq k$, $0<\gamma<1$. Moreover, $(\bar u_l,\ldots, \bar u_k,\bar v_l,\ldots, \bar v_k)\neq 0$, $\bar u_i\cdot \bar v_j=0$ $\forall i,j=l,\ldots,k$, and
\[
-\atilde_i \Delta \bar u_i=-\bar \Mcal_i,\quad -\btilde_i \Delta \bar v_i=-\bar \Ncal_i, 
\]
where $\bar \Mcal_i,\bar\Ncal_i$ are measures concentrated on
\[
\Gamma_{(\bar u,\bar v)}^l:=\{x\in \Omega:\ \bar u_i(x)=\bar v_i(x)=0\ \forall i=l,\ldots,k\}.
\]
Finally, we also have a local Pohozaev type identity, which yields an Almgren monotonicity formula similar to Corollary \ref{coro:Almgren_for_baru_barv}, this time for the limiting Almgren's quotient
\[
\bar N(x_0,r):=\frac{\bar E(x_0,r)}{\bar H(x_0,r)}=\frac{r \sum_{i=l}^k \int_{B_r(x_0)} (\atilde_i|\nabla \bar u_i|^2+\btilde_i |\nabla \bar v_i|^2)\, dx}{\sum_{i=l}^k \int_{\partial B_r(x_0)} (\atilde_i \bar u_i^2+\btilde_i \bar v_i^2)\, d\sigma}.
\]
In particular, $\Gamma_{(\utilde,\vtilde)}^l$ has an empty interior, and if $x_n$ is either a constant sequence, or $x_n\in \Gamma_{(\utilde,\vtilde)}^l$ satisfies $x_n\to x_0$ with $\bar N(x_0,0^+)=1$, then 
\[
\bar N(0,r)\equiv \widetilde N(x_0,r)=:\alpha \qquad \text{ for every $r>0$ }
\]
and the limiting functions in polar coordinates write as
\[
\bar u_i=r^\alpha g_i(\theta),\qquad \bar v_i=r^\alpha h_i(\theta),
\]
with 
\[
-\Delta_{S^{N-1}}g_i=\lambda g_i \quad -\Delta_{S^{N-1}}h_i=\lambda h_i \quad \text{ in } S^{N-1}\setminus \Gamma_{(\bar u,\bar v)}^l,\qquad \ \lambda=\alpha(\alpha+N-2).
\]

Following now Subsection \ref{subset:HausdorffDim}, we define
\[
\Reh_{(\utilde,\vtilde)}^l=\{x\in \Gamma^l_{(\utilde,\vtilde)}:\ \widetilde N(x,0^+)=1\}
\] 
and
\[
\Seh^l_{(\utilde,\vtilde)}=\{x\in \Gamma^l_{(\utilde,\vtilde)}:\ \widetilde N(x,0^+)>1\}.
\]
Following word by word the proofs presented there, we obtain the existence of $\delta_N>0$ such that
\[
\forall x_0\in \Gamma^l_{(\utilde,\vtilde)}, \quad \text{ either}\quad  \widetilde N(x_0,0^+)=1 \ \text{ or } \widetilde N(x_0,0^+)\geq 1+\delta_N
\]
(as in Proposition \ref{prop:jump_condition_on_N}). This implies (Theorem \ref{thm:hausdorff_measures_of_nodalsets}) that
\[
\Hh_\text{dim}(\Gamma^l_{(\utilde,\vtilde)})\leq N-1,\qquad \Hh_{dim}(\Seh^l_{(\utilde,\vtilde)})\leq N-2,
\]
which yields the first part of property (G2). Moreover, as observed in Remark \ref{rem:at_most_hyperplane}, given a blowup limit $(\bar u,\bar v)\in \Bcal\Ucal_{x_0}$ with $x_0\in \Reh^l_{(\utilde,\vtilde)}$, the nodal set $\Gamma^l_{(\utilde,\vtilde)}$ is a vector space which is at most a hyperplane; if $\bar u,\bar v\not \equiv 0$, then it is actually an hyperplane. 

By using a Clean-Up result (Proposition \ref{prop:cleanup}), we know that if $\bar v\equiv 0$ then also $\vtilde \equiv 0$ in a neighborhood of $x_0$. However, this can well be the case for some $x_0\in \Gamma^l_{(\utilde,\vtilde)}$. This is the reason why, when going through Subsections \ref{subsec:Cleanup} and \ref{subsec:regularity}, we need to restrict our attention to 
\[
\widetilde \Gamma:= \partial \widetilde \omega_1\cap \Omega=\partial \widetilde \omega_2\cap \Omega, \quad \text{ and } \quad \widetilde \Reh:=\widetilde \Gamma\cap \Reh^l_{(\utilde,\vtilde)}.
\]
Given $x_0\in \widetilde \Reh$ and $(\bar u,\bar v)\in \Bcal\Ucal_{x_0}$, exactly as in Theorem \ref{coro:hyperplane} one can now prove that $\bar u,\bar v\not \equiv 0$, and show the existence of $\nu\in S^{N-1}$ and $ \alpha_i,\beta_i\in \R$ ($l\leq i\leq k$) such that
\begin{equation}\label{blowup_full_charact_for_tilde}
\bar u_i= \alpha_i (x\cdot \nu)^+,\ \bar v_i=\beta_i (x\cdot \nu)^-, \qquad \text{ and }\qquad \sum_{i=l}^k \atilde_i \alpha_i^2=\sum_{i=l}^k \btilde_i \beta_i^2.
\end{equation}
As a corollary, we obtain that given $x_0\in\widetilde \Reh$ there exists $R_0>0$ such that
\[
B_{R_0}(x_0)\setminus \Gamma^l_{(\utilde,\vtilde)}=\Omega_1 \cup \Omega_2,
\]
with $\Omega_1\cap \Omega_2=\emptyset$, and $\Omega_1,\Omega_2$ are $(\delta,R_0)$--Reifenberg flat and NTA domains. Moreover,
\[
\sum_{i=l}^k \utilde_i^2>0,\ \sum_{i=1}^k \vtilde_i^2\equiv 0 \text{ on } \Omega_1\qquad \text{ and }\qquad \sum_{i=1}^k \utilde_i^2=0,\ \sum_{i=1}^k \vtilde_i^2>0 \text{ on } \Omega_2.
\]
Define
\[
\widetilde U(x)=(\sqrt{\atilde_l}\utilde_l(x),\ldots, \sqrt{\atilde_k}\utilde_k(x)),\qquad \Ucal(x):=\frac{\widetilde U(x)}{|\widetilde U(x)|},\qquad \text{ for } x\in \Omega_1
\]
and
\[
\widetilde V(x)=(\sqrt{\btilde_l}\vtilde_l(x),\ldots, \sqrt{\btilde_k}\vtilde_k(x)),\qquad \Vcal(x):=\frac{\widetilde V(x)}{|\widetilde V(x)|},\qquad \text{ for } x\in \Omega_2.
\]
Proposition \ref{lemma:survives_blowup_then_positive} together with \eqref{blowup_full_charact_for_tilde} implies that at least one component among $(\utilde_l,\ldots, \utilde_k)$ and among $(\vtilde_l,\ldots, \vtilde_k)$ is signed in a neighborhood of $x_0$. Thus, as before, we can extend $\Ucal,\Vcal$ up to $\Gamma:=\partial \Omega_1\cap \partial \Omega_2\cap B_{r_0}(x_0)$, and \eqref{eq:Ucal_alphaHolder} holds. The rest of Subsection \ref{subsec:regularity} can now be applied to our situation.

In conclusion, we have shown that $\widetilde R_{(\utilde,\vtilde)}$ is locally a $C^{1,\alpha}$ hypersurface. As the remaining part of $\widetilde \Gamma$ has Hausdorff dimension at most $N-2$, we have thus proved property (H2), as wanted. Having proved by now (H1), (H2) and (H3),  as seen in the end of Subsection \ref{subsec:first_subsec_of_Chapter4} we have everything we need in order to prove Theorems \ref{thm:first_main_result} and \ref{thm:2ndmain}.


\appendix

\section{Boundary behavior  on NTA and Reifenberg flat domains}\label{appendix:NTA}

\subsection{Results on NTA domains}
For convenience of the reader, we recall here the notion of \emph{non-tangencially accessible domain} (NTA), introduced in \cite{JerisonKenig} (the version we present here is taken from \cite{Lewis}).

\begin{definition}\label{def:NTA}
A bounded domain $\Omega \subset \R^N$ is called an NTA if there exist $M,r_0>0$ (the NTA constants) such that
\begin{enumerate}
\item \emph{Corkscrew condition:} Given $x\in \partial \Omega$ and $0<r<r_0$ there exist $x_0\in \Omega$ such that
\[
M^{-1} r<\dist(x_0,\partial \Omega)<|x-x_0|<M r.
\]
\item $\R^N\setminus \Omega$ satisfies the corkscrew condition
\item If $w\in \partial \Omega$ and $w_1,w_2 \in B_{r_0}(w)\cap \Omega$, then there is a rectifiable curve $\gamma : [0, 1]\to  \Omega$ with $\gamma(0)=w_1$, $\gamma(1)=w_2$, and:
\begin{itemize}
\item[(a)] $\Hh^1(\gamma) \leq M |w_1 - w_2|$,
\item[(b)] $\min\{\Hh^1(\gamma([0, t])), \Hh^1(\gamma([t, 1]))\} \leq M \dist(\gamma(t), \partial \Omega)$.
\end{itemize}
\end{enumerate} 
\end{definition}

We need the following slight extensions of the classical boundary Harnack principle by Jerison and Kenig \cite{JerisonKenig} (see also the book \cite{Kenig}):

\begin{lemma}\label{lemma:boundaryHarnack_same_lambda}
Let $\Omega$ be an NTA domain, $\lambda>0$, and $x_0\in \partial \Omega$. Then there exist $R_0,C>0$ (depending only on $\lambda$ and the NTA constants) such that for every $0<2r<R_0$ and for every $u,v$ solutions of $-\Delta u=\lambda u$ in $\Omega\cap B_{2r}(x_0)$ with $u=v$ on $\partial \Omega\cap B_{2r}(x_0)$, and $u,v> 0$ in $\Omega$, then
\[
C^{-1}\frac{v(y)}{u(y)}\leq \frac{v(x)}{u(x)}\leq C\frac{v(y)}{u(y)}
\]

for every $x\in \overline \Omega \cap B_r(x_0)$, $y\in \Omega \cap B_r(x_0)$.
\end{lemma}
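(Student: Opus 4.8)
The plan is to reduce Lemma~\ref{lemma:boundaryHarnack_same_lambda} to the classical boundary Harnack principle of Jerison--Kenig for \emph{positive harmonic} functions on NTA domains, by absorbing the linear term $\lambda u$ into a harmless perturbation. The first step is a localization/normalization: fix $x_0\in\partial\Omega$, work in $\Omega\cap B_{2r}(x_0)$ with $2r<R_0$ (where $R_0$ will be chosen small depending only on $\lambda$ and the NTA constants), and note that by the corkscrew condition we may pick a fixed non-tangential reference point $A_r\in\Omega\cap B_r(x_0)$ with $\dist(A_r,\partial\Omega)\gtrsim r$; after rescaling $x\mapsto x_0+rx$ we may assume $r\sim 1$, at the cost of replacing $\lambda$ by $r^2\lambda$, which is even smaller, so no generality is lost and all constants track only $\lambda$ and the NTA data.

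The key step is to handle the equation $-\Delta u=\lambda u$. I would use the standard device of writing $u=w\,h$, or more simply, comparing $u$ with the harmonic function $\hat u$ on $\Omega\cap B_{2r}(x_0)$ having the same boundary values as $u$ on the whole of $\partial(\Omega\cap B_{2r}(x_0))$. By the maximum principle and elliptic estimates, together with the Carleson estimate for NTA domains, one gets $c\,\hat u\le u\le C\,\hat u$ in $\Omega\cap B_{3r/2}(x_0)$ with constants depending only on $\lambda$ and the NTA constants (here one uses that $\|u\|_{L^\infty(\Omega\cap B_{3r/2})}\le C\,u(A_r)$ by the Carleson estimate, so the solid term $\lambda u$ is controlled by a small multiple of the boundary data once $R_0$ is small). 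The same comparison applies to $v$ and its harmonic replacement $\hat v$. Since $u=v$ on $\partial\Omega\cap B_{2r}(x_0)$, the functions $\hat u$ and $\hat v$ are positive harmonic in $\Omega\cap B_{2r}(x_0)$ and agree on $\partial\Omega\cap B_{2r}(x_0)$; the classical boundary Harnack principle of \cite{JerisonKenig} then gives $C^{-1}\,\hat v(y)/\hat u(y)\le \hat v(x)/\hat u(x)\le C\,\hat v(y)/\hat u(y)$ for $x\in\overline\Omega\cap B_r(x_0)$, $y\in\Omega\cap B_r(x_0)$. Chaining the four two-sided comparisons $u\sim\hat u$, $v\sim\hat v$ yields the claimed estimate for $u,v$ with a possibly larger constant $C$, still depending only on $\lambda$ and the NTA constants.

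One technical point worth spelling out: the two harmonic replacements must be taken on the same domain $\Omega\cap B_{2r}(x_0)$ so that ``$\hat u=\hat v$ on $\partial\Omega\cap B_{2r}(x_0)$'' genuinely holds; the boundary of this domain also includes the spherical cap $\Omega\cap\partial B_{2r}(x_0)$, where $\hat u$ and $\hat v$ need not agree, but that portion lies at distance $\ge r/2$ from $B_{3r/2}(x_0)$, so interior estimates plus Harnack control the ratio there and the boundary Harnack principle of \cite{JerisonKenig} is exactly designed to tolerate this (it only requires agreement on the ``true'' boundary piece $\partial\Omega\cap B_{2r}(x_0)$, with a reference point argument handling the cap). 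I expect the main obstacle to be precisely this bookkeeping --- verifying that the perturbation $\lambda u$ is genuinely negligible at scale $r<R_0$ uniformly in $x_0$, which requires invoking the Carleson estimate and the doubling property of harmonic measure on NTA domains --- but all of these are standard and available in \cite{JerisonKenig,Kenig}. Since the statement only claims existence of such $R_0,C$, no sharp tracking of constants is needed, and the proof is essentially a citation wrapped around the substitution $u\rightsquigarrow\hat u$.
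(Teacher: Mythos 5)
Your reduction to the Jerison--Kenig theorem via harmonic replacements has a genuine gap at its central step, namely the claimed two-sided comparison $c\,\hat u\le u\le C\,\hat u$ in $\Omega\cap B_{3r/2}(x_0)$. The maximum principle does give $u\ge \hat u$, and the Carleson estimate together with the Green representation $u-\hat u=\lambda\int G_D(\cdot,y)u(y)\,dy$ gives an \emph{additive} bound of the form $u\le \hat u+C\lambda r^2\,u(A_r)$. But this error term does not vanish on $\partial\Omega\cap B_{2r}(x_0)$, while $\hat u$ does — and in an NTA domain a positive harmonic function vanishing on a boundary portion may decay like $\mathrm{dist}(x,\partial\Omega)^{\gamma}$ with $\gamma$ arbitrarily large (think of a thin cone, which is NTA with fixed constants), whereas the Green potential of a bounded function decays at most quadratically near such a boundary point. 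So the additive estimate cannot be upgraded to the multiplicative bound $u\le C\hat u$ near $\partial\Omega$, which is exactly the regime where the boundary Harnack principle has content; the chaining of the four comparisons breaks precisely there. What would be needed to repair this route is an intrinsic estimate of the type $\int_D G_D(x,y)u(y)\,dy\le Cr^2\,u(x)$ (a Cranston--McConnell / conditional-gauge type inequality), which is true on NTA domains at scales where $\lambda r^2$ is small but is a substantial theorem in its own right, not a consequence of ``maximum principle plus Carleson''. The issue with the spherical cap $\Omega\cap\partial B_{2r}(x_0)$ is, by comparison, minor.

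The paper avoids the perturbation altogether: choose $R_0$ so that $\lambda$ is the first Dirichlet eigenvalue of $B_{3R_0}(x_0)$ and let $\varphi_0>0$ be the corresponding eigenfunction, which is bounded above and below on $\overline{B_{2R_0}(x_0)}$. A direct computation shows $\mathrm{div}\bigl(\varphi_0^2\nabla(w/\varphi_0)\bigr)=0$ for $w=u$ and $w=v$, so $u/\varphi_0$ and $v/\varphi_0$ are positive solutions of a uniformly elliptic divergence-form equation vanishing on $\partial\Omega\cap B_{2r}(x_0)$, and one applies the boundary Harnack principle for such operators on NTA domains (\cite[Lemma 1.3.7]{Kenig}) to them directly; the conclusion for $v/u=(v/\varphi_0)/(u/\varphi_0)$ follows at once. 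You may want to compare your outline with this argument: it turns the zeroth-order term into part of the operator exactly, with constants depending only on $\lambda$ (through $R_0$) and the NTA constants, instead of trying to treat it as a small error.
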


\begin{proof}
 Let $R_0>0$ be so that there exists $\varphi_0\geq 0$ nontrivial solution of
\[
-\Delta \varphi_0=\lambda \varphi_0 \text{ in } B_{3R_0}(x_0),\qquad \varphi_0=0 \text{ on } \partial B_{3R_0}(x_0).
\]
Then $\varphi_0>0$ on $\overline{B_{2R_0}(x_0)}$, and
\begin{align*}
\div(\varphi_0^2\nabla (\frac{w}{\varphi_0}))&=\div(\nabla w \varphi_0-w\nabla \varphi_0)\\
								  &=\Delta w \varphi_0-\nabla w \nabla \varphi_0-\nabla w\nabla \varphi_0-w\Delta \varphi_0=0 \quad \text{ in } B_{2R_0}(x_0)
\end{align*}
for $w\equiv u$ and $w\equiv v$. Thus we can apply \cite[Lemma 1.3.7]{Kenig} to $u/\varphi_0, v/\varphi_0$, which provides the result.
\end{proof}

\begin{lemma}\label{prop:u_i/u_1_alpha_Holder_generalcase}
Let $\Omega$ be an NTA domain, $I\subset \R^+$ a bounded interval, and $x_0\in \partial \Omega$. Then there exists $R_0>0$ and $\alpha\in (0,1)$ (depending on $I$ and on the NTA constants) such that:
for every $\lambda,\mu\in I$ and $u,v$ solution of
\[
-\Delta u=\lambda u,\quad -\Delta v=\mu v \qquad \text{ in } \Omega,
\]
with $u>0$ in $\Omega$, $u,v=0$ on $\partial \Omega \cap B_{2R_0}(x_0)$ then
\[
\frac{v}{u} \text{ is $\alpha$--H\"older continuous in } \overline \Omega\cap B_{R_0}(x_0).
\]
\end{lemma}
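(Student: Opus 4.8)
The plan is to reduce the statement, following the hint of S.~Salsa, to the boundary Harnack principle for divergence-form elliptic operators on NTA domains. Working locally near $x_0$, and arguing as in the proof of Lemma~\ref{lemma:boundaryHarnack_same_lambda}, one may shrink $R_0$ (uniformly for $\lambda,\mu\in I$) and fix a function $\varphi_0$ with $-\Delta\varphi_0=\lambda\varphi_0$ on a neighbourhood of $\overline{\Omega}\cap B_{2R_0}(x_0)$ and $0<c_0\le\varphi_0\le C_0$ there. Set $\tilde u:=u/\varphi_0>0$ and $\tilde v:=v/\varphi_0$; then $v/u=\tilde v/\tilde u$, both $\tilde u$ and $\tilde v$ vanish continuously on $\partial\Omega\cap B_{2R_0}(x_0)$, and the computation used for Lemma~\ref{lemma:boundaryHarnack_same_lambda} gives
\[
\div\bigl(\varphi_0^2\nabla\tilde u\bigr)=0,\qquad \div\bigl(\varphi_0^2\nabla\tilde v\bigr)=(\lambda-\mu)\,\varphi_0^2\,\tilde v\qquad\text{in }\Omega\cap B_{2R_0}(x_0).
\]
Since $v$, hence $\tilde v$, is bounded, the right-hand side of the second equation lies in $L^\infty$ with norm controlled by $|\lambda-\mu|$; equivalently $\tilde v$ solves $Lw=0$ with $L:=\div(\varphi_0^2\nabla\,\cdot\,)-(\lambda-\mu)\varphi_0^2(\,\cdot\,)$, a uniformly elliptic divergence-form operator with the \emph{same} principal part as the one solved by $\tilde u$ and a bounded zeroth order term. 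The problem is thus reduced to the following: in an NTA domain, given a positive solution $\tilde u$ and a solution $\tilde v$ of arbitrary sign of uniformly elliptic divergence-form equations sharing the principal part $\varphi_0^2\,\mathrm{Id}$ and having bounded lower order terms, both vanishing on a boundary portion, prove that $\tilde v/\tilde u$ extends Hölder continuously up to that portion.

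For this I would run the classical two-step boundary Harnack argument, keeping track that all constants depend only on $N$, the NTA constants of $\Omega$ and $I$. First, a Carleson/boundedness estimate: chaining Harnack inequalities along the Harnack chains of the NTA domain up to a corkscrew point $A_{R_0}(x_0)$, and using the maximum principle for the (sign-changing) solution $\tilde v$, one obtains $|\tilde v|\le C\,\tilde u$ on $\Omega\cap B_{R_0/2}(x_0)$, so that $\tilde v/\tilde u$ is bounded there. Second, oscillation decay: for $y\in\partial\Omega\cap B_{R_0/2}(x_0)$ and small $r$, setting $M(r)=\sup_{\Omega\cap B_r(y)}\tilde v/\tilde u$ and $m(r)=\inf_{\Omega\cap B_r(y)}\tilde v/\tilde u$, one applies the boundary Harnack principle of Jerison--Kenig \cite{JerisonKenig} — in the Caffarelli--Fabes--Mortola--Salsa form, valid for divergence-form operators with bounded zeroth order coefficients — to the \emph{nonnegative} solutions $M(2r)\tilde u-\tilde v$ and $\tilde v-m(2r)\tilde u$ of $Lw=0$; the term $(\lambda-\mu)\varphi_0^2\tilde u$ (which appears because $\tilde u$ does not solve $Lw=0$ exactly) is first absorbed into the right-hand side and estimated by the standard non-degeneracy of positive solutions at NTA boundary points. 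This yields $M(r)-m(r)\le\theta\,(M(2r)-m(2r))+C r^{\delta}$ for some $\theta\in(0,1)$ and $\delta>0$; iterating gives that the oscillation of $\tilde v/\tilde u$ over $\Omega\cap B_r(y)$ is at most $C r^{\alpha}$ for some $\alpha\in(0,1)$. Together with interior elliptic regularity (recall $\tilde u>0$ and $\varphi_0$ is smooth) this gives $\tilde v/\tilde u\in C^{0,\alpha}(\overline{\Omega}\cap B_{R_0}(x_0))$, and since $\varphi_0$ is bounded below and smooth, $v/u=\tilde v/\tilde u$ inherits the same regularity, with $R_0$ and $\alpha$ depending only on $N$, the NTA constants and $I$.

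The main difficulty is precisely the gap between Lemma~\ref{lemma:boundaryHarnack_same_lambda} and the present statement: there $u$ and $v$ are positive and solve the \emph{same} equation, so one can quote the classical boundary Harnack principle verbatim, whereas here $v$ changes sign and, because $\lambda\ne\mu$, after the natural substitution $\tilde v$ genuinely solves an inhomogeneous problem. One therefore has to (a) carry out the Carleson estimate and the boundary Harnack comparison in the sign-changing, zeroth-order-perturbed setting, and (b) control the error term coming from $\tilde u$ not being an exact $L$-solution, which is where the polynomial non-degeneracy of positive solutions at NTA boundary points enters. Once these two points are handled, the conclusion follows from the standard oscillation-decay mechanism.
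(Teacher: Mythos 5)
Your reduction to divergence form is correct ($\div(\varphi_0^2\nabla\tilde u)=0$, $\div(\varphi_0^2\nabla\tilde v)=(\lambda-\mu)\varphi_0^2\tilde v$), but the core of the argument is then only asserted, and the asserted justification does not work as stated. In the oscillation-decay step you must compare $\tilde v$ with $\tilde u$, and $\tilde u$ is \emph{not} a solution of your operator $L$: the nonnegative functions $M(2r)\tilde u-\tilde v$ and $\tilde v-m(2r)\tilde u$ solve inhomogeneous equations $Lw=g$ with $|g|\lesssim |\lambda-\mu|\,\tilde u$. To close the iteration $M(r)-m(r)\le\theta\,(M(2r)-m(2r))+Cr^{\delta}$ you need the correction $z$ (solving $Lz=g$ with zero boundary data on $\partial(\Omega\cap B_{2r})$) to satisfy $|z|\le r^{\delta}\,\tilde u$ \emph{up to the boundary portion}. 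The ``standard non-degeneracy of positive solutions at NTA boundary points'' does not give this: in a general NTA domain the lower bound for $\tilde u$ is of the form $\tilde u(x)\gtrsim(\dist(x,\partial\Omega)/r)^{\gamma}\tilde u(A_r)$ with $\gamma$ dictated by the corkscrew/Harnack-chain constants, while the only free estimate for $z$ is the boundary H\"older bound $|z(x)|\lesssim(\dist(x,\partial\Omega)/r)^{\beta}r^{2}\sup\tilde u$ with the (small) De Giorgi--Nash--Moser exponent $\beta$; since in general $\beta<\gamma$, the ratio $|z|/\tilde u$ is not small near the boundary and the absorption fails. Making this step rigorous requires the genuinely finer CFMS/Jerison--Kenig machinery (Green function--harmonic measure comparisons, or exploiting that $g$ is pointwise comparable to $\tilde u$ together with an estimate of the form $\int G(\cdot,y)\tilde u(y)\,dy\lesssim r^{\delta}\tilde u$), i.e.\ essentially redoing the perturbative boundary Harnack theory; the Carleson-type bound $|\tilde v|\le C\tilde u$ for the sign-changing $\tilde v$ suffers from the same issue, since $\tilde u$ and $\tilde v$ solve different equations and the maximum principle alone does not compare them. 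So as written there is a genuine gap, precisely at the two points you yourself flag as ``(a)'' and ``(b)''.

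For comparison, the paper avoids all of this with a short trick (due to S.~Salsa): set $U(x,x_{N+1})=e^{-x_{N+1}\sqrt{\lambda}}u(x)$ and $V(x,x_{N+1})=e^{-x_{N+1}\sqrt{\mu}}v(x)$, which are \emph{harmonic} in the cylinder $\Omega\times(-2,2)$, still an NTA domain with the same constants; when $u,v>0$ one quotes Jerison--Kenig (Theorem~7.9) directly to get that $V/U$, hence $v/u$ on the slice $x_{N+1}=0$, is $\alpha$--H\"older up to the boundary portion. The sign-changing case is reduced to the positive one by taking the $\mu$--harmonic extensions $w_1,w_2$ of $v^{+},v^{-}$ in $B_{2R_0}(x_0)\cap\Omega$ (unique for $R_0$ small) and writing $v/u=w_1/u-w_2/u$. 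Your route corresponds to the ``different and more involved proof'' mentioned at the end of the appendix; it can be completed, but only by actually carrying out the perturbed Carleson and boundary Harnack estimates, which your proposal leaves unproved.
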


\begin{proof} 1. Assume first that both $u,v>0$ in $\Omega$. Define $U,V:\Omega \times [-2,2]\to \R$ by
\[
U(x,x_{N+1})=e^{-x_{N+1} \sqrt{\lambda}} u(x), \qquad V(x,x_{N+1})=e^{-x_{N+1} \sqrt{\mu}} v(x).
\]
These new functions are harmonic in $\widetilde \Omega=\Omega \times (-2,2)\subset \R^{N+1}$, which is an NTA domain with the same constants of $\Omega$. Moreover, $U,V>0$ in $\widetilde \Omega$ and $U=V=0$ on $(\partial \Omega \cap B_{2R_0}(x_0))\times(-2,2)$. Then, by \cite[Theorem 7.9]{JerisonKenig} we have that
\[
\frac{V}{U} \quad \text{ is $\alpha$--H\"older in } (\overline \Omega \cap B_{R_0}(x_0))\times [-1,1]
\]
and hence also $v/u$ is $\alpha$--H\"older continuous in $\overline \Omega \cap B_{R_0}(x_0)$.

\noindent 2. If, on the other hand, $v$ changes sign, then we take the following $\mu$--harmonic extensions
\[
-\Delta w_1=\mu w_1 \quad -\Delta w_2=\mu w_2 \quad \text{ in } B_{2R_0}(x_0)\cap \Omega,
\]
\[
w_1=v^+,\quad w_2=v^- \qquad \text{ on } \partial (B_{2R_0}(x_0)\cap \Omega).
\]
These are unique for $R_0$ small (more precisely, small so that the first Laplacian eigenvalue on $B_{2R_0}(x_0)\cap \Omega$ is larger than $\mu$). Then by the previous point both $w_1/u$ and $w_2/u$ are $\alpha$--H\"older continuous, and so is $v/u=w_1/u-w_2/u$.

\end{proof}

It should be acknowledged that the transformation $U(x,x_{N+1})=e^{-\sqrt{\lambda}x_{N+1}} u(x)$ was suggested to us by S. Salsa. For the interest of the reader we would like to say that there is actually a different and more involved proof that enables to treat more general problems of type $-\Delta u=f(x,u)$, with $|f(x,s)|\leq C|s|$ for $s\sim 0$, but for the purpose of this paper the previous lemma is sufficient.

\subsection{Results on Reifenberg flat domains}

Next, we prove that if some blowup limit of a $\lambda$--harmonic function $u$ converges to a nonnegative, nontrivial function, then $u>0$ in a neighborhood of $x_0$. This result holds for Reifenberg flat domains. Let us recall its definition.

\begin{definition}\label{def:Reifenberg}
Let $\Omega\subset \R^N$ be an open set and take $\delta,R>0$. We say that $\Omega$ is a $(\delta,R)$--Reifenberg flat domain if:
\begin{itemize}
\item[-] $\forall x\in \partial \Omega$ there exists a hyperplane $H=H_{x,R}$ containing $x$, and $\nu=\nu(x,R)\in S^{N-1}$ orthogonal to $H$ such that
\begin{align*}
&\{x+t\nu\in B_R(x):\ x\in H,\ t\geq 2\delta R\}\subset \Omega,\\
&\{x- t\nu\in B_R(x):\ x\in H,\ t\geq 2\delta R\}\subset \R^N \setminus \Omega.
\end{align*}
\item[-] $\forall x\in \partial \Omega$, $0<r<R$ there exists a hyperplane $H=H_{x,r}$ containing $x$ such that
\[
d_\Hh (\partial \Omega \cap B_r(x),H\cap B_r(x))\leq \delta r.
\]
\end{itemize}
\end{definition}

 Recall that for $\delta=\delta(N)>0$ sufficiently small a $(\delta,R)$--Reifenberg flat domain is NTA \cite[Theorem 3.1]{KenigToro}. We assume from now take we have such $\delta$.

\begin{proposition}\label{lemma:survives_blowup_then_positive}
Let $\Omega$ be a Reifenberg flat domain, and take $u$ such that $-\Delta u=\lambda u$ in $\Omega$, for some $\lambda>0$. Take $x_0\in \partial \Omega$ and suppose that $u=0$ on $\partial \Omega\cap B_{R_0}(x_0)$. If, for some sequences $t_n,\rho_n\to 0$, we have 
\[
u_{n}(x):=\frac{u(x_0+t_n x)}{\rho_n}\to \bar u\geq 0 \text{ in $C^{0,\alpha}(\overline \Omega)$},
\]
with $\bar u\not \equiv 0$, then 
\[
u>0 \qquad \text{ in } \Omega\cap B_r(x_0)
\]
for some sufficiently small $r>0$.
\end{proposition}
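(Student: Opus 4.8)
The plan is to pin down the blowup limit via the maximum principle on NTA domains, and then transfer its positivity to $u$ by a contradiction argument controlling the components of $\{u<0\}$.

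\emph{Step 1: the blowup limit is strictly positive.} Since $\Omega$ is Reifenberg flat with a sufficiently small constant, the rescaled domains $\Omega_n:=(\Omega-x_0)/t_n$ converge, along a subsequence and locally in the Hausdorff distance, to an NTA domain $\Omega_\infty$ with $0\in\partial\Omega_\infty$. The $u_n$ solve $-\Delta u_n=t_n^2\lambda u_n$, so $\bar u$ is harmonic in $\Omega_\infty$; as $u\equiv 0$ on $\partial\Omega\cap B_{R_0}(x_0)$, $\bar u$ vanishes on $\partial\Omega_\infty$ locally, and $\bar u\ge 0$, $\bar u\not\equiv 0$ by hypothesis. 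Because $\Omega_\infty$ is connected, the strong maximum principle yields $\bar u>0$ throughout $\Omega_\infty$, and the Hopf boundary lemma gives the nondegeneracy $\bar u(x)\ge c\,\dist(x,\partial\Omega_\infty)$ near $0$. Consequently, on every compact $K\Subset\Omega_\infty$ one has $u_n\ge\tfrac12\min_K\bar u>0$ for $n$ large, so the rescaled sets $(\{u\le 0\}-x_0)/t_n$ eventually lie in any fixed neighbourhood of $\partial\Omega_\infty$; in particular $\sup\{\,u^-(x):x\in\partial B_{t_n}(x_0),\ \dist(x,\partial\Omega)\ge\varepsilon t_n\,\}=\mathrm{o}(\rho_n)$ for every $\varepsilon>0$.

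\emph{Step 2: localisation and the eigenvalue obstruction.} Assume, for contradiction, that $u$ is not strictly positive on any $\Omega\cap B_r(x_0)$. If $u\ge 0$ near $x_0$ with an interior zero, the strong maximum principle forces $u\equiv 0$ on a neighbourhood of $x_0$, hence $u_n\to 0$ and $\bar u\equiv 0$, a contradiction. Therefore there is a sequence $y_m\to x_0$ with $y_m\in\Omega$ and $u(y_m)<0$. Let $A_m$ be the connected component of $\{u<0\}$ through $y_m$. Recalling that in the relevant application $u\in H^1_0(\Omega)$, so $u\equiv 0$ on all of $\partial\Omega$, one has $\partial A_m\subset\{u=0\}$; hence $-u$ restricted to $A_m$ is a sign-definite Dirichlet eigenfunction of $-\Delta$ with eigenvalue $\lambda$, so $\lambda=\lambda_1(A_m)$. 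By Faber--Krahn, $A_m$ contains a ball of a fixed radius $\rho_*=\rho_*(N,\lambda)>0$; choosing $R_0<\rho_*$ from the start, this forces $A_m\not\subset B_{R_0}(x_0)$.

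\emph{Step 3: conclusion.} Thus, for $m$ large, the connected set $A_m$ joins $y_m$ (near $x_0$) to points at distance $\ge R_0/2$ from $x_0$, hence meets $\partial B_{t_n}(x_0)$ for every $t_n\in(|y_m-x_0|,R_0/2)$. Matching to each $n$ an index $m(n)$ with $|y_{m(n)}-x_0|<t_n$, we obtain $q_n\in\partial B_{t_n}(x_0)\cap\Omega$ with $u(q_n)<0$. Rescaling, $\tilde q_n:=(q_n-x_0)/t_n\in\partial B_1(0)\cap\overline{\Omega_n}$ and, along a subsequence, $\tilde q_n\to\tilde q$ with $\bar u(\tilde q)=\lim_n u_n(\tilde q_n)\le 0$, so $\bar u(\tilde q)=0$. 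If $\tilde q\in\Omega_\infty$ this contradicts Step 1, and we are done. The remaining possibility $\tilde q\in\partial\Omega_\infty$ is precisely the delicate point, discussed next.

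\emph{Main obstacle.} The crux is excluding the "boundary case'' $\tilde q\in\partial\Omega_\infty$ in Step 3, i.e.\ ruling out a thin negative sliver of $\{u<0\}$ hugging $\partial\Omega$ near $x_0$. Here the NTA/Reifenberg structure is essential: the component $A_m$ carries a positive eigenfunction of the \emph{fixed} eigenvalue $\lambda$ and hence contains a ball of radius $\rho_*$ at distance $\ge\rho_*$ from $\partial\Omega$, so a Harnack chain inside $A_m$ (available since $A_m$, being a sub-NTA region, admits such chains) propagates a definite lower bound for $-u$ from that ball to some point of $A_m\cap\partial B_{Ct_n}(x_0)$ lying at distance $\gtrsim t_n$ from $\partial\Omega$; this contradicts the $\mathrm{o}(\rho_n)$ decay of $u^-$ away from the boundary established in Step 1, once combined with the boundary behaviour of $\lambda$-harmonic functions on NTA domains (Lemma \ref{lemma:boundaryHarnack_same_lambda}) and the nondegeneracy $\bar u\ge c\,\dist(\cdot,\partial\Omega_\infty)$. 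The iteration involved is of the same nature as Step 4 of the proof of Theorem \ref{thm:convergence_of_blowupsequence} and the Saint--Venant decay estimate of Lemma \ref{lemma:saint_venant}. This forces $\tilde q\in\Omega_\infty$, completing the proof.
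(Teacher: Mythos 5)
Your Steps 1--3 are fine as far as they go, but they reduce the proposition to exactly the case you then leave open, and the ``main obstacle'' paragraph does not close it. Three specific problems. First, Faber--Krahn gives only a lower bound on the \emph{measure} of a nodal component $A_m$ of $\{u<0\}$ (since $\lambda=\lambda_1(A_m)$ would force $|A_m|\geq c(N,\lambda)$); it does \emph{not} give that $A_m$ contains a ball of a fixed radius $\rho_*$, much less a ball at distance $\geq \rho_*$ from $\partial\Omega$ --- a set of large measure can be a thin sliver hugging the boundary, which is precisely the configuration you must exclude. Second, the Harnack chain you invoke ``inside $A_m$'' is unjustified: $A_m$ is an arbitrary connected component of $\{u<0\}$, and subregions of an NTA domain are not NTA; no uniform Harnack chain condition is available there. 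Third, your Step 2 imports the hypothesis $u\in H^1_0(\Omega)$, i.e.\ $u=0$ on \emph{all} of $\partial\Omega$, which is neither in the statement nor true in the intended application: there the proposition is applied with $\Omega$ a local NTA piece $\Omega_1\subset B_{R_0}(x_0)$ whose boundary contains a portion of $\partial B_{R_0}(x_0)$ on which $u$ need not vanish, so $\partial A_m\subset\{u=0\}$ (and hence $\lambda=\lambda_1(A_m)$) can fail if $A_m$ escapes through that portion. So the argument has a genuine gap at its decisive point.

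For comparison, the paper handles the possible negativity of $u$ near $x_0$ without ever analysing nodal components: it writes $u=w_n-z_n$ in $B_{2t_n}(x_0)\cap\Omega$, where $w_n,z_n$ are the $\lambda$-harmonic extensions of $u^+$ and $u^-$ with the same boundary data on $\partial(B_{2t_n}(x_0)\cap\Omega)$; both are positive and vanish on $\partial\Omega\cap B_{2t_n}(x_0)$. After rescaling, $\tilde w_n\to\bar u\not\equiv 0$ while $\tilde z_n\to 0$ (its boundary data tends to zero), so at some point $\bar y$ on $\partial B_{1/2}(0)$ one has $z_n(\bar y)/w_n(\bar y)<1/C$ for large $n$. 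The boundary Harnack comparison of Lemma \ref{lemma:boundaryHarnack_same_lambda} (for two positive $\lambda$-harmonic functions vanishing on the same boundary portion of an NTA domain) then propagates this smallness of the ratio uniformly up to the boundary on all of $B_{t_n}(x_0)\cap\Omega$, giving $z_n<w_n$ there, i.e.\ $u>0$. That comparison principle is exactly the tool that rules out the thin negative sliver you could not exclude; if you want to salvage your route, you would need to replace the Faber--Krahn/Harnack-chain heuristics by an argument of this boundary Harnack type.
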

\begin{proof}
Define by $w_n,z_n$ the $\lambda$--harmonic extensions of $u^+$ and $u^-$ on $B_{2t_n}(x_0)\cap \Omega$, namely:
\[
-\Delta w_n=\lambda w_n \quad -\Delta z_n=\lambda z_n \quad \text{ in } B_{2t_n}(x_0)\cap \Omega,
\]
\[
w_n=u^+,\quad z_n=u^- \qquad \text{ on } \partial (B_{2t_n}(x_0)\cap \Omega).
\]
and observe that $u=w_n-z_n$ in $B_{2 t_n}(x_0)\cap \Omega$. Let $\widetilde w_n$, $\widetilde z_n$ denote the blowups
\[
\widetilde w_n=\frac{w_n(x_0+t_n x)}{\rho_n}\ \text{ and }\ \widetilde z_n=\frac{z_n(x_0+t_n x)}{\rho_n}, \qquad \text{defined in $B_2(0)\cap\left( \frac{\Omega-x_0}{t_n}\right)$}. 
\]
At the limit, we find a harmonic equation in an half sphere (since $\Omega$ is a Reifenberg flat domain), and the boundary data converges to $\bar u$ and $0$ respectively; hence we have
\[
\widetilde w_n\to \bar u_i,\qquad \widetilde z_n\to 0 \qquad \text{ in a half-sphere of radius 2}.
\]
Thus there exists $\bar y\in \partial B_{1/2}(0)\cap \left(\frac{\Omega-x_0}{t_n}\right)$ such that
\[
\frac{z_n(\bar y)}{w_n(\bar y)}=\frac{\tilde z_n(\bar y)}{w_n(\bar y)}<\frac{1}{C}
\]
for $n$ large, where $C$ is the constant introduced in Lemma \ref{lemma:boundaryHarnack_same_lambda}, depending only on $\lambda$ and on the NTA constants of $\Omega$. Then by this very same lemma applied to $z_n,w_n$, we have 
\[
\frac{z_n(x_0+t_n x)}{w_n(x_0+t_n x)}\leq C\frac{\tilde z_n(\bar y)}{w_n(\bar y)}<1 \qquad \forall x\in B_1(0)\cap \frac{\Omega-x_0}{t_n},
\]
and so $u=w_n-z_n>0$ in $B_{r_n}(x_0)$ for sufficiently large $n$.
\end{proof}

\end{document}